\numberwithin{equation}{section}
\theoremstyle{plain}
\newtheorem{theorem}{Theorem}[section]
\newtheorem{cor}{Corollary}
\newtheorem{proposition}[theorem]{Proposition}
\theoremstyle{definition}
\theoremstyle{remark}
\newtheorem{remark}{Remark}
\newcommand\tenq[2][1]{%
 \def\useanchorwidth{T}%
  \ifnum#1>1%
    \stackunder[0pt]{\tenq[\numexpr#1-1\relax]{#2}}{\scriptscriptstyle\sim}%
  \else%
    \stackunder[1pt]{#2}{\scriptscriptstyle\sim}%
  \fi%
}
\newcommand{\overbar}[1]{\mkern 1.6mu\overline{\mkern-1.6mu#1\mkern-1.6mu}\mkern 1.6mu}
\DeclareMathOperator*{\esssup}{ess\,sup}
\begin{document}
\title[Existence and Uniqueness of Solutions to a Regularized Oldroyd-B Model]{Existence and Stability of Global Solutions to a regularized Oldroyd-B Model in its Vorticity Formulation}
 
\author{Jaroslaw S. Jaracz and Young Ju Lee} 
\begin{abstract}
We present a new regularized Oldroyd-B model in three dimensions which satisfies an energy estimate analogous to that of the standard model, and maintains the positive semi-definiteness of the conformation tensor. This results in the unique existence and stability of global solutions in a periodic domain. To be precise, given an initial velocity $u_0$ and initial conformation tensor $\sigma_0$, both with components in $H^2$, we obtain a velocity $u$ and conformation tensor $\sigma$ both with components in $C([0, T]; H^2)$ for all $T>0$. Assuming better regularity for the initial data allows us to obtain better regularity for the solutions. We treat both the diffusive and non-diffusive cases of the model. Notably, the regularization in the equation for the conformation tensor in our new model has been applied only to the velocity, rather than to the conformation tensor, unlike other available regularization techniques \cite{barrett;suli2009}. This is desired since the stress, and thus the conformation tensor, is typically less regular than the velocity for the creeping flow of non-Newtonian fluids. In \cite{constantinnote} the existence and regularity of solutions to the non-regularized two dimensional diffusive Oldroyd-B model was established. However, the proof cannot be generalized to three dimensions nor to the non-diffusive case. The proposed regularization overcomes these obstacles. Moreover, we show that the solutions in the diffusive case are stable in the $H^2$ norm. In the non-diffusive case, we are able to establish that the solutions are stable in the $L^2$ norm. Furthermore, we show that as the diffusivity parameter goes to zero, our solutions converge in the $L^2$ norm to the non-diffusive solution.  
\end{abstract}
\maketitle

\section{Introduction}\label{SEC:Intro}

The classical incompressible Oldroyd-B model is given by the following system of equations:  
\begin{equation}\label{OldroydB} 
\left \{ \begin{array}{l}
\partial_t u + \left(u \cdot \nabla \right) u - \frac{\nu}{R_e} \Delta u = \frac{1}{R_e}\text{div} \sigma -\frac{1}{R_e}\nabla p+ \frac{1}{R_e} \mathcal{F}, \\
\partial_t \sigma - \epsilon \Delta \sigma + \left(u \cdot \nabla \right) \sigma - (\nabla u)\sigma - \sigma (\nabla u)^T + \frac{1}{W_i}\sigma  = \frac{1 - \nu}{W_i^2} I,  \\ 
\nabla \cdot u = 0, 
\end{array}\right. 
\end{equation}
where $u = (u^i)_{1\leq i \leq d}$ is the velocity, $\sigma = (\sigma_{ij})_{1 \leq i,j \leq d}$ is the conformation tensor, $p$ is the pressure, and $\mathcal{F}$ is an applied external force. Here $d$ is the dimension. The stress $\tau$ is related to the conformation tensor by the equation $\tau = \sigma - \frac{1-\nu}{W_i} I$ where $I$ is the identity matrix. We will use raised indices for velocities, and lowered indices for conformation tensors. The parameters $\nu, R_e, W_i,$ and $\epsilon$ are physical constants satisfying $\nu, R_e, W_i>0$ and $\epsilon \geq 0$. The constant $W_i$ is called the Weissenberg number, and its significance in the context of numerical simulation will be discussed later, while the constant $\epsilon$ is the diffusivity parameter. Usually, the case where $\epsilon = 0$ is simply referred to as the Oldroyd-B model, as historically that was the model which was first introduced. However, for clarity, we will refer to it as the non-diffusive Oldroyd-B model to make it absolutely clear we are discussing the case where $\epsilon = 0$. The case $\epsilon > 0$ is referred to as the diffusive Oldroyd-B model. The Oldroyd-B model is sometimes presented in terms of the stress, rather than the conformation tensor. However, it is preferable to work with the conformation tensor because it has certain positivity properties which are lacked by the stress, as will be discussed later.

One way of studying the system of equations \eqref{OldroydB} is to look at what is known as the vorcitiy formulation. Namely, we take the curl for the momentum equation in \eqref{OldroydB} and obtain the vorticity formulation (see Appendix \ref{VorticityDerivation} for more details):  
\begin{equation}\label{OldroydBVorticityForm} 
\left \{ \begin{array}{l}
\partial_t \omega - \frac{\nu}{R_e} \Delta \omega + (u \cdot \nabla ) \omega - (\omega \cdot \nabla) u = \frac{1}{R_e} \nabla \times \text{div} \sigma + g \\
\partial_t \sigma- \epsilon \Delta \sigma + \left( u \cdot \nabla \right) \sigma - (\nabla u) \sigma - \sigma (\nabla u)^T + \frac{1}{W_i}\sigma = \frac{1 - \nu}{W_i^2} I  \\ 
\nabla \cdot u = 0, \quad \mbox{ and } \quad \nabla \times u = \omega, 
\end{array} \right. 
\end{equation}
where $g = \nabla \times \left( \frac{1}{R_e} \mathcal{F} \right )$.

There are several reasons why this is an attractive approach. Notably, it eliminates the pressure from consideration, and since the velocity can be recovered from the vorticity, nothing is lost. In fact, eliminating the pressure is useful when performing numerical analysis on such a system as it removes one more object to worry about, and recovering the velocity is accomplished by basically solving Laplace's equation, for which efficient numerical schemes are available. On a more fundamental level, oftentimes it is the vorticity itself that one is interested in studying, as it is closely related to turbulence, which remains a poorly understood phenomenon \cite{thomases2007emergence}. Thus, numerically one often prefers working in the vorticity formulation.

The Oldroyd-B model is a generic viscoelastic model that has its origin in continuum mechanics \cite{Oldroyd1950,Oldroyd1958,Oliveira2009}, but it can be derived from a microscopic bead and spring model as well \cite{RGOWENS_TNPHILLIPS_2002}. As clearly discussed in \cite{JWBarrett;ESuli2007}, a careful look at the microscopic derivations \cite{RGOWENS_TNPHILLIPS_2002,Li;Zhang2007} of the Oldroyd-B model provides room where the model can be improved, especially in terms of stability. There, a modification using the Brownian force for the beads to derive the motion of the center of mass of the beads can introduce two regularizing terms. In this particular regularized model, there are two stabilizing terms, (1) the diffusion term and (2) the regularized velocity. This regularized model, based on a
careful microscopic derivation, is proven to be well-posed
\cite{Barrett;Schwab;Suli2004,JWBarrett;ESuli2007,JWBarrett;ESuli2008a},
even without the diffusion when $\alpha$ is non-zero. We will introduce a different regularization.

We remark that the non-diffusive Oldroyd-B model at relatively large Weissenberg number has for decades been elusive for numerical rheologists \cite{Hulsen2005,Fattal2004}. While progress has been made towards resolving the issues \cite{Hulsen2005, Wapperom.Renardy2005}, beyond a certain critical Weissenberg number all numerical methods fail to obtain reliable solutions. A steady state local analysis of the behavior of the stress in the case of fluid passing by a cylinder reveals that there is a singularity located in the resulting wake \cite{bajaj2008coil}. Such singularities appear to be the source of the difficulty in simulating highly elastic flows, and this suggests that the Oldroyd-B model may have a problem at the continuous level at high Weissenberg number \cite{thomases2007emergence}. The primary way of dealing with this issue was to introduce the diffusive Oldroyd-B model, as opposed to tackling the issue of the high Weissenberg number directly. The diffusive term $\epsilon \Delta \sigma$ of course gives better regularity to the conformation tensor, and hence to the stress. Thus one can think of the $\epsilon > 0$ case as somewhat artificial, and it is really the $\epsilon=0$ case that is of major interest. 

In this paper, we present a new regularized Oldroyd-B model which we treat in the periodic domain. Our new model satisfies an energy estimate very much like the one satisfied by the Oldroyd-B model. In addition, we show that if we begin with a positive semi-definite conformation tensor, it remains positive semi-definite. Combined, these results provide apriori estimates for the solutions of our regularized system, which we can then use to establish the existence of unique global solutions for any $\epsilon \geq 0$. Readers can refer to \cite{liu2008global} for the global existence of classical solution to the original Oldroyd-B model in a periodic domain, limited to the case of small initial data and $\mathcal{F} = 0$, where $\mathcal{F}$ is a forcing function. Our result has no such limitations. Other relevant work can be found in \cite{barrett2011existence}, where a different regularized Oldroyd-B model is introduced as well as the global existence of its weak solution discussed. Technically, the regularizer is applied to the stress and the existence proof does not use the positivity of the conformation tensor, but instead uses a so-called the free-energy estimate.

We briefly comment on why we believe our model is better than other competing ones. The key is that our model has global solutions, in both the diffusive and non-diffusive cases, can handle dimension $d=3$, and is formulated in the periodic setting. This is in contrast to the result in \cite{constantinnote}, where, even though the result is given for the non-regularized model, the result only holds for dimension $d=2$ and the diffusive case $\epsilon>0$. Furthermore, the result presented there is in an unbounded domain, which is not particularly useful for numerical simulation. The proof relies on Ladyzhenskaya's identity in deriving certain energy estimates, and the exponents present in that identity do not work out properly in $d=3$, meaning the proof cannot be generalized. See equation (45) in \cite{constantinnote}. Also the non-diffusive case $\epsilon=0$ is not treated. On the other hand, unlike the regularized model presented in \cite{barrett2011existence, Barrett;Schwab;Suli2004,JWBarrett;ESuli2007,JWBarrett;ESuli2008a}, our model does not regularize the conformation tensor, and thus the stress, in the constitutive equation for the conformation tensor. It is only the velocity which is regularized in that equation. This is preferable as experimentally the stress is typically less regular than the velocity for the creeping flow of non-Newtonian fluids, and thus while regularizing the velocity is acceptable from an empirical point of view, the regularization of the stress or conformation tensor should be kept to a minimum. However, we still have to regularize the conformation tensor in the velocity equation, for technical reasons. As mentioned, seeing that the velocity is experimentally well behaved, this should not have any negative effect on the practical applicability of our model.

In our proofs, the diffusive and non-diffusive cases have been treated separately. This is because in the diffusive case, the presence of the $\epsilon \Delta \sigma$ term for $\epsilon >0 $ allows us to apply parabolic techniques to solve for the conformation tensor, which are not available for the case $\epsilon = 0$. In both cases, we obtain the same levels of regularity. In particular, starting with initial data $(u_0, \sigma_0)$ with components in the Sobolev space $H^2$, we obtain that our solution $(u, \sigma)$ has components in $C([0, T]; H^2)$ (see \S \ref{notation}). One may think that the result for the non-diffusive case can be obtained from that of diffusive case by taking the limit. However, this is difficult to show. One would expect that starting from the same initial data, in the limit as $\epsilon \rightarrow 0$ the solutions to the diffusive system, should converge to the non-diffusive solution, component wise in $H^2$. While this proved difficult to show, we were able show this convergence in the $L^2$ norm. See also \cite{Linliuzhang2005} for a similar result. Also see \cite{Berti2008,CYDLu_PDOlmsted_RCBall_2000}. 

Our solutions also exhibit stability, meaning that given two sets of initial data and the corresponding solutions, the norm of the difference of the solutions can be estimated in terms of the norm of the difference of the initial data. In the diffusive case, this stability is in terms of the $H^2$ norm of the components. In the non-diffusive case establishing the $H^2$ stability is more difficult, and so we only establish the stability in terms of the $L^2$ norm.

\subsection{Notation}\label{notation} 

We will be dealing with functions of both space and time. We will primarily focus on the case where the dimension of our system is $d=3$. Thus we denote the spatial variable by $x=(x^1, x^2, x^3)$ using raised indices. We denote our velocity by $u=(u^1, u^2, u^3)$ again with raised indices. The conformation tensor is given by $\sigma=(\sigma_{ij})_{1\leq i, j \leq 3}$ with lowered indices. Sometimes when convenient, we will suppress one (usually the space) or both variables. Thus for example we would write $f(x, t)=f(t)=f$. Derivatives with respect to time will be denoted by $\partial_t, \partial_t^2$, etc., while derivatives with respect to space will be denoted by $\partial_{x^i}=\partial_i$ or $D^\lambda$ where $\lambda$ is a multi-index. As usual,  we use $\nabla=(\partial_{1}, \partial_{2}, \partial_{3})$ for the gradient operator, and $\Delta$ for the Laplacian.

Since the domain is given as periodic, we shall assume without loss of generality that all of our functions are $(1, 1, 1)$ periodic, though a different periodicity could be taken. Thus we can identify the cube $[1, 1]^3 \subset \mathbb{R}^3$ with the three dimensional torus, which we denote by $\mathbb{T}$. Thus we will be able to think of all of our functions as functions on $\mathbb{T}$ which is a compact three dimensional Riemannian manifold without boundary with the flat metric induced from $\mathbb{R}^3$. We will switch between these two different points of view whenever convenient. Most of the time working on the trous provides a more convenient viewpoint. Thus, we shall use the torus, except where explicitly stated. The symbol $|\mathbb{T}|$ is designated for the volume of $\mathbb{T}$.   

As usual, the symbol $W^{m, p}(\mathbb{T})$ denotes 
the Sobolev space consisting of (real valued) functions on the flat three dimensional torus $\mathbb{T}$ having weak derivatives of order $m$ lying in $L^p(\mathbb{T})$. These can always be lifted to periodic functions in $\mathbb{R}^3$, if desired, in the obvious way. We shall work with the case for $p=2$, and as such will denote the resulting (real) Hilbert space by $H^m(\mathbb{T}) := W^{m, 2}(\mathbb{T}).$ Furthermore, we will be dealing with velocities and stress tensors, whose components will each lie in some appropriate Sobolev space. As such, we define
\begin{equation*}
\mathbb{H}^m_\ell = \underbrace{H^m(\mathbb{T})\times \dots \times H^m(\mathbb{T})}_\ell, 
\end{equation*} 
where the product is taken $\ell$-times, with the Hilbert norm given by 
\begin{equation*}
\|\left(f_1, \dots , f_\ell  \right) \|_{\mathbb{H}^m_\ell}^2 = \underbrace{\|f_1\|_{H^m}^2 + \dots + \|f_\ell\|_{H^m}^2}_\ell. 
\end{equation*}
In the case when $m=0$, we write $\mathbb{L}^2_\ell$. We will be particularly interested in the case $m=2$, and so we will have our velocity $u \in \mathbb{H}^2_3$ and the conformation tensor $\sigma \in \mathbb{H}^2_9$, since it has nine components.

Recall that for $U$, which can either be an open domain in the plane or a Riemannian manifold with or without boundary, the space of functions $H^1_0(U)$ consists of the functions in $H^1(U)$ which are zero on the boundary in the trace sense. We also define $H^{-1}(U)=(H^1_0(U))^*$ to be the dual space for $H^1_0(U)$. Since the torus has an empty boundary, we have $H^1_0(\mathbb{T})=H^1(\mathbb{T})$ and $H^{-1}(\mathbb{T})=(H^1(\mathbb{T}))^*$ 

We are interested in the case of incompressible flows for which the velocity $u$ satisfies
\begin{equation*}
\nabla \cdot u = 0.
\end{equation*}
Our velocities will lie in $\mathbb{H}^2_3$, where we mean that the above condition holds in the weak sense. Therefore, we define
\begin{equation*}
\mathcal{V} = \lbrace v \in C^\infty (\mathbb{T}) \times C^\infty (\mathbb{T}) \times C^\infty (\mathbb{T}) \; | \; \nabla \cdot v=0 \rbrace \subset \mathbb{H}^2_3, 
\end{equation*}
and then take the closure of this subspace with respect to the $\mathbb{H}^2_3$ norm. We denote the resulting space by
\begin{equation*}
\overbar{\mathbb{H}}^2_3 \coloneqq \overbar{\mathcal{V}} \subset \mathbb{H}^2_3, 
\end{equation*}
and so 
\begin{equation*}
\overbar{\mathbb{H}}^2_3 = \lbrace u \in \mathbb{H}^2_3 \; | \; \nabla \cdot u =0 \; \text{almost everywhere} \rbrace.
\end{equation*}
We shall also use the standard notation for spaces involving time. Let $Z$ denote a real Banach space. The space $L^p(0, T; Z)$ consists of all measurable functions $\textbf{u}: \left[ 0, T \right] \rightarrow Z$ with
\begin{equation*}
\Vert \textbf{u} \Vert_{L^p(0, T; Z)} \coloneqq \left( \int_{0}^{T} \Vert \textbf{u}(t) \Vert^p \, dt \right)^{1/p} < \infty
\end{equation*} 
for $1\leq p<\infty$, and $\|\textbf{u}\|_{L^\infty (0, T; Z)} \coloneqq \esssup_{0\leq t\leq T} \|\textbf{u}(t)\| < \infty$ for $p=\infty$. The space $C(\left[ 0, T \right]; Z)$
comprises all continuous functions $\textbf{u}(t):[0, T]\rightarrow Z$ with
\begin{equation*}
\Vert \textbf{u} \Vert_{C([0, T]; Z)} \coloneqq \max_{0\leq t \leq T} \Vert \textbf{u}(t) \Vert < \infty. 
\end{equation*}
Now let $\textbf{u} \in L^1(0, T; Z)$. We say $\textbf{v} \in L^1(0, T; Z)$ is the weak time derivative of $\textbf{u}$ provided that
\begin{equation*}
\int_{0}^{T} w'(t)\textbf{u}(t) \, dt=-\int_{0}^{T} w(t)\textbf{v}(t) \, dt, 
\end{equation*}
for all scalar test functions $w \in C^\infty_c(0, T)$. The Sobolev space $
W^{1, p}\left( 0, T; Z \right)$ consists of all functions $\textbf{u}\in L^p(0, T; Z)$ such that $\textbf{u}'$ exists in the weak sense and belongs to $L^p(0, T; Z)$. Furthermore,
\begin{equation*}
\Vert \textbf{u} \Vert_{W^{1, p}(0, T; Z)} \coloneqq 
\begin{cases} 
\left( \int_{0}^{T} \Vert \textbf{u}(t) \Vert^p + \Vert \textbf{u}'(t)\Vert^p \, dt \right)^{1/p} & \left( 1\leq p< \infty\right) \\
\esssup_{0\leq t\leq T} \left( \Vert \textbf{u}(t) \Vert + \Vert \textbf{u}'(t) \Vert \right)& \left(p=\infty \right). 
\end{cases}
\end{equation*}
In addition, we write $H^1(0, T; Z)=W^{1, 2}(0, T; Z)$. 
In addition, given two tensors we define
\begin{equation*}
A : B = \sum_{k, l} A_{kl}B_{kl}
\end{equation*}
to be the Frobenius inner product, with its associated norm. Finally, we remark that if $v$ is divergence free then for any scalar function $h$ we have 
\begin{equation*}
    \int_{\mathbb{T}} h\left[(v\cdot \nabla)h\right]  \, dx = -\int_\mathbb{T} \left[\nabla \cdot (hv) \right] h \, dx = -\int_\mathbb{T} \left[ (\nabla \cdot v) h + (v\cdot \nabla)h \right] h \, dx = -\int_{\mathbb{T}} h\left[(v\cdot \nabla)h\right]  \, dx = 0
\end{equation*}
where we integrated by parts. This identity is used often without comment in various calculations.

\subsection{Organization of the Paper}
The rest of the paper is organized as follows. In \S \ref{sec:model}, we present our regularized Oldroyd-B model and its vorticity formulation. We state the main theorems of the paper regarding the existence and uniqueness of global solutions for both the diffusive and non-diffusive models. The two dimensional case is then given as a corollary. We also clarify the relationship between the regularized Oldroyd-B model in the velocity form and the vorticity formulation. The main theorems for the diffusive and non-diffusive models are then proven in \S \ref{SectionDissipativeProof} and \S \ref{SectionNonDissipative}, respectively. In these sections we also give the relevant stability results. We then investigate the $L^2$ convergence of the solutions in the limit as $\epsilon \rightarrow 0$ in \S \ref{SEC:L2ConvergenceSection}. In \S \ref{con}, we offer some concluding remarks. 
\section{The Regularized Oldroyd-B Model and the Main Theorem}\label{sec:model} 

We define a regularizing operator $J$ as follows. Let $\eta$ be a smooth bump function satisfying 
\begin{equation}
	\int \eta(x) \, dx=1.
\end{equation} 
For any function $f(x, t)$ we define its regularization in space by
\begin{equation}
	Jf(x, t)=\int \eta(x-y) f(y, t) \, dy
\end{equation}  
which is simply the convolution. If the object we are regularizing has multiple components, then the regularization is done component-wise. For example, for the velocity $u$, 
\begin{equation}
    Ju(x, t)=(Ju^i(x, t))_{1\leq i\leq 3}
\end{equation}
and for the conformation tensor 
\begin{equation}
    J\sigma(x, t)=(J\sigma_{ij}(x, t))_{1\leq i, j \leq 3}.
\end{equation}
We also point out that if $u$ is incompressible, so is $Ju$ since
\begin{subeqnarray*}
\nabla \cdot Ju(x, t) = \nabla \cdot \left( \int \eta(x-y) u(y, t) \, dy \right) = \nabla \cdot \left(\int \eta(y) u(x-y, t) \, dy\right) = \int \eta(y) \left(\nabla \cdot u(x-y, t) \right)\, dy = 0. 
\end{subeqnarray*}
The operator $J$ depends on the choice of $\eta$. For the discussion that follows, we choose some $\eta$ and fix it once and for all.

Next, we take \eqref{OldroydB} as our starting point and introduce the regularized set of equations
\begin{equation}\label{RegularizedOldroydB}
\left \{ \begin{array}{l}
\partial_t u - \frac{\nu}{R_e} \Delta u + \left(Ju \cdot \nabla\right) u = \frac{1}{R_e}\text{div} J\sigma -\frac{1}{R_e}\nabla p+ \frac{1}{R_e}\mathcal{F} \\
\partial_t \sigma- \epsilon \Delta \sigma + \left(Ju \cdot \nabla\right) \sigma - (\nabla Ju)\sigma - \sigma (\nabla Ju)^T + \frac{1}{W_i}\sigma = \frac{1 - \nu}{W_i^2} I  \\ 
\nabla \cdot u = 0, \quad 
 \int_{\mathbb{T}} u(x, t) \; dx = 0.
\end{array}\right. 
\end{equation}
We will explain this last equation later on.
We impose the initial conditions
\begin{equation} \label{InitialConditionsH2}
   \quad u(0)=u_0 \in \overbar{\mathbb{H}}^2_3, \quad \mbox{ with } \int_{\mathbb{T}} u_0 \; dx = 0, \quad \mbox{ and } \quad  \sigma(0) =\sigma_0 \in \mathbb{H}^2_{9}.
\end{equation}
We will now put this system of equations into the vorticity form. To simplify the notation we will define
\begin{equation}
\alpha \coloneqq \frac{\nu}{R_e}, \quad \beta \coloneqq \frac{1}{R_e}, \quad \gamma \coloneqq \frac{1}{W_i}, \quad \mbox{ and } \quad \delta \coloneqq \frac{1-\nu}{W_i^2}. 
\end{equation}
While taking the curl of the velocity equation, the only term which is difficult to deal with is
\begin{equation}
    \nabla \times \left( \left( Ju \cdot \nabla\right) u \right)
\end{equation}
which can be expressed in two different ways, as detailed in Appendix \ref{VorticityDerivation}. Writing $\nabla \times u= \omega$ and $\nabla \times Ju = \omega_J$ we can write 
\begin{equation}
    \nabla \times \left( \left( Ju \cdot \nabla\right) u \right) = \frac{1}{2} \left ( - \nabla \times \nabla \times (Ju \times u) + (u \cdot \nabla ) \omega_J - (\omega_J \cdot \nabla) u + (Ju \cdot \nabla) \omega - (\omega \cdot \nabla)Ju \right )
\end{equation}
and while it is nice to know that it is possible to express this term in a closed form, this expression is too unwieldy to deal with. Instead, we can express the term as
\begin{equation}
    \nabla \times \left( \left( Ju \cdot \nabla\right) u \right)=(Ju\cdot \nabla) \omega + \Omega(Ju, u)
\end{equation}
where $\Omega(Ju, u)$ is given by the expression \eqref{A10ExpressionForOmega}.
Therefore taking the curl of the velocity equation gives us
\begin{equation}
    \partial_t \omega - \alpha \Delta \omega + (Ju \cdot \nabla)\omega + \Omega(Ju, u) = \beta \left( \nabla \times \text{div} J\sigma   \right) + g
\end{equation}

As mentioned earlier, we will be looking for a velocity and conformation tensor for which the components are in $H^2=W^{2, 2}$ on the torus. With the operator $J$ defined, we will look for solutions of the system: 
\begin{equation}\label{RegularizedVorticityForm}
\left \{ \begin{array}{l} 
\partial_t \omega - \alpha \Delta \omega + (Ju \cdot \nabla)\omega + \Omega(Ju, u) = \beta \left( \nabla \times \text{div} J\sigma   \right) + g	\\ 
\partial_t \sigma  -\epsilon \Delta \sigma + \left(Ju \cdot \nabla \right) \sigma  - \left(\nabla Ju\right) \sigma -\sigma \left(\nabla Ju\right)^T + \gamma \sigma = \delta I \\
\nabla \cdot u = 0, \quad \nabla \times u =\omega, \quad  \int_{\mathbb{T}} u(x, t) \; dx = 0
\end{array} \right. 
\end{equation}  
subject to the initial conditions \eqref{InitialConditionsH2},
thought of as either a periodic system or a system on the torus. We need to assume some kind of regularity for the forcing function $g = g(x,t) = (g^i)_{1\leq i \leq 3}$. 
First, we assume that 
\begin{equation}\label{CurlForce}
    g= \beta \nabla \times \mathcal{F}
\end{equation}
for some vector valued function $\mathcal{F}$. For simplicity, we assume that the components of $\mathcal{F}$ are $C^1$ in space and continuous in time, resulting in the components of $g$ being in  $C^0$ in space and continuous in time. In particular, we assume
\begin{equation*}
    \sup_{0\leq t < \infty} \Vert \mathcal{F}(x, t) \Vert_{C^1} < \infty
\end{equation*}
where $\Vert \cdot \Vert_{C^k}$ indicates the uniform norm in the spatial variable. In particular we assume that the following estimate holds
\begin{equation} \label{gBound}
	\sup_{0\leq t < \infty} \sum_i \left(   \Vert \mathcal{F}^i(x, t) \Vert_{C^0} + \Vert g^i(x, t) \Vert_{C^0} \right)  \leq K_g,
\end{equation}
for some constant $K_g$ and that
\begin{equation} \label{gCondition}
    \int_{\mathbb{T}} \mathcal{F}(x, t) \; dx =  \int_{\mathbb{T}} g(x, t) \; dx =0. 
\end{equation}
Note that the constant $K_g$ can take on any finite value. These are reasonable assumption to put on the external force on the system.

We now comment on the condition
\begin{equation}\label{AverageCondition}
    \int_{\mathbb{T}} u(x, t) \; dx = 0, 
\end{equation}
as this condition might initially seem strange. We see that integrating the velocity equation we have
\begin{equation*}
\int_{\mathbb{T}} \partial_t u  \; dx= \int_{\mathbb{T}} \left( \alpha \Delta u - (Ju \cdot \nabla)u + \beta \text{div} J\sigma - \beta \nabla p + \beta \mathcal{F}    \right) \; dx =0 
\end{equation*}
as long as 
\begin{equation*}
\int_{\mathbb{T}} \mathcal{F} \; dx =0, 
\end{equation*}
which as mentioned is a mild assumption. Then for a sufficiently regular solution we'd obtain
\begin{equation*}
\int_{\mathbb{T}} \partial_t u \; dx =\partial_t \left(  \int_{\mathbb{T}} u \; dx  \right) =0, 
\end{equation*}
which says that the average velocity remains constant. So if 
\begin{equation} \label{AverageInitialCondition}
\int_{\mathbb{T}} u_0 (x) \; dx = 0
\end{equation}
then
\begin{equation*}
\int_{\mathbb{T}} u(x, t) \; dx = 0, \quad \forall \quad t \ge 0. 
\end{equation*}
Using a coordinate transformation we can always ensure that the average initial velocity is $0$. Thus, it is a reasonable requirement to add to our system of equations. In fact, when obtaining solutions numerically, the condition \eqref{AverageCondition} is imposed.

We are now in a position to state our main theorems. We begin with the ones for the diffusive case:
\begin{theorem}\label{GlobalExistenceTheorem1}
	Suppose the forcing function $g= \beta \nabla \times \mathcal{F}$ satisfies the conditions \eqref{gBound} and \eqref{gCondition}, and that $\sigma_0$ is symmetric and positive semi-definite. Then the system of equations \eqref{RegularizedVorticityForm} subject to the initial conditions \eqref{InitialConditionsH2}  on the torus (or thought of as a system with periodic boundary conditions) has a unique solution $(u, \sigma)$ with $u\in C([0, T]; \overbar{\mathbb{H}}^2_3)$,  $u'\in L^2(0, T; \overbar{\mathbb{H}}^1_3)$, $\sigma \in C([0, T]; \mathbb{H}^2_{9})$, and $\sigma'\in L^2(0, T; \mathbb{H}^1_9)$ for any $\epsilon > 0$,  for every $T>0$, meaning the solution is global. 
\end{theorem}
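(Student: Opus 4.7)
The plan is to construct the solution by a Picard-type iteration in the Banach space $X_T := C([0,T]; \overbar{\mathbb{H}}^2_3) \times C([0,T]; \mathbb{H}^2_9)$. Given $(\bar u, \bar\sigma) \in X_T$, I would define $\Phi(\bar u, \bar\sigma) = (\tilde u, \tilde\sigma)$ by linearization: $\tilde\omega$ solves the linear parabolic equation with drift $(J\bar u \cdot \nabla)$, diffusion $-\alpha\Delta$, and forcing built from $\Omega(J\bar u, \bar u)$, $\beta \nabla \times \text{div}\, J\bar\sigma$, and $g$, after which $\tilde u$ is recovered from $\tilde\omega$ via the periodic Biot--Savart operator using $\nabla \cdot \tilde u = 0$ and $\int_{\mathbb{T}} \tilde u\,dx = 0$. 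Similarly, $\tilde\sigma$ solves the linear parabolic equation with diffusion $-\epsilon \Delta$ and coefficients built from $J\bar u$ and $\nabla J\bar u$ acting on $\bar\sigma$. The decisive structural fact is that $J$ is convolution with a fixed smooth kernel $\eta$, so $\|D^\mu J\bar u\|_{C^0} \le C_{\mu} \|\bar u\|_{L^2}$ for every multi-index $\mu$; thus all linearized coefficients are smooth and controlled by low-order norms of $\bar u$. Standard parabolic theory (Galerkin approximation plus energy estimates, or semigroup arguments) then yields $\tilde u, \tilde\sigma$ with the claimed regularity, including $\tilde u' \in L^2(0,T; \overbar{\mathbb{H}}^1_3)$ and $\tilde\sigma' \in L^2(0,T;\mathbb{H}^1_9)$.

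For local existence I would show that $\Phi$ maps a suitable closed ball $B_R \subset X_T$ into itself and is a contraction in a weaker norm on a short interval $[0,T_0]$, with $R$ and $T_0$ depending on $\|u_0\|_{\overbar{\mathbb{H}}^2_3}$, $\|\sigma_0\|_{\mathbb{H}^2_9}$, $K_g$, and finitely many $C^k$-norms of $\eta$. Uniqueness on any common interval follows by writing energy identities for $u_1 - u_2$ and $\sigma_1 - \sigma_2$, using that $J$ turns differences into bounded perturbations by the same kernel-based estimates, and applying Gr\"onwall.

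To globalize, I would establish two a priori bounds. First, a classical-style energy estimate: working momentarily in the velocity form, testing against $u$ and combining with the trace of the $\sigma$-equation (exploiting $\nabla \cdot Ju = 0$, which survives mollification, and the symmetry of $\sigma$) gives an $L^\infty_t \mathbb{L}^2 + L^2_t \mathbb{H}^1$ bound analogous to the one satisfied by the classical Oldroyd-B model. Second, the matrix structure of the $\sigma$-equation, which for frozen drift reads $\partial_t \sigma - \epsilon\Delta\sigma + (J\bar u \cdot \nabla)\sigma = A\sigma + \sigma A^T - \gamma\sigma + \delta I$ with $A = \nabla J\bar u$, preserves the cone of positive semi-definite symmetric matrices: since $\delta I \succeq 0$ and the homogeneous part is the standard Lyapunov flow, $\sigma_0 \succeq 0$ forces $\sigma(t) \succeq 0$ for all $t \ge 0$. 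Bootstrapping then: differentiating the equations up to order two, testing against $D^\lambda \omega$ and $D^\lambda \sigma$ for $|\lambda| \le 2$, applying Gagliardo--Nirenberg interpolation together with the $J$-bounds above, and closing via Gr\"onwall yields an $\mathbb{H}^2$ bound on $(u(t), \sigma(t))$ growing at most exponentially in $t$. Finite-time blow-up is therefore precluded and the local solution extends to every $[0,T]$.

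The step I expect to be the main obstacle is the $\mathbb{H}^2$ bootstrap for the $\sigma$-equation, where products of the form $(\nabla Ju)(D^\lambda \sigma)$ with $|\lambda|=2$ and commutators between $D^\lambda$ and the transport $(Ju \cdot \nabla)$ threaten a loss of derivatives. The saving point is that $J$ absorbs spatial derivatives onto the smooth kernel, so $\|D^\mu Ju\|_{L^\infty}$ is controlled by $\|u\|_{L^2}$ for every $\mu$, and every dangerous term reduces to a product of the form $(\text{bounded functional of } u) \cdot \|\sigma\|_{\mathbb{H}^2}$, exactly the structure needed to close the Gr\"onwall loop. An analogous remark disposes of the $\Omega(Ju, u)$ contribution to the vorticity equation. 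This is precisely the feature that distinguishes the argument from the un-regularized setting of \cite{constantinnote}, where the exponents in Ladyzhenskaya's inequality obstruct the three-dimensional case.
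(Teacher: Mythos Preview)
Your proposal is correct and follows essentially the same route as the paper: a contraction mapping on $X_T$ built from linear parabolic solves, Biot--Savart recovery of $u$ from $\omega$, the energy estimate plus positive semi-definiteness of $\sigma$ to obtain a priori control of $\|u\|_{L^2}$, and then a bootstrap to $\mathbb{H}^2$ using that $J$ trades spatial derivatives for powers of $\|u\|_{L^2}$, closing via Gr\"onwall and a blow-up criterion.

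Two minor deviations are worth flagging. First, the paper's linearization places the transport term $(J\tilde u\cdot\nabla)\tilde\omega$ entirely on the right as a known forcing (with the \emph{old} iterate $\tilde\omega$), rather than keeping a drift $(J\bar u\cdot\nabla)$ on the new unknown as you do; the authors explicitly remark that your version works but makes the constants depend on $\|\tilde u\|$, which they prefer to avoid. Second, your positivity argument (``the homogeneous part is the standard Lyapunov flow'') is the right intuition for the non-diffusive case but is not directly applicable once $-\epsilon\Delta\sigma$ is present: the paper proves positivity via a Hamilton-style tensor maximum principle applied to $A = \sigma + \varepsilon e^{St}I$, first for smooth $\sigma_0$ and then by approximation in $\mathbb{H}^2_9$ using a short-time stability estimate. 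This is more work than your sentence suggests, though the conclusion you state is correct. Finally, Gagliardo--Nirenberg is not needed anywhere, as you yourself observe in the last paragraph; the $J$-smoothing already makes every coefficient $L^\infty$-bounded by $\|u\|_{L^2}$.
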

The stability of these solutions is described in Theorem \ref{H2Stability}. We can obtain better regularity for our solutions. Of course, for this to be the case, the initial data and the forcing function must be more regular. So let us suppose we have the initial conditions
\begin{equation} \label{InitialConditionsHk}
    u(0)=u_0 \in \overbar{\mathbb{H}}^k_3, \quad \mbox{ with } \int_{\mathbb{T}} u_0 \; dx = 0, \quad \mbox{ and } \quad  \sigma(0) =\sigma_0 \in \mathbb{H}^k_{9}
\end{equation}
and that the forcing function satisfies
\begin{equation} \label{ForcingFunctionBoundK}
	   \sup_{0\leq t < \infty} \sum_i \Vert g^i(x, t) \Vert_{C^{k-2}}\leq K_{g, k}
	\end{equation}
for some constant $K_{g, k}$. We obtain the following generalization:

\begin{theorem}\label{GlobalExistenceTheorem1HigherRegularity}
	Let $k\geq 2$. Suppose the forcing function $g= \beta \nabla \times \mathcal{F}$ satisfies the conditions \eqref{gBound}, \eqref{gCondition} and \eqref{ForcingFunctionBoundK}  
	for some constant $K_{g, k}$. Suppose that $\sigma_0$ is symmetric and positive semi-definite. Then the system of equations \eqref{RegularizedVorticityForm} subject to the initial conditions \eqref{InitialConditionsHk} on the torus (or thought of as a system with periodic boundary conditions) has a unique solution $(u, \sigma)$ with $u\in C([0, T]; \overbar{\mathbb{H}}^k_3)$,  $u'\in L^2(0, T; \overbar{\mathbb{H}}^{k-1}_3)$, $\sigma \in C([0, T]; \mathbb{H}^{k}_{9})$, and $\sigma'\in L^2(0, T; \mathbb{H}^{k-1}_9)$  for any $\epsilon > 0$,  for every $T>0$, meaning the solution is global. In particular, if $k\geq 4$, then the solutions are classical.
\end{theorem}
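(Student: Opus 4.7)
The plan is to proceed by induction on $k$, the base case $k=2$ being exactly Theorem \ref{GlobalExistenceTheorem1}. Suppose the result holds for $k-1$; given data $(u_0,\sigma_0) \in \overbar{\mathbb{H}}^k_3 \times \mathbb{H}^k_9$ and forcing $g$ satisfying \eqref{ForcingFunctionBoundK}, the inductive hypothesis produces a unique global solution at regularity $k-1$. I would then upgrade this to regularity $k$ by establishing a priori bounds on $\|\omega(t)\|_{H^{k-1}}$ and $\|\sigma(t)\|_{H^k}$; the Biot--Savart relationship $\nabla \times u = \omega$ together with $\nabla \cdot u = 0$ and $\int_{\mathbb{T}} u\,dx = 0$ automatically gives $\|u(t)\|_{H^k} \leq C\|\omega(t)\|_{H^{k-1}}$. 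Continuity in time and the stated bounds on the time derivatives then follow from exactly the same Faedo--Galerkin / Aubin--Lions scheme used in Theorem \ref{GlobalExistenceTheorem1}, with the (parabolic) $\sigma$-equation contributing the $L^2(0,T;\mathbb{H}^{k-1}_9)$ bound on $\sigma'$ and the $\omega$-equation the analogous bound for $u'$.

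The structural feature that makes this induction close --- and that is unavailable for the unregularized model --- is that $J$ is convolution with a fixed smooth bump $\eta$, so for every multi-index $\mu$ and every $q \in [1,\infty]$,
\begin{equation*}
\|D^\mu Jf\|_{L^q(\mathbb{T})} \leq \|D^\mu \eta\|_{L^q}\,\|f\|_{L^1(\mathbb{T})} \leq C(\eta,\mu)\,\|f\|_{L^2(\mathbb{T})}.
\end{equation*}
Thus the advection velocity $Ju$ and stretching matrix $\nabla Ju$ appearing in both equations are controlled, to as many derivatives as needed, by $\|u\|_{L^2}$ alone, which is uniformly bounded on $[0,T]$ by the energy estimate already supplied by Theorem \ref{GlobalExistenceTheorem1}. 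Consequently every Leibniz commutator arising from pushing $D^\lambda$ through $(Ju\cdot\nabla)\omega$, $(Ju\cdot\nabla)\sigma$, $(\nabla Ju)\sigma$, or $\sigma(\nabla Ju)^T$ is bounded by $C(\eta,k)\|u\|_{L^2}\bigl(\|\omega\|_{H^{k-1}}^2 + \|\sigma\|_{H^k}^2\bigr)$, with no need to balance derivatives against derivatives of the singular factor.

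To extract the estimate, I apply $D^\lambda$ with $|\lambda| \leq k-1$ to the vorticity equation and test against $D^\lambda \omega$: the top-order transport piece $(Ju\cdot\nabla)D^\lambda\omega \cdot D^\lambda\omega$ vanishes by the divergence-free identity recorded at the end of \S\ref{notation}, the commutator remainders are controlled as above, the term $D^\lambda\Omega(Ju,u)$ is handled using the $J$-smoothing together with $\|u\|_{H^k}\leq C\|\omega\|_{H^{k-1}}$, and the stress-curl term $\beta\nabla\times\text{div}\,J\sigma$ is estimated by transferring all spatial derivatives onto $\eta$ inside the convolution, producing a bound $C(\eta)\|\sigma\|_{H^{k-1}}\|D^\lambda\omega\|_{L^2}$. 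Any remaining top-order $\omega$-contribution is absorbed by the parabolic dissipation $\alpha\|\nabla D^\lambda\omega\|_{L^2}^2$ via Young's inequality, while $\|D^\lambda g\|_{L^2}$ is controlled by $K_{g,k}$ through \eqref{ForcingFunctionBoundK}. Performing the analogous computation on the $\sigma$-equation with $|\lambda|\leq k$ (using $\epsilon\|\nabla D^\lambda\sigma\|_{L^2}^2$ in the same absorbing role) and summing produces a Gronwall inequality
\begin{equation*}
\frac{d}{dt}\!\left(\|\omega\|_{H^{k-1}}^2 + \|\sigma\|_{H^k}^2\right) + \alpha\|\nabla\omega\|_{H^{k-1}}^2 + \epsilon\|\nabla\sigma\|_{H^k}^2 \leq C_1\bigl(\|\omega\|_{H^{k-1}}^2 + \|\sigma\|_{H^k}^2\bigr) + C_2,
\end{equation*}
which closes on every $[0,T]$.

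The main obstacle is the careful bookkeeping of the commutator and $\Omega(Ju,u)$ terms at the highest order, in particular verifying that no summand contains an uncontrollable $k$th derivative of $u$ that cannot be absorbed into the $\alpha$-dissipation; this is where the convolution bound for $J$ does all of the heavy lifting. A secondary subtlety is that to perform the above calculation rigorously one needs the solution already to possess the derivatives being taken, which is handled either by working on a Galerkin approximation (in a basis of eigenfunctions of $-\Delta$) and passing to the limit in the uniform bounds, or equivalently by mollifying the initial data, invoking Theorem \ref{GlobalExistenceTheorem1}, and passing to the limit in the mollification parameter using the $H^k$ bounds just proved. Uniqueness at level $k$ is inherited from uniqueness at level $2$. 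Finally, the Sobolev embedding $H^k(\mathbb{T}) \hookrightarrow C^{k-2}(\mathbb{T})$ in three dimensions, valid as soon as $k \geq 4$, promotes the solution to a classical one, giving the last assertion of the theorem.
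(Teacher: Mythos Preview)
Your proposal is correct and follows essentially the same route as the paper: induct on $k$, exploit the convolution structure of $J$ to obtain uniform $C^m$ bounds on $Ju$ from the $L^2$ energy estimate alone, differentiate the equations, and close a Gronwall-type bound at each level, with the Sobolev embedding giving classical solutions for $k\geq 4$. The only cosmetic difference is that the paper, rather than re-deriving the energy estimate by testing $D^\lambda\omega$ against the differentiated equation, simply observes that $D^\lambda\sigma$ and $D^\lambda\omega$ themselves satisfy linear parabolic equations of the form already treated in Propositions \ref{ImprovedRegularity0} and \ref{ImprovedRegularitySigma} and invokes those estimates directly; your direct energy-estimate version is equivalent and arguably more self-contained.
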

We have analogous theorems for the non-diffusive case:

\begin{theorem}\label{GlobalExistenceTheorem2}
		Suppose the forcing function $g= \beta \nabla \times \mathcal{F}$ satisfies the conditions \eqref{gBound} and \eqref{gCondition}, and that $\sigma_0$ is symmetric and positive semi-definite. Then the system of equations \eqref{RegularizedVorticityForm} subject to the initial conditions \eqref{InitialConditionsH2} on the torus (or thought of as a system with periodic boundary conditions) has a unique solution  $(u, \sigma)$ with $u\in C([0, T]; \overbar{\mathbb{H}}^2_3)$,  $u'\in L^2(0, T; \overbar{\mathbb{H}}^1_3)$, $\sigma \in C([0, T]; \mathbb{H}^2_{9})$, and $\sigma'\in L^2(0, T; \mathbb{H}^1_9)$ for $\epsilon = 0$,  for every $T>0$, meaning the solution is global. 
\end{theorem}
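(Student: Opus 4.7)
The plan is to parallel the strategy used for Theorem \ref{GlobalExistenceTheorem1}, working around the loss of the parabolic smoothing $\epsilon \Delta \sigma$ by exploiting the fact that $Ju$, being convolution with a fixed smooth bump $\eta$, is spatially smooth regardless of the regularity of $u$. Concretely, $\|D^\lambda Ju\|_{L^\infty}$ is controlled by $\|u\|_{L^2}\|D^\lambda \eta\|_{L^2}$ for any multi-index $\lambda$, so the constitutive equation for $\sigma$ becomes a linear first-order transport-type PDE whose coefficients $Ju$ and $\nabla Ju$ are as smooth as desired whenever $u$ is controlled merely in $L^2$.

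First I would set up a Galerkin or Picard iteration in which, given a candidate velocity $u^{(n)}$, one solves the linear problem for $\sigma^{(n+1)}$ along the flow of $Ju^{(n)}$: on each characteristic $\sigma^{(n+1)}$ satisfies the matrix ODE $\dot \Sigma = (\nabla Ju^{(n)})\Sigma + \Sigma (\nabla Ju^{(n)})^T - \gamma \Sigma + \delta I$, whose linear structure immediately yields existence, uniqueness, and preservation of positive semi-definiteness (whenever $v^T \Sigma v = 0$ one computes $\tfrac{d}{dt}(v^T \Sigma v) = \delta |v|^2 \ge 0$). Then I would feed $\sigma^{(n+1)}$ into the vorticity equation for $\omega^{(n+1)}$, which retains its parabolic smoothing $-\alpha \Delta \omega$, and solve it by the same Galerkin scheme used in Theorem \ref{GlobalExistenceTheorem1}; $u^{(n+1)}$ is then recovered from $\omega^{(n+1)}$ using elliptic regularity on $\mathbb{T}$ together with $\nabla \cdot u = 0$ and $\int_\mathbb{T} u \, dx = 0$. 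The basic energy identity is obtained exactly as in the classical Oldroyd-B model: pairing the $u$-equation with $u$, taking the trace of the $\sigma$-equation and pairing with a suitable constant, the coupling terms cancel because $Ju$ is divergence free. Positive semi-definiteness then converts the trace bound into an $L^1$ (hence $L^2$) bound on $\sigma$.

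For the $H^2$ estimate, I would apply $D^\lambda$ with $|\lambda|\le 2$ to the $\sigma$-equation and dominate the resulting commutators $[D^\lambda, Ju \cdot \nabla]\sigma$ and $[D^\lambda, \nabla Ju]\sigma$ in $L^2$ by $C(\|\nabla Ju\|_{C^2})\|\sigma\|_{H^2}$; as noted, $\|\nabla Ju\|_{C^2}$ is controlled by $\|u\|_{L^2}$. The $H^2$ bound for $u$ is derived essentially as in the diffusive case, exploiting the parabolic smoothing in the vorticity equation and controlling the $\Omega(Ju, u)$ term pointwise via the smoothness of $Ju$. Gr\"onwall then yields bounds on any finite interval, ruling out blow-up. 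Uniqueness follows from a direct $L^2$ difference estimate relying only on $L^\infty$ bounds on $\nabla Ju$, so that no derivatives ever fall on a difference of conformation tensors. The main obstacle, and what makes this case more delicate than its diffusive counterpart, is precisely the $H^2$ estimate for $\sigma$: without $\epsilon \Delta \sigma$ there is no dissipative mechanism to absorb the second-order commutators, so every such error term must be matched against a smoothness gain from $Ju$. This is where the mollifier is indispensable, since it allows us to trade the missing regularity of $u$ for cheap low-norm control of $\nabla Ju$, keeping the commutators linear in $\|\sigma\|_{H^2}$ with time-integrable coefficients depending only on the energy norm of $u$.
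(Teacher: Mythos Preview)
Your strategy is sound and the a~priori $H^2$ estimate for $\sigma$ via commutators is correct: because $Ju$ is a convolution, $\|Ju\|_{C^k}\le K_k\|u\|_{L^2}$ for every $k$, so when you hit the transport equation with $D^\lambda$ ($|\lambda|\le 2$) and pair with $D^\lambda\sigma$, every commutator term is bounded by $C(\|u\|_{L^2})\|\sigma\|_{H^2}^2$ and Gr\"onwall closes. Combined with the energy identity and positive semi-definiteness this gives global bounds, and your $L^2$ difference estimate is exactly the right mechanism for uniqueness.

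The paper, however, takes a genuinely different route for the constitutive equation. Rather than energy/commutator estimates, it writes down the explicit Lagrangian representation
\[
\sigma(x,t)=e^{-\gamma t}F(x,t)\sigma_0(x)F^T(x,t)+\delta\int_0^t e^{-\gamma(t-s)}F(x,t)F^{-1}(x,s)F^{-T}(x,s)F^T(x,t)\,ds,
\]
where $F=\partial\varphi/\partial x$ is the deformation gradient of the flow of $Ju$. Since $Ju$ is $C^\infty$ in space, the flow map and hence $F$ and its spatial derivatives up to any order are controlled pointwise by Gr\"onwall on the ODEs for $\partial^\lambda\varphi$; the $H^2$ norm of $\sigma$ then follows directly from the formula, and positive semi-definiteness is immediate. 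For local existence the paper proves a contraction in $C([0,T];\mathbb H^2_9)$ by estimating $|D^\lambda(F_2-F_1)|$ pointwise in terms of $\|\tilde u_2-\tilde u_1\|$, so the difference $\sigma_2-\sigma_1$ inherits the full $H^2$ smallness without any loss of derivative.

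This last point is where your sketch is thinner than it should be. Your Picard iteration produces uniform $H^2$ bounds on $\sigma^{(n)}$, but to show \emph{convergence} in $H^2$ you would need an $H^2$ difference estimate, and the term $D^\lambda\big((J\bar u\cdot\nabla)\sigma_1\big)$ contains $(J\bar u\cdot\nabla)D^\lambda\sigma_1$, which asks for $\sigma_1\in H^3$---one derivative more than you have. The paper's explicit formula sidesteps this because the $\sigma_0$ factor is common to both iterates and only the smooth $F$'s differ. Your route can still be completed, but it needs either a compactness argument (uniform $H^2$ bounds plus $L^2$ Cauchy plus interpolation, then weak-$*$ limits and a separate argument for continuity into $H^2$), or a vanishing-viscosity limit from the diffusive case using $\epsilon$-independent $H^2$ bounds. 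Either works, but it is more than the one-line ``Gr\"onwall then yields bounds'' you wrote; you should flag which of these closures you intend.
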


\begin{theorem}\label{GlobalExistenceTheorem2HigherRegularity2}
	Let $k\geq 2$. Suppose the forcing function $g= \beta \nabla \times \mathcal{F}$ satisfies the conditions \eqref{gBound}, \eqref{gCondition} and \eqref{ForcingFunctionBoundK}  
	for some constant $K_{g, k}$. Suppose that $\sigma_0$ is symmetric and positive semi-definite. Then the system of equations \eqref{RegularizedVorticityForm} subject to the initial conditions \eqref{InitialConditionsHk} on the torus (or thought of as a system with periodic boundary conditions) has a unique solution $(u, \sigma)$ with $u\in C([0, T]; \overbar{\mathbb{H}}^k_3)$,  $u'\in L^2(0, T; \overbar{\mathbb{H}}^{k-1}_3)$, $\sigma \in C([0, T]; \mathbb{H}^{k}_{9})$, and $\sigma'\in L^2(0, T; \mathbb{H}^{k-1}_9)$  for $\epsilon = 0$,  for every $T>0$, meaning the solution is global. In particular, if $k\geq 4$, then the solutions are classical.
\end{theorem}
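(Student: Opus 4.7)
The plan is to proceed by induction on $k\geq 2$, with Theorem \ref{GlobalExistenceTheorem2} furnishing the base case. At step $k$, given $(u_0,\sigma_0) \in \overbar{\mathbb{H}}^k_3 \times \mathbb{H}^k_9$ and forcing satisfying \eqref{ForcingFunctionBoundK}, I would first invoke the inductive hypothesis at level $k-1$ to produce the unique solution $(u,\sigma)$ on $[0,T]$ at that regularity, together with the positive semi-definiteness of $\sigma$ and the lower-order $L^\infty_t L^2_x$ bounds on $u$ and $\sigma$ inherited from the base case. Since uniqueness already holds at level $2$, no further uniqueness argument is needed at level $k$; it suffices to show that the $\mathbb{H}^k$ norms of this particular solution remain finite on $[0,T]$.

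The main technical tool is a combined higher-order energy estimate: I would apply $D^\lambda$ with $|\lambda|\leq k-1$ to the vorticity equation and test against $D^\lambda\omega$, and apply $D^\lambda$ with $|\lambda|\leq k$ to the conformation tensor equation and test against $D^\lambda\sigma$. These formal manipulations should be rigorously justified either via a Galerkin scheme at the $\mathbb{H}^k$ level, mirroring the construction of the base case, or by time mollification. The parabolic term $-\alpha\Delta\omega$ contributes the good quantity $\alpha\sum_{|\lambda|\leq k-1}\|\nabla D^\lambda\omega\|_{L^2}^2$, and the transport terms $(Ju\cdot\nabla)\omega$ and $(Ju\cdot\nabla)\sigma$ reduce to commutators $[D^\lambda,Ju\cdot\nabla]$ acting on $\omega,\sigma$ after the divergence-free cancellation. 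The decisive structural feature is that $J$ is convolution against a fixed smooth kernel $\eta$, so for every $m$ one has the a priori estimate $\|Ju\|_{C^m}\leq \|\partial^m\eta\|_{L^2}\|u\|_{L^2}$ and analogously for $J\sigma$; therefore every derivative of $Ju$ or $J\sigma$ appearing in the commutators or in the terms $\Omega(Ju,u)$, $(\nabla Ju)\sigma$, $\sigma(\nabla Ju)^T$, and $\beta\nabla\times\text{div}\,J\sigma$ is majorized by the base-case $L^2$ bounds on $u$ and $\sigma$. The remaining factors involving $\omega,u,\sigma$ are controlled by Moser-type product estimates.

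The principal obstacle, absent in the diffusive case, is that the $\sigma$ equation carries no regularizing term when $\epsilon=0$, so top-order control of $\sigma$ must come purely from Gronwall. Writing $E_k(t) \coloneqq \|\omega(t)\|_{H^{k-1}}^2+\|\sigma(t)\|_{H^k}^2$, the combined estimate should take the form
\begin{equation*}
\frac{d}{dt} E_k(t) + \alpha \|\nabla \omega(t)\|_{H^{k-1}}^2 \leq C\bigl(\|u\|_{L^2},\|\sigma\|_{L^2}\bigr)\,E_k(t) + C_{g,k},
\end{equation*}
where the prefactor is uniformly finite on $[0,T]$ thanks to the base case. Gronwall's inequality then yields $\omega\in L^\infty(0,T;\mathbb{H}^{k-1}_3)$ and $\sigma\in L^\infty(0,T;\mathbb{H}^k_9)$, and reconstruction of $u$ from $\omega$ via Biot--Savart under the constraints $\nabla\cdot u=0$ and $\int_\mathbb{T} u\,dx=0$ gives $u\in L^\infty(0,T;\overbar{\mathbb{H}}^k_3)$.

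Finally, the upgrade from $L^\infty_t$ to $C_t$, together with the claims on $u'\in L^2(0,T;\overbar{\mathbb{H}}^{k-1}_3)$ and $\sigma'\in L^2(0,T;\mathbb{H}^{k-1}_9)$, is obtained by reading off the time derivatives directly from the equations (applying the Leray projector in the velocity equation to eliminate the pressure): each term in the right-hand side is shown to lie in the appropriate $L^2_t \mathbb{H}^{k-1}_x$ space using the smoothing property of $J$ and the bounds just established, after which the standard Lions--Magenes lemma delivers continuity in time with values in $\mathbb{H}^k$. The classical regularity for $k\geq 4$ then follows from the Sobolev embedding $H^k(\mathbb{T}) \hookrightarrow C^2(\mathbb{T})$, which holds as soon as $k>7/2$.
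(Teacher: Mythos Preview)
Your proposal is correct and would carry through, but the route you take for the conformation tensor differs from the paper's. The paper does not perform any $H^k$ energy estimate on $\sigma$ at all in the non-diffusive case: it simply invokes the explicit Lagrangian representation \eqref{SolSigma}, namely
\[
\sigma(x,t)=e^{-\gamma t}F(x,t)\sigma_0(x)F^{\text{T}}(x,t)+\delta\int_0^t e^{-\gamma(t-s)}F(x,t)F^{-1}(x,s)F^{-\text{T}}(x,s)F^{\text{T}}(x,t)\,ds,
\]
and observes that the deformation tensor $F$ is $C^\infty$ in space (it comes from the flow of $Ju$, which is smooth because $J$ is a convolution), so $\sigma$ inherits $H^k$ regularity directly from $\sigma_0$. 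For the vorticity, the paper then runs the same inductive differentiation argument as in Theorem~\ref{GlobalExistenceTheorem1HigherRegularity}. Your approach instead stays Eulerian throughout, differentiating the transport equation for $\sigma$ and closing a coupled Gronwall estimate on $\|\omega\|_{H^{k-1}}^2+\|\sigma\|_{H^k}^2$. This is more uniform (same method for both unknowns) and does not require the characteristics formula, while the paper's method is shorter for $\sigma$ since it reads off regularity from an explicit expression.

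One small point of caution: your invocation of Lions--Magenes to upgrade $\sigma$ from $L^\infty_t H^k_x$ to $C_t H^k_x$ is not quite the right tool when $\epsilon=0$, since that lemma with the pairing $H^k\subset H^{k-1}$ would only yield continuity into an intermediate space. The standard fix is to combine weak-$*$ continuity in $H^k$ (from the $L^\infty_t H^k_x$ bound and $C_t H^{k-1}_x$) with the absolute continuity of $t\mapsto\|\sigma(t)\|_{H^k}^2$ coming from your energy identity, which together give strong continuity in $H^k$. This is a routine adjustment and does not affect the validity of your overall scheme.
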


The stability of the non-diffusive solutions is described in Theorem \ref{L2StabilityNonDiffusive}.
There is also a corresponding two dimensional analogue of \eqref{RegularizedVorticityForm}. Normally, the $d = 2$ and $d = 3$ cases would require different proofs, with the $d = 2$ case being much easier. As mentioned earlier, this is due to Ladyzhenskaya's inequality, which has different exponents in different dimensions. See equation (45) in \cite{constantinnote}. However, the presence of the regularizer allows us to prove the $d = 3$ case directly, and then the existence of the solution in two dimensions can be easily obtained as a corollary to the three dimensional case.  

\begin{cor}
Let $k\geq 2$. Suppose that the (one component) forcing function $g=\beta \nabla \times \mathcal{F}$ satisfies \eqref{gBound}, \eqref{gCondition} and \eqref{ForcingFunctionBoundK}. Suppose that $\sigma_0$ is symmetric and positive semi-definite. Then the two dimensional system corresponding to \eqref{RegularizedVorticityForm} subject to the two-dimensional version of the initial conditions \eqref{InitialConditionsHk} on the torus (or thought of as a system with periodic boundary conditions) has a unique solution $(u, \sigma)$ with $u\in C([0, T]; \overbar{\mathbb{H}}^k_2)$,  $u'\in L^2(0, T; \overbar{\mathbb{H}}^{k-1}_2)$, $\sigma \in C([0, T]; \mathbb{H}^{k}_{4})$, and $\sigma'\in L^2(0, T; \mathbb{H}^{k-1}_4)$  for any $\epsilon \geq 0$,  for every $T>0$, meaning the solution is global. 
\end{cor}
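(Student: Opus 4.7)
The plan is to obtain the two-dimensional result as a direct corollary of the three-dimensional Theorems \ref{GlobalExistenceTheorem1HigherRegularity} and \ref{GlobalExistenceTheorem2HigherRegularity2} by a lift-and-restrict argument. Concretely, any two-dimensional datum on $\mathbb{T}^2$ is lifted to a three-dimensional datum on $\mathbb{T}^3$ that is independent of $x^3$; the three-dimensional theorems then produce a unique global solution on $\mathbb{T}^3$; translation and reflection invariances of the system together with three-dimensional uniqueness force this solution to be $x^3$-independent with vanishing third velocity component and vanishing cross-entries of $\sigma$; restricting back to $\mathbb{T}^2$ then yields the desired two-dimensional solution. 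Uniqueness in two dimensions likewise follows from three-dimensional uniqueness applied to the lifts.

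For the lift, given $(u_0,\sigma_0)$ on $\mathbb{T}^2$ with $u_0$ divergence-free of zero mean and $\sigma_0$ symmetric positive semi-definite, I set
\begin{equation*}
    \tilde u_0(x^1,x^2,x^3) := \bigl(u_0^1(x^1,x^2),\, u_0^2(x^1,x^2),\, 0\bigr),\qquad
    \tilde\sigma_0(x^1,x^2,x^3) := \begin{pmatrix} \sigma_0(x^1,x^2) & 0 \\ 0 & c \end{pmatrix}
\end{equation*}
for any chosen constant $c\ge 0$ (e.g.\ $c=\delta/\gamma$). The block structure preserves symmetry and positive semi-definiteness, and $\tilde u_0\in\overbar{\mathbb{H}}^k_3$, $\tilde\sigma_0\in\mathbb{H}^k_9$ with $\int_{\mathbb{T}^3}\tilde u_0\,dx=0$. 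The two-dimensional forcing $\mathcal{F}=(\mathcal{F}^1,\mathcal{F}^2)$ lifts to $\tilde{\mathcal{F}}=(\mathcal{F}^1,\mathcal{F}^2,0)$, so that $\tilde g=\beta\nabla\times\tilde{\mathcal{F}}=(0,0,g)$ inherits the bounds \eqref{gBound}, \eqref{gCondition}, and \eqref{ForcingFunctionBoundK}. Applying Theorem \ref{GlobalExistenceTheorem1HigherRegularity} (for $\epsilon>0$) or \ref{GlobalExistenceTheorem2HigherRegularity2} (for $\epsilon=0$) produces a unique $(\tilde u,\tilde\sigma)$ of the claimed regularity on $\mathbb{T}^3$.

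The system \eqref{RegularizedVorticityForm} is invariant under the translation $x^3\mapsto x^3+a$, and the lifted initial data and forcing are invariant under this translation; three-dimensional uniqueness forces $(\tilde u,\tilde\sigma)$ to be independent of $x^3$. The system is also invariant under the reflection $x^3\mapsto -x^3$ combined with the sign flips $u^3\mapsto -u^3$ and $\sigma_{i3},\sigma_{3i}\mapsto -\sigma_{i3},-\sigma_{3i}$ for $i\in\{1,2\}$; since every flipped quantity vanishes at $t=0$ in the lifted data (and $\tilde{\mathcal{F}}^3=0$), uniqueness gives $\tilde u^3\equiv 0$ and $\tilde\sigma_{13}=\tilde\sigma_{31}=\tilde\sigma_{23}=\tilde\sigma_{32}\equiv 0$ for all $t$. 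The restriction $(u^i,\sigma_{ij})_{1\le i,j\le 2}$ of $(\tilde u,\tilde\sigma)$, together with the scalar vorticity $\omega=\tilde\omega^3$, is then a solution of the two-dimensional analogue of \eqref{RegularizedVorticityForm} with the claimed regularity.

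The main technical obstacle is the compatibility of the two- and three-dimensional mollifiers: for the lifted three-dimensional equations to collapse term-by-term to the two-dimensional ones, the two-dimensional regularizer $J_2$ (with bump $\eta_2$) must agree with $J_3$ (with the fixed three-dimensional bump $\eta_3$) when acting on $x^3$-independent functions. This is arranged by setting
\begin{equation*}
    \eta_2(x^1,x^2) := \int \eta_3(x^1,x^2,x^3)\,dx^3,
\end{equation*}
so that Fubini yields $J_3\tilde f=J_2 f$ for every $f=f(x^1,x^2)$ lifted to $\mathbb{T}^3$. With this compatible choice, each convective, stretching, regularized-stress, and diffusion term in the lifted three-dimensional system reduces exactly to its two-dimensional counterpart (while the $i=3$ and $j=3$ components of the $\sigma$-equation become a trivial transport equation for a constant, automatically satisfied), closing the argument.
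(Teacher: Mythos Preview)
Your proposal is correct and follows essentially the same lift-and-restrict strategy as the paper: embed the two-dimensional data into three dimensions with a trivial third component, invoke the three-dimensional existence theorems, and observe that the resulting solution retains the two-dimensional block structure so that the 2D solution can be read off. You supply considerably more detail than the paper---explicit translation- and reflection-symmetry arguments combined with uniqueness to force $x^3$-independence and the vanishing of $u^3,\sigma_{i3},\sigma_{3i}$, and the mollifier compatibility $\eta_2=\int\eta_3\,dx^3$---whereas the paper simply asserts that ``it is not too difficult to check'' the block form (and in fact slightly misstates it, writing $\sigma_{33}\equiv 0$ when from zero initial data the $\sigma_{33}$-equation gives $\sigma_{33}(t)=\tfrac{\delta}{\gamma}(1-e^{-\gamma t})$; your choice $c=\delta/\gamma$ sidesteps this).
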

\begin{proof}
 There are two possible ways of handling the two dimensional case. One is to define the ``curl" of a two dimensional quantity $f=(f^1, f^2)$ as 
\begin{equation*}
    \nabla \times f \coloneqq  \partial_1 f^2 - \partial_2 f^1
\end{equation*}
and applying this to the corresponding two dimensional velocity equation. This would then yield an equation for the vorticity (which in this case would be a single scalar function), and the analysis we use to obtain a solution to the 3 dimensional case would carry over almost verbatim to yield a solution to the 2 dimensional case. The only modification that would have to be made is how we recover the velocity from the vorticity.

The other possibility is to embed the two dimensional problem into the three dimensional one. Suppose we start with a two dimensional initial velocity and symmetric conformation tensor, only depending on $x^1$  and $x^2$. We can extend these to obtain 3 dimensional initial quantities
\begin{align*}\label{InitialData}
    u_0=\begin{pmatrix}
    u_0^1(x^1, x^2) \\
    u_0^2(x^1, x^2) \\
    0
    \end{pmatrix} \quad
    \sigma_0=\begin{pmatrix}
    \sigma_{0, 11}(x^1, x^2) & \sigma_{0, 12}(x^1, x^2) & 0 \\
    \sigma_{0, 21}(x^1, x^2) & \sigma_{0, 22}(x^1, x^2) & 0 \\
    0 & 0 & 0
    \end{pmatrix}
\end{align*}
and furthermore let $g$ be a function satisfying \eqref{gBound} and \eqref{gCondition}. Then we can let
\begin{equation*}
    \tilde{g}=\begin{pmatrix}
    0 \\
    0 \\
    g
    \end{pmatrix}
\end{equation*}
be our forcing function.

We can then use these to obtain a solution
\begin{align*}
    u(x, t)=\begin{pmatrix}
    u^1(x, t) \\
    u^2(x, t) \\
    u^3(x, t)
    \end{pmatrix} \quad \mbox{ and } \quad 
    \sigma=\begin{pmatrix}
    \sigma_{11}(x, t) & \sigma_{12}(x, t) & \sigma_{13}(x, t) \\
    \sigma_{21}(x, t) & \sigma_{22}(x, t) & \sigma_{2, 3}(x, t) \\
    \sigma_{31}(x, t) & \sigma_{32}(x, t) & \sigma_{33}(x, t)
    \end{pmatrix}. 
\end{align*}
It is then not too difficult to check that the solution is of the form
\begin{eqnarray}\label{SolutionForm}
u(x, t)=\begin{pmatrix}
u^1(x^1, x^2, t) \\
u^2(x^1, x^2, t) \\
0
\end{pmatrix} \quad \mbox{ and } \quad 
\sigma=\begin{pmatrix}
\sigma_{11}(x^1, x^2, t) & \sigma_{12}(x^1, x^2, t) & 0 \\
\sigma_{21}(x^1, x^2, t) & \sigma_{22}(x^1, x^2, t) & 0 \\
0 & 0 & 0
\end{pmatrix}, 
\end{eqnarray}
and so we can extract out the solution to the two dimensional problem. This completes the proof. 
\end{proof}

Next we comment on the relationship between the solutions of \eqref{RegularizedOldroydB} and \eqref{RegularizedVorticityForm}. It is not surprising that they are equivalent, and the proof is basically trivial. However, we include the proof for completeness.  

\begin{proposition} \label{equivalence}
The functions $u$ and $\sigma$, of sufficiently high regularity, are a solution of the system of equations \eqref{RegularizedOldroydB} with the auxiliary conditions
\begin{equation}\label{AuxiliaryConditions}
u_0 \in \overbar{\mathbb{H}}^2_3, \quad \sigma_0 \in \mathbb{H}^2_9, \quad \mbox{ and } \quad \int_{\mathbb{T}} u_0 \; dx = \int_{\mathbb{T}} u \; dx = 0
\end{equation} 
if and only if they are a solution of the system \eqref{RegularizedVorticityForm}. That is, the two systems are equivalent, and obtaining a sufficiently regular, unique, global solution to one system, yields a unique global solution of the other. 
\end{proposition}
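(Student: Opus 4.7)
The equivalence is handled by two directions, one essentially computational and one requiring a small amount of harmonic analysis on the torus.

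For the forward direction, assume $(u,\sigma)$ solves \eqref{RegularizedOldroydB} together with \eqref{AuxiliaryConditions}. The constitutive equation for $\sigma$ and the divergence-free condition appear identically in both systems, so there is nothing to check there. Taking the curl of the momentum equation annihilates $\nabla p$, and using the identity $\nabla \times ((Ju \cdot \nabla) u) = (Ju\cdot \nabla)\omega + \Omega(Ju, u)$ together with $g = \beta\,\nabla\times\mathcal{F}$ gives exactly the vorticity equation in \eqref{RegularizedVorticityForm}. The condition $\nabla \times u = \omega$ is just the definition, and the mean-zero condition on $u$ is part of \eqref{AuxiliaryConditions}.

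For the converse, suppose $(u,\sigma)$ satisfies \eqref{RegularizedVorticityForm}. The $\sigma$-equation and $\nabla\cdot u = 0$ carry over automatically, so the only task is to produce a pressure $p$ for which the momentum equation holds. Define the residual
\begin{equation*}
F := \partial_t u - \alpha \Delta u + (Ju\cdot \nabla) u - \beta\,\mathrm{div}\, J\sigma - \beta\,\mathcal{F}.
\end{equation*}
I plan to show that $F$ is a pure gradient, and then set $p := -\beta^{-1}\phi$ where $F = \nabla\phi$. Applying $\nabla\times$ to $F$ and using the same curl identity for $(Ju\cdot\nabla)u$ as above, one finds that $\nabla \times F$ is exactly the left-hand side minus the right-hand side of the vorticity equation in \eqref{RegularizedVorticityForm}, hence $\nabla\times F = 0$.

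The main obstacle is that, unlike on $\mathbb{R}^3$, a curl-free vector field on $\mathbb{T}$ need not be a gradient: by Hodge theory, it equals a gradient plus a constant vector. To eliminate the harmonic (constant) part I would verify that $\int_\mathbb{T} F\,dx = 0$. Namely, $\int_\mathbb{T} \partial_t u\,dx = \partial_t \int_\mathbb{T} u\,dx = 0$ by \eqref{AverageCondition}; $\int_\mathbb{T} \Delta u\,dx = 0$ and $\int_\mathbb{T} \mathrm{div}\,J\sigma\,dx = 0$ by periodicity; $\int_\mathbb{T} (Ju\cdot\nabla)u\,dx = \int_\mathbb{T} \nabla\cdot(Ju\otimes u)\,dx = 0$ since $Ju$ is divergence-free; and $\int_\mathbb{T} \mathcal{F}\,dx = 0$ by \eqref{gCondition}. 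Combined with $\nabla\times F = 0$, the Hodge decomposition on $\mathbb{T}$ then forces $F = \nabla\phi$ for some scalar $\phi$, unique up to a constant. Setting $p = -\beta^{-1}\phi$ recovers the momentum equation, and the mean-zero condition on $u$ is already built into \eqref{RegularizedVorticityForm}. This yields a solution of \eqref{RegularizedOldroydB} with the auxiliary conditions \eqref{AuxiliaryConditions}, completing the equivalence.
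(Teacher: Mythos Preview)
Your proof is correct and follows the same overall strategy as the paper: the forward direction is immediate by taking the curl, and for the converse one shows the momentum-equation residual is curl-free and hence a gradient, which is then absorbed into the pressure. The paper argues by contradiction (assume no pressure works, so the residual $H$ is not a gradient; but $\nabla\times H=0$ forces $H=\nabla\Psi$), whereas you argue directly by defining $F$ and constructing $\phi$.

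Your treatment is in fact more careful on one point. The paper passes to periodic functions on $\mathbb{R}^3$ and invokes ``$\nabla\times H=0\Rightarrow H=\nabla\Psi$'' without comment, but on $\mathbb{T}$ a curl-free field can have a nonzero constant (harmonic) component, in which case $\Psi$ would fail to be periodic. You close this gap by explicitly verifying $\int_\mathbb{T} F\,dx=0$ term by term, using the mean-zero conditions on $u$ and $\mathcal{F}$ together with the divergence structure of the remaining terms, and then invoking the Hodge decomposition. This is the right way to make the argument airtight on the torus.
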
 
\begin{proof} One direction is obvious. If we have a sufficiently regular solution of \eqref{RegularizedOldroydB}, then upon taking the curl of the velocity equation, we see that $u$ and $\sigma$ also satisfy \eqref{RegularizedVorticityForm}. On the other hand, suppose that $u$ and $\sigma$ satisfy \eqref{RegularizedVorticityForm}, but not \eqref{RegularizedOldroydB}. Let us consider these as periodic systems in $\mathbb{R}^3$. Therefore, we must have 
\begin{equation*}
    \partial_t u - \frac{\nu}{R_e} \Delta u + \left(Ju \cdot \nabla\right) u \neq \frac{1}{R_e}\text{div} J\sigma -\frac{1}{R_e}\nabla p+ \frac{1}{R_e}f
\end{equation*}
or put another way, there must be some extra term on the right hand side, which we call $H$. So we would have
\begin{equation*}
    \partial_t u - \frac{\nu}{R_e} \Delta u + \left(Ju \cdot \nabla\right) u = \frac{1}{R_e}\text{div} J\sigma -\frac{1}{R_e}\nabla p+ \frac{1}{R_e}f + H
\end{equation*}
and notice we must have $H \neq \nabla \Psi$, since otherwise we could absorb this into the pressure term. However, upon taking the curl of the equation we find that necessarily
\begin{equation*}
    \nabla \times H = 0
\end{equation*}
which means there exists some $\Psi$ such that $H = \nabla \Psi$, yielding a contradiction. Hence, $u$ and $\sigma$ must be a solution of \eqref{RegularizedOldroydB}. Therefore, a unique solution to one system also yields a unique solution to the other. In the case of solutions with components in $H^2$ we simply think of everything in the weak sense and the argument still holds. This completes the proof. 
\end{proof} 
Next, we obtain an energy estimate satisfied by the solutions of $\eqref{RegularizedOldroydB}$, which by Proposition \ref{equivalence}, is also satisfied by the solutions of \eqref{RegularizedVorticityForm}. This energy estimate will be instrumental in obtaining global existence for our solutions. We remark that we need to regularize the conformation tensor in the velocity equation for the following proof to work. This is the technical reason we mentioned earlier.

\begin{proposition}[Energy Estimate]\label{EnergyEstimate}
There exist generic constants $C > 0$ and $0 < \overline{\alpha} \leq \alpha$ such that the following energy estimate holds true for the solutions of \eqref{RegularizedOldroydB}:
\begin{align}\label{EnergyEstimateEquation} \begin{split}
\int_{\mathbb{T}} |u(x, t)|^2 dx + 2 \overline{\alpha} \int_0^t \int_{\mathbb{T}}|\nabla {u}(x, s)|^2 dx \, ds+ \beta\int_\mathbb{T} {tr} \sigma(x, t) \,  dx + \beta \gamma \int_0^t \int_\mathbb{T} { tr}  \sigma(x, s) \,dx \, ds \\
\leq \int_\mathbb{T} |{u}(x,0) |^2 dx + \beta \int_\mathbb{T} { tr} 
\sigma(x,0) \,dx + \beta \delta  \int_0^t d |\mathbb{T}| ds + C \int_0^t \|\mathcal{F}\|^2_{\mathbb{L}^2_3} \, ds.  \end{split}
\end{align}
where $d$ is the dimension, which is $3$ in our case and $|\mathbb{T}|$ is the measure of $\mathbb{T}$, which is $1$ in our case. In particular, if $\sigma(t)$ is positive semi-definite (meaning that $tr \sigma \geq 0$) and $\mathcal{F}$ satisfies \eqref{gBound} , then the estimate
\begin{align}\label{k1k2} \begin{split}
\Vert u(t) \Vert^2_{\mathbb{L}^2_3} = \int_\mathbb{T} |u(x, t)|^2 dx &\leq \int_\mathbb{T} |{u}(x,0) |^2 dx + \beta \int_\mathbb{T} { tr} 
\sigma(x,0) \,dx + \beta \delta  \int_0^t d |\mathbb{T}| dt + C \int_0^t \|\mathcal{F}(s)\|^2_{\mathbb{L}^2_3} \, ds \\ &\coloneqq E_1t + E_2 \end{split}
\end{align}
holds for some constants $E_1$ and $E_2$ which depend only on the initial data, $\beta$, $\delta$, $d$, $|\mathbb{T}|$, the generic constant $C$, and the constant $K_g$ in \eqref{gBound}.
\end{proposition}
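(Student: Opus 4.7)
The plan is to derive \eqref{EnergyEstimateEquation} in the classical way: test the velocity equation in \eqref{RegularizedOldroydB} against $u$ in $\mathbb{L}^2_3$, take the trace of the conformation tensor equation and integrate over $\mathbb{T}$, and then form a linear combination so that the stress--velocity coupling terms cancel. The second bound \eqref{k1k2} will follow from \eqref{EnergyEstimateEquation} by dropping non-negative terms, using the positive semi-definiteness of $\sigma$ to get $\mathrm{tr}\,\sigma \geq 0$, and bounding $\|\mathcal{F}(s)\|_{\mathbb{L}^2_3}$ via \eqref{gBound}.

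For the velocity step I would take the $\mathbb{L}^2_3$ inner product of the first equation in \eqref{RegularizedOldroydB} with $u$. The time term gives $\tfrac{1}{2}\tfrac{d}{dt}\|u\|_{\mathbb{L}^2_3}^2$; the viscous term produces $\alpha\|\nabla u\|^2$ after integration by parts; the pressure term vanishes because $\nabla\cdot u=0$; and the transport term $\int_\mathbb{T} u\cdot(Ju\cdot\nabla)u\,dx$ vanishes by the identity at the end of \S\ref{notation} applied componentwise, using that $Ju$ is divergence-free. The stress coupling term, after integration by parts, yields
\begin{equation*}
    -\beta\int_\mathbb{T} u\cdot\mathrm{div}\,J\sigma\,dx \;=\; \beta\int_\mathbb{T} \nabla u : J\sigma\,dx \;=\; \beta\int_\mathbb{T} \nabla(Ju):\sigma\,dx,
\end{equation*}
where I use that $J$ commutes with $\partial_j$ and that $J$ is self-adjoint on $L^2(\mathbb{T})$ (which holds, say, when the mollifier $\eta$ is chosen even; otherwise one replaces $J$ with its symmetrization, which does not affect the rest of the argument). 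For the stress step I would take the trace of the second equation, noting that $\mathrm{tr}((\nabla Ju)\sigma+\sigma(\nabla Ju)^T) = 2\,\nabla(Ju):\sigma$ because $\sigma$ is symmetric (preserved from $\sigma_0$), and integrate over $\mathbb{T}$; the $\Delta\mathrm{tr}\,\sigma$ and $(Ju\cdot\nabla)\mathrm{tr}\,\sigma$ terms drop by periodicity and incompressibility of $Ju$, yielding
\begin{equation*}
    \frac{d}{dt}\int_\mathbb{T}\mathrm{tr}\,\sigma\,dx - 2\int_\mathbb{T}\nabla(Ju):\sigma\,dx + \gamma\int_\mathbb{T}\mathrm{tr}\,\sigma\,dx \;=\; \delta\, d\,|\mathbb{T}|.
\end{equation*}

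Next I would multiply the velocity identity by $2$ and add $\beta$ times the stress trace identity so that the $\nabla(Ju):\sigma$ contributions cancel, producing
\begin{equation*}
    \frac{d}{dt}\|u\|_{\mathbb{L}^2_3}^2 + 2\alpha\|\nabla u\|_{\mathbb{L}^2_9}^2 + \beta\frac{d}{dt}\!\int_\mathbb{T}\mathrm{tr}\,\sigma\,dx + \beta\gamma\!\int_\mathbb{T}\mathrm{tr}\,\sigma\,dx = \beta\delta d|\mathbb{T}| + 2\beta\!\int_\mathbb{T} u\cdot\mathcal{F}\,dx.
\end{equation*}
The forcing term is controlled via Cauchy--Schwarz, Poincaré (available because $\int_\mathbb{T} u\,dx=0$), and Young's inequality: $2\beta\langle u,\mathcal{F}\rangle \leq \alpha\|\nabla u\|^2 + C\|\mathcal{F}\|_{\mathbb{L}^2_3}^2$ for a generic $C=C(\alpha,\beta,\mathbb{T})$. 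Absorbing $\alpha\|\nabla u\|^2$ into the viscous term leaves $2\overline{\alpha}\|\nabla u\|^2$ with $\overline{\alpha}=\alpha/2$, and integrating in time from $0$ to $t$ yields \eqref{EnergyEstimateEquation}. Finally, \eqref{k1k2} follows by discarding the non-negative terms $2\overline{\alpha}\int_0^t\!\|\nabla u\|^2$, $\beta\gamma\int_0^t\!\!\int\mathrm{tr}\,\sigma$, and $\beta\int\mathrm{tr}\,\sigma(t)$ (using $\mathrm{tr}\,\sigma(t)\geq 0$), then using \eqref{gBound} to estimate $\|\mathcal{F}(s)\|_{\mathbb{L}^2_3}^2 \leq |\mathbb{T}|K_g^2$, which gives the linear-in-$t$ bound $E_1 t + E_2$ with the stated dependence.

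The main obstacles are bookkeeping rather than conceptual: first, ensuring the cancellation between the stress coupling in the velocity equation and the coupling from $(\nabla Ju)\sigma + \sigma(\nabla Ju)^T$ in the trace identity, which requires both the self-adjointness (or symmetrization) of $J$ and the symmetry of $\sigma$, and second, calibrating the Young/Poincaré step so that $2\overline{\alpha}\leq\alpha$ emerges cleanly with a generic $C$ independent of $t$. Both issues are resolved by the standard choices indicated above.
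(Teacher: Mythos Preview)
Your proposal is correct and follows essentially the same route as the paper: test the velocity equation against $u$, take the trace of the conformation equation, combine so that the $\nabla(Ju):\sigma$ terms cancel, then handle the forcing with Cauchy--Schwarz/Young and Poincar\'e before integrating in time. Your explicit remark that the cancellation relies on $J$ being self-adjoint (i.e., $\eta$ even) is a point the paper uses implicitly in its kernel identity but does not state; otherwise the arguments coincide.
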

\begin{proof} 
Employing the summation convention, we recall that 
\begin{eqnarray*} 
\int_{\mathbb{T}} Ju^i\partial_i u^j u^j \, dx = - \int_{\mathbb{T}} u^j \partial_i (Ju^i u^j) \,dx = - \int_{\mathbb{T}} u^j (\partial_i Ju^i u^j + Ju^i \partial^i u^j) \,dx = - \int_{\mathbb{T}} u^j Ju^i \partial_i u^j \,dx.     
\end{eqnarray*}
This means that
\begin{eqnarray*}
\int_{\mathbb{T}} Ju^i\partial_i u^j u^j \, dx = 0.
\end{eqnarray*}
Next we note the following two identities: 
\begin{eqnarray*} 
\int_\mathbb{T} \nabla_x \cdot \left ( \int_\mathbb{T} J({x} - {y}) {\sigma}({y}) \, d{y} \right ) {u}({x}) \, d {x}
&=&  \int_\mathbb{T} \left ( \int_\mathbb{T} \nabla_x  \cdot J({x} - {y}) {\sigma}({y}) \, d{y} \right ) {u}({x}) \, d {x}  \\
&=& - \int_\mathbb{T} {\sigma}({y}) \left ( \int_\mathbb{T} \nabla_{{y}} \cdot J({y} - {x}) {u}({x}) \, d{x} \right ) \, d{y}, 
\end{eqnarray*}
and
\begin{eqnarray*}
(\sigma:S(Ju)) = \frac{1}{2} {\mathrm tr}( (\nabla J u) \sigma + \sigma (\nabla J u)^T)\, \quad \mbox{ and } \quad \int_\mathbb{T} \Delta {\mathrm tr}(\sigma) \, dx = 0,  \end{eqnarray*}
where $S({u})=\frac{1}{2}(\nabla {u}+\nabla^T {u})$. From the equation and the incompressibility constraint, we can can take the innder product of the velocity equation with ${u}$ and integrate in space to obtain the equation 
\begin{eqnarray}\label{EnergyEstimateEq1}
\frac{1}{2}\frac{d}{dt}\int_\mathbb{T} |{u}|^2 dx+ \alpha \int_\mathbb{T} |\nabla {u}|^2 dx + \beta \int_{\mathbb{T}} (\sigma : S(J {u})) dx  = \beta \int_\mathbb{T} \mathcal{F} u\, dx. 
\end{eqnarray}
On the other hand, taking the constitutive equation and taking its Frobenius product with the identity matrix $I$ we obtain
\begin{eqnarray}
\int_{\mathbb{T}}\left( \partial_t \sigma : I \right ) dx - \int_{\mathbb{T}}((\nabla Ju) \sigma + \sigma (\nabla Ju) : I) dx + \gamma \int_{\mathbb{T}}({\sigma}:I ) dx = \delta \int_{\mathbb{T}} (I : I) dx =  \delta d |\mathbb{T}|. \end{eqnarray}
Multiplying by $\beta$, rearranging, and using the aforementioned identities we can rewrite this as 
\begin{equation} \label{EnergyEstimateEq2}
\beta \int_{\mathbb{T}} ( \sigma : S (Ju)) dx = \frac{\beta}{2} \frac{d}{dt} \int_{\mathbb{T}} {\mathrm tr}(\sigma) dx + \frac{\beta \gamma}{2} \int_\mathbb{T} {\mathrm tr} \sigma \, dx - \frac{\beta \delta}{2} d |\mathbb{T}|.
\end{equation}
Therefore substituting \eqref{EnergyEstimateEq2} into \eqref{EnergyEstimateEq1} we arrive at the identity: 
\begin{eqnarray*}
\frac{1}{2}\frac{d}{d t}\int_{\mathbb{T}}|{u}|^2 dx + \alpha \int_{\mathbb{T}}|\nabla {u}|^2 dx + \frac{\beta}{2} \frac{d}{dt} \int_\mathbb{T} 
{\mathrm tr} {\sigma}  \,  dx + \frac{ \beta \gamma}{2} 
\int_{\mathbb{T}}{\mathrm tr}\sigma \,dx 
= \frac{\beta \delta}{2} d |\mathbb{T}| + \beta \int_\mathbb{T} \mathcal{F}  u \, dx. 
\end{eqnarray*}
Applying the Cauchy-Schwarz ineqquality, we obtain that
\begin{eqnarray*}
\int_\mathbb{T} \mathcal{F}  u \, dx \leq \frac{1}{ 2\varepsilon} \left (\int_\mathbb{T} |\mathcal{F} |^2 \, dx \right) + \frac{\varepsilon}{2} \left ( \int_\mathbb{T} |u|^2\, dx \right ).
\end{eqnarray*}
By choosing $\varepsilon$ sufficiently small and applying Poincare's equality, there exists $0 < \overline{\alpha} \leq \alpha$ such that the following inequality holds: 
\begin{eqnarray*}
\frac{1}{2}\frac{d}{d t}\int_{\mathbb{T}}|{u}|^2 dx+ \overline{\alpha} \int_{\mathbb{T}}|\nabla {u}|^2 dx + \frac{\beta}{2} \frac{d}{dt} \int_\mathbb{T} {\mathrm tr}{\sigma}  \,  dx +\frac{\beta \gamma}{2} 
\int_{\mathbb{T}}{\mathrm tr}{\sigma} \,dx \leq \frac{\beta \delta }{2} d |\mathbb{T}| + \frac{1}{2\varepsilon} \left (\int_\mathbb{T} |\mathcal{F} |^2 \, dx \right). 
 \end{eqnarray*}
Setting $C = \frac{1}{2\varepsilon}$ and  integrating with respect to time, we obtain the first inequality. For the second inequality, we note that if $\sigma$ is positive semi-definite for all $t$, then $tr \sigma(x, t) \geq 0$ and so all of the terms on the left hand side of \eqref{EnergyEstimateEquation} are non-negative and so \eqref{k1k2} follows. 
\end{proof}

For the remainder of this paper, we will reserve the constants $E_1$ and $E_2$ for the constants appearing in this theorem. 

\section{Global existence of solutions to the diffusive regularized Oldroyd-B model}\label{SectionDissipativeProof}
In this section we prove Theorems \ref{GlobalExistenceTheorem1} and \ref{GlobalExistenceTheorem1HigherRegularity}. We assume that $\epsilon > 0$ is fixed and consider the space 
\begin{equation}
X = X(T)\coloneqq C([0, T]; \overbar{\mathbb{H}}^2_3)\times C([0, T]; \mathbb{H}^2_9) 
\end{equation}
with the norm
\begin{equation*}
\Vert (v, w) \Vert_X^2 = \max_{0\leq t \leq T} \left( \Vert v(t) \Vert_{\mathbb{H}^2_3}^2 + \Vert w(t) \Vert_{\mathbb{H}^2_9}^2 \right).
\end{equation*}
We also consider the space
\begin{equation*}
	Y\coloneqq \overbar{\mathbb{H}}^2_3 \times \mathbb{H}^2_9
\end{equation*} 
with the norm
\begin{equation*}
\Vert (a, b) \Vert_Y^2 =  \Vert a \Vert_{\mathbb{H}^2_3}^2 + \Vert b \Vert_{\mathbb{H}^2_9}^2.
\end{equation*}
We will take $\left( \tilde{u}, \tilde{\sigma} \right)\in X$ and then try solving 
\begin{equation}\label{MMollifiedFormulation}
\left \{ \begin{array}{l} 
\partial_t \omega - \alpha \Delta \omega  = \beta \left( \nabla \times \text{div} J\tilde{\sigma}   \right)- (J\tilde{u} \cdot \nabla)\tilde{\omega} - \Omega(J\tilde{u}, \tilde{u}) + g	\\ 
\partial_t \sigma - \epsilon \Delta \sigma = \delta I - \left(J\tilde{u} \cdot \nabla \right) \tilde{\sigma}  + \left(\nabla J\tilde{u}\right) \tilde{\sigma} +\tilde{\sigma} \left(\nabla J\tilde{u}\right)^T  - \gamma \tilde{\sigma} \\
\nabla \cdot u =0\, \quad \nabla \times u =\omega \, \quad \mbox{ and } \quad \int_{\mathbb{T}} u(x, t) \; dx = 0, 
\end{array}\right. 
\end{equation}
subject to the initial conditions: 
\begin{equation}
u(0) = u_0 \in \overbar{\mathbb{H}}^2_3  \quad \mbox{ with } \quad \int_{\mathbb{T}} u_0 \, dx =0 \quad \mbox{ and } \quad 
\sigma(0) =\sigma_0 \in \mathbb{H}^2_{9}.  
\end{equation}
Solving this system of equations will yield a map $\Phi\left( \tilde{u}, \tilde{\sigma} \right) = \left( u, \sigma  \right)$. We will show that this is a contraction mapping in $X$ for some $T$, giving us a solution to the system.

\begin{remark}
	We could solve equations of the form $\partial_t \omega - \alpha \Delta \omega + J\tilde{u} \cdot \nabla \omega = \beta \left(\nabla \times \text{div} J\tilde{\sigma} \right) - \Omega(J\tilde{u}, \tilde{u})+ g$ and similarly for $\sigma$ but then the estimates we obtain would depend on the norm of $\tilde{u}$ which needlessly complicates things.
\end{remark}

\begin{remark}
At this stage, we are not imposing any condtions on $\sigma_0$ other than that $\sigma_0 \in \mathbb{H}^2_{9}$. The short time existence of solutions can be established for an arbitrary $\sigma_0$. However, the estimate \eqref{k1k2} will be key to establishing global existence of solutions, and it requires $\sigma(x, t)$ to be positive semi-definite for all $x$ and $t$. Later on we will show that if $\sigma_0$ is symmetric and positive semi-definite, then $\sigma(x, t)$ remains symmetric and positive semi-definite for all $t\geq 0$ for which it exists.  
\end{remark}

We take the initial data $(u_0, \sigma_0) \in Y$ and take $\left( \tilde{u}, \tilde{\sigma}   \right) \in X$. 
We first focus on solving the equation for the vorticity vector 
\begin{equation}\label{Vorticity}
\left \{ \begin{array}{rl} 
\partial_t \omega - \alpha \Delta \omega &= f \\ \omega(0) &= \nabla \times u_0
\end{array} \right. 
\end{equation}
 where 
\begin{equation*}
f\coloneqq \beta \left( \nabla \times \text{div} J\tilde{\sigma}   \right)- (J\tilde{u} \cdot \nabla)\tilde{\omega} - \Omega(J\tilde{u}, \tilde{u}) + g
\end{equation*}   
and
\begin{equation*}
    \tilde{\omega}=\nabla \times \tilde{u}
\end{equation*}
Due to the regularization, for any multi-index $\lambda$ with $|\lambda|\leq 4$ there exists a constant $K$, depending only on our fixed choice of $\eta$ in the definition of $J$, such that  
\begin{subeqnarray}\label{VBound}
\sum_{|\lambda|\leq 4} \sum_i |D^{\lambda }J\tilde{u}^i(x)| &\leq& K \|\tilde{u}\|_{\mathbb{L}^2_3}, \\
\sum_{|\lambda| \leq 4} \sum_{i,j} |D^{\lambda} J\tilde{\sigma}_{ij} | &\leq& K \| \tilde{\sigma}\|_{\mathbb{L}^2_9}, \\
|\Omega(J\tilde{u}, \tilde{u})| &\leq& \sum_{i, j} \sqrt{K} \|\tilde{u}\|_{\mathbb{L}^2_3} |\partial_j \tilde{u}_i|. 
\end{subeqnarray}
Therefore, we can easily obtain the estimates
\begin{subeqnarray}\label{CapitalOmegaEstimate}
\|\nabla \times \text{div} J \tilde{\sigma} \|_{\mathbb{L}^2_3}^2 &\leq& 9K^2 |\mathbb{T}| \| \tilde{\sigma} \|_{\mathbb{L}^2_9}^2 \\
\|\Omega (J\tilde{u}, \tilde{u} ) \|_{\mathbb{L}^2_3}^2 &\leq& K \|\tilde{u}\|^2_{\mathbb{L}^2_3} \left(81 \|\tilde{u} \|_{\mathbb{H}^1_3}^2 \right) \\
\|(J\tilde{u} \cdot \nabla )\tilde{\omega} \|_{\mathbb{L}^2_3}^2 &\leq& K^2 \|\tilde{u} \|_{\mathbb{L}^2_3}^2 \|\tilde{\omega} \|_{\mathbb{H}^1_3}^2 \leq 4 K^2 \|\tilde{u} \|_{\mathbb{L}^2_3}^2 \| \tilde{u} \|_{\mathbb{H}^2_3}^2.
\end{subeqnarray}
With these, we derive the following estimates: 
\begin{subeqnarray*}
\|f\|_{\mathbb{L}^2_3}^2 &\leq& 4\beta^2 \|\nabla \times \text{div} J \tilde{\sigma} \|_{\mathbb{L}^2_3}^2 + 4 \|\Omega (J\tilde{u}, \tilde{u} ) \|_{\mathbb{L}^2_3}^2 + 4 \|(J\tilde{u} \cdot \nabla )\tilde{\omega} \|_{\mathbb{L}^2_3}^2 + 4K_g^2 |\mathbb{T}| \\ 
&\leq& 36 K^2 |\mathbb{T}| \|\tilde{\sigma} \|_{\mathbb{L}^2_9}^2 + 324 K \|\tilde{u} \|^2_{\mathbb{L}^2_3} \|\tilde{u}\|_{\mathbb{H}^1_3}^2 + 16 K^2 \|\tilde{u} \|_{\mathbb{L}^2_3}^2 \|\tilde{u}\|_{\mathbb{H}^2_3}^2 + 4K_g^2 |\mathbb{T}|.
\end{subeqnarray*}
Thus if $(\tilde{u}, \tilde{\sigma})\in X$ we easily see
that $f\in C([0, T]; \mathbb{L}^2_3)$ with the estimate
\begin{equation}\label{EstimateForf}
\|f\|_{C([0, T]; \mathbb{L}^2_3)} \leq 36 K^2 |\mathbb{T}| \| (\tilde{u}, \tilde{\sigma} ) \|_X^2 + 324 K \|(\tilde{u}, \tilde{\sigma}) \|_X^4  + 16 K^2 \|(\tilde{u}, \tilde{\sigma} ) \|_X^4 + 4K_g^2 |\mathbb{T}|, 
\end{equation} 
and we can also estimate
\begin{equation} \label{EstimateForOmega}
\Vert \omega_0 \Vert_{ \mathbb{H}^1_3  } \leq 2\Vert u_0 \Vert_{\overbar{\mathbb{H}}^2_3} \leq 2\Vert (u_0, \sigma_0)\Vert_Y .
\end{equation}
We can now solve \eqref{Vorticity} in the weak sense.

\subsection{Existence of a Weak Solution to the Vorticity Equation}\label{SolvingForVorticity}

We solve \eqref{Vorticity} by using standard PDE methods. Since our equation is linear, we can treat it as three equations for the components $\omega_i$ of the vorticity vector. The equation for each component on $\mathbb{T}$ is of the form
\begin{subeqnarray}\label{ParabolicEquation} 
\partial_t W - \alpha \Delta W &=& h  \\
W(0) &=& W_0
\end{subeqnarray}
where $W$ and $h$ are scalar functions.

\begin{proposition}[Existence of a Weak Solution] \label{ExistenceOfWeakSolution}
	The initial value problem \eqref{ParabolicEquation} has a unique weak solution with $W\in L^2(0, T; H^1(\mathbb{T}))$ and $W'\in L^2(0, T; H^{-1}(\mathbb{T}))$.
\end{proposition}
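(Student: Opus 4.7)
The plan is to use the standard Faedo--Galerkin method, which is perfectly suited for linear parabolic equations of this form on a compact manifold without boundary. Since $\mathbb{T}$ is compact and flat, the Laplacian is self-adjoint and admits a complete orthonormal basis $\{e_k\}_{k=1}^\infty$ of $L^2(\mathbb{T})$ consisting of smooth eigenfunctions (the Fourier modes), which is simultaneously orthogonal in $H^1(\mathbb{T})$. I would build finite-dimensional approximations by projecting onto $V_m = \mathrm{span}\{e_1,\dots,e_m\}$ and seek $W_m(t) = \sum_{k=1}^m d_k^m(t) e_k$ satisfying the weak form of \eqref{ParabolicEquation} tested against each $e_k$, together with $W_m(0)$ equal to the projection of $W_0$ onto $V_m$. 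This reduces to a linear first-order ODE system in the coefficients $d_k^m$, which has a unique global-in-time solution by standard ODE theory.

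Next, to obtain uniform bounds, I would test the Galerkin equation against $W_m$ itself to derive the standard parabolic energy identity
\begin{equation*}
\frac{1}{2}\frac{d}{dt}\|W_m\|_{L^2}^2 + \alpha \|\nabla W_m\|_{L^2}^2 = \int_{\mathbb{T}} h\, W_m\, dx,
\end{equation*}
and absorb the right-hand side via Young's inequality. Integrating in time and using $\|h\|_{L^2(0,T;L^2)} \leq \sqrt{T}\|h\|_{C([0,T];L^2)}$, which is available from \eqref{EstimateForf}, yields uniform bounds for $W_m$ in $L^\infty(0,T;L^2(\mathbb{T})) \cap L^2(0,T;H^1(\mathbb{T}))$. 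A standard argument, splitting an arbitrary test function $v \in H^1(\mathbb{T})$ with $\|v\|_{H^1} \leq 1$ into its projection onto $V_m$ and the orthogonal complement, then gives a uniform bound for $W_m'$ in $L^2(0,T;H^{-1}(\mathbb{T}))$ in terms of $\alpha \|W_m\|_{L^2(0,T;H^1)}$ and $\|h\|_{L^2(0,T;L^2)}$.

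With these uniform estimates in hand, I would extract a subsequence such that $W_m \rightharpoonup W$ weakly in $L^2(0,T;H^1(\mathbb{T}))$ and $W_m' \rightharpoonup W'$ weakly in $L^2(0,T;H^{-1}(\mathbb{T}))$. Passing to the limit in the weak formulation tested against any finite linear combination of basis functions is routine, and then a density argument upgrades this to arbitrary test functions in $H^1(\mathbb{T})$, showing that $W$ is a weak solution. To recover the initial condition, I would invoke the Aubin--Lions compactness lemma to conclude that $W_m \to W$ strongly in $C([0,T];L^2(\mathbb{T}))$ up to a subsequence, so $W(0) = W_0$.

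Finally, uniqueness follows from the standard energy argument applied to the difference of two hypothetical solutions: the homogeneous problem with zero initial datum admits only $W \equiv 0$, once the identity $\frac{d}{dt}\|W\|_{L^2}^2 = 2\langle W', W\rangle_{H^{-1}\times H^1}$ is justified by the Lions--Magenes lemma (valid because of the regularity $W \in L^2(0,T;H^1)$, $W' \in L^2(0,T;H^{-1})$). Since the argument is entirely linear and all objects are scalar-valued, there is no real obstacle; the only minor care needed is the duality pairing manipulation at the uniqueness step and the routine bookkeeping in the Galerkin bound for $W_m'$.
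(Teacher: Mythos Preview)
Your proposal is correct and follows exactly the standard Galerkin approximation and energy estimate approach that the paper invokes by citing Theorems~3 and~4 of \S7.1.2 in Evans. The only minor remark is that recovery of the initial condition is usually done via the Lions--Magenes embedding $W \in C([0,T];L^2)$ together with an integration-by-parts argument rather than Aubin--Lions, but this is a cosmetic difference.
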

\begin{proof} 
This follows in the usual way by using Galerkin approximations and energy estimates. We again remark that $H^1_0(\mathbb{T})=H^1(\mathbb{T})$. See Theorems 3 and 4 in 7.1.2 of \cite{evans1997partial}. 
\end{proof} 

\begin{proposition}[Improved Regularity] \label{ImprovedRegularity0}
There exists a constant $C_1$ depending only on $T$, $\alpha$, and the geometry of the torus such that our weak solution satisfies the estimate
\begin{equation}\label{ImprovedRegularity1}
\esssup_{0\leq t\leq T} \Vert W(t) \Vert_{H^1(\mathbb{T})} + \Vert W \Vert_{L^2(0, T; H^{2}(\mathbb{T}))} + \Vert W' \Vert_{L^2(0, T; L^2(\mathbb{T}))} \leq C_1 \left(\Vert W_0 \Vert_{H^1(\mathbb{T})} + \Vert h \Vert_{L^2(0, T; L^2(\mathbb{T}))}    \right).
\end{equation}
Furthermore the following more precise estimates hold: 
\begin{subeqnarray*} 
\esssup_{0\leq t \leq T} \|W(t)\|_{H^1(\mathbb{T})}^2 &\leq& \left( 1+ e^{(1+2\alpha)T}  \right) \|W_0\|_{H^1(\mathbb{T})}^2 + \left( \frac{1}{\alpha} + e^{(1+2\alpha)T}  \right)\|h\|_{L^2(0, T; L^2(\mathbb{T})) }^2 \\
\|W'\|_{L^2(0, T; L^2(\mathbb{T})) }^2 &\leq& (2+\alpha ) \left[ \|W_0\|_{H^1(\mathbb{T})}^2 + \|h\|_{L^2(0, T; L^2(\mathbb{T})) }^2  \right] \\
\|W\|_{L^2(0, T; H^2(\mathbb{T})) }^2 &\leq& C \left( 1+ Te^{(1+2\alpha)T}\right) \|W_0\|_{H^1(\mathbb{T})}^2 + C\left( 1 + e^{(1+2\alpha)T} \right)\|h\|_{L^2(0, T; L^2(\mathbb{T}))}^2, 
\end{subeqnarray*}
where $C$ is an elliptic constant depending only on $\alpha$ and the geometry of the torus and is thus independent of $T$.
\end{proposition}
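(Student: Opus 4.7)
The plan is to derive the three claimed bounds by testing the equation against three standard multipliers ($W$, $-\Delta W$, and $W'$), combining the resulting inequalities, and invoking elliptic regularity on the torus. The computations below are formal: they are justified rigorously by carrying them out for a Galerkin approximation $W_m = \sum_{k\leq m} d_k^m(t)\phi_k$ (say in the eigenbasis of $-\Delta$ on $\mathbb{T}$), obtaining uniform estimates, and passing to the limit using the weak solution produced by Proposition~\ref{ExistenceOfWeakSolution}. No boundary terms arise because $\mathbb{T}$ has no boundary, so integration by parts is unrestricted.

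\textbf{Step 1 (basic $L^2$ bound).} Pairing $\partial_t W - \alpha \Delta W = h$ with $W$ gives
\begin{equation*}
\frac{1}{2}\frac{d}{dt}\|W\|_{L^2}^2 + \alpha\|\nabla W\|_{L^2}^2 = (h,W)_{L^2} \leq \tfrac{1}{2}\|h\|_{L^2}^2 + \tfrac{1}{2}\|W\|_{L^2}^2.
\end{equation*}
Gronwall's inequality then yields an $L^\infty(0,T;L^2)$ control of $W$ by $\|W_0\|_{L^2}^2 + \|h\|_{L^2(0,T;L^2)}^2$ with a prefactor of the form $e^{c T}$; tracking constants produces the exponential $e^{(1+2\alpha)T}$ after it is combined with Step 2.

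\textbf{Step 2 ($H^1$ and $H^2$ bounds).} Pairing with $-\Delta W$ and using $(W',-\Delta W)_{L^2} = \tfrac{1}{2}\frac{d}{dt}\|\nabla W\|_{L^2}^2$, Cauchy--Schwarz, and Young's inequality with parameter $\alpha$ gives
\begin{equation*}
\frac{d}{dt}\|\nabla W\|_{L^2}^2 + \alpha\|\Delta W\|_{L^2}^2 \leq \tfrac{1}{\alpha}\|h\|_{L^2}^2.
\end{equation*}
Integrating in $t$ controls $\esssup_{[0,T]}\|\nabla W\|_{L^2}^2$ and $\|\Delta W\|_{L^2(0,T;L^2)}^2$ by $\|\nabla W_0\|_{L^2}^2$ and $\|h\|_{L^2(0,T;L^2)}^2$. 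Combining with Step~1 yields the $\esssup\|W\|_{H^1}^2$ bound, where the $e^{(1+2\alpha)T}$ factor arises from the Gronwall step applied to the coupled $L^2$--$H^1$ inequality. Elliptic regularity on the torus, $\|W\|_{H^2}^2 \leq C(\|\Delta W\|_{L^2}^2 + \|W\|_{L^2}^2)$ for a constant $C$ depending only on the geometry (and not on $T$), then upgrades the $\|\Delta W\|_{L^2(0,T;L^2)}^2$ control to the desired $\|W\|_{L^2(0,T;H^2)}^2$ estimate.

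\textbf{Step 3 (bound on $W'$).} The quickest route is to use the equation directly: $W' = \alpha\Delta W + h$, whence
\begin{equation*}
\|W'\|_{L^2(0,T;L^2)}^2 \leq 2\alpha^2 \|\Delta W\|_{L^2(0,T;L^2)}^2 + 2\|h\|_{L^2(0,T;L^2)}^2,
\end{equation*}
and the right-hand side is already controlled by Step~2. (Alternatively, pairing with $W'$ and using $(\nabla W,\nabla W')_{L^2} = \tfrac{1}{2}\frac{d}{dt}\|\nabla W\|_{L^2}^2$ gives the same bound with the explicit constant $2+\alpha$ stated in the proposition.)

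Putting Steps~1--3 together produces all three refined estimates, and summing them gives the single consolidated inequality with constant $C_1$ depending on $T$, $\alpha$, and the geometry of $\mathbb{T}$. The only mild care needed is to justify the Galerkin limits so that the multipliers $-\Delta W$ and $W'$ are admissible for the weak solution; this is standard and follows the template of Theorems~3--4 of \S 7.1.2 in \cite{evans1997partial} cited in Proposition~\ref{ExistenceOfWeakSolution}. I do not expect a genuine obstacle here; the only bookkeeping is to confirm that the specific exponential $e^{(1+2\alpha)T}$ and the constants $\tfrac{1}{\alpha}$ and $2+\alpha$ come out of the Gronwall and Young steps with the coefficients asserted, which is a routine but careful tracking of absorption parameters.
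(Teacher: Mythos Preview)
Your proposal is correct and follows essentially the same approach as the paper. The paper's proof simply cites Theorem~5 in \S 7.1.3 of \cite{evans1997partial} for the first estimate and states that the precise constants follow by tracking the steps in that proof; your outline (test against $W$, $-\Delta W$, and $W'$, then invoke elliptic regularity on the torus) is exactly the content of that argument, so you have in fact written out more detail than the paper itself provides.
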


\begin{proof}
The first estimate follows from Theorem 5 in 7.1.2 of \cite{evans1997partial}. The more precise estimates follow by keeping track of all the constants and steps in the proof of that theorem. 
\end{proof}

As a result, we obtain the following theorem. 

\begin{proposition}[Existence and Regularity for $\omega$] \label{ImprovedRegularity} The problem \eqref{Vorticity} possesses a unique weak solution with $\omega \in L^2(0, T; \mathbb{H}^1_3)$ and $\omega'\in L^2(0, T; \mathbb{L}_3^2)$. Moreover, 
There exists a constant $C_1$ depending only on $T$, $\alpha$, and the geometry of the torus such that our weak solution satisfies the estimate
\begin{equation}\label{ImprovedRegularity1}
\esssup_{0\leq t\leq T} \|\omega(t) \|_{\mathbb{H}^1_3} + \|\omega\|_{L^2(0, T; \mathbb{H}^2_3)} + \|\omega' \|_{L^2(0, T; \mathbb{L}^2_3)} \leq C_1 \left(\|\omega_0 \|_{\mathbb{H}^1_3} + \|f\|_{L^2(0, T; \mathbb{L}^2_3)}  \right).
\end{equation}
Furthermore the following more precise estimates hold: 
\begin{subeqnarray}\label{IED1}
\esssup_{0\leq t \leq T} \|\omega(t) \|_{\mathbb{H}^1_3}^2 &\leq& \left( 1+ e^{(1+2\alpha)T}   \right) \|\omega_0\|_{\mathbb{H}^1_3}^2 + \left( \frac{1}{\alpha} + e^{(1+2\alpha)T}  \right)\|f \|_{L^2(0, T; \mathbb{L}^2_3 ) }^2 \slabel{IED1} \\  
\|\omega'\|_{L^2(0, T; \mathbb{L}^2_3  ) }^2 &\leq& (2+\alpha ) \left[ \|\omega_0 \|_{\mathbb{H}^1_3}^2 + \|f \|_{L^2(0, T; \mathbb{L}^2_3  ) }^2 \right] \slabel{IED2} \\ 
\|\omega\|_{L^2(0, T; \mathbb{H}^2_3  )) }^2 &\leq& C \left( 1+ Te^{(1+2\alpha)T}\right)\|\omega_0\|_{ \mathbb{H}^1_3}^2 + C\left( 1 + e^{(1+2\alpha)T}   \right)\|f\|_{L^2(0, T; \mathbb{L}^2_3)}^2,  \slabel{IED3} 
\end{subeqnarray}
where $C$ is an elliptic constant depending only on $\alpha$ and the geometry of the torus and is thus independent of $T$.
\end{proposition}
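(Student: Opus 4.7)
The plan is to reduce the vector-valued system \eqref{Vorticity} to three decoupled scalar heat equations of the form \eqref{ParabolicEquation} and then invoke Proposition \ref{ImprovedRegularity0} componentwise. Since the operator $\partial_t - \alpha \Delta$ is linear and acts diagonally on the components of $\omega$, setting $W = \omega_i$, $h = f_i$, and $W_0 = (\nabla \times u_0)_i$ for $i = 1, 2, 3$ yields three independent scalar initial-value problems. The required input regularity is already in hand: from the bound \eqref{EstimateForf} we have $f \in C([0, T]; \mathbb{L}^2_3) \subset L^2(0, T; \mathbb{L}^2_3)$, so each $f_i \in L^2(0, T; L^2(\mathbb{T}))$; and since $u_0 \in \overbar{\mathbb{H}}^2_3$, the initial vorticity $\omega_0 = \nabla \times u_0$ lies in $\mathbb{H}^1_3$, so each $\omega_{0,i} \in H^1(\mathbb{T})$.

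Next I would apply Proposition \ref{ExistenceOfWeakSolution} to each component to obtain a unique scalar weak solution $\omega_i$, and then Proposition \ref{ImprovedRegularity0} to upgrade it to $\omega_i \in L^2(0, T; H^2(\mathbb{T}))$ with $\omega_i' \in L^2(0, T; L^2(\mathbb{T}))$ together with the three quantitative scalar bounds recorded there. Assembling $\omega = (\omega_1, \omega_2, \omega_3)$ furnishes the required vector-valued weak solution, and uniqueness of the vector solution follows automatically from componentwise uniqueness.

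To derive the precise vector estimates \eqref{IED1}--\eqref{IED3}, I would sum the three scalar bounds over $i = 1, 2, 3$ and use $\|v\|_{\mathbb{H}^m_3}^2 = \sum_i \|v_i\|_{H^m}^2$ together with the analogous identity for $L^2$-in-time norms. For \eqref{IED2} and \eqref{IED3} the time integral commutes with the sum, so the scalar constants transfer verbatim. For \eqref{IED1} I would invoke the subadditivity of the essential supremum,
\[
\esssup_{0 \leq t \leq T} \sum_{i=1}^{3} \|\omega_i(t)\|_{H^1}^2 \;\leq\; \sum_{i=1}^{3} \esssup_{0 \leq t \leq T} \|\omega_i(t)\|_{H^1}^2,
\]
which again preserves the scalar constants. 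The coarser consolidated bound \eqref{ImprovedRegularity1} then follows by taking square roots, applying $\sqrt{a+b} \leq \sqrt{a} + \sqrt{b}$, and absorbing all multiplicative factors (each depending only on $T$, $\alpha$, and the geometry of $\mathbb{T}$) into a single constant $C_1$.

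There is essentially no substantive obstacle; the proposition is a direct vectorization of the scalar Proposition \ref{ImprovedRegularity0}. The only point requiring care is the order of operations in handling \eqref{IED1}: one must sum the componentwise bounds on $\|\omega_i(t)\|_{H^1}^2$ before taking the essential supremum in $t$ (or equivalently apply the displayed subadditivity) in order to inherit the scalar constants $(1 + e^{(1+2\alpha)T})$ and $(1/\alpha + e^{(1+2\alpha)T})$ unchanged.
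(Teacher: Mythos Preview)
Your proposal is correct and follows exactly the paper's approach: apply Propositions \ref{ExistenceOfWeakSolution} and \ref{ImprovedRegularity0} componentwise and then sum the resulting scalar inequalities over $i=1,2,3$. Your remark about the subadditivity of the essential supremum when passing to \eqref{IED1} is precisely the one point of care needed, and the paper's proof is in fact terser than yours on this.
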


\begin{proof}
We apply Propositions \ref{ExistenceOfWeakSolution} and \ref{ImprovedRegularity0} to the components of $\omega$ and add up the various inequalities.
\end{proof}
We need $\omega$ to lie in a better space. This is accomplished with the final proposition of this subsection.
\begin{proposition}\label{KeyTheorem}
The weak solution satisfies $\omega \in C([0, T]; \mathbb{H}^1_3)$ with the estimate
\begin{equation*}
\max_{0\leq t \leq T} \Vert \omega(t) \Vert_{\mathbb{H}^1_3  }^2 \leq \left(1+  e^{(1+2\alpha)T} \right) \Vert \omega_0 \Vert_{\mathbb{H}^1_3   }^2 + \left( \frac{1}{\alpha} + e^{(1+2\alpha)T} \right) \Vert f \Vert_{L^2(0, T; \mathbb{L}^2_3   )   }^2.
\end{equation*}
\end{proposition}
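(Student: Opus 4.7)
The plan is to bootstrap from the integrability and estimates already delivered by Proposition \ref{ImprovedRegularity} using a standard Bochner-Sobolev embedding, and then simply identify the essential supremum with a maximum. Proposition \ref{ImprovedRegularity} already furnishes $\omega \in L^2(0,T;\mathbb{H}^2_3)$ and $\omega' \in L^2(0,T;\mathbb{L}^2_3)$ together with the sharp bound \eqref{IED1} on $\esssup_{0\leq t\leq T}\|\omega(t)\|_{\mathbb{H}^1_3}^2$, so all that is missing is that $\omega$ admits a representative that is continuous into $\mathbb{H}^1_3$.

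The core analytic input is the Lions--Magenes intermediate-derivative theorem applied to the Gelfand triple $\mathbb{H}^2_3 \hookrightarrow \mathbb{H}^1_3 \hookrightarrow \mathbb{L}^2_3$, where $\mathbb{L}^2_3$ is identified with the dual of $\mathbb{H}^2_3$ via the isomorphism $I-\Delta$ acting componentwise on $\mathbb{T}$. This yields the continuous embedding
\begin{equation*}
L^2(0,T;\mathbb{H}^2_3)\cap H^1(0,T;\mathbb{L}^2_3)\;\hookrightarrow\;C([0,T];\mathbb{H}^1_3),
\end{equation*}
so $\omega$, after modification on a set of measure zero, lies in $C([0,T];\mathbb{H}^1_3)$. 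If one prefers to quote only the scalar result already used in Proposition \ref{ExistenceOfWeakSolution}, the same conclusion can be reached componentwise: for each component $\omega_i$ the spatial derivatives $\partial_k \omega_i$ satisfy $\partial_k\omega_i\in L^2(0,T;H^1(\mathbb{T}))$ and $(\partial_k\omega_i)' = \partial_k\omega_i'\in L^2(0,T;H^{-1}(\mathbb{T}))$, and Theorem 3 of 5.9.2 of \cite{evans1997partial} gives $\partial_k\omega_i\in C([0,T];L^2(\mathbb{T}))$; applying the same theorem directly to $\omega_i$ supplies $\omega_i\in C([0,T];L^2(\mathbb{T}))$, and summing yields $\omega\in C([0,T];\mathbb{H}^1_3)$. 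The fact that $\mathbb{T}$ has no boundary means no trace conditions intervene, so the differentiation in space is harmless.

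Once continuity into $\mathbb{H}^1_3$ is established, the essential supremum coincides with the maximum, and the estimate \eqref{IED1} of Proposition \ref{ImprovedRegularity} becomes the inequality asserted in the present proposition with no modification. The principal obstacle is purely bibliographic, namely locating a clean reference for the shifted Lions--Magenes statement; the componentwise detour through the standard Evans result bypasses this and requires no new estimates beyond what is already recorded.
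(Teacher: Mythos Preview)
Your proposal is correct and follows essentially the same approach as the paper: invoke the Lions--Magenes/Evans \S 5.9 embedding using the regularity $\omega\in L^2(0,T;\mathbb{H}^2_3)$, $\omega'\in L^2(0,T;\mathbb{L}^2_3)$ already supplied by Proposition~\ref{ImprovedRegularity}, deduce $\omega\in C([0,T];\mathbb{H}^1_3)$, and then upgrade the $\esssup$ in \eqref{IED1} to a $\max$. The paper's proof is just a two-line citation of this same result; your version spells out the Gelfand triple and offers a componentwise alternative, but there is no substantive difference in strategy.
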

\begin{proof} 
By results in \S 5.9 in \cite{constantinnote}, we have $\omega \in C([0, T]; \mathbb{H}^1_3)$. Since $\omega(t)$ is continuous in time, we can replace $\esssup$ with $\max$ in the second estimate of Proposition \ref{ImprovedRegularity}. 
\end{proof} 

\subsection{Obtaining Velocity From Vorticity} \label{SolvingForVelocity}

Having obtained the vorticity, we must now use it to recover the velocity. Normally, the velocity is recovered
by considering the identity 
\begin{equation}\label{Identity}
\nabla \times \left( \nabla \times u \right) - \nabla \left( \nabla \cdot u \right) = -\Delta u.
\end{equation}
which holds for a velocity in $\mathbb{R}^3$. Turning this around and assuming the flow is incompressible leads to the elliptic PDE
\begin{equation*}
\Delta u = -\nabla \times \omega \quad \mbox{ and } \quad \int_{\mathbb{T}} u \; dx = 0, 
\end{equation*}
which does indeed yield an incompressible flow. This flow then generates some vorticity which we would call $\omega_u$. However, it is actually quite tricky to verify that $\omega_u=\omega$, the vorticity which we started with. In two dimensions this can actually be verified by hand, but in three dimensions the argument becomes more difficult. Thus, we use an alternate argument which gives both an incompressible flow and the right vorticity. We start with the following proposition.

\begin{proposition}\label{AverageOmega}
Suppose $\omega$ is the solution of \eqref{Vorticity}. Then 
\begin{equation*}
\int_{\mathbb{T}} \omega(t) \; dx = 0, 
\end{equation*}
for all $0\leq t \leq T$.  
\end{proposition}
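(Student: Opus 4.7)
The plan is to integrate the vorticity equation \eqref{Vorticity} over the torus and show that every term on the right-hand side has zero spatial mean, so that $\tfrac{d}{dt}\int_{\mathbb{T}} \omega\,dx = 0$; since the initial value is itself a curl and thus has zero mean, the conclusion follows.

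First I would verify the initial condition: because $\omega_0 = \nabla \times u_0$ is a curl and $\mathbb{T}$ is a closed manifold (equivalently, by integration by parts with periodic boundary data), $\int_{\mathbb{T}} \omega_0\,dx = 0$ componentwise. Next I would integrate \eqref{Vorticity} over $\mathbb{T}$: the Laplacian term $\int_{\mathbb{T}} \Delta \omega\,dx$ vanishes by the divergence theorem on the torus (or, for the weak solution, by testing with the constant function $1 \in H^1(\mathbb{T})$, which is a legitimate test function since $H^1_0(\mathbb{T}) = H^1(\mathbb{T})$). Using the regularity $\omega'\in L^2(0,T;\mathbb{L}^2_3)$ from Proposition \ref{ImprovedRegularity}, differentiation under the integral sign is justified, giving
\begin{equation*}
\frac{d}{dt}\int_{\mathbb{T}} \omega(x,t)\,dx = \int_{\mathbb{T}} f(x,t)\,dx.
\end{equation*}

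Now I would inspect each piece of $f$. The term $\beta\,\nabla\times \operatorname{div} J\tilde{\sigma}$ is a curl of a smooth periodic field, so its integral vanishes. The forcing $g = \beta \nabla\times \mathcal{F}$ has zero integral by assumption \eqref{gCondition}. The remaining two terms combine into the curl of a single vector field: by the identity used in deriving the vorticity formulation (see Appendix \ref{VorticityDerivation}),
\begin{equation*}
(J\tilde{u}\cdot\nabla)\tilde{\omega} + \Omega(J\tilde{u},\tilde{u}) = \nabla\times\bigl((J\tilde{u}\cdot\nabla)\tilde{u}\bigr),
\end{equation*}
whose integral over $\mathbb{T}$ is again zero. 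Thus $\int_{\mathbb{T}} f\,dx = 0$ for a.e.\ $t$, and the ODE above yields $\int_{\mathbb{T}}\omega(t)\,dx \equiv \int_{\mathbb{T}}\omega_0\,dx = 0$.

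The main (and only real) technical point is justifying that the weak solution machinery permits testing with the constant function and interchanging time differentiation with the spatial integral; this is standard given $\omega \in C([0,T];\mathbb{H}^1_3)$ with $\omega'\in L^2(0,T;\mathbb{L}^2_3)$ established in Proposition \ref{KeyTheorem}. All the other steps are algebraic identities combined with the fact that curls and Laplacians of periodic fields integrate to zero on $\mathbb{T}$.
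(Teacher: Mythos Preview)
Your proof is correct and follows essentially the same approach as the paper: both integrate the vorticity equation over $\mathbb{T}$, recognize that every term on the right-hand side is either a curl of a periodic field (in particular $(J\tilde{u}\cdot\nabla)\tilde{\omega} + \Omega(J\tilde{u},\tilde{u}) = \nabla\times((J\tilde{u}\cdot\nabla)\tilde{u})$) or has zero mean by assumption, and then use the regularity $\omega' \in L^2(0,T;\mathbb{L}^2_3)$ together with $\int_\mathbb{T}\omega_0\,dx=0$ to conclude. The paper phrases the last step via the integral identity $\omega(t)=\omega_0+\int_0^t\omega'(s)\,ds$ rather than as an ODE for $\int_\mathbb{T}\omega\,dx$, but this is the same argument.
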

\begin{proof} 
First notice that since $\omega_0 = \nabla \times u_0$ we have
\begin{equation*}
    \int_\mathbb{T} \omega_0 \; dx = 0
\end{equation*}
by applying Stokes' theorem. Next, we take \eqref{Vorticity}, take the inner product with the vector $\mathbbm{1}=(1, 1, 1)$ and integrate over space to obtain
\begin{equation*}
    \int_{\mathbb{T}} \omega' \; dx + \alpha \int_\mathbb{T} \nabla \omega \cdot \nabla \mathbbm{1} \; dx = \int_\mathbb{T} \nabla \times \left( \beta \text{div} J \tilde{\sigma} - \beta \nabla p - (J\tilde{u} \cdot \nabla ) \tilde{u}  \right) \; dx + \int_\mathbb{T} g \; dx
\end{equation*}
and so
\begin{equation*}
    \int_\mathbb{T} \omega'(s) \; dx =0
\end{equation*}
for all $0 \leq s \leq T$, 
by again using Stokes' theorem and the fact that we assume $\int_\mathbb{T} g \; dx =0$. Next, by our estimates we have that $\omega \in H^1(0, T; \mathbb{L}^2_3 )$ and so by \S 5.9 in \cite{evans1997partial},  
\begin{equation}\label{21}
\omega(t)= \omega_0 + \int_0^t \omega'(s) \; ds
\end{equation}
and taking this equation and integrating it in space we obtain
\begin{equation*}
\int_\mathbb{T} \omega(t) \; dx = \int_\mathbb{T} \omega_0 \; dx + \int_\mathbb{T} \left( \int_0^t \omega'(s) \; ds   \right) \; dx = \int_0^t \left( \int_\mathbb{T} \omega'(s) \; dx \right) \, ds = 0,
\end{equation*}
completing the proof.  
\end{proof} 
With this in hand, we can now obtain our velocity. 
\begin{proposition}\label{VelocityFromVorticity}
Let $\omega$ be the solution of \eqref{Vorticity}. Then the elliptic problem
\begin{equation}\label{VelocityProblem}
\Delta F - \nabla (\nabla \cdot F ) = \omega \quad \mbox{ subject to } \int_{\mathbb{T}} F \, dx = 0
\end{equation}
has a unique solution for each $0\leq t \leq T$,  satisfying
\begin{equation}\label{Festimate}
\|F\|_{\mathbb{H}^3_3}^2 \leq C_E \|\omega \|_{\mathbb{H}^1_3}^2, 
\end{equation}
for some elliptic constant $C_E$. Furthermore, if we define
\begin{equation*}
u \coloneqq - \nabla \times F
\end{equation*} then this velocity $u$ belongs to $C([0, T]; \overbar{\mathbb{H}}^2_3)$ and it satisfies
\begin{equation*}
\nabla \cdot u = 0, \quad \nabla \times u = \omega, \quad \mbox{ and } \quad \int_\mathbb{T} u \; dx = 0,  
\end{equation*}
along with the estimate
\begin{equation}\label{Uestimate}
\|u(t)\|_{\overbar{\mathbb{H}}^2_3 }^2 \leq \tilde{C} \|\omega(t)\|_{\mathbb{H}^1_3}^2, 
\end{equation}
for some constant $\tilde{C}$ independent of $t$.
\end{proposition}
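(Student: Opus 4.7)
The plan is to reduce \eqref{VelocityProblem} to three decoupled scalar Poisson equations on $\mathbb{T}$, solve them via standard elliptic theory, verify a posteriori that the resulting $F$ is divergence-free, and then recover $u$ as $-\nabla \times F$. The whole construction rests on two preliminary facts about $\omega$: that it has zero mean (which is Proposition \ref{AverageOmega}) and that it is divergence-free for each $t$. The latter is not recorded earlier, so I would first establish it: from $\omega_0 = \nabla \times u_0$ we have $\nabla \cdot \omega_0 = 0$, and the forcing $f$ on the right-hand side of \eqref{Vorticity} is a sum of three curls---$\beta \nabla \times \mathrm{div}\, J\tilde\sigma$, the force $g = \beta \nabla \times \mathcal{F}$, and, by the identity used to derive \eqref{RegularizedVorticityForm}, $-[(J\tilde u \cdot \nabla)\tilde\omega + \Omega(J\tilde u, \tilde u)] = -\nabla \times\bigl((J\tilde u \cdot \nabla)\tilde u\bigr)$---so $\nabla \cdot f \equiv 0$. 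Taking the divergence of \eqref{Vorticity}, the scalar $\nabla \cdot \omega$ satisfies a homogeneous heat equation with zero initial data, and hence $\nabla \cdot \omega \equiv 0$.

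With this in hand, for each $i$ I would solve the scalar Poisson problem $\Delta F^i = \omega^i$ on $\mathbb{T}$ with $\int_\mathbb{T} F^i \, dx = 0$, which is uniquely solvable because $\omega^i$ has zero mean. The standard elliptic estimate on the torus, most transparently expressed through Fourier series ($\hat F_k = -\hat\omega_k/(4\pi^2|k|^2)$ for $k \in \mathbb{Z}^3 \setminus \{0\}$, with $|k| \geq 1$), yields $\|F\|_{\mathbb{H}^3_3}^2 \leq C_E \|\omega\|_{\mathbb{H}^1_3}^2$, which is \eqref{Festimate}. The resulting $F$ is automatically divergence-free: $\nabla \cdot F$ has zero mean by periodicity and satisfies $\Delta(\nabla \cdot F) = \nabla \cdot \omega = 0$, so by uniqueness $\nabla \cdot F \equiv 0$; consequently $\Delta F - \nabla(\nabla \cdot F) = \Delta F = \omega$, so $F$ is a solution of \eqref{VelocityProblem}. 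Any other solution differs from this $F$ by a gradient, which is annihilated by the curl in the next step, so the velocity produced below is unambiguous.

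Setting $u := -\nabla \times F$, the vector identities $\nabla \cdot (\nabla \times F) = 0$, $\nabla \times (\nabla \times F) = \nabla(\nabla \cdot F) - \Delta F = -\omega$, and $\int_\mathbb{T} \nabla \times F \, dx = 0$ by periodicity deliver the three stated properties of $u$. Boundedness of $\nabla \times : \mathbb{H}^3_3 \to \mathbb{H}^2_3$ together with \eqref{Festimate} yields \eqref{Uestimate}, and because the linear solution map $\omega(t) \mapsto F(t) \mapsto u(t)$ is bounded from $\mathbb{H}^1_3$ into $\overbar{\mathbb{H}}^2_3$, Proposition \ref{KeyTheorem} transfers continuity in $t$, giving $u \in C([0,T]; \overbar{\mathbb{H}}^2_3)$. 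I expect the main subtlety to be the divergence-free property of $\omega$, since it requires unpacking the structure of $f$ and invoking the curl identity from Appendix \ref{VorticityDerivation}; once that is in place, the remaining ingredients are routine elliptic theory on the torus.
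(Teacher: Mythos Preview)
Your proof is correct and in fact more careful than the paper's. The paper proceeds by directly asserting that the vector elliptic problem $\Delta F - \nabla(\nabla\cdot F) = \omega$ with $\int_{\mathbb{T}} F\,dx = 0$ ``is well known'' to have a unique solution if and only if $\int_{\mathbb{T}}\omega\,dx = 0$, then computes $\nabla\times u = -\nabla\times(\nabla\times F) = \Delta F - \nabla(\nabla\cdot F) = \omega$ using the system itself. You instead reduce to three scalar Poisson problems $\Delta F^i = \omega^i$, which requires you to first prove $\nabla\cdot\omega = 0$ so that a posteriori $\nabla\cdot F = 0$ and the original system is recovered. This extra work pays off: the operator $\Delta - \nabla\nabla\cdot = -\nabla\times\nabla\times$ has all gradients in its kernel and only divergence-free fields in its range, so the paper's bare claim of unique solvability from the zero-mean condition alone is incomplete---solvability genuinely requires $\nabla\cdot\omega = 0$, and uniqueness fails modulo gradients. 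Your argument supplies the missing divergence-free verification (via the observation that $f$ is a sum of curls, so $\nabla\cdot\omega$ solves a homogeneous heat equation with zero data) and handles the gradient ambiguity explicitly by noting it is annihilated by the final curl. The remaining steps---the vector identities for $u$, the $\mathbb{H}^2_3$ estimate, and the transfer of time-continuity---are the same in both proofs.
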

\begin{proof} 
It is well known that \eqref{VelocityProblem} has a unique solution on $\mathbb{T}$ if and only if the right hand side satisfies
\begin{equation*}
\int_\mathbb{T} \omega \; dx = 0,
\end{equation*}
which we established in Proposition \ref{AverageOmega} for each $t$. Therefore we can solve \eqref{VelocityProblem} for each $t$ to obtain a function $F=F(t)$. Furthermore, the solution satisfies \eqref{Festimate} for some constant $C_E$ independent of $t$. We can now consider $F$ as a periodic function in $\mathbb{R}^3$ and apply the usual vector identities. We define $u=u(t) \coloneqq -\nabla \times F(t)$ which is of course periodic and thus can be pulled back to give a velocity on the torus. We have
\begin{equation*}
\nabla \cdot u = -\nabla \cdot (\nabla \times F) = 0
\quad \mbox{ and } \quad \nabla \times u = - \nabla \times (\nabla \times F)=\Delta F - \nabla (\nabla \cdot F ) = \omega 
\end{equation*}
as desired. Pulling back the velocity to the torus, or simply using the periodicity, we obtain
\begin{equation*}
\int_{\mathbb{T}} u \; dx = - \int_{\mathbb{T}} \nabla \times F \; dx = 0.
\end{equation*}
We have that 
\begin{equation*}
\|u\|_{\mathbb{H}^2_3}^2 = \|\nabla \times F \|_{\mathbb{H}^2_3}^2 \leq C_1 \|F\|_{\mathbb{H}^3_3}^2, 
\end{equation*}
for some constant $C_1$ independent of $t$ (basically this constant just counts up the number of partial derivatives of $F$ appearing in the definition of $u$) and thus combining this with \eqref{Festimate} we obtain \eqref{Uestimate} for the constant $\tilde{C} = C_1C_E$ independent of $t$. Therefore, since $\omega \in C([0, T]; \mathbb{H}^1_3)$ and $u$ is incompressible, we obtain \begin{equation*}
u \in C([0, T]; \overbar{\mathbb{H}}^2_3)
\end{equation*}
as desired, completing the proof. 
\end{proof} 
We shall now obtain the time derivative $u'$ and investigate which space it lies in. 
\begin{proposition}
Let $\omega'$ be the weak time derivative of the solution of \eqref{Vorticity}. Then the elliptic problem
\begin{equation}\label{DerivativeVelocityProblem}
\Delta G - \nabla (\nabla \cdot G ) = \omega' \quad \mbox{ subject to } \int_{\mathbb{T}} G \; dx = 0, 
\end{equation}
has a unique solution for each $0\leq t \leq T$, satisfying
\begin{equation}\label{Festimate}
\|G\|_{\mathbb{H}^2_3}^2 \leq C_E \|\omega' \|_{\mathbb{L}^2_3}^2, 
\end{equation}
for some elliptic constant $C_E$. Furthermore if we define $a$ as follows: 
\begin{equation}
a := - \nabla \times G, 
\end{equation}
then it holds that 
\begin{equation*}
a = u'
\end{equation*}
which furthermore satisfies
\begin{equation}\label{PropertiesUPrime}
\nabla \cdot u' = 0, \quad \int_\mathbb{T} u' \; dx = 0 \quad \mbox{ and } \quad \nabla \times u' = \omega', 
\end{equation}
along with the estimate
\begin{equation}\label{UPrimeEstimate1}
\|u'(t)\|_{\overbar{\mathbb{H}}^1_3 }^2 \leq \tilde{C} \| \omega'(t) \|_{\mathbb{L}^2_3}^2
\end{equation}
where $\tilde{C}$ is the constant given in Proposition \ref{VelocityFromVorticity}, and thus  $u'\in L^2(0, T; \overbar{\mathbb{H} }^1_3)$.
\end{proposition}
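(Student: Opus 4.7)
The plan is to reuse the architecture of Proposition \ref{VelocityFromVorticity} but at one less order of spatial regularity, and then to identify $a$ with the weak time derivative $u'$ by exploiting the linearity of the elliptic solution operator. First I would verify the compatibility condition for the elliptic problem: the argument in Proposition \ref{AverageOmega} established, along the way, that $\int_{\mathbb{T}} \omega'(s)\, dx = 0$ for a.e.\ $s \in [0,T]$ (this was the key identity used to conclude $\int_{\mathbb{T}} \omega(t)\, dx = 0$). Hence for a.e.\ $t$ the right-hand side of \eqref{DerivativeVelocityProblem} has zero mean, so standard elliptic theory on $\mathbb{T}$ yields a unique $G(t)$ satisfying \eqref{Festimate}, with the elliptic constant $C_E$ independent of $t$.

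Next, with $a := -\nabla \times G$, the vector identity $-\nabla \times (\nabla \times G) = \Delta G - \nabla(\nabla \cdot G)$ together with $\nabla \cdot (\nabla \times G) = 0$ and periodicity gives immediately $\nabla \cdot a = 0$, $\nabla \times a = \omega'$, and $\int_{\mathbb{T}} a\, dx = 0$, which are the three identities in \eqref{PropertiesUPrime} modulo the identification $a = u'$. The bound $\|a(t)\|_{\overbar{\mathbb{H}}^1_3}^2 \leq C_1 \|G(t)\|_{\mathbb{H}^2_3}^2 \leq \tilde{C}\|\omega'(t)\|_{\mathbb{L}^2_3}^2$ follows exactly as in Proposition \ref{VelocityFromVorticity}, with the same constant $\tilde{C} = C_1 C_E$.

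The main obstacle, and really the only non-routine step, is to show $a = u'$ in the sense of weak time derivatives. The clean way is to observe that the map $\mathcal{T}$ sending a mean-zero vector field $\zeta$ to $-\nabla \times (\Delta - \nabla (\nabla \cdot))^{-1}\zeta$ (with the mean-zero normalization on the inverse) is a bounded linear operator from $\mathbb{L}^2_3$ into $\overbar{\mathbb{H}}^1_3$, and by Proposition \ref{VelocityFromVorticity} we have $u(t) = \mathcal{T}(\omega(t))$ pointwise in $t$. Bounded linear operators commute with weak time differentiation: for any scalar $\varphi \in C_c^\infty(0,T)$,
\begin{equation*}
\int_0^T \varphi'(t)\, u(t)\, dt = \mathcal{T}\!\left(\int_0^T \varphi'(t)\, \omega(t)\, dt\right) = -\mathcal{T}\!\left(\int_0^T \varphi(t)\, \omega'(t)\, dt\right) = -\int_0^T \varphi(t)\, a(t)\, dt,
\end{equation*}
where the first and third equalities use boundedness of $\mathcal{T}$ to pull it inside the Bochner integral and the middle equality uses that $\omega'$ is the weak time derivative of $\omega$ established in Proposition \ref{ImprovedRegularity}. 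This is precisely the definition of $u'$, so $a = u'$.

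Finally, to conclude $u' \in L^2(0,T;\overbar{\mathbb{H}}^1_3)$, I would square the pointwise estimate \eqref{UPrimeEstimate1} and integrate in time, invoking \eqref{IED2} from Proposition \ref{ImprovedRegularity} to bound $\|\omega'\|_{L^2(0,T;\mathbb{L}^2_3)}^2$ by the data. The combined inequality
\begin{equation*}
\|u'\|_{L^2(0,T;\overbar{\mathbb{H}}^1_3)}^2 \leq \tilde{C}\,(2+\alpha)\left[\|\omega_0\|_{\mathbb{H}^1_3}^2 + \|f\|_{L^2(0,T;\mathbb{L}^2_3)}^2\right]
\end{equation*}
closes the argument and will also be useful later when we set up the contraction in $X$.
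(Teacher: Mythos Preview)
Your proposal is correct and follows essentially the same approach as the paper, which simply states that the properties \eqref{PropertiesUPrime} and \eqref{UPrimeEstimate1} follow exactly as in Proposition~\ref{VelocityFromVorticity} and that ``it is easy to check directly that $a$ satisfies the definition of $u'$.'' Your argument via the bounded linear solution operator $\mathcal{T}$ commuting with Bochner integration is precisely the natural way to carry out that direct check, so you have supplied the details the paper omits.
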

\begin{proof} The properties \eqref{PropertiesUPrime} and \eqref{UPrimeEstimate1} and all follow exactly as in Proposition \ref{VelocityFromVorticity}. It is then easy to check directly that $a$ satisfies the definition of $u'$. 
\end{proof} 
 
 We can summarize what we have obtained so far as follows:

\begin{proposition} \label{uExistence}
    There exists an incompressible $u$ with $u\in C([0, T]; \overbar{\mathbb{H}}^2_3)$ and $u'\in L^2(0, T; \overbar{\mathbb{H} }^1_3)$ satisfying the estimates
    \begin{subeqnarray}
\max_{0\leq t \leq T} \|u(t) \|_{\overbar{\mathbb{H}}^2_2}^2 &\leq& 4C\left( 1+ e^{(1+2\alpha)T}   \right) \| u_0 \|_{\overbar{\mathbb{H}}^2_2}^2 + C\left( \frac{1}{\alpha} + e^{(1+2\alpha)T}  \right) \|f \|_{L^2(0, T; L^2(\mathbb{T})) }^2 
\slabel{IED4} \\ 
\|u'\|_{L^2(0, T; \mathbb{H}^1_2) }^2 &\leq& C(2+\alpha ) \left[ 4 \|u_0\|_{\overbar{\mathbb{H} }^2}^2 + \|f \|_{L^2(0, T; L^2(\mathbb{T})) }^2  \right ]\slabel{IED5}. 
\end{subeqnarray} 
\end{proposition}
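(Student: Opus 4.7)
The proposition is essentially a bookkeeping corollary that stitches together the three preceding results: Proposition~\ref{KeyTheorem} (which controls $\max_t \|\omega(t)\|_{\mathbb{H}^1_3}$), Proposition~\ref{ImprovedRegularity} (specifically \eqref{IED2}, which controls $\|\omega'\|_{L^2(0,T;\mathbb{L}^2_3)}$), and Propositions~\ref{VelocityFromVorticity}–\eqref{UPrimeEstimate1} (which transfer these vorticity estimates to velocity estimates via the elliptic solve). So the plan is to simply chain these pointwise-in-time bounds, square/integrate as appropriate, and then bound $\|\omega_0\|_{\mathbb{H}^1_3}$ in terms of $\|u_0\|_{\overbar{\mathbb{H}}^2_3}$ using \eqref{EstimateForOmega}. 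The existence of $u$ in the claimed space has already been produced by Proposition~\ref{VelocityFromVorticity}, so no new construction is needed.

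First I would argue the bound \eqref{IED4}. Starting from the pointwise estimate $\|u(t)\|_{\overbar{\mathbb{H}}^2_3}^2 \leq \tilde{C}\,\|\omega(t)\|_{\mathbb{H}^1_3}^2$ provided by \eqref{Uestimate}, I take the maximum over $t \in [0,T]$, which by Proposition~\ref{KeyTheorem} yields
\begin{equation*}
\max_{0 \leq t \leq T} \|u(t)\|_{\overbar{\mathbb{H}}^2_3}^2 \;\leq\; \tilde{C}\left(1 + e^{(1+2\alpha)T}\right)\|\omega_0\|_{\mathbb{H}^1_3}^2 \;+\; \tilde{C}\left(\tfrac{1}{\alpha} + e^{(1+2\alpha)T}\right)\|f\|_{L^2(0,T;\mathbb{L}^2_3)}^2.
\end{equation*}
Then \eqref{EstimateForOmega} gives $\|\omega_0\|_{\mathbb{H}^1_3}^2 \leq 4\|u_0\|_{\overbar{\mathbb{H}}^2_3}^2$, and absorbing $\tilde{C}$ and the factor of $4$ into a generic constant $C$ yields \eqref{IED4}.

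For \eqref{IED5}, I would start from \eqref{UPrimeEstimate1}, namely $\|u'(t)\|_{\overbar{\mathbb{H}}^1_3}^2 \leq \tilde{C}\,\|\omega'(t)\|_{\mathbb{L}^2_3}^2$, and integrate both sides over $[0,T]$ to obtain
\begin{equation*}
\|u'\|_{L^2(0,T;\overbar{\mathbb{H}}^1_3)}^2 \;\leq\; \tilde{C}\,\|\omega'\|_{L^2(0,T;\mathbb{L}^2_3)}^2.
\end{equation*}
Applying \eqref{IED2} from Proposition~\ref{ImprovedRegularity} bounds the right side by $\tilde{C}(2+\alpha)[\|\omega_0\|_{\mathbb{H}^1_3}^2 + \|f\|_{L^2(0,T;\mathbb{L}^2_3)}^2]$, and one final use of $\|\omega_0\|_{\mathbb{H}^1_3}^2 \leq 4\|u_0\|_{\overbar{\mathbb{H}}^2_3}^2$ delivers \eqref{IED5} after renaming constants.

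There is no real obstacle here: the existence and regularity of $u$ and $u'$, together with incompressibility, have already been verified in the previous two propositions, and the only content of Proposition~\ref{uExistence} is to package the combined estimates in terms of the original data $(u_0, f)$ rather than $(\omega_0, f)$. The one thing to keep an eye on is matching constants carefully so that the prefactors displayed in \eqref{IED4}–\eqref{IED5} are consistent with those in Propositions~\ref{KeyTheorem} and \ref{ImprovedRegularity}; this is purely a bookkeeping exercise.
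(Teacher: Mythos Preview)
Your proposal is correct and follows essentially the same approach as the paper: the paper's proof simply says to collect the previous estimates and use \eqref{EstimateForOmega} to rewrite everything in terms of $u_0$, which is precisely the chaining of Propositions~\ref{KeyTheorem}, \ref{ImprovedRegularity}, \ref{VelocityFromVorticity}, and the bound \eqref{UPrimeEstimate1} that you outline.
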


\begin{proof}
We collect all of our previous estimates and use \eqref{EstimateForOmega} to write them in terms of $u_0$.
\end{proof}

\subsection{Solving for the Conformation Tensor ($\epsilon>0$)} \label{SolvingForConformationTensor}
We solve the equation  
\begin{subeqnarray}\label{sigmaeq}
\partial_t \sigma - \epsilon \Delta \sigma &=& \delta I - \left(J\tilde{u} \cdot \nabla \right) \tilde{\sigma}  + \left(\nabla J\tilde{u}\right) \tilde{\sigma} +\tilde{\sigma} \left(\nabla J\tilde{u}\right)^T  - \gamma \tilde{\sigma}  \\
\sigma(0) &=& \sigma_0 \in \mathbb{H}^2_9, 
\end{subeqnarray}
which can be thought of as a system of nine equations for the nine components of $\sigma$.
By defining the right hand side as 
\begin{equation*}
\tilde{f} = \delta I - \left(J\tilde{u} \cdot \nabla \right) \tilde{\sigma}  + \left(\nabla J\tilde{u}\right) \tilde{\sigma} +\tilde{\sigma} \left(\nabla J\tilde{u}\right)^T  - \gamma \tilde{\sigma}, \end{equation*}
we have that $\tilde{f}\in C([0, T]; \mathbb{H}^1_9)$ as long as $\tilde{u} \in C([0, T]; \overbar{\mathbb{H}}^2_3)$ and $\tilde{\sigma} \in C([0, T]; \mathbb{H}^2_9)$. We can thus obtain:

\begin{proposition}[Existence and Regularity for $\sigma$] \label{ImprovedRegularitySigma} The problem \eqref{sigmaeq} possesses a unique weak solution with $\sigma \in C([0, T]; \mathbb{H}^2_9)$ and $\sigma'\in L^2(0, T; \mathbb{H}^1_9)$ which satisfies the estimates
\begin{subeqnarray} \label{SigmaEstimates}
\max_{0\leq t \leq T} \|\sigma(t) \|_{\mathbb{H}^2_9}^2 &\leq& \left( 1+ e^{(1+2\epsilon)T} \right) \|\sigma_0 \|_{\mathbb{H}^2_9}^2 + \left( \frac{1}{\epsilon} + e^{(1+2\epsilon)T} \right) \|\tilde{f}\|_{L^2(0, T; \mathbb{H}^1_9) } \slabel{IEDSF1} \\ 
\|\sigma'\|_{L^2(0, T; \mathbb{H}^1_9) }^2 &\leq& (2+\epsilon ) \left[ \|\sigma_0\|_{\mathbb{H}^2_9}^2 + \| \tilde{f}\|_{L^2(0, T; \mathbb{H}^1_9) }^2  \right ] \slabel{IEDSF2} \\ 
\|\sigma\|_{L^2(0, T; \mathbb{H}^3_9) }^2 &\leq& C\left( 1+ Te^{(1+2\epsilon)T} \right)\|\sigma_0 \|_{\mathbb{H}^2_9}^2 + C\left( 1 + e^{(1+2\epsilon)T}   \right)\|\tilde{f}\|_{L^2(0, T; \mathbb{H}^1_9) }^2.  \slabel{IEDSF3}
\end{subeqnarray}
where $C$ is an elliptic constant depending only on $\epsilon$ and the geometry of the torus and is thus independent of $T$.
\end{proposition}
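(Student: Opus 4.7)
The plan is to reduce this proposition to the scalar parabolic results already established earlier, namely Propositions \ref{ExistenceOfWeakSolution}, \ref{ImprovedRegularity0}, and \ref{KeyTheorem}, and then to bootstrap by one derivative. Since \eqref{sigmaeq} is a decoupled linear system in the nine scalar components of $\sigma$ (the right-hand side $\tilde f$ does not depend on the unknown $\sigma$), it suffices to work with a scalar equation of the form $\partial_t V - \epsilon \Delta V = \tilde h$ on $\mathbb{T}$ with $V(0) = V_0$, where $V_0$ is a component of $\sigma_0 \in H^2(\mathbb{T})$ and $\tilde h \in L^2(0, T; H^1(\mathbb{T}))$ is the corresponding component of $\tilde f$. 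Applying Proposition \ref{ExistenceOfWeakSolution} (with $\alpha$ replaced by $\epsilon$) produces a weak solution, and Propositions \ref{ImprovedRegularity0} and \ref{KeyTheorem} upgrade it to $V \in C([0,T]; H^1)$, $V \in L^2(0, T; H^2)$, $V' \in L^2(0, T; L^2)$ with the quantitative estimates stated there.

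The key step is the bootstrap that gains one additional spatial derivative. Because $\mathbb{T}$ is a closed manifold and the operator $\partial_t - \epsilon \Delta$ has constant coefficients, spatial differentiation commutes with the equation, so for each $k \in \{1, 2, 3\}$ the function $\partial_k V$ satisfies, in the weak sense,
\begin{equation*}
\partial_t(\partial_k V) - \epsilon \Delta(\partial_k V) = \partial_k \tilde h, \qquad (\partial_k V)(0) = \partial_k V_0.
\end{equation*}
Now $V_0 \in H^2$ gives $\partial_k V_0 \in H^1$ and $\tilde h \in L^2(0,T; H^1)$ gives $\partial_k \tilde h \in L^2(0, T; L^2)$, exactly the hypotheses of Propositions \ref{ImprovedRegularity0} and \ref{KeyTheorem}. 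Hence $\partial_k V \in C([0,T]; H^1) \cap L^2(0, T; H^2)$ with $(\partial_k V)' \in L^2(0, T; L^2)$, together with the quantitative estimates of those propositions applied to $\partial_k V_0$ and $\partial_k \tilde h$. Combined with the base estimates for $V$ itself, this yields $V \in C([0, T]; H^2)$, $V \in L^2(0,T; H^3)$, and $V' \in L^2(0, T; H^1)$, with constants inherited (with $\epsilon$ in place of $\alpha$) from the earlier propositions.

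Finally, summing the three derivative estimates plus the base estimate over $k$ and over the nine scalar components recovers precisely \eqref{IEDSF1}--\eqref{IEDSF3}, since $\sum_{k, i, j} \|\partial_k (\sigma_0)_{ij}\|_{H^1}^2$ is controlled by $\|\sigma_0\|_{\mathbb{H}^2_9}^2$ and $\sum_{k, i, j} \|\partial_k \tilde f_{ij}\|_{L^2(0,T; L^2)}^2$ is controlled by $\|\tilde f\|_{L^2(0, T; \mathbb{H}^1_9)}^2$. Uniqueness descends from uniqueness of each scalar weak problem in Proposition \ref{ExistenceOfWeakSolution}. The main obstacle I anticipate is purely bookkeeping: tracking that the $\epsilon$-dependent prefactors come out in the exact form stated, and justifying the commutation of $\partial_k$ with the equation at the level of weak solutions. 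The latter is essentially automatic on the torus, where $C^\infty_c$ test functions are closed under $\partial_k$ (up to sign) and no boundary terms arise, so no distributional subtleties appear; only the bookkeeping of the nine-component sum remains.
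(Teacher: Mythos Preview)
Your proposal is correct and follows essentially the same approach as the paper: apply the scalar parabolic estimates (with $\epsilon$ in place of $\alpha$) to each component to get $H^1$-level control, then differentiate the equation in space and reapply the same estimates to $\partial_k \sigma$ with data $\partial_k \sigma_0$ and forcing $\partial_k \tilde f$, summing at the end. The paper phrases the bootstrap as solving an auxiliary problem for $v_i$ and then identifying $v_i = \partial_i \sigma$, which is exactly your commutation-of-$\partial_k$ argument stated slightly differently.
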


\begin{proof}
Since \eqref{sigmaeq} has the same form as \eqref{Vorticity} we obtain similar estimates for $\sigma$:
\begin{subeqnarray}
\esssup_{0\leq t \leq T} \|\sigma(t) \|_{\mathbb{H}^1_9}^2 &\leq& \left( 1+ e^{(1+2\epsilon)T}   \right) \|\sigma_0 \|_{\mathbb{H}^1_9}^2 + \left( \frac{1}{\epsilon} + e^{(1+2\epsilon)T} \right) \| \tilde{f}\|_{L^2(0, T; \mathbb{L}^2_9) }^2  \slabel{IEDS1} \\ 
\|\sigma'\|_{L^2(0, T; \mathbb{L}^2_9) }^2 &\leq& (2+\epsilon ) \left[ \|\sigma_0\|_{\mathbb{H}^1_9}^2 + \|\tilde{f}\|_{L^2(0, T; \mathbb{L}^2_9) }^2 \right ] \slabel{IEDS2} \\ 
\|\sigma\|_{L^2(0, T; \mathbb{H}^2_9) }^2 &\leq& C \left( 1+ Te^{(1+2\epsilon)T} \right) \|\sigma_0 \|_{\mathbb{H}^1_9}^2 + C\left( 1 + e^{(1+2\epsilon)T}   \right) \|\tilde{f}\|_{L^2(0, T; \mathbb{L}^2_9) }^2,  \slabel{IEDS3} 
\end{subeqnarray}
for some constant $C$ depending only on $\epsilon$ and the geometry of the torus. However, these are not good enough because we want the components of $\sigma$ to be in $H^2$, not $H^1$. Now the trick is to notice that if $\tilde{f}\in C([0, T]; \mathbb{H}^1_9)$ then $\partial_i \tilde{f}= \partial_{x_i} \tilde{f} \in C([0, T]; \mathbb{L}^2_9)$. Therefore the same estimates as above can be used to solve the equations
\begin{subeqnarray*}
\partial_t v_i - \epsilon \Delta v_i &=& \partial_i \tilde{f} \\
v_i(0) &=& \partial_i \sigma_0, 
\end{subeqnarray*}  
with the same sorts of estimates. But it is easy to see that $v_i = \partial_i \sigma$ and $v_i'=\partial_i \sigma'$, which allows us to conclude
\begin{equation*}
\sigma \in L^2(0, T; \mathbb{H}^3_9), \quad \sigma' \in L^2(0, T; \mathbb{H}^1_9)
\end{equation*}
and so by results in \S 5.9 in \cite{evans1997partial},  
we conclude that $\sigma \in C([0, T]; \mathbb{H}^2_9)$. Furthermore, the above estimates for $\sigma$ can be added to similar estimates for $v_i$ to obtain the estimates \eqref{SigmaEstimates}.
\end{proof}

\subsection{The Contraction Mapping and Short Time Existence}
With these estimates, establishing the existence of a solution using the contraction mapping principle becomes easy. As before define
\begin{equation*}
X=X(T)=C([0, T]; \overbar{\mathbb{H}}^2_3) \times C([0, T]; \mathbb{H}^2_9), \quad \mbox{ and } \quad Y=\overbar{\mathbb{H}}^2_3 \times \mathbb{H}^2_9, 
\end{equation*}
equipped with the norms
\begin{equation*}
\|(v, w)\|^2_X = \max_{0\leq t \leq T} \left( \|v(t) \|_{\overbar{\mathbb{H}}^2_3}^2  \|w(t)\|_{\mathbb{H}^2_9}^2 \right) \quad \mbox{ and } \quad 
\|(a, b)\|^2_Y= \|a \|_{\overbar{\overbar{\mathbb{H}}}^2_3}^2 + \|b \|_{\mathbb{H}^2_9}^2.
\end{equation*}
Fixing the initial data $(u_0, \sigma_0)\in Y$, and taking $(\tilde{u}, \tilde{\sigma})\in X$ we solve the system of equations \eqref{MMollifiedFormulation} to obtain solutions $u$ and $\sigma$ which by Theorems \ref{uExistence} and \ref{ImprovedRegularitySigma} satisfy $(u, \sigma)\in X$. We can thus consider this as a mapping
\begin{equation*}
\Phi: X \rightarrow X \quad \mbox{ defined by } \quad \Phi(\tilde{u}, \tilde{\sigma}) = \left( u, \sigma  \right).
\end{equation*}
We also define
\begin{equation*}
	B_R= \left \{(v, w)\in X : \Vert (v, w) \Vert^2_X \leq R^2   \right \}.  
\end{equation*}
Next, we notice that if $(\tilde{u}, \tilde{\sigma}) \in B_R$, then we can estimate
\begin{equation*} 
\|f\|_{L^2(\mathbb{T})}^2 \leq K_1 (K_g^2+ R^2+ R^4) 
\end{equation*}
by \eqref{EstimateForf} for some appropriate constant $K_1$ only depending on $\beta$ and the choice of $J$. Similarly, for $\tilde{f}$ we have
\begin{equation*}
\|\tilde{f}\|_{\mathbb{H}^1_4}^2 \leq K_2 \left( 1 + R^2 + R^4  \right),  
\end{equation*}
where $K_2$ only depends on $\delta, \gamma$ and $J$. With these, we can estimate
\begin{subeqnarray}\label{B1}
\|f\|_{L^2(0, T; H^2(\mathbb{T})) }^2 &\leq& TK_1(K_g^2 +R^2+R^4), \slabel{B1} \\ 
\|\tilde{f}\|_{L^2(0, T; \mathbb{H}^1_4) }^2 &\leq& TK_2(1+R^2+R^4). \slabel{B2} 
\end{subeqnarray} 
Now it is quite easy to prove the following proposition: 
\begin{proposition}\label{SelfMap}
There exist $R$ and a $T_{u, \sigma}>0$, depending only on the initial data, such that the map $\Phi$ maps the ball $B_R\subset X(T)$ to itself for all $0\leq T \leq T_{u, \sigma}$
\end{proposition}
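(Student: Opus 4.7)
The plan is to combine the linear estimates already established (Proposition \ref{uExistence} for $u$ and Proposition \ref{ImprovedRegularitySigma} for $\sigma$) with the source-term bounds \eqref{B1}--\eqref{B2} to control $\|\Phi(\tilde u,\tilde\sigma)\|_X$ in terms of the initial data and $R$, and then choose $R$ and $T$ in the correct order.

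First I would substitute \eqref{B1} into \eqref{IED4} and \eqref{B2} into \eqref{IEDSF1} to bound the output on any $(\tilde u,\tilde\sigma)\in B_R$. After collecting constants, this yields an inequality of the schematic form
\begin{equation*}
\|\Phi(\tilde u,\tilde\sigma)\|_X^2 \;\le\; A(T)\,\|(u_0,\sigma_0)\|_Y^2 \;+\; B(T)\,T\,P(R),
\end{equation*}
where $A(T)$ and $B(T)$ are continuous, monotone increasing in $T$ (they involve the factors $1+e^{(1+2\alpha)T}$, $\frac{1}{\alpha}+e^{(1+2\alpha)T}$ and the $\epsilon$-analogues) and $P(R)=c_0(K_g^2+1)+c_1R^2+c_2R^4$ is a fixed polynomial whose coefficients depend only on $\alpha,\beta,\gamma,\delta,\epsilon$, the kernel constant $K$ from \eqref{VBound}, and the constants $K_1,K_2$ from \eqref{B1}--\eqref{B2}. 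Nothing here depends on $(\tilde u,\tilde\sigma)$ beyond the fact that it lies in $B_R$.

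Next I would fix a convenient reference time, say $T_0 = 1$, and choose
\begin{equation*}
R^2 \;:=\; 2\,A(T_0)\,\|(u_0,\sigma_0)\|_Y^2 \;+\; 1,
\end{equation*}
so that $R$ depends only on the initial data. Then for every $T\in(0,T_0]$ one has $A(T)\|(u_0,\sigma_0)\|_Y^2 \le A(T_0)\|(u_0,\sigma_0)\|_Y^2 \le R^2/2$. The remaining task is to make the source-term contribution $B(T)\,T\,P(R)$ at most $R^2/2$; since $B(T)$ is bounded on $[0,T_0]$ and $P(R)$ is now a fixed number, this is achieved by choosing
\begin{equation*}
T_{u,\sigma} \;:=\; \min\!\left\{T_0,\;\frac{R^2}{2\,B(T_0)\,P(R)}\right\}.
\end{equation*}
For any $T\le T_{u,\sigma}$ the two contributions add to at most $R^2$, giving $\Phi(B_R)\subset B_R$.

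I do not expect a genuine obstacle: the required estimates on $f$ and $\tilde f$ are already packaged in \eqref{B1}--\eqref{B2}, and the polynomial dependence $P(R)=O(R^4)$ is exactly the reason one needs to pay for it with a small $T$ rather than a small $R$. The only bit that deserves care is the order of quantifiers — $R$ must be chosen before $T_{u,\sigma}$, since $T_{u,\sigma}$ depends on $P(R)$ — and the observation that $A(T),B(T)$ remain bounded on the fixed interval $[0,T_0]$, so shrinking $T$ does not spoil the initial-data bound.
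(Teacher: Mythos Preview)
Your proposal is correct and follows essentially the same approach as the paper: both combine the linear estimates \eqref{IED4} and \eqref{IEDSF1} with the source bounds \eqref{B1}--\eqref{B2}, first fix $R$ in terms of the initial data alone, and then shrink $T$ to absorb the $T\cdot P(R)$ contribution. The only cosmetic difference is that the paper first picks $T'$ small enough that $e^{(1+2\alpha)T'}\le 2$ and $e^{(1+2\epsilon)T'}\le 2$ (making the constants explicit) before defining $R$, whereas you fix a reference time $T_0=1$ and bound $A(T),B(T)$ on $[0,T_0]$; the logic and the order of quantifiers are the same.
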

\begin{proof} 
We first choose a $T'>0$ small enough so that 
\begin{equation*}
e^{(1+2\epsilon)T'}\leq 2 \quad  \text{and} \quad e^{(1+2\alpha)T'}\leq 2
\end{equation*}
which then holds for all $0\leq T \leq T'$. Now define
\begin{equation}\label{Rformula}
	R^2=12C\Vert (u_0, \sigma_0)  \Vert_Y^2 + 3\Vert (u_0, \sigma_0)  \Vert_Y^2 +4 
\end{equation}   
and now choose a $0<T_{u, \sigma}\leq T'$ sufficiently small so that 
\begin{subeqnarray*}
T_{u, \sigma}C\left(\frac{1}{\alpha}+2\right)K_1(K_g^2 +R^2+R^4) \leq 1 \quad \mbox{ and } \quad T_{u, \sigma}\left(\frac{1}{\epsilon} +2  \right)K_2\left( 1+ R^2 +R^4  \right) \leq 1, 	
\end{subeqnarray*}
which holds for all $0\leq T \leq T_{u, \sigma}$. Therefore using \eqref{IED4}, \eqref{IEDSF1}, \eqref{B1}, and \eqref{B2} we conclude that 
\begin{equation*}
\|(u, \sigma)\|_{X(T)}^2 \leq R^2 
\end{equation*}
for all $0\leq T \leq T_{u, \sigma}$. This completes the proof.  
\end{proof}

Next, we show that $\Phi$ is a contraction mapping for all sufficiently small $T$. 
\begin{proposition} \label{Contraction}
	There exists a $T_c>0$, depending only on the initial data such that $\Phi$ is a contraction on the ball $B_R$. 
\end{proposition}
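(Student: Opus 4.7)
The plan is to use linearity of the equations defining $\Phi$ and estimate the difference of two images in terms of the difference of their inputs, exploiting that the initial data is the same. Take $(\tilde{u}_1,\tilde{\sigma}_1),(\tilde{u}_2,\tilde{\sigma}_2)\in B_R$ and set $(u_i,\sigma_i)=\Phi(\tilde{u}_i,\tilde{\sigma}_i)$, $\omega_i=\nabla\times u_i$. Writing $\delta u = u_1-u_2$, $\delta\omega=\omega_1-\omega_2$, $\delta\sigma=\sigma_1-\sigma_2$ and similarly $\delta\tilde{u},\delta\tilde{\omega},\delta\tilde{\sigma}$, we see that $(\delta\omega,\delta\sigma)$ solves the linear parabolic system
\begin{equation*}
\partial_t \delta\omega -\alpha\Delta\delta\omega = \delta f,\qquad \partial_t \delta\sigma -\epsilon\Delta\delta\sigma = \delta\tilde{f},
\end{equation*}
with $\delta\omega(0)=0$ and $\delta\sigma(0)=0$, where $\delta f=f_1-f_2$ and $\delta\tilde{f}=\tilde{f}_1-\tilde{f}_2$.

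The next step is to bound $\delta f$ and $\delta\tilde{f}$ by splitting each quadratic term in the telescoping fashion $A_1B_1-A_2B_2=(\delta A)B_1+A_2(\delta B)$. For instance
\begin{equation*}
(J\tilde{u}_1\cdot\nabla)\tilde{\omega}_1-(J\tilde{u}_2\cdot\nabla)\tilde{\omega}_2=(J\delta\tilde{u}\cdot\nabla)\tilde{\omega}_1+(J\tilde{u}_2\cdot\nabla)\delta\tilde{\omega},
\end{equation*}
and analogously for $\Omega(J\tilde{u},\tilde{u})$, $(\nabla J\tilde{u})\tilde{\sigma}$, $\tilde{\sigma}(\nabla J\tilde{u})^T$, $\gamma\tilde{\sigma}$, and $(J\tilde{u}\cdot\nabla)\tilde{\sigma}$. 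Since $(\tilde{u}_i,\tilde{\sigma}_i)\in B_R$ and $J$ is a smoothing convolution with the uniform bounds \eqref{VBound}, every such product is controlled by $R$ times the relevant difference, giving
\begin{equation*}
\|\delta f\|_{C([0,T];\mathbb{L}^2_3)}^2\le K_3(R)\,\|(\delta\tilde{u},\delta\tilde{\sigma})\|_X^2,\qquad \|\delta\tilde{f}\|_{C([0,T];\mathbb{H}^1_9)}^2\le K_4(R)\,\|(\delta\tilde{u},\delta\tilde{\sigma})\|_X^2,
\end{equation*}
for constants $K_3(R),K_4(R)$ depending only on $R$, $\beta,\gamma$, and $J$.

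Now apply Proposition \ref{KeyTheorem} to $\delta\omega$ and Proposition \ref{ImprovedRegularitySigma} to $\delta\sigma$, noting that the initial-data terms are zero. Combined with the elliptic recovery estimate \eqref{Uestimate} to pass from $\delta\omega$ to $\delta u$, and restricting to $T\le T'$ as in the proof of Proposition \ref{SelfMap} so the exponentials $e^{(1+2\alpha)T},e^{(1+2\epsilon)T}$ are bounded by $2$, we obtain
\begin{equation*}
\|(\delta u,\delta\sigma)\|_X^2\le C(R)\,T\,\|(\delta\tilde{u},\delta\tilde{\sigma})\|_X^2,
\end{equation*}
where $C(R)$ absorbs $K_3(R),K_4(R),\tilde{C},\alpha,\epsilon$ and all geometric constants. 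Choosing $T_c\le T_{u,\sigma}$ so that $C(R)T_c\le\tfrac14$ (say) makes $\Phi$ a strict contraction on $B_R\subset X(T)$ for every $0<T\le T_c$, and $T_c$ depends only on the initial data through $R$.

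The main obstacle is handling the $\Omega(J\tilde{u},\tilde{u})$ difference, since $\Omega$ is quadratic with mixed dependence on $Ju$ and $u$ and on their first derivatives; here I expect to rely on the fact that the difference of $J\tilde{u}$ factors can be controlled in a uniform norm by the $\mathbb{L}^2_3$ norm of $\delta\tilde{u}$ (via \eqref{VBound}), so that only one factor needs to absorb an $\mathbb{H}^2$-derivative. A similar remark applies to controlling $\|\delta\tilde{f}\|_{\mathbb{H}^1_9}$, where differentiating the product $(\nabla J\tilde{u})\tilde{\sigma}$ once forces us to use that $\|\tilde{\sigma}_i\|_{\mathbb{H}^2_9},\|\tilde{u}_i\|_{\overbar{\mathbb{H}}^2_3}\le R$ and that $J$ provides as many spatial derivatives on $\tilde{u}$ as needed without cost.
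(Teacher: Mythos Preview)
Your proposal is correct and follows essentially the same approach as the paper: subtract the two linear parabolic problems to get zero initial data, telescope each bilinear term (including $\Omega$) to extract one factor of the input difference with the other factor bounded by $R$, bound $\delta f$ in $\mathbb{L}^2_3$ and $\delta\tilde f$ in $\mathbb{H}^1_9$, and then apply the parabolic estimates \eqref{IED4}/\eqref{IEDSF1} (equivalently Proposition~\ref{KeyTheorem} and Proposition~\ref{ImprovedRegularitySigma}) together with \eqref{Uestimate} to pick up the factor of $T$. The paper's proof is the same up to notation, so there is nothing substantive to add.
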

\begin{proof} 
Recalling that the initial data $(u_0, \sigma_0)$ are fixed, we take $(\tilde{u}_1, \tilde{\sigma}_1), (\tilde{u}_2, \tilde{\sigma}_2) \in B_R$, and consider the corresponding solutions $(u_1, \sigma_1), (u_2, \sigma_2)\in B_R$. First consider the corresponding vorticities $\omega_1$ and $\omega_2$ and notice their difference $\bar{\omega}=\omega_2-\omega_1$ satisfies the equation
\begin{eqnarray}
\partial_t \bar{\omega} - \alpha \Delta \bar{\omega} = G, \quad \mbox{ with } \quad \bar{\omega}(0) = 0, 
\end{eqnarray}
where $G = f_2 - f_1$. Using the fact that we can write
\begin{eqnarray*}
\Omega(J\tilde{u}_2, \tilde{u}_2)-\Omega(J\tilde{u}_1, \tilde{u}_1) = \Omega(J\tilde{u}_2, \tilde{u}_2- \tilde{u}_1) + \Omega(J\tilde{u}_2-J\tilde{u}_1, \tilde{u}_1), 
\end{eqnarray*}
we can write $G$ as follows: 
\begin{eqnarray*}
G &=& \beta \nabla \times \text{div}(J\tilde{\sigma}_2-J\tilde{\sigma}_1) + \left(   J\tilde{u}_2 - J\tilde{u}_1 \right)\nabla \tilde{\omega}_1 + J\tilde{u}_2 \left( \nabla \tilde{\omega}_2 - \nabla \tilde{\omega}_1  \right) \\
&& +\Omega(J\tilde{u}_2, \tilde{u}_2- \tilde{u}_1) + \Omega(J\tilde{u}_2-J\tilde{u}_1, \tilde{u}_1).
\end{eqnarray*}
Notice that the forcing terms cancel. Similarly, let  $\bar{\sigma} = \sigma_2-\sigma_1$. Then, it satisfies the following equation: 
\begin{equation*}
\partial_t \bar{\sigma} - \epsilon \Delta \bar{\sigma} = \tilde{G}, \quad \mbox{ with } \quad \bar{\sigma}(0) = 0, 
\end{equation*}
where $\tilde{G} = \tilde{f}_2 - \tilde{f}_1$, which after some manipulations, can be written as follows: \begin{eqnarray*} 
\tilde{G} &=& \gamma(\tilde{\sigma}_2-\tilde{\sigma}_1) + \left( J\tilde{u}_2 \cdot \nabla \right) \left( \tilde{\sigma}_2-\tilde{\sigma}_1 \right) + \left[ J \left( \tilde{u}_2 - \tilde{u}_1 \right) \cdot \nabla  \right] \tilde{\sigma}_1 + \left[ \nabla \left( J \left( \tilde{u}_1 - \tilde{u}_2 \right)  \right) \right] \tilde{\sigma}_1 \\
&& +\left( \nabla J \tilde{u}_2 \right)\left( \tilde{\sigma}_1 - \tilde{\sigma}_2   \right) + \tilde{\sigma}_1 \left[  \nabla \left(  J \left(  \tilde{u}_1 - \tilde{u}_2 \right) \right)    \right]^T   + \left(  \tilde{\sigma}_1 - \tilde{\sigma}_2   \right) \left(  \nabla J\tilde{u}_2 \right)^T.
\end{eqnarray*}
Thus, we see that there exists a constant $C_R$ depending only on $R$ such that 
\begin{subeqnarray*}
\|G\|_{\mathbb{L}^2_3}^2 &\leq& C_R\| (\tilde{u}_2-\tilde{u}_1,  \tilde{\sigma}_2-\tilde{\sigma}_1)\|_X^2 \\ 
\|\tilde{G}\|_{\mathbb{H}^1_9(\mathbb{T})}^2 &\leq& C_R \| (\tilde{u}_2-\tilde{u}_1, \tilde{\sigma}_2-\tilde{\sigma}_1)\|_X^2.
\end{subeqnarray*}
Now choose some $0 < \theta<1$ and a $T_c>0$ such that $T_c\leq T_{u, \sigma}$ and such that 
\begin{equation*}
C T_c \left( \frac{1}{\alpha} + 2 \right)  C_R\leq \frac{\theta^2}{2} \quad \mbox{ and } \quad 
T_c \left(  \frac{1}{\epsilon} + 2   \right) C_R\leq \frac{\theta^2}{2}.
\end{equation*}
Using \eqref{IED4} and \eqref{IEDSF1} we conclude
\begin{equation*}
\max_{0\leq t \leq T_c} \|u_2(t)-u_1(t) \|_{\overbar{\mathbb{H}}^2_2} \leq \frac{\theta^2}{2} \|(\tilde{u}_2-\tilde{u}_1, \tilde{\sigma}_2-\tilde{\sigma}_1) \|_X^2  
\end{equation*}
and 
\begin{equation*}
\max_{0\leq t \leq T_c} \|\sigma_2(t)-\sigma_1(t) \|_{\mathbb{H}^2_4} \leq \frac{\theta^2}{2} \| (\tilde{u}_2-\tilde{u}_1, \tilde{\sigma}_2-\tilde{\sigma}_1) \|_X^2  
\end{equation*}
so putting these together we obtain
\begin{equation*}
\|(u_2-u_1, \sigma_2-\sigma_1)\|_X^2 \leq \theta^2 \| (\tilde{u}_2-\tilde{u}_1, \tilde{\sigma}_2-\tilde{\sigma}_1) \|_X^2   
\end{equation*}
for $X=X(T_c)$. Therefore, $\Phi$ is a contraction mapping. This completes the proof. 
\end{proof} 

\begin{proposition} \label{ShortTimeExistence}
	The system of equations \eqref{RegularizedVorticityForm} has a unique solution $(u, \sigma)$ which exists for $0\leq t \leq T_c$, where $T_c>0$ depends only on the initial data.
\end{proposition}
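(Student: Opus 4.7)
The plan is a direct application of the Banach fixed point theorem, since the bulk of the work has been done in Propositions \ref{SelfMap} and \ref{Contraction}. First I would fix $R$ as in \eqref{Rformula} and take $T_c > 0$ as in Proposition \ref{Contraction}, so that simultaneously $\Phi : B_R \subset X(T_c) \to B_R$ and $\Phi$ is a contraction on $B_R$ with constant $\theta \in (0,1)$. I would observe that $B_R$ is a closed subset of the Banach space $X(T_c)$ (being the closed ball of radius $R$ in the $X$-norm), and hence $(B_R, \|\cdot\|_X)$ is itself a complete metric space.

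Next I would apply the Banach fixed point theorem to obtain a unique $(u,\sigma) \in B_R$ with $\Phi(u,\sigma) = (u,\sigma)$. By the construction of $\Phi$ in \eqref{MMollifiedFormulation}, writing $(\tilde u, \tilde \sigma) = (u,\sigma)$ on the right-hand side collapses the linear heat-type system for $(\omega, \sigma)$ into exactly the vorticity equation and the constitutive equation of \eqref{RegularizedVorticityForm}. The auxiliary conditions $\nabla \cdot u = 0$, $\nabla \times u = \omega$, and $\int_\mathbb{T} u\, dx = 0$ are built into the definition of $\Phi$ via Proposition \ref{VelocityFromVorticity}, so they are satisfied automatically. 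The regularity $u \in C([0,T_c]; \overbar{\mathbb{H}}^2_3)$, $u' \in L^2(0,T_c; \overbar{\mathbb{H}}^1_3)$, $\sigma \in C([0,T_c]; \mathbb{H}^2_9)$, and $\sigma' \in L^2(0,T_c; \mathbb{H}^1_9)$ follows from Propositions \ref{uExistence} and \ref{ImprovedRegularitySigma} applied to the fixed point.

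For uniqueness, Banach's theorem only yields uniqueness inside $B_R$; to upgrade this to uniqueness in the natural class $X(T_c)$, I would argue as follows. Suppose $(u_1, \sigma_1)$ and $(u_2, \sigma_2)$ are two solutions on $[0,T_c]$ sharing the same initial data. Their norms in $X(T)$ are continuous in $T$ and equal to $\|(u_0,\sigma_0)\|_Y$ at $T = 0$, so for $T$ sufficiently small both solutions lie in $B_R$, where the contraction argument of Proposition \ref{Contraction} forces them to coincide. A standard continuation argument, shifting the initial time and repeating, then yields agreement on all of $[0,T_c]$.

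The main obstacle in principle is nothing more than carefully identifying the fixed point with a solution of \eqref{RegularizedVorticityForm}; all analytic difficulty has already been absorbed into the self-map property, the contraction estimate, and the elliptic reconstruction of the velocity from the vorticity. In that sense this proposition is the clean payoff of the preceding subsections rather than an independent estimate, and its proof is essentially a two-line invocation of the contraction mapping principle together with the remarks above on upgrading uniqueness.
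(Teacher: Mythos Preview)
Your proposal is correct and matches the paper's own proof almost exactly: both invoke the Banach fixed point theorem on $B_R \subset X(T_c)$ using Propositions \ref{SelfMap} and \ref{Contraction}, and both upgrade uniqueness from $B_R$ to all of $X(T_c)$ by noting that any second solution must, by continuity in $T$, lie in $B_R$ for some short time $T_2 \leq T_c$ and hence coincide with the fixed point there, followed by a continuation argument. Your write-up is slightly more explicit about why $B_R$ is complete and about identifying the fixed point with a genuine solution of \eqref{RegularizedVorticityForm}, but the logical structure is identical to the paper's.
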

\begin{proof} Since $\Phi$ is a contraction mapping on $B_R \subset X(T_c)$, it has a unique fixed point $(u, \sigma)$ inside of  $B_R \subset X(T_c)$ which is the solution of our system. In fact, this is the only solution in all of $X(T_c)$, not just in $B_R$. Suppose there was some other solution, call it $(u_2, \sigma_2) \in X(T_c)$ with the same initial data $(u_0, \sigma_0)$. By continuity, there would have to be some time $T_2>0$ with $T_2 \leq T_c$ such that $(u_2, \sigma_2) \in B_R \subset X(T_2)$. But then, applying the contraction mapping for this shorter time would yield a unique solution. Then, both $(u, \sigma), (u_2, \sigma_2)\in B_R\subset X(T_2)$ would be solutions for $0\leq t \leq T_2$, meaning that $(u(t), \sigma(t))=(u_2(t), \sigma_2(t))$ for $0\leq t \leq T_2$. We can extend this argument to the entire interval $0\leq t \leq T_c$ to obtain the uniqueness of the solution on the established interval of existence. Furthermore, as we saw, our choice of $T_c$ ultimately depended on $R$ whose definition depends only on the choice of the initial data, and the elliptic constant $C$ which is independent of $T$, completing the proof. 
\end{proof} 

\begin{proposition} \label{extension}
   There exists a number $T^*>0$ (potentially with $T^*=\infty$) such that a unique solution to \eqref{RegularizedOldroydB} for $\epsilon>0$ exists for $0\leq t < T^*$ with $u(t)\in C([0, T]; \overbar{\mathbb{H}}^2_3)$, $u'(t) \in L^2(0, T; \overbar{\mathbb{H}}^1_3)$, $\sigma(t) \in C([0, T]; \mathbb{H}^2_9)$, and $\sigma'(t) \in L^2(0, T; \mathbb{H}^1_9)$ for all $0\leq T < T^*$.
\end{proposition}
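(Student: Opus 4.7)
The plan is a standard continuation argument built directly on top of the short-time existence result, Proposition \ref{ShortTimeExistence}. First I would invoke Proposition \ref{ShortTimeExistence} with the given initial data $(u_0, \sigma_0) \in Y$ to obtain a unique solution on $[0, T_1]$ with $T_1 > 0$ depending only on $\Vert (u_0, \sigma_0) \Vert_Y$. Because that solution lies in $C([0, T_1]; \overbar{\mathbb{H}}^2_3) \times C([0, T_1]; \mathbb{H}^2_9)$, the terminal values $u(T_1)$ and $\sigma(T_1)$ are well-defined elements of the respective spaces; moreover, by the computation around equation \eqref{AverageCondition}, the divergence-free and zero-mean conditions on $u$ are preserved in time, so $(u(T_1), \sigma(T_1))$ is a legitimate element of $Y$ from which the system can be restarted.

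Second I would restart: apply Proposition \ref{ShortTimeExistence} with $(u(T_1), \sigma(T_1))$ as initial data and the time origin shifted to $T_1$, producing a unique solution on $[T_1, T_1 + T_2]$ for some $T_2 > 0$. Local uniqueness forces the two pieces to agree at $T_1$, so they concatenate into a solution on $[0, T_1 + T_2]$; continuity in time is inherited across the joining point, and the $L^2$-in-time norms of $u'$ and $\sigma'$ are additive over the two subintervals. Iterating produces an increasing sequence of existence times $T_1 < T_1 + T_2 < \cdots$, and I would set
\begin{equation*}
T^* \coloneqq \sup \bigl\{ T > 0 : \eqref{RegularizedVorticityForm} \text{ admits a solution on } [0, T] \text{ with the claimed regularity}\bigr\},
\end{equation*}
which is at least $T_c > 0$ and possibly $+\infty$. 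For any $T < T^*$ the supremum is realized beyond $T$ by finitely many concatenated pieces, so a solution exists on $[0, T]$ with the stated regularity, and uniqueness on $[0, T^*)$ follows from local uniqueness together with a standard connectedness argument.

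The main subtle point, rather than a genuine obstacle, is to verify that the glued solution has weak time derivative equal to the glued pointwise derivative, with no extra singular contribution appearing at the joining times. This is immediate from the continuity of each piece into $\overbar{\mathbb{H}}^2_3$ and $\mathbb{H}^2_9$: integration by parts in time against a scalar test function splits cleanly across the joining point and the boundary contributions cancel by continuity, leaving the expected weak-derivative identity. Note that Proposition \ref{extension} makes no claim that $T^* = \infty$; excluding finite blow-up is deferred to later sections, where the energy estimate \eqref{EnergyEstimateEquation} together with the propagation of positive semi-definiteness of $\sigma$ will provide uniform-in-$t$ bounds on $\Vert (u(t), \sigma(t)) \Vert_Y$.
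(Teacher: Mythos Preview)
Your proposal is correct and follows essentially the same approach as the paper: invoke short-time existence from Proposition \ref{ShortTimeExistence}, restart with the terminal data as new initial data, iterate, and define $T^*$ as the supremum of existence times. The paper's own proof is a brief three-sentence version of this continuation argument; your proposal simply fills in more of the routine details (verifying that the restarted data lies in $Y$, handling the weak derivative across joining times) that the paper leaves implicit.
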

\begin{proof} We have shown that a solution in the appropriate spaces exists for $0\leq t \leq T_c$. Now we can use $(u(T_c), \sigma(T_c))$ as new initial data and repeat the procedure, to extend our solution to a longer time. This extension is unique as we saw in Proposition \ref{ShortTimeExistence}, and using that proposition we can always extend the solution to a longer time interval. This completes the proof.  
\end{proof}  

We remark that there is a possibility that as the solution grows, the amount of time we can extend by becomes smaller, leading to a barrier time $T^*$ beyond which we cannot extend. In fact, we shall show that this can be ruled out later on by assuming that $\sigma_0$ is symmetric and positive semi-definite. 

\subsection{Short Term Stability and Positivity of $\sigma(t)$}
\label{EnergyEstimate}
In order to establish global existence we need to establish the energy estimate \eqref{k1k2}. In order to do this we need to establish the postive semi-definiteness of $\sigma(t)$, which we will show is true assuming $\sigma_0$ is symmetric and positive semi-definite. In \cite{constantinnote} this is proven by finding equations satisfied by the eigenvalues of $\sigma$, and showing by the maximum principle that if they start out non-negative, they remain (weakly) non-negative. While this works well in two dimensions, the algebra becomes messy in three dimensions. Thus we use an alternative proof strategy. Since we have better regularity, we can establish non-negativity point-wise instead of almost everywhere. The basic idea is to approximate $\sigma$
 by a smooth tensor, prove the positive semi-definiteness of this smooth tensor, and then show that this smooth tensor converges to $\sigma$. We begin with establishing the short term stability of $\sigma$.   
\begin{proposition} \label{ShortTermStability}
Suppose that $(u_1, \sigma_1)$ and $(u_2, \sigma_2)$ are two solutions to \eqref{RegularizedVorticityForm} with two different bounded initial states: 
\begin{equation*}
\|(u_1(0), \sigma_1(0)) \|_Y^2, \quad \|(u_2(0), \sigma_2(0))\|_Y^2 \leq \mathcal{M}.
\end{equation*}
Then there exists a time $T_\mathcal{M}>0$ depending only on $\mathcal{M}$ such that for $0\leq t \leq T_\mathcal{M}$
\begin{equation*}
    \max_{0\leq t \leq T_\mathcal{M}} \left(\Vert u_2(t) - u_1(t) \Vert_{\overbar{\mathbb{H}}^2_3}^2 + \Vert \sigma_2(t) - \sigma_1(t) \Vert_{\mathbb{H}^2_9}^2 \right) \leq C_\mathcal{M} \left(\Vert u_2(0) - u_1(0) \Vert_{\overbar{\mathbb{H}}^2_3}^2 + \Vert \sigma_2(0) - \sigma_1(0) \Vert_{\mathbb{H}^2_9} \right)
\end{equation*}
where the constant $C_\mathcal{M}$ depends only on $\mathcal{M}$.
\end{proposition}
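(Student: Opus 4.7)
The plan is to mirror the contraction argument in Proposition \ref{Contraction}, but now retaining the nonzero initial data of the difference and tracking its contribution through the parabolic estimates. The key observation is that the bound $\|(u_i(0),\sigma_i(0))\|_Y^2\leq \mathcal{M}$ lets us apply Propositions \ref{SelfMap} and \ref{Contraction} with a radius $R=R(\mathcal{M})$ and a time $T_{u,\sigma}=T_{u,\sigma}(\mathcal{M})$ depending only on $\mathcal{M}$. Consequently both solutions $(u_1,\sigma_1)$ and $(u_2,\sigma_2)$ exist on a common interval $[0,T_{u,\sigma}]$ with $\|(u_i,\sigma_i)\|_{X(T_{u,\sigma})}\leq R$. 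Fix $\mathcal{M}$ and these choices of $R$ and $T_{u,\sigma}$ for the remainder of the argument.

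Next, set $\bar u=u_2-u_1$, $\bar\sigma=\sigma_2-\sigma_1$, and $\bar\omega=\omega_2-\omega_1$. Subtracting the two systems exactly as in the proof of Proposition \ref{Contraction} yields
\begin{equation*}
\partial_t \bar\omega - \alpha \Delta \bar\omega = G,\qquad \bar\omega(0)=\nabla\times\bigl(u_2(0)-u_1(0)\bigr),
\end{equation*}
\begin{equation*}
\partial_t \bar\sigma - \epsilon \Delta \bar\sigma = \tilde G,\qquad \bar\sigma(0)=\sigma_2(0)-\sigma_1(0),
\end{equation*}
where $G$ and $\tilde G$ are the same combinations of $J\tilde u_i$, $\tilde u_i$, $\tilde\omega_i$, $\tilde\sigma_i$ written down in the Contraction proof, with $\tilde u_i=u_i$ and $\tilde\sigma_i=\sigma_i$. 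Since $(u_i,\sigma_i)$ lie in $B_R$, the same bilinear estimates used there give a constant $C_R$, depending only on $R$ (hence only on $\mathcal{M}$), such that
\begin{equation*}
\sup_{0\le t\le T}\|G(t)\|_{\mathbb L^2_3}^2 + \sup_{0\le t\le T}\|\tilde G(t)\|_{\mathbb H^1_9}^2 \le C_R\,\|(\bar u,\bar\sigma)\|_{X(T)}^2 \qquad \text{for } T\le T_{u,\sigma}.
\end{equation*}

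Now apply the parabolic estimates of Propositions \ref{ImprovedRegularity}, \ref{uExistence} and \ref{ImprovedRegularitySigma} to the two linear problems above, but this time keeping the nonzero-initial-data contribution. For $T\le T_{u,\sigma}$ (so $e^{(1+2\alpha)T}$ and $e^{(1+2\epsilon)T}$ are bounded by a constant $K(\mathcal{M})$), combined with the bound $\|\bar\omega(0)\|_{\mathbb H^1_3}^2\leq 4\|\bar u(0)\|_{\overbar{\mathbb H}^2_3}^2$ from \eqref{EstimateForOmega} and the elliptic recovery $\|\bar u\|_{\overbar{\mathbb H}^2_3}^2\leq \tilde C\|\bar\omega\|_{\mathbb H^1_3}^2$ from Proposition \ref{VelocityFromVorticity}, one obtains an estimate of the shape
\begin{equation*}
\|(\bar u,\bar\sigma)\|_{X(T)}^2 \le A(\mathcal{M})\,\bigl(\|\bar u(0)\|_{\overbar{\mathbb H}^2_3}^2+\|\bar\sigma(0)\|_{\mathbb H^2_9}^2\bigr) + B(\mathcal{M})\,T\,C_R\,\|(\bar u,\bar\sigma)\|_{X(T)}^2,
\end{equation*}
where $A(\mathcal{M}),B(\mathcal{M})$ depend only on $\mathcal{M}$, $\alpha$, $\epsilon$ and the geometry of the torus. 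Finally, choose $T_\mathcal{M}\in(0,T_{u,\sigma}]$ small enough (depending only on $\mathcal{M}$) so that $B(\mathcal{M})\,T_\mathcal{M}\,C_R\le \tfrac12$. Absorbing that term into the left-hand side yields the desired estimate with $C_\mathcal{M}=2A(\mathcal{M})$.

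The main obstacle is just bookkeeping: ensuring that every constant produced along the way (the radius $R$, the local existence time $T_{u,\sigma}$, the bilinear constant $C_R$, and the exponential factors from the parabolic estimates) is controlled solely by $\mathcal{M}$, so that the final $T_\mathcal{M}$ and $C_\mathcal{M}$ depend only on $\mathcal{M}$. Once the Contraction-style decomposition of $G$ and $\tilde G$ is reused verbatim, no new structural difficulty arises, and a standard small-time absorption closes the argument.
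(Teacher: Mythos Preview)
Your argument is correct and is, in fact, somewhat cleaner than the paper's own proof, but the route is genuinely different. You work entirely at the top level of regularity: you write $\partial_t\bar\omega-\alpha\Delta\bar\omega=G$ and $\partial_t\bar\sigma-\epsilon\Delta\bar\sigma=\tilde G$, invoke the parabolic estimates of Propositions \ref{ImprovedRegularity} and \ref{ImprovedRegularitySigma} once (with nonzero initial data), bound $G$ and $\tilde G$ by $C_R\|(\bar u,\bar\sigma)\|_{X(T)}^2$ exactly as in Proposition \ref{Contraction}, and then absorb via smallness of $T$. The paper instead bootstraps: it first derives $L^2$ energy inequalities for $\bar\omega$ and $\bar\sigma$ and closes them with Gronwall, then puts the transport term $(Ju_2\cdot\nabla)\bar\omega$ on the left and uses parabolic regularity (with the right-hand side now controlled in $L^2$ by the previous step) to upgrade to $\mathbb H^1$, and finally differentiates the $\bar\sigma$ equation in space and repeats to reach $\mathbb H^2$. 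Your approach avoids the three-tier bootstrap at the price of possibly shrinking $T_\mathcal{M}$ below the local existence time $T_{u,\sigma}$ to make the absorption work; the paper's approach uses the full $T_{u,\sigma}$ and yields explicit intermediate $L^2$ and $H^1$ stability bounds along the way, which it later reuses in the global-stability section.
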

\begin{proof} 
As we saw in the proofs of Propositions \ref{SelfMapOfBall}, \ref{Contraction}, and \ref{ShortTimeExistence} if we define 
\begin{equation*}
R^2=12C\mathcal{M}+3\mathcal{M}+4 
\end{equation*}
then there exists a time $T_\mathcal{M}$ depending only on $\mathcal{M}$ such that for any initial data smaller than $\mathcal{M}$ we will have a solution, which will furthermore be contained in $B_R \subset X(T_\mathcal{M})$. Compare with \eqref{Rformula}. Now, we define $\bar{u}=u_2-u_1$, $\bar{\omega}=\omega_2-\omega_1$, and $\bar{\sigma}=\sigma_2-\sigma_1$. We have that
\begin{subeqnarray}\label{usefulEstimates}
&& |J\bar{\sigma}| \leq K \|\bar{\sigma}\|_{\mathbb{L}^2_9}, \quad |J\bar{u}| \leq K \|\bar{u}\|_{\mathbb{L}^2_3}, \quad \|\bar{u}\|_{\mathbb{L}^2_3}^2 \leq C \|\bar{\omega}\|^2_{\mathbb{L}^2_3}, \\
&& \|\Omega (Ju_1, \bar{u})\|_{\mathbb{L}^2_3}^2 \leq 81 K \|u_1\|^2_{\mathbb{L}^2_3} \|\bar{u} \|_{\mathbb{H}^1_3}^2, \quad \|\Omega (J\bar{u}, u_2 ) \|_{\mathbb{L}^2_3}^2 \leq 81 K \|\bar{u}\|^2_{\mathbb{L}^2_3}\|u_2 \|_{\mathbb{H}^1_3}^2 
\end{subeqnarray}
where the constant $K$ only depends on the choice of $\eta$ in the definition of $J$, as we mentioned previously. Also notice the terms involving $u_1$ and $u_2$ can be bounded in terms of $R$ and hence $\mathcal{M}$. Here $C$ again is the same elliptic constant as before. 

By taking the equation for $\omega_2$ and the equation for $\omega_1$ and subtracting them, we obtain the equation for $\bar{\omega}$ which takes the form 
\begin{equation*}
\partial_t \bar{\omega} - \alpha \Delta \bar{\omega} = \beta \left(  \nabla \times \text{div} J \bar{\sigma}  \right) - (Ju_2\cdot \nabla)\bar{\omega} - (J\bar{u}\cdot \nabla) \omega_1 + \Omega(J\bar{u}, u_2) + \Omega(Ju_1, \bar{u}). 
\end{equation*}
We now take the inner product with $\bar{\omega}$ and integrate in space. Since we can estimate $\bar{u}$ in terms of $\bar{\omega}$, and the terms involving $u_1, \omega_1, \text{etc},$ can be estimated in terms of $\mathcal{M}$ we can easily obtain an estimate of the form: 
\begin{equation}\label{OmegaBarL2Derivative} \frac{1}{2}\frac{d}{dt} \| \bar{\omega} \|_{\mathbb{L}^2_3}^2 + \alpha \|\nabla \bar{\omega} \|_{\mathbb{L}^2_3}^2 
\leq C_1 \left( \|\bar{\omega} \|_{\mathbb{L}^2_3}^2 + \|\bar{\sigma} \|_{\mathbb{L}^2_9}^2 \right)
\end{equation}
where the constant $C_1$ depends only on $\mathcal{M}$. We can similarly obtain an equation for $\bar{\sigma}$ which gives
\begin{equation*} 
\partial_t \bar{\sigma} - \epsilon \Delta \bar{\sigma} = -(Ju_2 \cdot \nabla ) \bar{\sigma} +(J\bar{u} \cdot \nabla) \sigma_1 + (\nabla Ju_2)\bar{\sigma} + (\nabla J\bar{u})\sigma_1 + \bar{\sigma} ( \nabla Ju_2)^T + \sigma_1 (\nabla J\bar{u})^T - \gamma \bar{\sigma}
\end{equation*}
and upon taking the Frobenius inner product with $\bar{\sigma}$ and integrating in space we obtain
\begin{equation}\label{SigmaBarL2Derivative}
\frac{1}{2} \frac{d}{dt} \|\bar{\sigma} \|_{\mathbb{L}^2_9}^2 + \epsilon \|\nabla \bar{\sigma}\|_{\mathbb{L}^2_{27}}^2 \leq C_2 \left( \|\bar{\omega} \|_{\mathbb{L}^2_3}^2 +  \| \bar{\sigma}\|_{\mathbb{L}^2_9}^2 \right), 
\end{equation}
where the constant $C_2$ depends only on $\mathcal{M}$.

Adding \eqref{OmegaBarL2Derivative} and \eqref{SigmaBarL2Derivative}, multiplying by $2$, and defining $C_3=2C_1+2C_2$ we obtain the inequality
\begin{equation*}
\frac{d}{dt} \left( \|\bar{\omega}(t) \|_{\mathbb{L}^2_3}^2 + \|\bar{\sigma}(t) \|_{\mathbb{L}^2_9}^2 \right) \leq C_3 
\left( \|\bar{\omega}(t) \|_{\mathbb{L}^2_3}^2 + \| \bar{\sigma}(t) \|_{\mathbb{L}^2_9}^2 \right)
\end{equation*}
and so applying Gronwall's inequality 
\begin{equation*}
\|\bar{\omega}(t)\|_{\mathbb{L}^2_3}^2 + \| \bar{\sigma}(t)\|_{\mathbb{L}^2_9}^2 \leq e^{C_3 t} \left( \|\bar{\omega}(0)\|_{\mathbb{L}^2_3}^2 + \| \bar{\sigma}(0)\|_{\mathbb{L}^2_9}^2 \right),
\end{equation*}
which holds for $0\leq t \leq T_\mathcal{M}$. So we can estimate
\begin{equation*}
\|\bar{\omega}(t)\|_{\mathbb{L}^2_3}^2 + \| \bar{\sigma}(t) \|_{\mathbb{L}^2_9}^2 \leq e^{C_3 T_\mathcal{M}} \left( \| \bar{\omega}(0) \|_{\mathbb{L}^2_3}^2 + \|\bar{\sigma}(0) \|_{\mathbb{L}^2_9}^2 \right)\coloneqq C_4 \left( \| \bar{\omega}(0) \|_{\mathbb{L}^2_3}^2 + \| \bar{\sigma}(0) \|_{\mathbb{L}^2_9}^2 \right), 
\end{equation*}
for $0\leq t \leq T_M$.

Next, notice that the equation for $\bar{\omega}$ can be put in the form 
\begin{equation}\label{eqomegabar2}
\partial_t \bar{\omega} - \alpha \Delta \bar{\omega} + (Ju_2\cdot \nabla)\bar{\omega}= \beta \left(  \nabla \times \text{div} J \bar{\sigma}  \right)  - (J\bar{u}\cdot \nabla) \omega_1 + \Omega(J\bar{u}, u_2) + \Omega(Ju_1, \bar{u}) 
\end{equation}
where for $0\leq t \leq T_\mathcal{M}$ we have $|Ju_2|\leq KR$. Therefore, there exists a constant $C_{T_\mathcal{M}}$ such that
\begin{equation}\label{MaxOmegaH1}
\max_{0\leq t \leq T_\mathcal{M}} \|\bar{\omega}(t) \|_{\mathbb{H}^1_3}^2 \leq C_{T_\mathcal{M}} \left( \| \bar{\omega}(0) \|_{\mathbb{H}^1_3}^2 + \|f_\omega \|_{L^2(0, T_\mathcal{M}; \mathbb{L}^2_3)}^2 \right), 
\end{equation}
where $f_\omega$ is the right hand side of \eqref{eqomegabar2}. But by our previous propositions and estimates we have $L^2$ control over the right hand side and we can estimate
\begin{eqnarray*}
\|f_\omega \|_{L^2(0, T_\mathcal{M}; \mathbb{L}^2_3)}^2 &\leq& \int_0^{T_\mathcal{M}} C_5 \left( \|\bar{u}(t) \|_{\mathbb{H}^1_3}^2 + \|\bar{\sigma}(t) \|_{\mathbb{L}^2_9}^2 \right) dt  \leq \int_0^{T_\mathcal{M}} C_5 \left( C \|\bar{\omega}(t) \|_{\mathbb{L}^2_3}^2 + \|\bar{\sigma}(t) \|_{\mathbb{L}^2_9}^2 \right) dt \\
&\leq& \int_0^{T_\mathcal{M}} C_6 \left( \|\bar{\omega}(t) \|_{\mathbb{L}^2_3}^2 + \|\bar{\sigma}(t) \|_{\mathbb{L}^2_9}^2 \right) dt \leq T_\mathcal{M}  C_6 C_4 \left( \|\bar{\omega}(0) \|_{\mathbb{L}^2_3}^2 + \|\bar{\sigma}(0) \|_{\mathbb{L}^2_9}^2 \right), 
\end{eqnarray*}
where the constants depend only on $\mathcal{M}$ and the $K$ which comes from the definition of $J$. Therefore, plugging this estimate into \eqref{MaxOmegaH1} we see there is a constant $C_7$ depending only on $K$ and $\mathcal{M}$ such that
\begin{equation*}
    \max_{0\leq t \leq T_\mathcal{M}} \Vert \bar{\omega} (t) \Vert_{\mathbb{H}^1_3}^2 \leq C_7 \left(  \Vert \bar{\omega}(0) \Vert_{\mathbb{H}^1_3}^2 + \Vert \bar{\sigma}(0) \Vert_{\mathbb{L}^2_9}^2   \right).
\end{equation*}
Similarly, we can write the equation for $\bar{\sigma}$ as 
\begin{equation}\label{eqsigmabar2}
\partial_t \bar{\sigma} - \epsilon \Delta \bar{\sigma} +(Ju_2 \cdot \nabla ) \bar{\sigma} =  (J\bar{u} \cdot \nabla) \sigma_1 + (\nabla Ju_2)\bar{\sigma} + (\nabla J\bar{u})\sigma_1 + \bar{\sigma} ( \nabla Ju_2)^T + \sigma_1 (\nabla J\bar{u})^T - \gamma \bar{\sigma}, 
\end{equation}
where again we have $|Ju_2|\leq KR$ for $0\leq t \leq T_\mathcal{M}$. So again there is a constant $\tilde{C}_{T_\mathcal{M}}$ such that
\begin{equation*}
\max_{0\leq t \leq T_\mathcal{M}} \|\bar{\sigma} (t) \|_{\mathbb{H}^1_9}^2 \leq \tilde{C}_{T_\mathcal{M}} \left( \|\bar{\sigma}(0)\|_{\mathbb{H}^1_9}^2 + \| f_\sigma \|_{L^2(0, T_\mathcal{M}; \mathbb{L}^2_9)}^2 \right), 
\end{equation*}
where $f_\sigma$ is the right hand side of \eqref{eqsigmabar2}. Using the same arguments that we made for $\bar{\omega}$ we can find a constant such that
\begin{equation*}
\max_{0\leq t\leq T_\mathcal{M}} \|\bar{\sigma}(t) \|_{\mathbb{H}^1_9}^2 \leq C_8 \left( \| \bar{\omega}(0) \|_{\mathbb{L}^2_3}^2 + \| \bar{\sigma}(0) \|_{\mathbb{H}^1_9}^2 \right). 
\end{equation*}
Now, we want to obtain an $H^2$ bound for the components of $\bar{\sigma}$.
The idea is to take the equation \eqref{eqsigmabar2} and differentiate it with respect to $x^i$. This then gives the equation for $\partial_i \bar{\sigma}$ which can be put into the same form as \eqref{eqsigmabar2}
\begin{eqnarray*}
 \partial_t (\partial_i \bar{\sigma} ) - \epsilon \Delta (\partial_i \bar{\sigma} ) +(Ju_2 \cdot \nabla ) \partial_i \bar{\sigma} &=& -(\partial_i Ju_2 \cdot \nabla) \bar{\sigma} + \partial_i J \bar{u} \cdot \nabla \sigma_i + J\bar{u} \cdot \nabla \partial_i \sigma_1 + (\nabla \partial_i Ju_2) \bar{\sigma} \\&& \quad+ (\nabla Ju_2) \partial_i \bar{\sigma} 
+ (\nabla \partial_i J\bar{u})\sigma_1 + (\nabla J\bar{u}) \partial_i \sigma_1   + \partial_i \bar{\sigma} (\nabla J u_2 )^T \\ \quad \quad && \quad \quad + \bar{\sigma} (\nabla \partial_i Ju_2)^T  + \partial_i \sigma_1 (\nabla J\bar{u})^T + \sigma_1(\nabla \partial_i J\bar{u})^T - \gamma \partial_i\bar{\sigma} 
\end{eqnarray*}
and notice using our previous estimates we can estimate the norm of the right hand side and each term will have at least one factor of $\left(\Vert \bar{\omega}(0) \Vert_{\mathbb{H}^1_3}^2 + \Vert \bar{\sigma}(0)\Vert_{\mathbb{H}^1_9}^2 \right)$. Thus we obtain a bound of the form
\begin{equation*}
\max_{0\leq t \leq T_\mathcal{M}} \|\partial_i \bar{\sigma} \|_{\mathbb{H}^1_9}^2 \leq \tilde{C}_{T_\mathcal{M}}\left[ \|\partial_i \bar{\sigma}(0)\|_{\mathbb{H}^1_9}^2 +  C_9 \left( \|\bar{\omega}(0) \|_{\mathbb{H}^1_3}^2 + \| \bar{\sigma}(0)\|_{\mathbb{H}^1_9}^2 \right) \right].
\end{equation*}
We can repeat this for each $i$ to conclude the partial derivatives of the components of $\bar{\sigma}$ are in $H^1$, meaning that the components of $\bar{\sigma}$ are in $H^2$, and adding up all the estimates we obtain
\begin{equation*}
\|\bar{\sigma}\|_{\mathbb{H}^2_9}^2 \leq C_{10} \left( \|\bar{\omega}(0)\|_{\mathbb{H}^1_3}^2 + \|\bar{\sigma}(0) \|_{\mathbb{H}^2_9} ^2\right). 
\end{equation*}
Since we can estimate $\|\bar{u} \|_{\mathbb{H}^2_3}$ in terms of $\|\bar{\omega} \|_{\mathbb{H}^1_3}$ and $\|\bar{\omega}(0) \|_{\mathbb{H}^1_3}$ in terms of $\|\bar{u}(0) \|_{\mathbb{H}^2_3}$, we can put together all of our estimates to obtain
\begin{equation*}
\max_{0\leq t \leq T_\mathcal{M}} \left( \|\bar{u}(t) \|_{\overbar{\mathbb{H}}^2_3}^2 + \| \bar{\sigma}(t) \|_{\mathbb{H}^2_9}^2 \right) \leq C_\mathcal{M} \left(\|\bar{u}(0) \|_{\overbar{\mathbb{H}}^2_3}^2 + \| \bar{\sigma}(0) \|_{\mathbb{H}^2_9} \right), 
\end{equation*}
for all $0\leq t \leq T_\mathcal{M}$, for some constant $C_\mathcal{M}$ which can be expressed in terms of our previous constants, and thus only depends on $\mathcal{M}$, completing the proof. 
\end{proof} 
We will need one more proposition before we can prove positivity.

\begin{proposition} \label{Smoothness}
    If $\sigma_0$ is smooth, then $\sigma(t)$ remains smooth in the spatial variable for all $0\leq t < T^*$ where $T^*$ is the time given in Proposition \ref{extension}.
\end{proposition}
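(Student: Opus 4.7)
The plan is to bootstrap spatial regularity from the baseline $\mathbb{H}^2_9$ estimate on $\sigma$ already established by Proposition \ref{ImprovedRegularitySigma}. The key structural observation is that the velocity enters the $\sigma$-equation only through $Ju$ and $\nabla Ju$. Because $J$ is convolution with the smooth bump $\eta$, on any subinterval $[0,T] \subset [0,T^*)$ the quantity $\sup_{0 \leq t \leq T} \|u(t)\|_{\mathbb{L}^2_3}$ is finite (since $u \in C([0,T]; \overbar{\mathbb{H}}^2_3)$), and for every multi-index $\beta$
$$\sup_{0 \leq t \leq T} \|D^\beta Ju(\cdot, t)\|_{L^\infty} \leq K_\beta \sup_{0 \leq t \leq T} \|u(t)\|_{\mathbb{L}^2_3} < \infty,$$
where $K_\beta$ depends only on $\beta$ and $\eta$. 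Thus the coefficients in the equation for $\sigma$ are uniformly smooth in space on $[0,T]$, regardless of the merely $\mathbb{H}^2_3$ regularity of $u$ itself.

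With this in hand, I would proceed by induction on $m \geq 2$ to prove $\sigma \in L^\infty(0,T; \mathbb{H}^m_9) \cap L^2(0,T; \mathbb{H}^{m+1}_9)$. The base case $m=2$ is exactly Proposition \ref{ImprovedRegularitySigma}. For the inductive step, I view the equation as $\partial_t \sigma - \epsilon \Delta \sigma = \tilde f$ with $\tilde f = \delta I - (Ju \cdot \nabla)\sigma + (\nabla Ju)\sigma + \sigma(\nabla Ju)^T - \gamma \sigma$. Given the inductive hypothesis, the $L^\infty$ bounds on every spatial derivative of $Ju$ and $\nabla Ju$ yield $\|\tilde f\|_{L^2(0,T;\mathbb{H}^m_9)} \leq C(m,T)\bigl(1 + \|\sigma\|_{L^2(0,T;\mathbb{H}^{m+1}_9)}\bigr) < \infty$. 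I then apply the $\mathbb{H}^{m+1}$-analogue of Proposition \ref{ImprovedRegularitySigma}, obtained by repeating its Galerkin-plus-energy argument after differentiating the equation up to order $m+1$ in space; since $\sigma_0$ is smooth, $\sigma_0 \in \mathbb{H}^{m+1}_9$ is automatic. This produces $\sigma \in L^\infty(0,T; \mathbb{H}^{m+1}_9) \cap L^2(0,T; \mathbb{H}^{m+2}_9)$ and closes the induction.

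Once $\sigma \in L^\infty(0,T; \mathbb{H}^m_9)$ holds for every $m$, the identity $\sigma' = \epsilon \Delta \sigma + \tilde f$ shows $\sigma' \in L^2(0,T; \mathbb{H}^{m-1}_9)$ for every $m$, so by the standard time-continuity result (Evans \S 5.9, used repeatedly in Sections \ref{SolvingForVorticity}--\ref{SolvingForConformationTensor}) one upgrades to $\sigma \in C([0,T]; \mathbb{H}^m_9)$ for every $m$. Sobolev embedding $H^m(\mathbb{T}) \hookrightarrow C^k(\mathbb{T})$ for $m > k + 3/2$ then gives $\sigma(\cdot, t) \in C^\infty(\mathbb{T})$ for every $t \in [0,T]$. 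Since $T \in (0, T^*)$ was arbitrary, this is the claim.

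The only step that requires real care is verifying, uniformly in $m$, the $\mathbb{H}^{m+1}$-version of the parabolic energy estimates underlying Proposition \ref{ImprovedRegularitySigma}, applied to the equations satisfied by $D^\lambda \sigma$ with $|\lambda| \leq m+1$. This reduces to the routine bookkeeping that $\int (Ju \cdot \nabla) D^\lambda \sigma : D^\lambda \sigma \, dx = 0$ by incompressibility of $Ju$, while every commutator $[D^\lambda, Ju \cdot \nabla]\sigma$ is a finite sum of terms of the form $(D^\beta Ju) \cdot \nabla D^{\lambda - \beta} \sigma$ with $|\beta| \geq 1$, all controlled in $L^2$ by the inductive hypothesis and the uniform $L^\infty$ bounds on derivatives of $Ju$. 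The essential conceptual point, and the reason this proposition is much easier than its counterpart for the non-regularized Oldroyd-B model, is that the mollifier absorbs precisely the regularity mismatch between $u$ and $\sigma$ that would otherwise obstruct the bootstrap.
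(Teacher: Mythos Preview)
Your proposal is correct and follows essentially the same bootstrap argument as the paper: both exploit that $Ju$ and all its spatial derivatives are uniformly bounded in $L^\infty$ on any $[0,T]\subset[0,T^*)$, then induct on the Sobolev index using the parabolic estimates underlying Proposition~\ref{ImprovedRegularitySigma}, and finish with Sobolev embedding. The only cosmetic difference is that the paper keeps the transport term $(Ju\cdot\nabla)\sigma$ on the left-hand side (so the inductive hypothesis $\sigma\in L^\infty_t\mathbb{H}^n$ suffices directly), whereas you move it into $\tilde f$ and compensate by carrying the parabolic gain $\sigma\in L^2_t\mathbb{H}^{m+1}$ through the induction; both organizations are valid in the diffusive case.
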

\begin{proof} We consider the solution to \eqref{RegularizedVorticityForm} for $0\leq t \leq T_c$ with the components of $u$ and $\sigma$ being in $H^2$. We have that $\Vert u \Vert_{\mathbb{L}^2_3}^2\leq R$ on this time interval and so, due to the regularization, we have complete point-wise control over $Ju$ and all its derivatives. Looking at the equation
\begin{equation}\label{equationinductive}
\partial_t \sigma  -\epsilon \Delta \sigma + \left(Ju \cdot \nabla \right) \sigma = \delta I  + \left(\nabla Ju\right) \sigma +\sigma \left(\nabla Ju\right)^T - \gamma \sigma , 
\end{equation}
subject to $\sigma(0)=\sigma_0$, we see that if 
\begin{equation}\label{EstimateSigmaHn}
\max_{0\leq t \leq T_c} \Vert \sigma(t) \Vert_{\mathbb{H}^n_9} \leq C_n, 
\end{equation}
then the right hand side of the equation is in $\mathbb{H}^{n}_9$. Therefore, letting $\lambda$ be a multi-index of length $n$, as long as $\sigma_0$ is smooth, we can apply $D^\lambda$ to \eqref{equationinductive}, to get an initial value problem for $D^{\lambda} \sigma$. We can then conclude that $D^\lambda \sigma$ has components in $H^1$, so $\sigma$ has components in $H^{n+1}$, and we can estimate 
\begin{equation*}
\max_{0\leq t \leq T_c} \Vert \sigma(t) \Vert_{\mathbb{H}^{n+1}_9} \leq C_{n+1}, 
\end{equation*}
as we did with our earlier estimates. Here $C_{n+1}$ depends on $C_n, R,$ and $T_c$. The base case where $n=2$ has already been established. Therefore, by induction we can prove the estimate \eqref{EstimateSigmaHn} for any $n$, which by the Sobolev embedding theorem means that $\sigma(t)$ is smooth in space for each $0\leq t \leq T_c$. We repeat this argument every time we extend our solution to conclude that $\sigma(t)$ is smooth for all $0\leq t < T^*$.
\end{proof} 

\begin{proposition} \label{PositivityOfSigma}
	If $\sigma_0$ is symmetric and positive semi-definite, then so is $\sigma(t)$ for all $0\leq t < T^*$ where $T^*$ is the time given in Proposition \ref{extension}.
\end{proposition}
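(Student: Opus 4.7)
The plan is to proceed in three steps: first establish symmetry by uniqueness; then prove positive semi-definiteness for smooth, strictly positive-definite initial data via a pointwise minimum-principle argument on $\phi = v^T\sigma v$; finally extend to the general case by mollification combined with Proposition~\ref{ShortTermStability}. Symmetry will be immediate: transposing the equation for $\sigma$ term by term (using $((\nabla Ju)\sigma)^T = \sigma^T(\nabla Ju)^T$ and $I^T=I$) shows $\sigma^T$ solves the same PDE with the symmetric initial datum $\sigma_0^T = \sigma_0$, so $\sigma = \sigma^T$ follows from the uniqueness portion of Proposition~\ref{extension}.

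For the smooth, strictly positive-definite case, I would assume in addition that $\sigma_0$ is smooth with $\sigma_0 \succeq \kappa I$ for some $\kappa > 0$. Proposition~\ref{Smoothness} then gives spatial smoothness of $\sigma$ on $[0,T^*)$, and the PDE upgrades this to $C^1$ in time. Consider $\phi(x,t,v) := v^T\sigma(x,t)v$ on $\mathbb{T}\times[0,T]\times S^2$ for $T<T^*$. Since $\phi(\cdot,0,\cdot)\geq\kappa>0$, if positivity ever fails there is a first time $t_0>0$ at which the infimum of $\phi(\cdot,t_0,\cdot)$ equals $0$, attained at some $(x_0,v_0)$. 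Standard variational conditions at this minimum yield $\sigma(x_0,t_0)v_0 = 0$ (minimality in $v$ on $S^2$ forces $v_0$ to be an eigenvector whose eigenvalue equals the minimum value), $\nabla_x\phi = 0$ with $\Delta_x\phi \geq 0$ (minimality in $x$), and $\partial_t\phi \leq 0$ (minimality in $t$ from the left). Contracting the PDE with $v_0$ at $(x_0,t_0)$, the deformation terms $v_0^T[(\nabla Ju)\sigma + \sigma(\nabla Ju)^T]v_0$ vanish since $\sigma v_0 = 0$, the convective term equals $(Ju\cdot\nabla_x)\phi = 0$, the diffusion contributes $\epsilon\Delta_x\phi \geq 0$, the relaxation $-\gamma\phi$ vanishes, and the source contributes $\delta|v_0|^2 = \delta > 0$. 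Hence $\partial_t\phi \geq \delta > 0$ at $(x_0,t_0,v_0)$, contradicting $\partial_t\phi \leq 0$.

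For general symmetric positive semi-definite $\sigma_0\in\mathbb{H}^2_9$, I set $\sigma_0^n := \eta_n\ast\sigma_0 + \tfrac{1}{n}I$ with $\{\eta_n\}$ standard non-negative mollifiers; each $\sigma_0^n$ is smooth, symmetric, satisfies $\sigma_0^n \succeq \tfrac{1}{n}I$, and converges to $\sigma_0$ in $\mathbb{H}^2_9$. The previous step makes the corresponding solutions $\sigma^n$ pointwise PSD throughout their intervals of existence. Fixing $T<T^*$, the norms $\|(u_0,\sigma_0^n)\|_Y$ are uniformly bounded by some $\mathcal{M}$, so Proposition~\ref{ShortTermStability} supplies a $T_\mathcal{M}>0$ independent of $n$ on which $\sigma^n \to \sigma$ in $C([0,T_\mathcal{M}];\mathbb{H}^2_9)$. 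The Sobolev embedding $H^2(\mathbb{T})\hookrightarrow C^0(\mathbb{T})$ upgrades this to pointwise convergence, so $\sigma(x,t)$ is a pointwise limit of PSD matrices and hence PSD on $[0,T_\mathcal{M}]$. Using $\sigma(T_\mathcal{M})$ as new (PSD) initial data, and noting that $\|\sigma(t)\|_{\mathbb{H}^2_9}$ remains bounded on $[0,T]$ by continuity (so the same $T_\mathcal{M}$ works at each step), finitely many iterations will cover $[0,T]$; since $T<T^*$ is arbitrary, $\sigma$ is PSD on $[0,T^*)$. The hard part will be the minimum-principle step itself: the initial strict positivity is essential both to guarantee $t_0>0$ and to force the minimizer $v_0$ to be a genuine null eigenvector; I note that the diffusion term $\epsilon\Delta_x\phi$ only contributes non-negatively at the minimum, making the argument robust in $\epsilon\geq 0$ and hence transferable to the non-diffusive case in later sections.
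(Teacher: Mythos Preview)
Your proof is correct and follows the same three-step architecture as the paper: symmetry by uniqueness, a pointwise minimum-principle argument in the smooth case, and passage to general $H^2$ data via Proposition~\ref{ShortTermStability} together with the Sobolev embedding $H^2(\mathbb{T})\hookrightarrow C^0(\mathbb{T})$. The one substantive difference is how the strict inequality in the minimum-principle step is obtained. You build strict positivity into the approximating initial data and then use the source term $\delta|v_0|^2>0$ to force $\partial_t\phi>0$ at the first zero; this is clean but requires $\delta>0$, i.e.\ $\nu<1$, which is the physically relevant regime though not listed explicitly among the paper's hypotheses. The paper instead works with a barrier $A(t)=\sigma(t)+\varepsilon e^{St}I$, choosing $S>2\|\nabla Ju\|_{L^\infty}$ so that the barrier's own growth dominates the deformation terms and yields $\partial_t(A_{ij}V^iV^j)\geq\varepsilon e^{St}|V|^2>0$ independently of the sign of $\delta$; the price is a slightly longer computation. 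Your approximation-and-iteration in step three is essentially identical to the paper's, including the observation that a uniform $\mathcal{M}$ (hence a uniform $T_{\mathcal{M}}$) suffices to march across any compact subinterval of $[0,T^*)$.
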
 
\begin{proof} The proof is based on the sketch given in Theorem 3.3 in \cite{chow2006hamilton}. There, the authors suggest how to make their argument precise. However, they assume that the components of their tensor are $C^2$ which is not the case for us. We will first prove the proposition assuming $\sigma=\sigma(x, t)$ is smooth in $x$ for each $t$ and then use this to establish the case when the components of $\sigma$ are only in $H^2$.  

First, the symmetry of $\sigma(t)$ easily follows from the equations assuming $\sigma_0$ is symmetric. There is no difficulty here. 

Next, assume that our $\sigma$ is smooth in $x$ for each $t$ and $\sigma(0)$ is positive semi-definite. Further assume that $u\in C([0, T]; \overbar{\mathbb{H}}^2_3)$ for each $T<T^*$. Therefore, for each $T$ we can estimate
\begin{equation*}
|\nabla Ju| \leq K \|u\|_{C([0, T]; \overbar{\mathbb{H}}^2_3)}
\end{equation*}
where $K>0$ is as before. Next we define the constant 
\begin{equation*}
S = 1+2K \Vert u \Vert_{C([0, T]; \overbar{\mathbb{H}}^2_3)}
\end{equation*}
and now we define the tensor
\begin{equation*}
A(t) = \sigma(t) + \varepsilon e^{St} I, 
\end{equation*}
where $\varepsilon>0$. Notice that since $\sigma(0)$ is positive semidefinite, $A(0)$ is strictly positive. Now using the equation for $\sigma$ we can write down the differential equation satisfied by $A(t)$ which is
\begin{eqnarray}\label{EquationA} 
\partial_t A &=& \epsilon \Delta A - (Ju\cdot \nabla)A + (\nabla Ju) A + A (\nabla Ju )^T - \varepsilon e^{St}(\nabla Ju) - \varepsilon e^{St}(\nabla Ju)^T \nonumber \\ 
&& \quad - \gamma A + \gamma \varepsilon e^{St}I + \delta I + S\varepsilon e^{St}I  
\end{eqnarray}
as can be easily checked. Notice that by our choice of $S$ we have that for any vector $W\neq 0$
\begin{equation}\label{estimateA}
    S\varepsilon e^{St}I_{ij}W^iW^j - \varepsilon e^{St}(\nabla Ju)_{ij}W^iW^j - \varepsilon e^{St}(\nabla Ju)^T_{ij}W^iW^j\geq \varepsilon e^{St}|W|^2>0
\end{equation}
where we employed the summation convention. Now, let $(x_1, t_1)$ be a point with the smallest time coordinate $t_1 \leq T$ at which $A$ has $0$ as an eigenvalue and let $V$ be the corresponding eigenvector. Notice that $t_1>0$ since $A(0)$ is strictly positive. Now contract $A_{ij}$ with the constant vector field equal to $V$ everywhere to obtain the quantity
\begin{equation*}
A_{ij}V^iV^j=A_{ij}(x, t)V^iV^j
\end{equation*}
which is positive for all $(x, t)$ with $0\leq t < t_1$. Hence, we must have that
\begin{equation}\label{contradiction}
\partial_t\left(A_{ij}V^iV^j\right) \leq 0 \quad \text{at} \quad (x_1, t_1)
\end{equation}
On the other hand, contracting \eqref{EquationA} with the constant vector field $V$ and freely moving indices up and down we obtain
\begin{eqnarray*}
\partial_t \left( A_{ij}V^i V^j\right) &=& \epsilon \Delta \left( A_{ij}V^i V^j\right) -(Ju \cdot \nabla ) \left( A_{ij}V^i V^j\right) + ( \nabla Ju)_i^{\; \; p}A^p_{\; \; j}V^iV^j + A_{ip}(\nabla Ju)_j^{\; \; p} V^i V^j \\ 
&& \quad -\varepsilon e^{St} (\nabla Ju)_{ij}V^i V^j  -\varepsilon e^{St} (\nabla Ju)_{ji}V^i V^j - \gamma A_{ij}V^i V^j +\gamma \varepsilon e^{St}|V|^2 \\
&& \quad + \delta|V|^2 + S\varepsilon e^{St}|V|^2, 
\end{eqnarray*}
and upon evaluating this at $(x_1, t_1)$ and using the fact that in that case 
\begin{equation*}
A_{ij}V^i=A_{ij}V^j=0
\end{equation*}
we get
\begin{eqnarray*}
\partial_t \left( A_{ij}V^i V^j\right) &=& \epsilon \Delta \left( A_{ij}V^i V^j\right) -(Ju \cdot \nabla ) \left( A_{ij}V^i V^j\right) - \varepsilon e^{St} (\nabla Ju)_{ij}V^i V^j \\ && \quad -\varepsilon e^{St} (\nabla Ju)_{ji}V^i V^j  +\gamma \varepsilon e^{St}|V|^2 + \delta|V|^2 + S\varepsilon e^{St}|V|^2.
\end{eqnarray*}
Since for $t_1$ we have that $A_{ij}V^iV^j$ achieves a spatial minimum at $x_1$, we can apply the second derivative test so that $\nabla (A_{ij}V^iV^j)=0$ and $\Delta (A_{ij}V^iV^j)\geq 0$ at $(x_1, t_1)$ to conclude
\begin{equation*}
\partial_t (A_{ij}V^iV^j) \geq \varepsilon e^{St}|V|^2 > 0 \quad \text{at} \quad (x_1, t_1).
\end{equation*}
However, this contradicts \eqref{contradiction} which means there is no point $(x, t)$ at which $0$ is an eigenvalue of $A$, and so $A$ is strictly positive for all $(x, t)$ with $0\leq t \leq T$. Since this holds for any $\varepsilon$, we conclude that $\sigma$ is positive semi-definite for all $(x, t)$ with $0\leq t \leq T$. Since $T<T^*$ can be chosen arbitrarily, we obtain the proposition in the case of smooth $\sigma$.

Next we want to extend this to our general case, where the components of $\sigma(t)$ are only in $H^2$ for each fixed $t$. Since the components are in $H^2$ and our dimensions is $n=3$ we conclude that the components are in $C^{0, \Gamma}$, for some $0<\Gamma<1$ and so it actually makes sense to talk about the positivity of $\sigma$ pointwise, as opposed to almost everywhere.

First we will prove the positivity of $\sigma$ on the interval $0\leq t \leq T_c$ of Theorem \ref{ShortTimeExistence}. Take $\sigma_0$ and approximate it in $\mathbb{H}^2_9$ by a smooth positive definite $\sigma_{0, \varepsilon}$ such that
\begin{equation*}
\|\sigma_0 - \sigma_{0, \epsilon} \|_{\mathbb{H}^2_9} \leq \varepsilon. 
\end{equation*}
We can do this since $\sigma_0$ is assumed to be positive semi-definite.
We can thus estimate
\begin{equation*}
\|(u_0, \sigma_{0, \varepsilon}) \|^2_Y \leq \| (u_0, \sigma_{0}) \|^2_Y + 2 \varepsilon \| (u_0, \sigma_{0}) \|_Y + \varepsilon^2, 
\end{equation*}
and so take the $\mathcal{M}$ in Proposition \ref{ShortTermStability} to be 
\begin{equation*}
\mathcal{M} = \|(u_0, \sigma_{0})\|^2_Y + 2 \varepsilon \|(u_0, \sigma_{0}) \|_Y + \varepsilon^2.
\end{equation*}
Denote the solution with initial data $(u_0, \sigma_{0, \varepsilon})$ by $(u_\varepsilon(t), \sigma_\varepsilon(t))$ which is guaranteed to exist on $0\leq t \leq T_\mathcal{M} < T_c$. Since $\mathcal{M}$ depends on $\varepsilon$ we write $T_\varepsilon$ instead of $T_\mathcal{M}$ to make this dependence explicit. Moreover, we see that as $\varepsilon \searrow 0$ then $T_\varepsilon \nearrow T_c$. By Proposition \ref{Smoothness} we have that $\sigma_\varepsilon (t)$ is smooth, and therefore positive on $0\leq t \leq T_\varepsilon$. 

Now take any $T<T_c$. Eventually for sufficiently small $\varepsilon<1$ we have $T<T_\varepsilon$. Since the initial velocity is the same, by Proposition \ref{ShortTermStability} we can estimate
\begin{equation*}
\max_{0\leq t \leq T} \Vert \sigma(t) - \sigma_{\varepsilon} (t) \Vert_{\mathbb{H}^2_9}^2 \leq C_{T_1} \left(   \Vert \sigma_0 - \sigma_{0,\varepsilon} \Vert_{\mathbb{H}^2_9}^2   \right)
\end{equation*}
where the constant $C_{T_1}$ corresponding to $\varepsilon=1$ works for all smaller $\varepsilon$. Therefore, taking the limit as $\varepsilon \rightarrow 0$, we have that the components of $\sigma_{\varepsilon}(t)$ converge to the components of $\sigma(t)$ in $H^2$ for each $t$. Therefore, they also converge in $C^{0, \Gamma}$. Hence, taking an arbitrary vector $W$ we obtain
\begin{equation*}
\lim_{\varepsilon \rightarrow 0} (\sigma_\varepsilon (t) )_{ij}W^i W^j = (\sigma (t))_{ij} W^i W^j \geq 0
\end{equation*}
for all $0\leq t \leq T < T_c$. Since we can choose $T$ arbitrarily as long as $T<T_c$, we conclude that in fact
\begin{equation*}
(\sigma(t))_{ij} W^i W^j \geq 0
\end{equation*}
for all $0\leq t \leq T_c$. Repeating this argument for each extension of our solution, we conclude that $\sigma(t)$ is positive semi-definite for all $0\leq t < T^*$. 
\end{proof}

\subsection{A priori estimates} Next, we will establish some apriori estimates for $\omega(t)$, $u(t)$, and $\sigma(t)$. 
\begin{proposition} \label{SigmaL2Apriori}
Given $u_0$ and a positive semi-definite $\sigma_0$, the solution $\sigma(t)$ of \eqref{RegularizedOldroydB} satisfies the estimate
\begin{equation*}
\|\sigma(t) \|_{\mathbb{L}_{9}}^2 \leq R_1(t), \mbox{ for } 0\leq t < T^*, 
\end{equation*}
where 
\begin{equation*}
R_1(t) = e^{2\gamma t + 2 \delta \sqrt{|\mathbb{T}|} t + 4KE_2 t + 2K E_1 t^2}\left(   \Vert \sigma_0 \Vert^2_{\mathbb{L}^2_{9}} +2\sqrt{|\mathbb{T}|}\delta t \right), 
\end{equation*}
where $E_1$ and $E_2$ are constants appearing on the right hand side of \eqref{k1k2} and $K$ is a constant appearing in \eqref{VBound}, which depends only on the definition of $J$.
\end{proposition}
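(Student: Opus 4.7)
The strategy is to derive an $L^2$ energy identity for the constitutive equation, convert it to a differential inequality for $y(t) := \|\sigma(t)\|_{\mathbb{L}^2_9}^2$, and close via Gronwall using the velocity bound from Proposition \ref{EnergyEstimate}. Concretely, I take the Frobenius inner product of the second equation of \eqref{RegularizedOldroydB} with $\sigma$ and integrate over $\mathbb{T}$, obtaining
\begin{equation*}
\tfrac{1}{2}\tfrac{d}{dt}\|\sigma\|_{\mathbb{L}^2_9}^2 + \epsilon\|\nabla\sigma\|^2 + \gamma\|\sigma\|_{\mathbb{L}^2_9}^2 = \int_{\mathbb{T}} \bigl[(\nabla Ju)\sigma + \sigma(\nabla Ju)^T\bigr]:\sigma\,dx + \delta\int_{\mathbb{T}} \mathrm{tr}(\sigma)\,dx.
\end{equation*}
The convective term vanishes thanks to the incompressibility of $Ju$: $\int_{\mathbb{T}}\bigl[(Ju\cdot\nabla)\sigma\bigr]:\sigma\,dx = \tfrac{1}{2}\int_{\mathbb{T}}Ju\cdot\nabla|\sigma|^2\,dx = 0$, and integration by parts produces the non-negative dissipation $\epsilon\|\nabla\sigma\|^2$, which is discarded on the way to an upper bound on $y(t)$.

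For the stretching terms I use the pointwise bound $\bigl|[(\nabla Ju)\sigma+\sigma(\nabla Ju)^T]:\sigma\bigr|\le 2|\nabla Ju|\,|\sigma|^2$, together with the key regularizer estimate $|\nabla Ju(x)|\le K\|u\|_{\mathbb{L}^2_3}$ from \eqref{VBound}(a). This is exactly the step that brings in $K$. Since Proposition \ref{PositivityOfSigma} guarantees that $\sigma(t)$ remains positive semi-definite, the energy bound \eqref{k1k2} $\|u(t)\|_{\mathbb{L}^2_3}^2\le E_1 t + E_2$ applies, and a Young-type rearrangement turns the stretching contribution into a coefficient proportional to $(E_1 t+E_2)\,y(t)$. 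For the inhomogeneity, Cauchy–Schwarz together with the pointwise bound $|\mathrm{tr}(\sigma)|\le\sqrt{d}|\sigma|$ yields $\delta\int\mathrm{tr}(\sigma)\,dx\le \delta\sqrt{d|\mathbb{T}|}\|\sigma\|_{\mathbb{L}^2_9}$, which I split by Young's inequality into a multiple of $y(t)$ plus a pure constant depending only on $\delta$ and $|\mathbb{T}|$.

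Assembling the estimates produces a scalar ODE-type inequality $y'(t) \le C(t) y(t) + D$, where $D$ is a constant depending only on $\delta$ and $|\mathbb{T}|$ (the origin of the $2\sqrt{|\mathbb{T}|}\delta t$ in the second factor of $R_1(t)$), while $C(t)$ is affine in $t$ with $\int_0^t C(s)\,ds = 2\gamma t + 2\delta\sqrt{|\mathbb{T}|}t+4KE_2 t+2KE_1 t^2$. Gronwall's inequality in its standard form $y(t)\le e^{\int_0^t C(s)\,ds}\bigl(y(0) + Dt\bigr)$ (using $e^{\int_s^t C}\le e^{\int_0^t C}$ since $C\ge 0$) then produces exactly $R_1(t)$.

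The main obstacle is the careful bookkeeping of constants during the Young-type splittings: one must choose the splitting for the stretching term so that the coefficient of $y(t)$ comes out as $4K\|u(t)\|_{\mathbb{L}^2_3}^2$ (giving the desired $2KE_1 t^2 + 4KE_2 t$ after integration in $s$), and the splitting for the forcing term so that its $\|\sigma\|$-linear piece is absorbed into $C(t)$ as $2\delta\sqrt{|\mathbb{T}|}$ while its pure constant remainder gives $D = 2\sqrt{|\mathbb{T}|}\delta$. There is no genuine analytical difficulty beyond this bookkeeping, since positivity of $\sigma$ (needed to invoke \eqref{k1k2}) and the uniform control on $\nabla Ju$ from \eqref{VBound} are both already at hand.
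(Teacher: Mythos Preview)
Your proposal is correct and follows essentially the same route as the paper: take the Frobenius inner product of the constitutive equation with $\sigma$, integrate over $\mathbb{T}$, use incompressibility of $Ju$ to kill the transport term, control the stretching term via $|\nabla Ju|\le K\|u\|_{\mathbb{L}^2_3}$ together with the energy bound \eqref{k1k2}, handle the $\delta I:\sigma$ term by H\"older and the elementary splitting $\|\sigma\|\le 1+\|\sigma\|^2$, and close with Gronwall. The only cosmetic difference is that the paper does not perform a Young-type splitting on the stretching term but simply writes $2K\|u\|_{\mathbb{L}^2_3}\le 2K(E_1 t+E_2)$ directly; either bookkeeping yields the stated $R_1(t)$.
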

\begin{proof} Recall the constant $K$ satisfies
\begin{equation*}
|Ju|+|\nabla Ju| \leq K \|u\|_{\mathbb{L}_9}.
\end{equation*}
Therefore, we take the equation for $\sigma$ and take its Frobenius innder product with $\sigma$, and integrate over the spatial variable. This yields the inequality
\begin{eqnarray*}
\frac{1}{2} \frac{d}{dt} \|\sigma(t)\|_{\mathbb{L}^2_{9}} + \epsilon \int_{\mathbb{T}  } |\nabla \sigma |^2 dx &\leq& \delta \int_{\mathbb{T}} |\sigma| dx + \gamma \int_{\mathbb{T}} |\sigma|^2 dx + 2 \int_{\mathbb{T}} |\nabla Ju| |\sigma|^2 dx  \\ 
&\leq& \delta \sqrt{|\mathbb{T}|}\Vert \sigma \Vert_{\mathbb{L}^2_{9} } + \left( \gamma + 2K(E_1 t + E_2 )  \right) \Vert \sigma \Vert_{\mathbb{L}^2_{9} }^2 \\
&\leq& \delta \sqrt{|\mathbb{T}|}\left(1 + \Vert \sigma \Vert_{\mathbb{L}^2_{9} }^2 \right) + \left( \gamma + 2K(E_1 t + E_2 )  \right) \Vert \sigma \Vert_{\mathbb{L}^2_{9} }^2 \\ &\leq& \delta \sqrt{|\mathbb{T}|} + \left( \delta  \sqrt{|\mathbb{T}|} + \gamma + 2K(E_1 t + E_2 )  \right) \Vert \sigma \Vert_{\mathbb{L}^2_{9} }^2, 
\end{eqnarray*}
where H\"{o}lder's inequality is used for the second inequality. We also notice that $\int_{\mathbb{T}} \left[(Ju\cdot \nabla )\sigma \right]\cdot \sigma dx = 0$ as can be checked by integrating by parts. Taking the inequality and multiplying by $2$ we obtain
\begin{equation*}
\frac{d}{dt} \|\sigma \|_{\mathbb{L}^2_{9} }^2 \leq 2\delta \sqrt{|\mathbb{T}|} + \left( 2\delta  \sqrt{|\mathbb{T}|} + 2\gamma + 4K(E_1 t + E_2 )  \right) \|\sigma \|_{\mathbb{L}^2_{9} }^2
\end{equation*}
and using Gronwall's inequality we obtain
\begin{equation*}
\|\sigma(t)\|_{\mathbb{L}_{9}}^2 \leq e^{2\gamma t + 2 \delta \sqrt{|\mathbb{T}|} t + 4KE_2 t + 2K E_1 t^2}\left(   \Vert \sigma_0 \Vert^{2}_{\mathbb{L}^2_{9}} +2\sqrt{|\mathbb{T}|}\delta t \right). 
\end{equation*}
This completes the proof. 
\end{proof} 

We can obtain a similar estimate for $\Vert \nabla \sigma \Vert^2_{\mathbb{L}^2_{27}}$.

\begin{proposition} \label{NablaSigmaL2Apriori}
Given $u_0$ and a positive semi-definite $\sigma_0$, for any fixed $T>0$ with $T<T^*$ the solution $\sigma(t)$ of \eqref{RegularizedOldroydB} with $\epsilon>0$ satisfies the estimate
\begin{equation*}
\|\nabla \sigma(t)\|_{\mathbb{L}^2_{27}}^2 \leq R_2(t), \quad 0 \leq t \leq T, 
\end{equation*}
where 
\begin{equation*}
R_2(t) = e^{ \frac{2M^2}{\epsilon}t} \left( \|\nabla \sigma_0 \|_{\mathbb{L}^2_{27}}^2 + \frac{2\delta^2}{\epsilon} |\mathbb{T}| t + \frac{2\gamma^2 + 8M^2}{\epsilon} \int_0^t  R_1(s)ds  \right), 
\end{equation*}
where the constant $M=K(E_1T + E_2)$ depends on $T$, and the constants $K, E_1,$ and $E_2$ are those appearing in Proposition \ref{SigmaL2Apriori}.
\end{proposition}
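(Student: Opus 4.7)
The strategy is to mimic the proof of Proposition \ref{SigmaL2Apriori}, but now test the equation for $\sigma$ in \eqref{RegularizedOldroydB} against $-\Delta\sigma$ in the Frobenius sense rather than against $\sigma$ itself. This is the natural multiplier for producing $\tfrac{1}{2}\tfrac{d}{dt}\|\nabla\sigma\|_{\mathbb{L}^2_{27}}^2$ on the left, and crucially it generates a dissipative term $\epsilon\|\Delta\sigma\|_{\mathbb{L}^2_9}^2$ that lets $\epsilon$ appear in the denominators of $R_2(t)$. The integration by parts is justified because Proposition \ref{ImprovedRegularitySigma} gives $\sigma\in L^2(0,T;\mathbb{H}^3_9)$ with $\sigma'\in L^2(0,T;\mathbb{H}^1_9)$; equivalently one could differentiate the equation once in $x^k$, pair with $\partial_k\sigma$, and sum over $k$, arriving at the same estimate.

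Moving the lower-order terms to the right, the pairing produces
\begin{equation*}
\tfrac{1}{2}\tfrac{d}{dt}\|\nabla\sigma\|^2 + \epsilon\|\Delta\sigma\|^2 = \int_\mathbb{T}\Bigl((Ju\cdot\nabla)\sigma - (\nabla Ju)\sigma - \sigma(\nabla Ju)^T + \gamma\sigma - \delta I\Bigr):\Delta\sigma\,dx.
\end{equation*}
I bound each term on the right by Cauchy--Schwarz and then Young's inequality, with weights chosen so that every $\|\Delta\sigma\|^2$ contribution comes with coefficient $\epsilon/8$. The transport term (optionally after using $\nabla\cdot Ju=0$) contributes $\tfrac{\epsilon}{8}\|\Delta\sigma\|^2+\tfrac{2M^2}{\epsilon}\|\nabla\sigma\|^2$; each stretching term contributes $\tfrac{\epsilon}{8}\|\Delta\sigma\|^2+\tfrac{cM^2}{\epsilon}\|\sigma\|^2$; the relaxation term contributes $\tfrac{\epsilon}{8}\|\Delta\sigma\|^2+\tfrac{2\gamma^2}{\epsilon}\|\sigma\|^2$; and the forcing $\delta I$ contributes $\tfrac{\epsilon}{8}\|\Delta\sigma\|^2+\tfrac{2\delta^2 d|\mathbb{T}|}{\epsilon}$. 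The pointwise bounds $|Ju|,|\nabla Ju|,|\nabla^2 Ju|\lesssim M$ on $[0,T]$ follow by combining \eqref{VBound} with the energy estimate \eqref{k1k2}, which relies on the positive semi-definiteness of $\sigma(t)$ established in Proposition \ref{PositivityOfSigma}.

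Summing these estimates, the five copies of $\tfrac{\epsilon}{8}\|\Delta\sigma\|^2$ total less than $\epsilon\|\Delta\sigma\|^2$ and so get fully absorbed by the dissipation on the left. Multiplying what remains by $2$ gives the differential inequality
\begin{equation*}
\tfrac{d}{dt}\|\nabla\sigma(t)\|^2 \le \tfrac{2M^2}{\epsilon}\|\nabla\sigma(t)\|^2 + \tfrac{2\gamma^2+8M^2}{\epsilon}\|\sigma(t)\|^2 + \tfrac{2\delta^2|\mathbb{T}|}{\epsilon}.
\end{equation*}
Substituting $\|\sigma(s)\|^2\le R_1(s)$ from Proposition \ref{SigmaL2Apriori} and invoking Gronwall's inequality on this linear differential inequality yields the stated bound $\|\nabla\sigma(t)\|_{\mathbb{L}^2_{27}}^2 \le R_2(t)$.

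The only real subtlety is justifying the $-\Delta\sigma$ multiplier; once that is in hand every remaining step is a routine application of Cauchy--Schwarz and Young's inequality. The main bookkeeping task is choosing the Young's weights so that the five $\|\Delta\sigma\|^2$ terms sum to at most $\epsilon\|\Delta\sigma\|^2$ and the prefactors on $\|\sigma\|^2$, $\|\nabla\sigma\|^2$, and the constant land on precisely the coefficients $\tfrac{2M^2}{\epsilon}$, $\tfrac{2\gamma^2+8M^2}{\epsilon}$, and $\tfrac{2\delta^2|\mathbb{T}|}{\epsilon}$ appearing in the definition of $R_2$.
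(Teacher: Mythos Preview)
Your approach is correct and yields the stated bound, but it is not the route the paper takes. The paper tests the $\sigma$-equation against the time derivative $\sigma'$ rather than against $-\Delta\sigma$. This produces the identity
\[
\|\sigma'\|_{\mathbb{L}^2_9}^2 + \tfrac{\epsilon}{2}\tfrac{d}{dt}\|\nabla\sigma\|_{\mathbb{L}^2_{27}}^2 = \int_\mathbb{T}\bigl(-(Ju\cdot\nabla)\sigma + \delta I - \gamma\sigma + (\nabla Ju)\sigma + \sigma(\nabla Ju)^T\bigr):\sigma'\,dx,
\]
and then Cauchy's inequality is applied to each of the four groups on the right so as to extract exactly $\tfrac14\|\sigma'\|^2$ from each, cancelling the $\|\sigma'\|^2$ on the left. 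Dividing by $\epsilon/2$ gives precisely the differential inequality you wrote, with the stated constants falling out without any tuning. Your $-\Delta\sigma$ multiplier is an equally standard choice; note however that with it the forcing term actually disappears, since $\int_\mathbb{T}\delta I:\Delta\sigma\,dx=\delta\int_\mathbb{T}\Delta(\mathrm{tr}\,\sigma)\,dx=0$ on the torus, so you have only four terms to absorb, not five, and your bound is slightly sharper than $R_2$ (which is harmless). The small caveat is that your uniform choice of $\epsilon/8$ in Young's inequality does not quite reproduce the exact coefficients $\tfrac{2M^2}{\epsilon}$ and $\tfrac{2\gamma^2+8M^2}{\epsilon}$; to match them you would take $\epsilon/4$ on the transport term, $\epsilon/4$ on the relaxation term, and $\epsilon/4$ on the two stretching terms combined as $2M\|\sigma\|\|\Delta\sigma\|$. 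Either way the argument goes through.
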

\begin{proof} 
First we fix any $T>0$ with $T<T^*$. Now we define the constant
\begin{equation*}
M = K(E_1T + E_2)
\end{equation*}
We take the Frobenius inner product of the equation for $\sigma$ with $\sigma'$ and integrate in space to obtain
\begin{subeqnarray}\label{split}
\|\sigma' \|_{\mathbb{L}^2_9}^2 + \frac{\epsilon}{2} \frac{d}{dt} \|\nabla \sigma \|_{\mathbb{L}^2_{27}}^2 &=& \int_{\mathbb{T}} \left(\left[  (-Ju\cdot \nabla )\sigma \right]  + \delta I - \gamma \sigma  +\left[ (\nabla Ju) \sigma  \right]  + \left[ \sigma (\nabla Ju)^T   \right]   \right) \colon \sigma' dx \\
&\leq& M \int_{\mathbb{T}} |\nabla \sigma| | \sigma'| dx + \gamma \int_{\mathbb{T}} |\sigma| |\sigma'| dx +2M\int_{\mathbb{T}} |\sigma| |\sigma'| dx +  \int_{\mathbb{T}} \delta |\sigma'| dx
\end{subeqnarray}
and now the trick is to use Cauchy's inequality with $\varepsilon$ to obtain a contribution of $\frac{1}{4} \|\sigma' \|_{\mathbb{L}^2_9}^2$ from each term to cancel out the $\| \sigma' 
\|_{\mathbb{L}^2_9}^2$ on the left hand side. In this case we obtain  
\begin{equation*}
\frac{\epsilon}{2} \frac{d}{dt} \|\nabla \sigma \|_{\mathbb{L}^2_{27}}^2 \leq M^2 \|\nabla \sigma \|_{\mathbb{L}_{27}^2}^2 + \gamma^2 \| \sigma \|_{\mathbb{L}^2_9}^2 + 4M^2 \| \sigma \|_{\mathbb{L}^2_9}^2 + \delta^2 |\mathbb{T}|
\end{equation*}
and upon multiplying by $2/\epsilon$ and using Proposition \ref{SigmaL2Apriori} we obtain
\begin{equation*}
\frac{d}{dt} \|\nabla \sigma \|_{\mathbb{L}^2_{27}}^2 \leq \frac{2M^2}{\epsilon} \| \nabla \sigma \|_{\mathbb{L}^2_{27}}^2 + \frac{2\delta^2}{\epsilon} |\mathbb{T}| + \frac{2\gamma^2 + 8M^2}{\epsilon}R_1(t) 
\end{equation*}
and upon using Gronwall's inequality we get 
\begin{equation*}
\|\nabla \sigma(t) \|_{\mathbb{L}^2_{27}}^2 \leq e^{ \frac{2M^2}{\epsilon}t} \left( \|\nabla \sigma_0 \|_{\mathbb{L}^2_{27}}^2 + \frac{2\delta^2}{\epsilon} |\mathbb{T}| t + \frac{2\gamma^2 + 8M^2}{\epsilon} \int_0^t  R_1(s)ds  \right), 
\end{equation*}
which holds for $0\leq t \leq T$, completing the proof. 
\end{proof}

Next we will obtain similar estimates for $\omega$. 

\begin{proposition} \label{OmegaL2Estimate}
Given $u_0$ and a positive semi-definite $\sigma_0$, the solution $\omega(t)$ of \eqref{RegularizedVorticityForm} satisfies the estimate
\begin{equation*}
\|\omega(t) \|_{\mathbb{L}^2_3}^2 \leq R_3(t), 
\end{equation*}
where 
\begin{equation}\label{r3} 
R_3(t) := e^{  (\beta + 4 + 162K E_2C_1C_E)t + 81K E_1 t^2            } \left(  \|\omega_0 \|_{\mathbb{L}^2_3} + 2 K_g^2 |\mathbb{T}| t + 9K^2 \beta |\mathbb{T}| \int_0^t R_1(s) \; ds \right). 
\end{equation}
Here $K$ is a constant only depending on the choice of $\eta$ in the definition of $J$ and $R_1(t)$ is the function appearing in Proposition \ref{SigmaL2Apriori}.
\end{proposition}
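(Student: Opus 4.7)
The plan is to multiply the vorticity equation by $\omega$, integrate over $\mathbb{T}$, and combine the standard transport cancellation with Cauchy–Schwarz and the a priori ingredients already established. Starting from the first equation of \eqref{RegularizedVorticityForm}, I would form the $\mathbb{L}^2_3$ inner product with $\omega$ to obtain
\begin{equation*}
\frac{1}{2}\frac{d}{dt}\|\omega\|_{\mathbb{L}^2_3}^2 + \alpha\|\nabla\omega\|_{\mathbb{L}^2_9}^2 = -\int_{\mathbb{T}}\omega\cdot \Omega(Ju, u)\,dx + \beta\int_{\mathbb{T}}\omega\cdot\bigl(\nabla\times\mathrm{div}\,J\sigma\bigr)\,dx + \int_{\mathbb{T}}\omega\cdot g\,dx,
\end{equation*}
using the incompressibility of $Ju$ to kill the transport term $\int \omega \cdot (Ju\cdot\nabla)\omega\,dx = 0$ (the same identity noted at the end of \S\ref{notation}). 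The parabolic dissipation on the left-hand side is non-negative and will simply be discarded.

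Next I would estimate each right-hand-side term via Cauchy–Schwarz with a tuned Young's parameter, then plug in the ingredients already at hand: the mollifier bounds \eqref{CapitalOmegaEstimate}, the forcing bound \eqref{gBound} which yields $\|g\|_{\mathbb{L}^2_3}^2 \leq K_g^2|\mathbb{T}|$, the a priori velocity bound $\|u\|_{\mathbb{L}^2_3}^2 \leq E_1 t + E_2$ from \eqref{k1k2}, and the stress bound $\|\sigma\|_{\mathbb{L}^2_9}^2 \leq R_1(t)$ from Proposition \ref{SigmaL2Apriori}. The delicate term is the quadratic nonlinearity: from \eqref{CapitalOmegaEstimate} we have $\|\Omega(Ju, u)\|_{\mathbb{L}^2_3}^2 \leq 81K\|u\|_{\mathbb{L}^2_3}^2\|u\|_{\mathbb{H}^1_3}^2$, so to close the estimate I need to bound $\|u\|_{\mathbb{H}^1_3}^2$ by $\|\omega\|_{\mathbb{L}^2_3}^2$. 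This follows by repeating the elliptic argument of Proposition \ref{VelocityFromVorticity} one derivative lower: solving $\Delta F - \nabla(\nabla\cdot F) = \omega$ under the zero-average constraint gives $\|F\|_{\mathbb{H}^2_3}^2 \leq C_E\|\omega\|_{\mathbb{L}^2_3}^2$, whence $u = -\nabla\times F$ satisfies $\|u\|_{\mathbb{H}^1_3}^2 \leq C_1 C_E\|\omega\|_{\mathbb{L}^2_3}^2$.

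Combining the three estimates, the inequality collapses to the differential form
\begin{equation*}
\frac{d}{dt}\|\omega\|_{\mathbb{L}^2_3}^2 \leq \bigl(\beta + 4 + 162K C_1 C_E\,(E_2 + E_1 t)\bigr)\|\omega\|_{\mathbb{L}^2_3}^2 + 2K_g^2|\mathbb{T}| + 9K^2\beta|\mathbb{T}|\,R_1(t),
\end{equation*}
after matching Cauchy–Schwarz constants. A direct application of Gronwall's inequality then yields the claimed bound $\|\omega(t)\|_{\mathbb{L}^2_3}^2 \leq R_3(t)$, since integrating the coefficient of $\|\omega\|^2$ from $0$ to $t$ produces precisely the exponent $(\beta + 4 + 162KE_2 C_1 C_E)t + 81KE_1 t^2$ appearing in \eqref{r3}, while the inhomogeneous terms give the factor in parentheses.

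The main obstacle is the term $\int\omega\cdot\Omega(Ju, u)\,dx$: because $\Omega$ is quadratic in $u$ with one spatial derivative, neither the pure energy bound \eqref{k1k2} nor the mollifier $L^\infty$ bounds \eqref{VBound} can absorb it on their own. The Biot–Savart-type elliptic inequality $\|u\|_{\mathbb{H}^1_3} \leq \sqrt{C_1 C_E}\|\omega\|_{\mathbb{L}^2_3}$ is what closes the loop, producing a Gronwall coefficient that is only \emph{linear} in $t$ (through the energy bound on $\|u\|_{\mathbb{L}^2_3}^2$) rather than involving higher norms that would spoil the argument.
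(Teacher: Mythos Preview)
Your proposal is correct and follows essentially the same approach as the paper: take the $\mathbb{L}^2_3$ inner product of the vorticity equation with $\omega$, drop the dissipation, use the transport cancellation from incompressibility of $Ju$, estimate the three remaining terms via Cauchy--Schwarz together with \eqref{CapitalOmegaEstimate}, \eqref{gBound}, \eqref{k1k2}, Proposition~\ref{SigmaL2Apriori}, and the elliptic bound $\|u\|_{\mathbb{H}^1_3}^2 \leq C_1 C_E\|\omega\|_{\mathbb{L}^2_3}^2$, then close with Gronwall. Your identification of the $\Omega(Ju,u)$ term as the crux and its resolution via the Biot--Savart-type estimate is exactly the mechanism the paper uses.
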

\begin{proof}  
By definition, the constant $K$ satisfies
\begin{equation*}
|\nabla \times \text{div} J \sigma| \leq 3K \|\sigma \|_{\mathbb{L}^2_3}.
\end{equation*}
Again this constant depends only on the choice of $\eta$ in the definition of $J$. Now, take the equation for $\omega$ and take the inner product with $\omega$ to obtain
\begin{subeqnarray*}
\frac{1}{2} \frac{d}{dt} \|\omega \|_{\mathbb{L}^2_3}^2 + \alpha \|\nabla \omega \|_{\mathbb{L}^2_9}^2 &=& \beta \int_{\mathbb{T}} \left( \nabla \times \text{div} J \sigma \right)\cdot \omega \;  dx - \int_{\mathbb{T}} \Omega(Ju, u) \cdot \omega \; dx + \int_{\mathbb{T}} g\cdot \omega dx \\ 
&\leq& \beta  \int_{\mathbb{T}} |\nabla \times \text{div} J \sigma| |\omega| \; dx + \int_{\mathbb{T}} |\Omega(Ju, u)||\omega| \; dx +  \int_{\mathbb{T}} |g| |\omega| dx \\
&\leq& \frac{\beta}{2}\int_{\mathbb{T}} |\nabla \times \text{div} J\sigma|^2 \; dx + \left(\frac{\beta}{2} + 2 \right) \int_{\mathbb{T}} |\omega|^2 \; dx + \int_{\mathbb{T}} |\Omega(Ju, u)|^2 \; dx + \int_{\mathbb{T}} |g|^2 \; dx \\
&\leq& \frac{9K^2\beta}{2} |\mathbb{T}| \Vert \sigma \Vert_{\mathbb{L}^2_9}^2 + K_g^2 |\mathbb{T}| + \frac{\beta + 4}{2} \Vert \omega \Vert_{\mathbb{L}^2_3}^2 + 81K \Vert u \Vert_{\mathbb{L}^2_3}^2 \Vert u \Vert_{\mathbb{H}^1_3}^2.
\end{subeqnarray*}
However, since we can estimate
\begin{equation*}
\|u\|_{\mathbb{H}^1_3}^2 \leq C_1 \|F\|_{\mathbb{H}^2_3}^2 \leq C_1 C_E \|\omega \|_{\mathbb{L}^2_3}^2 =\tilde{C} \|\omega\|_{\mathbb{L}^2_3}^2. 
\end{equation*}
we obtain the inequality
\begin{equation*}
\frac{d}{dt} \|\omega \|_{L^2}^2 \leq \left[ 9K^2\beta |\mathbb{T}| R_1(t) +2K_g^2 |\mathbb{T}|\right] + \left[ \beta + 4 + 162 K(E_1t+E_2) C_1 C_E   \right] \|\omega \|_{\mathbb{L}^2_3}^2, 
\end{equation*}
and applying Gronwall's inequality we obtain
\begin{equation*}
\|\omega(t)\|_{\mathbb{L}^2_3}^2 \leq e^{  (\beta + 4 + 162KE_2C_1C_E)t + 81KE_1 t^2            } \left( \|\omega_0\|_{\mathbb{L}^2_3} + 2K_g^2 |\mathbb{T}| t + 9K^2 \beta |\mathbb{T}| \int_0^t R_1(s) \; ds \right), 
\end{equation*}
which completes the proof. 
\end{proof} 

\begin{proposition} \label{NablaOmegaL2Estimate}
Given $u_0$ and a positive semi-definite $\sigma_0$, for any fixed $T > 0$, for which the solition exists in $[0,T]$, the solution $\omega(t)$ of \eqref{RegularizedVorticityForm} satisfies the estimate
\begin{equation*}
\|\nabla \omega(t)\|_{\mathbb{L}^2_9}^2 \leq R_4(t) \quad \forall t \in [0,T], 
\end{equation*}
where 
\begin{equation*}
R_4(t) = e^{\frac{2M^2}{\alpha}t}\left( \|\nabla \omega_0 \|_{\mathbb{L}^2_9}^2 + \frac{2t}{\alpha} \left[ 9\beta^2 K^2 |\mathbb{T}| R_1(T) + 81M C_1 C_E R_3(T) + K_g^2 |\mathbb{T}| \right]  \right), 
\end{equation*}
where $M$ is the constant defined in Proposition \ref{NablaSigmaL2Apriori} and thus only depends on $T$ and the initial data. 
\end{proposition}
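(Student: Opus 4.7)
The plan is to parallel Proposition \ref{NablaSigmaL2Apriori}, now applied to the vorticity equation. Explicitly, I would take the $L^2$ inner product of
\[\partial_t \omega - \alpha \Delta \omega + (Ju\cdot \nabla)\omega + \Omega(Ju,u) = \beta\,\nabla \times \text{div}\, J\sigma + g\]
with $\omega'$, and integrate in space. Integration by parts on the torus converts the dissipation term into $-\alpha\int_{\mathbb{T}} \Delta \omega \cdot \omega'\,dx = \tfrac{\alpha}{2}\tfrac{d}{dt}\|\nabla \omega\|_{\mathbb{L}^2_9}^2$, yielding
\[\|\omega'\|_{\mathbb{L}^2_3}^2 + \frac{\alpha}{2}\frac{d}{dt}\|\nabla \omega\|_{\mathbb{L}^2_9}^2 = -\!\int_{\mathbb{T}}(Ju\cdot \nabla)\omega \cdot \omega'\,dx - \!\int_{\mathbb{T}}\Omega(Ju,u)\cdot \omega'\,dx + \beta\!\int_{\mathbb{T}}(\nabla \times \text{div}\, J\sigma)\cdot\omega'\,dx + \!\int_{\mathbb{T}}g\cdot \omega'\,dx.\]
The pairing with $\omega'$ and the integration by parts are justified by Proposition \ref{ImprovedRegularity}, which gives $\omega \in L^2(0,T;\mathbb{H}^2_3)$ and $\omega'\in L^2(0,T;\mathbb{L}^2_3)$.

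Next I apply Cauchy's inequality with $\varepsilon$ to each of the four right-hand side terms, extracting $\tfrac{1}{4}\|\omega'\|_{\mathbb{L}^2_3}^2$ from each so the four contributions absorb $\|\omega'\|^2$ on the left. The remaining $L^2_x$ factors are controlled by the a priori bounds already in hand: $|Ju(x,t)| \le M$ coming from \eqref{VBound} and the energy inequality \eqref{k1k2} (with the same $M = K(E_1 T+E_2)$ used in Proposition \ref{NablaSigmaL2Apriori}), which yields the $M^2\|\nabla \omega\|^2$ term from the convection integral; the bound $\|\nabla\times \text{div}\, J\sigma\|^2 \le 9K^2|\mathbb{T}|\,R_1(T)$ from \eqref{CapitalOmegaEstimate} together with Proposition \ref{SigmaL2Apriori}; the estimate $\|\Omega(Ju,u)\|^2 \le 81\,K\|u\|^2\|u\|_{\mathbb{H}^1_3}^2 \le 81\,M\,C_1 C_E \|\omega\|_{\mathbb{L}^2_3}^2 \le 81\,M\,C_1 C_E R_3(T)$, combining \eqref{CapitalOmegaEstimate}, the elliptic inequality of Proposition \ref{VelocityFromVorticity}, and Proposition \ref{OmegaL2Estimate}; and $\|g\|^2 \le K_g^2 |\mathbb{T}|$ from \eqref{gBound}. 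Collecting these and multiplying by $2/\alpha$ produces the linear differential inequality
\[\frac{d}{dt}\|\nabla \omega\|_{\mathbb{L}^2_9}^2 \le \frac{2M^2}{\alpha}\|\nabla \omega\|_{\mathbb{L}^2_9}^2 + \frac{2}{\alpha}\bigl[9\beta^2 K^2 |\mathbb{T}|R_1(T) + 81\,M\,C_1 C_E R_3(T) + K_g^2 |\mathbb{T}|\bigr],\]
after which the standard Gronwall estimate $y(t)\le e^{at}(y(0)+bt)$ delivers exactly the advertised $R_4(t)$.

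The argument is essentially bookkeeping once the machinery of the preceding propositions is in place; the only genuine concern is justifying the pairing with $\omega'$ and the ensuing integration by parts, which is legitimate at the available regularity, and can be made fully rigorous by a standard spatial mollification-and-pass-to-the-limit argument if desired. The rest is a direct transcription of the proof of Proposition \ref{NablaSigmaL2Apriori}, with $(\sigma,\epsilon)$ replaced by $(\omega,\alpha)$ and the source-term estimates adjusted to account for $\beta\,\nabla \times \text{div}\, J\sigma$, $\Omega(Ju,u)$, and $g$.
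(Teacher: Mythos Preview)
Your proposal is correct and follows essentially the same approach as the paper: pair the vorticity equation with $\omega'$, convert $-\alpha\Delta\omega\cdot\omega'$ into $\tfrac{\alpha}{2}\tfrac{d}{dt}\|\nabla\omega\|^2$, split each of the four right-hand side integrals via Cauchy's inequality so as to absorb $\|\omega'\|^2$, bound the remaining factors using $|Ju|\le M$, \eqref{CapitalOmegaEstimate}, Propositions \ref{SigmaL2Apriori} and \ref{OmegaL2Estimate}, and \eqref{gBound}, then apply Gronwall. The constants and the bookkeeping match the paper's argument line for line.
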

\begin{proof} For fixed $T>0$, we take the equation for $\omega$ and take the inner product with $\omega'$. Then for $0\leq t \leq T$, this gives us
\begin{eqnarray*}
\|\omega'\|_{\mathbb{L}^2_3}^2 + \frac{\alpha}{2} \frac{d}{dt} \| \nabla \omega \|_{\mathbb{L}^2_9}^2 &=& - \int_{\mathbb{T}} \left( (Ju \cdot \nabla ) \omega  \right) \omega' + \beta \int_{\mathbb{T}} \left( \nabla \times \text{div} J \sigma \right)\cdot \omega' \;  dx - \int_{\mathbb{T}} \Omega(Ju, u) \cdot \omega' \; dx + \int_{\mathbb{T}} g\cdot \omega' dx \\
&\leq& \int_{\mathbb{T}} |Ju||\nabla \omega| |\omega'| \; dx + \beta  \int_{\mathbb{T}} |\nabla \times \text{div} J \sigma| |\omega'| \; dx + \int_{\mathbb{T}} |\Omega(Ju, u)||\omega'| \; dx +  \int_{\mathbb{T}} |g| |\omega'| dx \\
&\leq& M \int_{\mathbb{T}} |\nabla \omega| |\omega'| \; dx + \beta  \int_{\mathbb{T}} |\nabla \times \text{div} J \sigma| |\omega'| \; dx + \int_{\mathbb{T}} |\Omega(Ju, u)||\omega'| \; dx +  \int_{\mathbb{T}} |g| |\omega'| dx\\ 
&\leq& M^2 \int_{\mathbb{T}} |\nabla \omega |^2 \; dx + \beta^2 \int_{\mathbb{T}} |\nabla \times \text{div} J\sigma|^2 \; dx + \int_{\mathbb{T}} |\Omega(Ju, u)|^2 \; dx + \int_{\mathbb{T}} |g|^2 \; dx + \|\omega' \|_{\mathbb{L}^2_3}^2, 
\end{eqnarray*}
where we used Cauchy's inequality with $\varepsilon$ to extract exactly $\frac{1}{4} \|\omega' \|_{\mathbb{L}^2_3}^2$ from each term. Thus using the energy estimate we obtain 
\begin{eqnarray*}
\frac{\alpha}{2} \frac{d}{dt} \|\nabla \omega \|_{\mathbb{L}^2_9}^2 &\leq& M^2 \|\nabla \omega \|_{\mathbb{L}^2_9}^2 + \beta^2 \int_{\mathbb{T}} |\nabla \times \text{div} J\sigma|^2 \; dx + \int_{\mathbb{T}} |\Omega(Ju, u)|^2 \; dx + \int_{\mathbb{T}} |g|^2 \; dx \\
&\leq& M^2 \|\nabla \omega \|_{\mathbb{L}^2_9}^2 +
9\beta^2 K^2 |\mathbb{T}| \|\sigma \|_{\mathbb{L}^2_3}^2 + 81K \|u\|_{\mathbb{L}^2_3}^2 \|u\|_{\mathbb{H}^1_3}^2 + K_g^2 |\mathbb{T}| \\
&\leq& M^2 \|\nabla \omega \|_{\mathbb{L}^2_9}^2 +
9\beta^2 K^2 |\mathbb{T}| \|\sigma \|_{\mathbb{L}^2_3}^2 + 81M C_1 C_E \|\omega \|_{\mathbb{L}^2_3}^2 + K_g^2 |\mathbb{T}| \\
&\leq& 9\beta^2 K^2 |\mathbb{T}| R_1(t) + 81M C_1 C_E R_3(t) + K_g^2 |\mathbb{T}| +  M^2 \|\nabla \omega \|_{\mathbb{L}^2_9}^2 \\
&\leq& 9\beta^2 K^2 |\mathbb{T}| R_1(T) + 81M C_1 C_E R_3(T) + K_g^2 |\mathbb{T}| +  M^2 \|\nabla \omega \|_{\mathbb{L}^2_9}^2, 
\end{eqnarray*}
and so we obtain
\begin{equation*}
\frac{d}{dt} \|\nabla \omega \|_{L^2}^2 \leq \frac{2}{\alpha} \left(   9\beta^2 K^2 |\mathbb{T}| R_1(T) + 81M C_1 C_E R_3(T) + K_g^2 |\mathbb{T}| +  M^2 \|\nabla \omega \|_{\mathbb{L}^2_9}^2 \right)
\end{equation*}
which holds for all $0\leq t \leq T$. Now, applying Gronwall's inequality we finally get
\begin{equation*}
\|\nabla \omega(t) \|_{\mathbb{L}^2_9}^2 \leq  e^{\frac{2M^2}{\alpha}t}\left( \|\nabla \omega_0 \|_{\mathbb{L}^2_9}^2 + \frac{2t}{\alpha} \left[   9\beta^2 K^2 |\mathbb{T}| R_1(T) + 81M C_1 C_E R_3(T) + K_g^2 |\mathbb{T}| \right]  \right), 
\end{equation*}
completing the proof.
\end{proof} 

\subsection{Global Existence}

To establish global existence we begin with the following proposition.
\begin{proposition}\label{BlowUpCriterion}
If the time $T^*$ appearing in Proposition \ref{extension} is finite (that is, the solution cannot be extended past $T^*$), then necessarily
\begin{equation*}
\|(u, \sigma)\|_X^2 \rightarrow \infty
\end{equation*}
as $T\nearrow T^*$.
\end{proposition}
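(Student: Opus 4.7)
The plan is to argue by contrapositive, invoking the fact, already established in Propositions \ref{SelfMap} and \ref{ShortTimeExistence}, that the short-time existence time $T_c$ depends only on the $Y$-norm of the initial data (through the quantity $R^2 = 12C\Vert(u_0,\sigma_0)\Vert_Y^2 + 3\Vert(u_0,\sigma_0)\Vert_Y^2 + 4$ in \eqref{Rformula}). In particular, if the initial data is bounded in $Y$ by some constant $\mathcal{M}$, there is a uniform time $T_\mathcal{M}>0$ such that a solution exists on $[0,T_\mathcal{M}]$.

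Suppose, for contradiction, that $T^*<\infty$ but that $\Vert(u,\sigma)\Vert_{X(T)}^2$ stays bounded as $T\nearrow T^*$, say by some $\mathcal{M}<\infty$. Since
\begin{equation*}
\Vert(u,\sigma)\Vert_{X(T)}^2 = \max_{0\leq t\leq T}\Vert(u(t),\sigma(t))\Vert_Y^2,
\end{equation*}
this would give $\Vert(u(t),\sigma(t))\Vert_Y^2\leq \mathcal{M}$ for every $t\in[0,T^*)$. Let $T_\mathcal{M}$ be the short-time existence time guaranteed by Propositions \ref{SelfMap} and \ref{ShortTimeExistence} for any initial datum with $Y$-norm squared bounded by $\mathcal{M}$. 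Pick any $t_0\in(T^*-T_\mathcal{M},T^*)$.

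Using $(u(t_0),\sigma(t_0))\in Y$ as new initial data, Proposition \ref{ShortTimeExistence} produces a unique solution $(\hat u,\hat\sigma)$ to \eqref{RegularizedVorticityForm} on the time interval $[t_0,t_0+T_\mathcal{M}]$, with regularity $\hat u\in C([t_0,t_0+T_\mathcal{M}];\overbar{\mathbb{H}}^2_3)$, $\hat u'\in L^2$, and analogously for $\hat\sigma$. By the uniqueness statement of Proposition \ref{ShortTimeExistence}, $(\hat u,\hat\sigma)$ agrees with $(u,\sigma)$ on $[t_0,T^*)$, so concatenating provides a solution of \eqref{RegularizedVorticityForm} on $[0,t_0+T_\mathcal{M}]$, with $t_0+T_\mathcal{M}>T^*$. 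This contradicts the maximality of $T^*$ asserted in Proposition \ref{extension}, completing the argument.

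The only point requiring care is that the short-time existence time in Proposition \ref{SelfMap} depends on the initial data only through the $Y$-norm (via $R$) and not on any time derivatives or more delicate quantities; this was exactly the content of the choice of $T_{u,\sigma}$ in the proof of Proposition \ref{SelfMap}, so no additional work is needed. The gluing of $(u,\sigma)$ and $(\hat u,\hat\sigma)$ at $t=t_0$ is legitimate because continuity in $Y$ of both pieces at $t_0$, together with $H^2$-uniqueness, forces them to coincide on their common domain, so the concatenated function inherits the class $C([0,t_0+T_\mathcal{M}];Y)$ along with the requisite $L^2$-in-time regularity of the time derivatives.
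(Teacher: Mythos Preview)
Your proof is correct and follows essentially the same approach as the paper: assume boundedness, use that the short-time existence interval depends only on the $Y$-norm of the data, restart the flow from a time sufficiently close to $T^*$, and contradict maximality. The paper phrases this with a sequence $T_k\nearrow T^*$ rather than a single $t_0$, but the argument is the same.
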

\begin{proof} 
Suppose $T^*<\infty$ is the finite time beyond which we can't extend and $\Vert (u, \sigma) \Vert_X^2 <B$ for all $T<T^*$. Take a sequence $T_k \nearrow T^*$. Then by Theorem \ref{ShortTimeExistence} there exists a $T>0$ depending only on $B$ such that the solution can be extended to $T_k+T$. For large enough $k$ we have $T_k+T>T^*$ contradicting the definition of $T^*$. 
\end{proof}

The point is that if we can show  $	\Vert (u, \sigma) \Vert_X^2$ remains finite for all finite $T$ then our solution is global, i.e. $T^*=\infty$. This, combined with the energy estimates allows us to obtain global existence of the solution.

\begin{proof}[\textbf{Proof of Theorem \ref{GlobalExistenceTheorem1}}] 
By Proposition \ref{BlowUpCriterion} we need to show that $\Vert u(t) \Vert_{\overbar{\mathbb{H}}^2_3 }$ and $\Vert \sigma(t) \Vert_{\mathbb{H}^2_9}$ are both finite for all finite $t$. Since we have 
\begin{equation*} 
\|u(t)\|_{\overbar{\mathbb{H}}^2_3 }^2 \leq \tilde{C} \|\omega(t) \|_{\mathbb{H}^1_3 }^2. 
\end{equation*}
It is now enough to show that the right hand side is finite for all finite time. However, since $\sigma_0$ is positive semi-definite, Propositions \ref{OmegaL2Estimate} and \ref{NablaOmegaL2Estimate} both hold, which means that the right hand side is in fact finite for all finite time. 

Similarly, since Propositions \ref{SigmaL2Apriori} and \ref{NablaSigmaL2Apriori} both hold we have that $\Vert  \sigma(t)  \Vert_{ \mathbb{H}^1_9  }^2 $ is finite for all finite $t$. Now, fixing any $T>0$ we look at
\begin{equation*}
\partial_t \sigma  -\epsilon \Delta \sigma + \left(Ju \cdot \nabla \right) \sigma = \delta I  + \left(\nabla Ju\right) \sigma +\sigma \left(\nabla Ju\right)^T - \gamma \sigma , 
\end{equation*}
and differentiate it with respect to $x^i$. This gives us an equation for the components $v_i=\partial_i \sigma$. The resulting equation can be put into the form 
\begin{equation*}
    \partial_t v_i - \epsilon \Delta v_i +(Ju \cdot \nabla) v_i = G
\end{equation*}
where 
\begin{equation*}
    \max_{0\leq t \leq T} \Vert G \Vert_{\mathbb{L}^2_9}^2
\end{equation*}
can be estimated directly in terms of our estimates for $\|u(t)\|_{\overbar{\mathbb{H}}^2_3 }^2$ and $\|\sigma(t)\|_{ \mathbb{H}^1_9}^2$. Thus we can obtain an explicit estimate for 
\begin{equation*}
    \max_{0\leq t \leq T} \Vert v_i \Vert_{\mathbb{H}^1_9}^2
\end{equation*}
and so we can conclude
\begin{equation*}
\max_{0\leq t \leq T}\|\sigma(t)\|_{\mathbb{H}^2_9}^2
\end{equation*}
is finite. Since the $T$ can be chosen arbitrarily, this completes the proof. 
\end{proof} 

We can now also prove Theorem \ref{GlobalExistenceTheorem1HigherRegularity}.

\begin{proof}[\textbf{Proof of Theorem \ref{GlobalExistenceTheorem1HigherRegularity}}]
The proof follows by induction. First, due to the regularization and the energy estimate, we have complete point-wise control over $Ju$ and all its derivatives. Looking at the equation
\begin{align} \label{InductiveEquationk} \begin{split}
\partial_t \sigma  -\epsilon \Delta \sigma + \left(Ju \cdot \nabla \right) \sigma = \delta I  + \left(\nabla Ju\right) \sigma +\sigma \left(\nabla Ju\right)^T - \gamma \sigma \\
 \sigma (0) =  \sigma_0,  \end{split}
\end{align}
we see that if we have 
\begin{equation} \label{inductiveCondition}
\max_{0\leq t \leq T} \Vert \sigma(t) \Vert_{\mathbb{H}^i_9} \leq C_i, 
\end{equation}
then the right hand side of the equation is in $\mathbb{H}^{i}_9$. Let $\lambda$ be a multi-index of length $i$. We can apply $D^\lambda$ to \eqref{InductiveEquationk} to obtain the initial value problem
\begin{equation*}
    \partial_t D^\lambda \sigma  -\epsilon \Delta D^\lambda \sigma + \left(Ju \cdot \nabla \right) D^\lambda \sigma = \mathcal{G}
\end{equation*}
where $\mathcal{G}$ is a sum of terms which are the products of smooth functions (coming from the derivatives of $Ju$) and the derivatives of $\sigma$ up to order $i$. Then, using \eqref{inductiveCondition} we conclude that we can bound $\Vert \mathcal{G} \Vert_{\mathbb{L}^2_9}$ in terms of $C_i$. Now, as long as $i<k$, then $D^\lambda \sigma_0 \in \mathbb{H}^1_9$ and so using of the form \eqref{IEDS1} we obtain that
\begin{equation*}
    \max_{0\leq t \leq T} \Vert D^\lambda \sigma \Vert_{\mathbb{H}^1_9} \leq C
\end{equation*}
for some constant $C$. Thus we can bound all of the partial derivatives of $\sigma$ to conclude that there is some constant $C_{i+1}$ such that
\begin{equation}
    \max_{0\leq t \leq T} \Vert \sigma(t) \Vert_{\mathbb{H}^{i+1}_9} \leq C_{i+1}.
\end{equation}
Moreover, using \eqref{IEDS2} we can conclude that $\sigma'\in L^2(0, T; \mathbb{H}^{i-1}_9)$.  

We can apply the exact same idea to the vorticity equation, and now the result follows by induction. Finally, by the Sobolev embedding theorem, if $k\geq 4$. then the components are in $C^{k-2}$ giving us classical solutions.
\end{proof}

\subsection{Global Stability of Solutions}
In this section, we investigate the stability of our solutions. 
\begin{proposition}[$L^2$ Stability] \label{L2Stability}
Let two sets of initial data $(u_{0, 1}, \sigma_{0,1})$ and $(u_{0, 2}, \sigma_{0,2})$ satisfying the assumptions of Theorem \ref{GlobalExistenceTheorem1} be given. We denote the corresponding solutions to \eqref{RegularizedVorticityForm} by $(\omega_1(t),u_1(t),\sigma_1(t))$ and $(\omega_2(t),u_2(t),\sigma_2(t))$, respectively. Then, for an arbitrary $T > 0$. The following estimate holds: 
\begin{eqnarray*}
\|\omega_2(t)-\omega_1(t)\|_{\mathbb{L}^2_3}^2 + \|\sigma_2(t)-\sigma_1(t)\|_{\mathbb{L}^2_9}^2 &\leq& R_6(t) \left( \|\omega_2(0)-\omega_1(0)\|_{\mathbb{L}^2_3}^2 + \|\sigma_2(0) - \sigma_1(0)\|_{\mathbb{L}^2_9}^2\right), 
\end{eqnarray*}
where $R_6(t) = e^{\int_0^t R_5(s)ds}$, with 
\begin{subeqnarray*}
R_5(t) = 2CK\sqrt{R_4(t)}+\beta+  \beta |\mathbb{T}| K^2   +2CK \sqrt{R_2(t)} + 4CK \sqrt{R_1(t)} + 4K(E_1 t + E_2) + 2 \gamma. 
\end{subeqnarray*}
\end{proposition}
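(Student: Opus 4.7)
The plan is to form the difference equations for $\bar\omega := \omega_2-\omega_1$ and $\bar\sigma:=\sigma_2-\sigma_1$, test them in $\mathbb{L}^2$, bound the ``cross'' terms using the a priori estimates from Propositions~\ref{SigmaL2Apriori}--\ref{NablaOmegaL2Estimate} together with the energy bound \eqref{k1k2}, and close the argument by a single Gr\"onwall inequality on $t\mapsto \|\bar\omega\|_{\mathbb{L}^2_3}^2+\|\bar\sigma\|_{\mathbb{L}^2_9}^2$.

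\textbf{Step 1 (difference equations).} Writing $\bar u := u_2-u_1$ and subtracting the two copies of \eqref{RegularizedVorticityForm}, one uses the same algebraic splittings as in Propositions~\ref{Contraction} and \ref{ShortTermStability}, namely $(Ju_2\!\cdot\!\nabla)\omega_2 - (Ju_1\!\cdot\!\nabla)\omega_1 = (Ju_2\!\cdot\!\nabla)\bar\omega + (J\bar u\!\cdot\!\nabla)\omega_1$, $\Omega(Ju_2,u_2) - \Omega(Ju_1,u_1) = \Omega(Ju_2,\bar u) + \Omega(J\bar u,u_1)$, and the analogous bilinear splittings in the constitutive equation. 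This yields a linear parabolic system for $(\bar\omega,\bar\sigma)$ with vanishing forcing, whose drift terms $(Ju_2\!\cdot\!\nabla)\bar\omega$ and $(Ju_2\!\cdot\!\nabla)\bar\sigma$ are transported by the divergence-free vector field $Ju_2$ and therefore drop out upon testing against $\bar\omega$ and $\bar\sigma$.

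\textbf{Step 2 ($\mathbb{L}^2$ energy estimates).} Testing the $\bar\omega$-equation with $\bar\omega$ in $\mathbb{L}^2_3$ gives, after integration by parts,
\begin{equation*}
\tfrac{1}{2}\tfrac{d}{dt}\|\bar\omega\|_{\mathbb{L}^2_3}^2 + \alpha\|\nabla\bar\omega\|_{\mathbb{L}^2_9}^2 = \beta\!\!\int\!\!(\nabla\times\mathrm{div}\,J\bar\sigma)\!\cdot\!\bar\omega\,dx - \!\!\int\!\!(J\bar u\!\cdot\!\nabla)\omega_1\!\cdot\!\bar\omega\,dx - \!\!\int\!\!\Omega(Ju_2,\bar u)\!\cdot\!\bar\omega\,dx - \!\!\int\!\!\Omega(J\bar u,u_1)\!\cdot\!\bar\omega\,dx.
\end{equation*}
For each right-hand side term I apply Cauchy--Schwarz in the form $ab \le \tfrac12(a^2+b^2)$ and insert the pointwise bounds \eqref{VBound}, the estimate $\|\nabla\times\mathrm{div}\,J\bar\sigma\|_{\mathbb{L}^2_3}^2 \le 9K^2|\mathbb{T}|\,\|\bar\sigma\|_{\mathbb{L}^2_9}^2$, and the velocity recovery $\|\bar u\|_{\mathbb{L}^2_3}^2 \le C\|\bar\omega\|_{\mathbb{L}^2_3}^2$ from Proposition~\ref{VelocityFromVorticity}. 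The a priori bounds then enter as $\|\nabla\omega_1\|_{\mathbb{L}^2_9}\le\sqrt{R_4(t)}$ (giving the $CK\sqrt{R_4}$ contribution in $R_5$), $\|u_i\|_{\mathbb{L}^2_3}\le\sqrt{E_1 t + E_2}$ from \eqref{k1k2} (giving the $K(E_1 t + E_2)$ contribution via the $\Omega$ terms), and the $\beta$ and $\beta|\mathbb{T}|K^2$ contributions from the $J\bar\sigma$ coupling. Analogously, testing the $\bar\sigma$-equation with $\bar\sigma$ in $\mathbb{L}^2_9$ produces the terms $\int(\nabla Ju_2)\bar\sigma\!:\!\bar\sigma$, $\int\bar\sigma(\nabla Ju_2)^T\!:\!\bar\sigma$, $\int(\nabla J\bar u)\sigma_1\!:\!\bar\sigma$, $\int\sigma_1(\nabla J\bar u)^T\!:\!\bar\sigma$, $\int(J\bar u\!\cdot\!\nabla)\sigma_1\!:\!\bar\sigma$, and $\gamma\|\bar\sigma\|_{\mathbb{L}^2_9}^2$, which together with \eqref{VBound}, $\|\sigma_1\|_{\mathbb{L}^2_9}\le\sqrt{R_1(t)}$, and $\|\nabla\sigma_1\|_{\mathbb{L}^2_{27}}\le\sqrt{R_2(t)}$ contribute the $CK\sqrt{R_1}$, $CK\sqrt{R_2}$, $K(E_1 t + E_2)$, and $\gamma$ pieces of $R_5$.

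\textbf{Step 3 (summation and Gr\"onwall).} Dropping the non-negative dissipation terms $\alpha\|\nabla\bar\omega\|^2$ and $\epsilon\|\nabla\bar\sigma\|^2$ and adding the two inequalities produces, after grouping constants,
\begin{equation*}
\tfrac{d}{dt}\!\left(\|\bar\omega\|_{\mathbb{L}^2_3}^2 + \|\bar\sigma\|_{\mathbb{L}^2_9}^2\right) \le R_5(t)\!\left(\|\bar\omega\|_{\mathbb{L}^2_3}^2 + \|\bar\sigma\|_{\mathbb{L}^2_9}^2\right),
\end{equation*}
with $R_5$ the function stated in the proposition. The integrating factor $R_6(t)=\exp\!\int_0^t R_5(s)\,ds$ then yields the claim via Gr\"onwall. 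The main technical point is Step~2: the term $(J\bar u\!\cdot\!\nabla)\omega_1$ formally costs a derivative on $\omega_1$ that the $\bar\omega$-equation does not control on the left, and the only way to handle it at the $\mathbb{L}^2$ level is to pay for it with the global a priori bound $\sqrt{R_4(t)}$ after using the pointwise regularization $|J\bar u|\le K\|\bar u\|_{\mathbb{L}^2_3}$ and the elliptic bound $\|\bar u\|_{\mathbb{L}^2_3}^2\le C\|\bar\omega\|_{\mathbb{L}^2_3}^2$; this is precisely where the regularizer $J$ and the a priori estimates of the previous subsection are indispensable.
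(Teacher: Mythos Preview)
Your proof is correct and follows essentially the same route as the paper: form the difference system, test in $\mathbb{L}^2$, exploit that the $(Ju_2\cdot\nabla)$ transport terms vanish by incompressibility, estimate each remaining bilinear piece using the pointwise regularizer bounds \eqref{VBound}, the elliptic recovery $\|\bar u\|_{\mathbb{H}^1_3}\le C\|\bar\omega\|_{\mathbb{L}^2_3}$, and the a priori bounds $R_1$--$R_4$, then close with Gr\"onwall.

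One small bookkeeping correction: you attribute the $K(E_1t+E_2)$ piece of $R_5$ to the $\Omega$ terms via the energy bound \eqref{k1k2}, but in fact that piece comes from the $(\nabla Ju_2)\bar\sigma+\bar\sigma(\nabla Ju_2)^T$ terms in the $\bar\sigma$-equation. For the $\Omega$ terms, with your splitting $\Omega(Ju_2,\bar u)+\Omega(J\bar u,u_1)$, the second piece costs $\|u_1\|_{\mathbb{H}^1_3}$ (not just $\|u_1\|_{\mathbb{L}^2_3}$), which is controlled by $\|\omega_1\|_{\mathbb{L}^2_3}\le\sqrt{R_3(t)}$ rather than by the energy bound. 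The paper's proof indeed picks up an extra $2K\tilde C\sqrt{R_3(t)}$ term in $R_5$ from exactly this source (the stated $R_5$ in the proposition omits it, which appears to be a typo in the paper). This does not affect the validity of your argument, since $R_3$ is already available from Proposition~\ref{OmegaL2Estimate}.
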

\begin{proof} 
As before, taking the equation for $\omega_2$ and the equation for $\omega_1$ and subtracting them, we obtain the equation for $\bar{\omega} = \omega_2 - \omega_1$, which takes the form 
\begin{equation*}
\partial_t \bar{\omega} - \alpha \Delta \bar{\omega} = \beta \left(  \nabla \times \text{div} J \bar{\sigma}  \right) - (Ju_2\cdot \nabla)\bar{\omega} - (J\bar{u}\cdot \nabla) \omega_1-\Omega(J\bar{u}, u_2)-\Omega(Ju_1, \bar{u}). 
\end{equation*}
We now take the inner product with $\bar{\omega}$ and integrate to get
\begin{eqnarray}\label{OmegaBarL2Derivative} 
\frac{1}{2}\frac{d}{dt} \|\bar{\omega}\|_{\mathbb{L}^2_3}^2 + \alpha\|\nabla \bar{\omega} \|_{\mathbb{L}^2_9}^2 &=& \int_{\mathbb{T}} \left( \beta \left(  \nabla \times \text{div} J \bar{\sigma} \right) - (Ju_2\cdot \nabla)\bar{\omega} - (J\bar{u}\cdot \nabla) \omega_1  -\Omega(J\bar{u}, u_2)- \Omega(Ju, \bar{u}) \right) \bar{\omega} dx \nonumber \\
&\leq& \frac{\beta}{2} \int_{\mathbb{T}} |\nabla \times \text{div} J \bar{\sigma}|^2dx + \frac{\beta}{2}\int_{\mathbb{T}} |\bar{\omega}|^2dx + \int_{\mathbb{T}} |J\bar{u}||\nabla \omega_1 | |\bar{\omega}| dx \nonumber \\
&& + \int_{\mathbb{T}} K \Vert \bar{u} \Vert_{\mathbb{L}^2_3} |\nabla u_2 | |\bar{\omega}| \; dx  + \int_{\mathbb{T}} K \Vert u_1 \Vert_{\mathbb{L}^2_3} |\nabla \bar{u} | |\bar{\omega} | \; dx  \nonumber \\ 
&\leq& \frac{\beta}{2}|\mathbb{T}| K^2 \Vert \bar{\sigma} \Vert_{\mathbb{L}^2_9}^2 + \frac{\beta}{2} \Vert \bar{\omega} \Vert_{\mathbb{L}^2_3}^2 +CK\Vert \bar{\omega} \Vert_{\mathbb{L}^2_3} \int_{\mathbb{T}} |\nabla \omega_1 | |\bar{\omega}| dx \nonumber \\
&& + K \Vert \bar{u} \Vert_{\mathbb{L}^2_3} \Vert \nabla u_2 \Vert_{\mathbb{L}^2_9} \Vert \bar{\omega} \Vert_{\mathbb{L}^2_3} + K \Vert u_1 \Vert_{\mathbb{L}^2_3} \Vert \nabla \bar{u} \Vert_{\mathbb{L}^2_3} \Vert \bar{\omega} \Vert_{\mathbb{L}^2_3} \nonumber  \\
&\leq& \frac{\beta}{2} |\mathbb{T}| K^2 \Vert \bar{\sigma} \Vert_{\mathbb{L}^2_9}^2 + \frac{\beta}{2} \Vert \bar{\omega} \Vert_{\mathbb{L}^2_9}^2 +CK\Vert \nabla \omega_1 \Vert_{\mathbb{L}^2_9}\Vert \bar{\omega} \Vert_{\mathbb{L}^2_3}^2 \nonumber \\
&& + K \tilde{C} \Vert \omega_2 \Vert_{\mathbb{L}^2_3} \Vert \bar{\omega} \Vert_{\mathbb{L}^2_3}^2 + K \tilde{C} \Vert \omega_1 \Vert_{\mathbb{L}^2_3} \Vert \bar{\omega} \Vert_{\mathbb{L}^2_3}^2 \nonumber \\
&\leq& \left( CK\sqrt{R_4(t)}+ \frac{\beta}{2} + 2K\tilde{C} \sqrt{R_3(t)}     \right) \Vert \bar{\omega} \Vert_{\mathbb{L}^2_3}^2 + \frac{\beta}{2}|\mathbb{T}| K^2 \Vert \bar{\sigma} \Vert_{\mathbb{L}^2_9}^2. 
\end{eqnarray}
We can similarly obtain an equation for $\bar{\sigma}$ which gives
\begin{equation*} 
\partial_t \bar{\sigma} - \epsilon \Delta \bar{\sigma} = -(Ju_2 \cdot \nabla ) \bar{\sigma} +(J\bar{u} \cdot \nabla) \sigma_1 + (\nabla Ju_2)\bar{\sigma} + (\nabla J\bar{u})\sigma_1 + \bar{\sigma} ( \nabla Ju_2)^T + \sigma_1 (\nabla J\bar{u})^T - \gamma \bar{\sigma}
\end{equation*}
and upon multiplying by $\bar{\sigma}$ and integrating in space we obtain
\begin{eqnarray}\label{SigmaBarL2Derivative}
\frac{1}{2} \frac{d}{dt} \|\bar{\sigma}\|_{\mathbb{L}^2_9}^2 + \epsilon \|\nabla \bar{\sigma} \|_{\mathbb{L}^2_{27}}^2 &\leq& \int_{\mathbb{T}} \left( |J\bar{u}||\nabla \sigma_1||\bar{\sigma}| + 2 |\nabla Ju_2||\bar{\sigma}|^2 + 2 |\nabla J\bar{u}||\sigma_1||\bar{\sigma}| + \gamma |\bar{\sigma}|^2 \right)dx \nonumber \\
&\leq& CK\Vert \bar\omega \Vert_{\mathbb{L}^2_3}\int_{\mathbb{T}}|\nabla \sigma_1| |\bar{\sigma}| dx + 2K(E_1t + E_2) \int_{\mathbb{T}} |\bar{\sigma}|^2 dx \nonumber \\ 
&& + 2CK\Vert \bar{\omega}\Vert_{\mathbb{L}^2_3} \int_{\mathbb{T}} |\sigma_1||\bar{\sigma} |dx + \gamma \int_{\mathbb{T}} |\bar{\sigma}|^2 dx \nonumber \\
&\leq& CK\Vert \bar{\omega} \Vert_{\mathbb{L}^2_3} \Vert \nabla \sigma_1 \Vert_{\mathbb{L}^2_9} \Vert \bar{\sigma} \Vert_{\mathbb{L}^2_9} +2K(E_1 t + E_2) \Vert \bar{\sigma} \Vert_{\mathbb{L}^2_9}^2 \nonumber \\
&& + 2CK\Vert \bar{\omega} \Vert_{\mathbb{L}^2_3} \Vert \sigma_1 \Vert_{\mathbb{L}^2_9} \Vert \bar{\sigma} \Vert_{\mathbb{L}^2_9} + \gamma \Vert \bar{\sigma} \Vert_{\mathbb{L}^2_9}^2 \nonumber \\
&\leq& CK \sqrt{R_2(t)} \Vert \bar{\omega} \Vert_{\mathbb{L}^2_3}  \Vert \bar{\sigma} \Vert_{\mathbb{L}^2_9} +2K(E_1 t + E_2) \Vert \bar{\sigma} \Vert_{\mathbb{L}^2_9}^2 \nonumber \\
&& + 2CK \sqrt{R_1(t)}  \Vert \bar{\omega} \Vert_{\mathbb{L}^2_3}  \Vert \bar{\sigma} \Vert_{\mathbb{L}^2_9} + \gamma \Vert \bar{\sigma} \Vert_{\mathbb{L}^2_9}^2 \nonumber \\
&\leq& \frac{1}{2}\left(  CK \sqrt{R_2(t)} + 2CK \sqrt{R_1(t)}    \right)\Vert \bar{\omega} \Vert_{\mathbb{L}^2_3}^2 \nonumber \\ 
&&  + \frac{1}{2}\left(  CK \sqrt{R_2(t)} + 2CK \sqrt{R_1(t)} + 4K(E_1 t + E_2) + 2\gamma \right) \Vert \bar{\sigma} \Vert_{\mathbb{L}^2_9}^2.
\end{eqnarray}
Adding \eqref{OmegaBarL2Derivative} and \eqref{SigmaBarL2Derivative} and multiplying by $2$, we obtain the inequality
\begin{equation*}
\frac{d}{dt} \left( \|\bar{\omega}(t)\|_{\mathbb{L}^2_3}^2 + \|\bar{\sigma}(t) \|_{\mathbb{L}^2_9}^2 \right) \leq R_5(t) \left( \|\bar{\omega}(t)\|_{\mathbb{L}^2_3}^2 + \| \bar{\sigma}(t) \|_{\mathbb{L}^2_9}^2 \right), 
\end{equation*}
where 
\begin{equation*}
R_5(t) = 2CK\sqrt{R_4(t)}+\beta+ 2K\tilde{C} \sqrt{R_3(t)} + \beta |\mathbb{T}| K^2   +2CK \sqrt{R_2(t)} + 4CK \sqrt{R_1(t)} + 4K(E_1 t + E_2) + 2\gamma
\end{equation*}
and so applying Gronwall's inequality 
\begin{equation*}
\|\bar{\omega}(t)\|_{\mathbb{L}^2_3}^2 + \|\bar{\sigma}(t)\|_{\mathbb{L}^2_9}^2 \leq 
e^{\int_0^t R_5(s)ds} \left( \|\bar{\omega}_0\|_{\mathbb{L}^2_3}^2 + \|\bar{\sigma}_0 \|_{\mathbb{L}^2_9}^2\right), 
\end{equation*}
which holds for $0\leq t \leq T$, completing the proof. 
\end{proof}

We now obtain higher order regularity for $\bar{\omega}$.
\begin{proposition} \label{OmegaBarH1Prop}
Fix some $T>0$. Then there exists a constant $C_T$ depending only on $T$ such that
\begin{equation*}
\|\bar{\omega} (t) \|_{\mathbb{H}^1_3}^2 \leq C_T R_7(t) \left( \|\bar{\omega}_0 \|_{\mathbb{H}^1_3}^2 + \|\bar{\sigma}_0\|_{\mathbb{L}^2_9}^2\right), 
\end{equation*}
for all $0\leq t \leq T$,  where
\begin{equation*}
R_7(t) = 1 + t \left( 2\beta K |\mathbb{T}| +2CK R_4(T) \right)R_6(T).
\end{equation*}
\end{proposition}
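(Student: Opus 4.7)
The plan is to write down the equation satisfied by $\bar{\omega}=\omega_2-\omega_1$, regard it as a linear parabolic equation of the form already studied in Section \ref{SolvingForVorticity}, and apply a parabolic $H^1$-estimate analogous to Propositions \ref{ImprovedRegularity} and \ref{KeyTheorem}, but with an additional (divergence-free) drift term $(Ju_2\cdot\nabla)\bar{\omega}$. Subtracting the two vorticity equations and moving the drift to the left as in \eqref{eqomegabar2} gives
\begin{equation*}
\partial_t\bar{\omega}-\alpha\Delta\bar{\omega}+(Ju_2\cdot\nabla)\bar{\omega}=f_\omega,\qquad \bar{\omega}(0)=\bar{\omega}_0,
\end{equation*}
where $f_\omega=\beta(\nabla\times\operatorname{div} J\bar{\sigma})-(J\bar{u}\cdot\nabla)\omega_1-\Omega(J\bar{u},u_2)-\Omega(Ju_1,\bar{u})$. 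Because $Ju_2$ is divergence free and pointwise bounded by $K(E_1T+E_2)^{1/2}$ via the energy estimate \eqref{k1k2}, the drift is a harmless lower-order perturbation: carrying out the standard energy identity against $-\Delta\bar{\omega}$ (as in the proof of Proposition \ref{ImprovedRegularity0}) and absorbing the drift contribution into $\|\nabla\bar{\omega}\|^2$ via Cauchy with $\varepsilon$ yields an estimate of the form
\begin{equation*}
\max_{0\leq t\leq T}\|\bar{\omega}(t)\|_{\mathbb{H}^1_3}^2\;\leq\;C_T\Bigl(\|\bar{\omega}_0\|_{\mathbb{H}^1_3}^2+\|f_\omega\|_{L^2(0,T;\mathbb{L}^2_3)}^2\Bigr)
\end{equation*}
with a constant $C_T$ depending only on $T$ (through $\alpha$ and $\|Ju_2\|_{L^\infty}$, which in turn is controlled by the energy estimate and hence by $T$).

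Next I would bound $\|f_\omega\|_{L^2(0,T;\mathbb{L}^2_3)}^2$ by splitting into the four summands and using the estimates \eqref{VBound}--\eqref{CapitalOmegaEstimate} together with $\|\bar{u}\|_{\mathbb{L}^2_3}^2\leq C\|\bar{\omega}\|_{\mathbb{L}^2_3}^2$ from Proposition \ref{VelocityFromVorticity}. Specifically, the $\nabla\times\operatorname{div} J\bar{\sigma}$ term contributes a multiple of $\beta K|\mathbb{T}|\,\|\bar{\sigma}\|_{\mathbb{L}^2_9}^2$; the convective term $(J\bar{u}\cdot\nabla)\omega_1$ contributes a multiple of $K\|\bar{u}\|_{\mathbb{L}^2_3}^2\|\nabla\omega_1\|_{\mathbb{L}^2_9}^2\leq CK\,R_4(T)\,\|\bar{\omega}\|_{\mathbb{L}^2_3}^2$ via Proposition \ref{NablaOmegaL2Estimate}; and the two $\Omega$ terms contribute multiples of $K\|\bar{u}\|_{\mathbb{L}^2_3}^2\|u_i\|_{\mathbb{H}^1_3}^2$, again bounded by $\|\bar{\omega}\|_{\mathbb{L}^2_3}^2$ times a constant depending on $T$ through the energy estimate.

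The decisive input is then Proposition \ref{L2Stability}: integrating the above pointwise bounds on $\|f_\omega\|_{\mathbb{L}^2_3}^2$ from $0$ to $t\leq T$ and using
\begin{equation*}
\|\bar{\omega}(s)\|_{\mathbb{L}^2_3}^2+\|\bar{\sigma}(s)\|_{\mathbb{L}^2_9}^2\leq R_6(T)\bigl(\|\bar{\omega}_0\|_{\mathbb{L}^2_3}^2+\|\bar{\sigma}_0\|_{\mathbb{L}^2_9}^2\bigr),\qquad 0\leq s\leq T,
\end{equation*}
pulls the time-independent factor $R_6(T)$ out of the integral and leaves only a factor of $t$, producing
\begin{equation*}
\|f_\omega\|_{L^2(0,t;\mathbb{L}^2_3)}^2\;\leq\;t\bigl(2\beta K|\mathbb{T}|+2CK\,R_4(T)+\cdots\bigr)R_6(T)\bigl(\|\bar{\omega}_0\|_{\mathbb{H}^1_3}^2+\|\bar{\sigma}_0\|_{\mathbb{L}^2_9}^2\bigr).
\end{equation*}
Substituting this into the parabolic estimate and using $\|\bar{\omega}_0\|_{\mathbb{L}^2_3}\leq\|\bar{\omega}_0\|_{\mathbb{H}^1_3}$ yields exactly the claimed bound with the stated $R_7(t)$ (after absorbing the remaining constants into $C_T$).

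The main obstacle will be bookkeeping: tracking the various constants appearing in the four pieces of $f_\omega$ so that they assemble correctly into the structure $2\beta K|\mathbb{T}|+2CK\,R_4(T)$ stated in $R_7$, and ensuring that the $\Omega$-terms and the $u_1$-drift can be absorbed into $C_T$ rather than into the $t$-linear factor. This is done by noting that the factors involving $\|u_i\|_{\mathbb{H}^1_3}$ coming from the $\Omega$-terms are already controlled uniformly in $t\in[0,T]$ by the energy estimate \eqref{k1k2}, and therefore contribute only to the $T$-dependent prefactor rather than to the explicit structure of $R_7(t)$.
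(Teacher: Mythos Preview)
Your approach is essentially the same as the paper's: write the equation for $\bar{\omega}$ with the divergence-free drift $(Ju_2\cdot\nabla)\bar{\omega}$ on the left, invoke the parabolic $H^1$-estimate with a constant $C_T$, then bound $\|f_\omega\|_{L^2(0,t;\mathbb{L}^2_3)}^2$ term by term and feed in the $L^2$-stability bound $R_6(T)$ to pull out a factor of~$t$.

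One small correction: in your final paragraph you say the factors $\|u_i\|_{\mathbb{H}^1_3}^2$ arising from the $\Omega$-terms are controlled uniformly in time by the energy estimate \eqref{k1k2}. That estimate only gives a pointwise bound on $\|u_i\|_{\mathbb{L}^2_3}$ and a \emph{time-integrated} bound on $\|\nabla u_i\|_{\mathbb{L}^2_9}^2$; it does not give a pointwise bound on $\|u_i\|_{\mathbb{H}^1_3}$. The correct route is the one the paper takes: use $\|u_i\|_{\mathbb{H}^1_3}^2\leq\tilde{C}\|\omega_i\|_{\mathbb{L}^2_3}^2\leq\tilde{C}\,R_3(T)$ from Proposition~\ref{OmegaL2Estimate}. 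Accordingly, in the paper's own proof the $\Omega$-contributions are \emph{not} absorbed into $C_T$ but appear explicitly as an additional summand $162K\tilde{C}^2R_3(T)$ inside the bracket defining $R_7(t)$; the omission of this term in the displayed statement appears to be a typo. Once you make this adjustment, your argument coincides with the paper's.
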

\begin{proof} 
Notice that the equation for $\bar{\omega}$ can be put in the form 
\begin{equation}\label{eqomegabar}
\partial_t \bar{\omega} - \alpha \Delta \bar{\omega} + (Ju_2\cdot \nabla)\bar{\omega}= \beta \left(  \nabla \times \text{div} J \bar{\sigma}  \right)  - (J\bar{u}\cdot \nabla) \omega_1 -\Omega(J\bar{u}, u_2)-\Omega(Ju_1, \bar{u}) 
\end{equation}
where for any fixed $T$, we have control over $|Ju_2|$. Therefore, there exists a constant $C_T$ such that
\begin{equation*}
\|\bar{\omega} (t) \|_{\mathbb{H}^1_3}^2 \leq C_T \left( \|\bar{\omega}_0 \|_{\mathbb{H}^1_3}^2 + \|f \|_{L^2(0, t; \mathbb{L}^2_3)}^2 \right), 
\end{equation*}
where $f$ is the right hand side of \eqref{eqomegabar}. We have $L^2$ control over the right hand side and we can estimate: 
\begin{eqnarray*}
\|f\|_{L^2(0, t; \mathbb{L}^2_3)}^2 &\leq& \int_0^t \left( 2\beta \|\nabla \times \text{div} J \bar{\sigma}(s)\|_{\mathbb{L}^2_3}^2 + 2\| J\bar{u} \cdot \nabla \omega_1 (s) \|_{\mathbb{L}^2_3}^2 + \|\Omega(J\bar{u}, u_2)\|_{\mathbb{L}^2_3}^2 + \|\Omega(Ju_1, \bar{u}) \|_{\mathbb{L}^2_3}^2 \right)ds \\
&\leq& \int_0^t \left( 2\beta K |\mathbb{T}| \|\bar{\sigma}(s) \|_{\mathbb{L}^2_3}^2 + 2CK \|\bar{\omega} (s)\|_{\mathbb{L}^2_3}^2 \|\nabla \omega_1 (s)\|^2_{\mathbb{L}^2_9} + 81K \|\bar{u}\|_{\mathbb{L}^2_3}^2 \|u_2\|_{\mathbb{H}^1_3}^2 \right. \\
&& \left. + 81K \|u_1\|_{\mathbb{L}^2_3}^2\| \bar{u}\|_{\mathbb{H}^1_3}^2 \right) ds \\
&\leq& \int_0^t \left( 2\beta K |\mathbb{T}| \Vert \bar{\sigma}(s) \Vert_{\mathbb{L}^2_9}^2 + 2CK\Vert \bar{\omega} (s) \Vert_{\mathbb{L}^2_3}^2 \Vert \nabla \omega_1 (s) \Vert^2_{\mathbb{L}^2_9} + 81K \tilde{C}^2 \Vert \bar{\omega} (s) \Vert_{\mathbb{L}^2_3}^2 \Vert \omega_2 (s) \Vert_{\mathbb{L}^2_3}^2 \right. \\  
&& \left. + 81 K \tilde{C}^2  \Vert \omega_1 (s) \Vert_{\mathbb{L}^2_3}^2 \Vert \bar{\omega}(s) \Vert_{\mathbb{L}^2_3}^2 \right) ds \\
&\leq& t \left( 2\beta K |\mathbb{T}| +2CK R_4(T) + 162 K \tilde{C}^2 R_3(T) \right)R_6(T)\left( \Vert \bar{\omega}_0 \Vert_{\mathbb{L}^2_3}^2 + \Vert \bar{\sigma}_0 \Vert_{\mathbb{L}^2_9}^2\right),  
\end{eqnarray*}
and thus, we obtain
\begin{equation*}
\|\bar{\omega} (t)\|_{\mathbb{H}^1_3}^2 \leq C_T R_7(t) \left(    \Vert \bar{\omega}_0 \Vert_{\mathbb{H}^1_3}^2 + \Vert \bar{\sigma}_0 \Vert_{\mathbb{L}^2_3}^2     \right), 
\end{equation*}
where 
\begin{equation*} 
R_7(t) = 1 + t \left( 2\beta K |\mathbb{T}| +2CK R_4(T) + 162 K \tilde{C}^2 R_3(T) \right)R_6(T),  
\end{equation*}
which completes the proof. 
\end{proof} 
Similarly, we estimate $\bar{\sigma}$ as follows: 
\begin{proposition}
For fixed $T > 0$, there exists a constant $C_T$ depending only on $T$ such that 
\begin{equation*}
\|\bar{\sigma}\|_{\mathbb{H}^1_3}^2 \leq C_T R_8(t) \left( \|\bar{\omega}_0 \|_{\mathbb{H}^1_3}^2 + \| \bar{\sigma}_0 \|_{\mathbb{H}^1_9}^2 \right), 
\end{equation*}
where
\begin{equation*}
R_8(t) = 1+ 8CKtR_6(T)\left( 2R_1(T) + R_2(T) + 2 R_3(T) + \gamma \right).
\end{equation*}
\end{proposition}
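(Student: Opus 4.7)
The plan is to mirror Proposition~\ref{OmegaBarH1Prop} applied to the equation for $\bar\sigma$ derived in the proof of Proposition~\ref{L2Stability}. First I would rewrite that equation with the transport term on the left-hand side,
\begin{equation*}
\partial_t \bar\sigma - \epsilon \Delta \bar\sigma + (Ju_2\cdot\nabla)\bar\sigma = f_\sigma,\qquad \bar\sigma(0)=\bar\sigma_0,
\end{equation*}
where $f_\sigma$ collects the terms $(J\bar u\cdot\nabla)\sigma_1$, $(\nabla Ju_2)\bar\sigma$, $(\nabla J\bar u)\sigma_1$, $\bar\sigma(\nabla Ju_2)^T$, $\sigma_1(\nabla J\bar u)^T$ and the linear term $-\gamma\bar\sigma$. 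Since $\|Ju_2\|_{L^\infty}$ is bounded on $[0,T]$ via \eqref{VBound} and the energy estimate, applying the same parabolic regularity used to produce \eqref{IEDS1} componentwise to the nine components of $\bar\sigma$ yields a constant $C_T$ (depending on $T$, $\epsilon$, and the energy bound for $u_2$) with
\begin{equation*}
\|\bar\sigma(t)\|_{\mathbb{H}^1_9}^2 \leq C_T\bigl(\|\bar\sigma_0\|_{\mathbb{H}^1_9}^2 + \|f_\sigma\|_{L^2(0,t;\mathbb{L}^2_9)}^2\bigr).
\end{equation*}

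The main work is then to estimate $\|f_\sigma(s)\|_{\mathbb{L}^2_9}^2$ term by term using the pointwise bounds~\eqref{VBound}, the elliptic inequalities $\|\bar u\|_{\mathbb{L}^2_3}^2 \leq C\|\bar\omega\|_{\mathbb{L}^2_3}^2$ and $\|u_2\|_{\mathbb{L}^2_3}^2 \leq C\|\omega_2\|_{\mathbb{L}^2_3}^2 \leq CR_3(T)$, and the a priori estimates from Propositions~\ref{SigmaL2Apriori} and \ref{NablaSigmaL2Apriori}. The term $(J\bar u\cdot\nabla)\sigma_1$ produces a contribution of the form $R_2(T)\|\bar\omega\|_{\mathbb{L}^2_3}^2$ via $\|\nabla\sigma_1\|_{\mathbb{L}^2_{27}}^2\leq R_2(T)$; the two $(\nabla Ju_2)\bar\sigma$-type terms each generate an $R_3(T)\|\bar\sigma\|_{\mathbb{L}^2_9}^2$ contribution through $\|\nabla Ju_2\|_{L^\infty}^2 \leq K^2\|u_2\|_{\mathbb{L}^2_3}^2 \leq CR_3(T)$; the two $(\nabla J\bar u)\sigma_1$-type terms each give an $R_1(T)\|\bar\omega\|_{\mathbb{L}^2_3}^2$ contribution via $\|\sigma_1\|_{\mathbb{L}^2_9}^2\leq R_1(T)$; and $-\gamma\bar\sigma$ contributes $\gamma\|\bar\sigma\|_{\mathbb{L}^2_9}^2$. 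Collecting and absorbing universal constants yields
\begin{equation*}
\|f_\sigma(s)\|_{\mathbb{L}^2_9}^2 \leq 8CK\bigl(2R_1(T)+R_2(T)+2R_3(T)+\gamma\bigr)\bigl(\|\bar\omega(s)\|_{\mathbb{L}^2_3}^2+\|\bar\sigma(s)\|_{\mathbb{L}^2_9}^2\bigr).
\end{equation*}

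Finally I would integrate over $s\in[0,t]$ and apply Proposition~\ref{L2Stability} to replace $\|\bar\omega(s)\|_{\mathbb{L}^2_3}^2+\|\bar\sigma(s)\|_{\mathbb{L}^2_9}^2$ by $R_6(T)(\|\bar\omega_0\|_{\mathbb{L}^2_3}^2+\|\bar\sigma_0\|_{\mathbb{L}^2_9}^2)$. This produces the factor $tR_6(T)\bigl(2R_1(T)+R_2(T)+2R_3(T)+\gamma\bigr)$, while the $\|\bar\sigma_0\|_{\mathbb{H}^1_9}^2$ piece of the parabolic estimate supplies the additive $1$ in $R_8(t)$. Bounding $\|\bar\omega_0\|_{\mathbb{L}^2_3}^2 \leq \|\bar\omega_0\|_{\mathbb{H}^1_3}^2$ then combines both contributions into the announced inequality. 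The argument is essentially a bookkeeping exercise parallel to Proposition~\ref{OmegaBarH1Prop}; the only mildly subtle point is recognizing that $\|\nabla Ju_2\|_{L^\infty}$ should be estimated through $\|u_2\|_{\mathbb{L}^2_3}\leq C\|\omega_2\|_{\mathbb{L}^2_3}$ so that the a priori bound $R_3(T)$ enters the final formula via this channel rather than through the raw energy estimate $E_1 t+E_2$.
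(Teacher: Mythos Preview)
Your proposal is correct and follows essentially the same approach as the paper: the paper also rewrites the $\bar\sigma$-equation with the transport term on the left, invokes the parabolic $H^1$ estimate with constant $C_T$, bounds each piece of the right-hand side exactly as you describe (producing the $R_1$, $R_2$, $R_3$, and $\gamma$ contributions), integrates in time, and then applies the $L^2$ stability from Proposition~\ref{L2Stability} to insert the factor $R_6(T)$. Your observation that $\|\nabla Ju_2\|_{L^\infty}$ is controlled through $\|u_2\|_{\mathbb{L}^2_3}\leq C\|\omega_2\|_{\mathbb{L}^2_3}$ so that $R_3(T)$ appears is precisely the route the paper takes as well.
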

\begin{proof} 
We can write the equation for $\bar{\sigma}$ as follows: 
\begin{equation}\label{eqsigmabar}
\partial_t \bar{\sigma} - \epsilon \Delta \bar{\sigma} +(Ju_2 \cdot \nabla ) \bar{\sigma} =  (J\bar{u} \cdot \nabla) \sigma_1 + (\nabla Ju_2)\bar{\sigma} + (\nabla J\bar{u})\sigma_1 + \bar{\sigma} ( \nabla Ju_2)^T + \sigma_1 (\nabla J\bar{u})^T - \gamma \bar{\sigma}, 
\end{equation}
where for any fixed $T$ we have control over $|Ju_2|$. Therefore, there exists a constant $C_T$ such that
\begin{equation*}
\|\bar{\sigma} (t)\|_{\mathbb{H}^1_9}^2 \leq C_T \left( \|\bar{\sigma}_0\|_{\mathbb{H}^1_9}^2 + \|f\|_{L^2(0, t; \mathbb{L}^2_9)}^2 \right), 
\end{equation*}
where $f$ is the right hand side of \eqref{eqsigmabar}. But again we have $L^2$ control over the right hand side and we can estimate: 
\begin{eqnarray*}
\|f\|_{L^2(0, t; \mathbb{L}^2_9)}^2 &\leq& \int_0^t 8\left( \|J\bar{u}\cdot \nabla \sigma_1 \|_{\mathbb{L}^2_9}^2 + 2\|(\nabla J u_2)\bar{\sigma}\|_{\mathbb{L}^2_9}^2 + 2\|(\nabla J\bar{u})\sigma_1\|_{\mathbb{L}^2_9}^2 + \gamma \| \bar{\sigma}\|_{\mathbb{L}^2_9}^2\right) ds \\
&\leq& 8 \int_0^t \left(CKR_2(T)R_6(T) + 2CKR_3(T)R_6(T)+2CKR_1(T)R_6(T) \right. \\
&& \left. + \gamma R_6(T)\right) \left( \| \bar{\omega}_0 \|_{\mathbb{H}^1_3}^2 + \|\bar{\sigma}_0 \|_{\mathbb{L}^2_9}^2 \right) ds \\ 
&\leq& 8CKtR_6(T)\left( 2R_1(T) + R_2(T) + 2 R_3(T) + \gamma \right) \left(\|\bar{\omega}_0 \|_{\mathbb{H}^1_3}^2 + \|\bar{\sigma}_0 \|_{\mathbb{L}^2_3}^2 \right), 
\end{eqnarray*}
and so
\begin{eqnarray*}
\|\bar{\sigma}\|_{\mathbb{H}^1_9}^2 &\leq& C_T \|\bar{\sigma}_0 \|_{\mathbb{H}^1_9}^2 + C_T 8CKtR_6(T)\left( 2R_1(T) + R_2(T) + 2 R_3(T) + \gamma \right) \left( \|\bar{\omega}_0 \|_{\mathbb{H}^1_3}^2 + \|\bar{\sigma}_0 \|_{\mathbb{L}^2_9}^2 \right) \\
&\leq& C_T R_8(t)  \left( \|\bar{\omega}_0 \|_{\mathbb{H}^1_3}^2 + \|\bar{\sigma}_0 \|_{\mathbb{H}^1_9}^2 \right), 
\end{eqnarray*}
where 
\begin{equation*}
R_8(t) = 1 + 8CKtR_6(T)\left( 2R_1(T) + R_2(T) + 2 R_3(T) + \gamma \right), 
\end{equation*}
completing the proof. 
\end{proof} 
Thus we have $H^1$ stability:
\begin{proposition}[$H^1$ Stability] Fix some $T>0$. We have the estimate:
\begin{equation*}
\|\bar{\omega}\|_{\mathbb{H}^1_3}^2 + \|\bar{\sigma}\|_{\mathbb{H}^1_9}^2 \leq C_T R_9 (t) \left( \|\bar{\omega}_0 \|_{\mathbb{H}^1_3}^2 + \|\bar{\sigma}_0\|_{\mathbb{H}^1_9}^2 \right), 
\end{equation*}
for all $0\leq t \leq T$ where 
\begin{equation*}
R_9(t) = R_7(t) + R_8(t)
\end{equation*}
\end{proposition}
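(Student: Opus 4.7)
The plan is essentially to observe that this proposition is a direct corollary of the previous two estimates, and the proof amounts to simply adding them together and relabeling. First I would invoke the bound from Proposition \ref{OmegaBarH1Prop}, namely
\begin{equation*}
\|\bar{\omega}(t) \|_{\mathbb{H}^1_3}^2 \leq C_T R_7(t)\left( \|\bar{\omega}_0 \|_{\mathbb{H}^1_3}^2 + \|\bar{\sigma}_0\|_{\mathbb{L}^2_9}^2 \right),
\end{equation*}
together with the analogous bound for $\bar{\sigma}$,
\begin{equation*}
\|\bar{\sigma}(t) \|_{\mathbb{H}^1_9}^2 \leq C_T R_8(t)\left( \|\bar{\omega}_0 \|_{\mathbb{H}^1_3}^2 + \|\bar{\sigma}_0\|_{\mathbb{H}^1_9}^2 \right).
\end{equation*}
Both are valid for $0\leq t \leq T$ with the same (or readily reconciled) constant $C_T$ depending only on $T$.

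Next I would simply add the two inequalities. The only minor step is to harmonize the right-hand sides: the first inequality only involves $\|\bar{\sigma}_0\|_{\mathbb{L}^2_9}$, which is trivially dominated by $\|\bar{\sigma}_0\|_{\mathbb{H}^1_9}$, so both terms can be subsumed into the common factor $\|\bar{\omega}_0 \|_{\mathbb{H}^1_3}^2 + \|\bar{\sigma}_0\|_{\mathbb{H}^1_9}^2$. Setting $R_9(t) := R_7(t) + R_8(t)$, the sum yields
\begin{equation*}
\|\bar{\omega}(t) \|_{\mathbb{H}^1_3}^2 + \|\bar{\sigma}(t) \|_{\mathbb{H}^1_9}^2 \leq C_T R_9(t)\left( \|\bar{\omega}_0 \|_{\mathbb{H}^1_3}^2 + \|\bar{\sigma}_0\|_{\mathbb{H}^1_9}^2 \right),
\end{equation*}
which is exactly the claimed estimate.

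There is no serious obstacle here; the real work was done in the two preceding propositions, where the parabolic $H^1$ estimates \eqref{ImprovedRegularity1} were applied to the difference equations for $\bar{\omega}$ and $\bar{\sigma}$ and the nonlinear source terms were controlled using the a priori bounds $R_1,\dots,R_4$ together with the $L^2$ stability bound $R_6$. The only bookkeeping point to verify is that the constant $C_T$ arising in the two applications of the parabolic estimate can be taken to be the same (or, failing that, replaced by the maximum of the two), which is immediate since both come from applying Proposition \ref{ImprovedRegularity} on the same interval $[0,T]$ with coefficient $Ju_2$ bounded uniformly on that interval. Thus the proof consists of a single line once the preceding two propositions are in hand.
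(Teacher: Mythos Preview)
Your proposal is correct and matches the paper's proof essentially verbatim: the paper simply adds the estimates from the two preceding propositions and invokes the trivial inequality $\|\bar{\sigma}_0\|_{\mathbb{L}^2_9}^2 \leq \|\bar{\sigma}_0\|_{\mathbb{H}^1_9}^2$ to harmonize the right-hand sides. Your additional remark about reconciling the two $C_T$ constants is a reasonable bit of care that the paper leaves implicit.
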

\begin{proof} This follows immediately by adding together the estimates from the previous two propositions and noticing the obvious fact that $\|\bar{\sigma}_0\|_{\mathbb{L}^2_9}^2 \leq \|\bar{\sigma}_0 \|_{\mathbb{H}^1_9}^2$. This completes the proof. 
\end{proof} 
We now obtain a higher order estimate for $\bar{\sigma}$ in the following proposition and theorem: 
\begin{proposition}\label{prop27} 
For fixed $T > 0$, we have  
\begin{equation*}
\|\bar{\sigma}(t)\|_{\mathbb{H}^2_9}^2 \leq C_T R_{10}(t) \left( \|\bar{\omega}_0 \|_{\mathbb{H}^1_3}^2 + \| \bar{\sigma}_0 \|_{\mathbb{H}^2_9}^2 \right), 
\end{equation*}
where $R_{10}(t)$ is a continuous function on $0\leq t \leq T$, which depends on $T, K, E_1, E_2$.
\end{proposition}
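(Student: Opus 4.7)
The plan is to proceed exactly in the spirit of the proof of Proposition \ref{ShortTermStability}, using the parabolic smoothing coming from $\epsilon > 0$ on the equation satisfied by $\bar\sigma$, but now pushing the regularity one order higher. We start from
\begin{equation*}
\partial_t \bar{\sigma} - \epsilon \Delta \bar{\sigma} +(Ju_2 \cdot \nabla ) \bar{\sigma} =  (J\bar{u} \cdot \nabla) \sigma_1 + (\nabla Ju_2)\bar{\sigma} + (\nabla J\bar{u})\sigma_1 + \bar{\sigma} ( \nabla Ju_2)^T + \sigma_1 (\nabla J\bar{u})^T - \gamma \bar{\sigma},
\end{equation*}
and apply $\partial_i$ to obtain, for each $i=1,2,3$, a parabolic equation of the form $\partial_t(\partial_i \bar{\sigma}) - \epsilon \Delta (\partial_i \bar{\sigma}) + (Ju_2\cdot\nabla)(\partial_i \bar{\sigma}) = F_i$, where $F_i$ collects roughly a dozen commutator-type terms (for instance $-(\partial_i Ju_2\cdot\nabla)\bar{\sigma}$, $(\partial_i J\bar{u}\cdot\nabla)\sigma_1$, $(J\bar{u}\cdot\nabla)\partial_i \sigma_1$, $(\nabla\partial_i Ju_2)\bar{\sigma}$, $(\nabla\partial_i J\bar{u})\sigma_1$, etc.). This mirrors the right-hand side written down in the last displayed equation of the proof of Proposition \ref{ShortTermStability}.

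I would then estimate $\|F_i\|_{L^2(0,t;\mathbb{L}^2_9)}$ term by term. Each factor involving $Ju_2$, $\partial_iJu_2$, $\nabla\partial_i Ju_2$, and analogous derivatives of $J\bar{u}$ is pointwise controlled, via the mollifier bound \eqref{VBound}, by the corresponding $\mathbb{L}^2$ norms of $u_2$ and $\bar{u}$; the latter are controlled by the a priori estimates $R_1,R_3$ (for $u_2$) and by the already-proven $L^2$ and $H^1$ stability estimates for $\bar{\omega}$ (for $\bar{u}$, via $\|\bar u\|_{\mathbb{L}^2_3}^2\le C\|\bar\omega\|_{\mathbb{L}^2_3}^2$). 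The derivatives of $\sigma_1$ up to order one are controlled by $R_1(T)$ and $R_2(T)$, and $\|\bar{\sigma}\|_{\mathbb{H}^1_9}$ is controlled by Proposition 2.26 in terms of $\|\bar\omega_0\|_{\mathbb{H}^1_3}^2+\|\bar\sigma_0\|_{\mathbb{H}^1_9}^2$. Collecting all these estimates gives a bound of the form
\begin{equation*}
\|F_i\|_{L^2(0,t;\mathbb{L}^2_9)}^2 \;\le\; C_T\,\Psi(t)\,\bigl(\|\bar\omega_0\|_{\mathbb{H}^1_3}^2 + \|\bar\sigma_0\|_{\mathbb{H}^1_9}^2\bigr),
\end{equation*}
where $\Psi(t)$ is a continuous function built from $R_1,\ldots,R_9$ evaluated up to time $T$.

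Next, since $\partial_i\bar\sigma(0) = \partial_i\bar\sigma_0 \in \mathbb{H}^1_9$ by assumption on $\bar\sigma_0$, and the drift coefficient $Ju_2$ is smooth with derivatives pointwise controlled, I apply the parabolic regularity estimate of the type used in Proposition \ref{ImprovedRegularitySigma} (equivalently, Proposition \ref{KeyTheorem}) to the equation for $\partial_i\bar\sigma$. This yields, for each $i$,
\begin{equation*}
\max_{0\le t\le T}\|\partial_i\bar\sigma(t)\|_{\mathbb{H}^1_9}^2 \;\le\; C_T\Bigl(\|\partial_i\bar\sigma_0\|_{\mathbb{H}^1_9}^2 + \|F_i\|_{L^2(0,T;\mathbb{L}^2_9)}^2\Bigr).
\end{equation*}
Summing over $i=1,2,3$, using $\|\bar\sigma\|_{\mathbb{H}^2_9}^2\le \|\bar\sigma\|_{\mathbb{H}^1_9}^2+\sum_i\|\partial_i\bar\sigma\|_{\mathbb{H}^1_9}^2$, and absorbing the previously established $H^1$ stability estimate, produces the desired bound with
\begin{equation*}
R_{10}(t) \;=\; R_9(t) + 1 + \Psi(t),
\end{equation*}
which is a continuous function of $t\in[0,T]$ depending only on $T,K,E_1,E_2$ (through the $R_i$ already defined). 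The only delicate point is bookkeeping of the many commutator terms in $F_i$; there is no genuinely new analytic difficulty, since the parabolic smoothing from $\epsilon \Delta$ is exactly what we need to gain one order of spatial regularity, and every factor one meets has already been estimated earlier in the section.
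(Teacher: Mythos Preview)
Your proposal is correct and follows essentially the same approach as the paper: differentiate the equation for $\bar\sigma$ in space, treat the resulting equation for $\partial_i\bar\sigma$ as a parabolic equation with smooth drift $Ju_2$, estimate the right-hand side in $L^2(0,t;\mathbb{L}^2_9)$ using the already-established $L^2$ and $H^1$ stability bounds together with the a~priori estimates on the individual solutions, apply the parabolic regularity estimate, and sum over $i$. One small bookkeeping point: the term $(J\bar u\cdot\nabla)\partial_i\sigma_1$ in $F_i$ requires control of \emph{second} derivatives of $\sigma_1$, not just first, so you need $\|\sigma_1\|_{\mathbb{H}^2_9}$ rather than only $R_1(T),R_2(T)$; this is available since $\sigma_1\in C([0,T];\mathbb{H}^2_9)$ by the global existence theorem, but you should say so explicitly.
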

\begin{proof} 
The idea is to take the equation \eqref{eqsigmabar} and differentiate it with respect to $x^i$. This then given the equation for $\partial_i \bar{\sigma}$ which can be put into the same form as \eqref{eqsigmabar}:
\begin{eqnarray*}
\partial_t (\partial_i \bar{\sigma} ) - \epsilon \Delta (\partial_i \bar{\sigma} ) +(Ju_2 \cdot \nabla ) \partial_i \bar{\sigma} &=& -(\partial_i Ju_2 \cdot \nabla) \bar{\sigma} + \partial_i J \bar{u} \cdot \nabla \sigma_i + J\bar{u} \cdot \nabla \partial_i \sigma_1 + (\nabla \partial_i Ju_2) \bar{\sigma}  \\ 
&& + (\nabla Ju_2) \partial_i \bar{\sigma} + (\nabla \partial_i J\bar{u})\sigma_1 + (\nabla J\bar{u}) \partial_i \sigma_1   + \partial_i \bar{\sigma} (\nabla J u_2 )^T \\ 
&& + \bar{\sigma} (\nabla \partial_i Ju_2)^T + \partial_i \sigma_1 (\nabla J\bar{u})^T + \sigma_1(\nabla \partial_i J\bar{u})^T - \gamma \partial_i\bar{\sigma},  
\end{eqnarray*}
and notice using our previous estimates we can estimate the norm of the right hand side and each term will have at least one factor of $\left(\|\bar{\omega}_0\|_{\mathbb{H}^1_3}^2 + \|\bar{\sigma}_0 \|_{\mathbb{H}^1_9} \right)$. Thus we obtain a bound of the form
\begin{equation*}
\|\partial_i \bar{\sigma}\|_{\mathbb{H}^1_9}^2 \leq C_T\left[\|\partial_i \bar{\sigma}_0 \|_{\mathbb{H}^1_9}^2 +  R_i(t)\left(\|\bar{\omega}_0 \|_{\mathbb{H}^1_3}^2 + \|\bar{\sigma}_0\|_{\mathbb{H}^1_9} \right) \right].
\end{equation*}
We can repeat this for each $i$ to conclude the partial derivatives of $\bar{\sigma}$ are in $H^1$, meaning that $\bar{\sigma}$ is in $H^2$. Combining all of the previous estimates, we see that there is a function $R_{10}(t)$ such that the proposition holds. This completes the proof. 
\end{proof} 
 Now we obtain $H^2$ stability for our solutions: 
\begin{theorem}[$H^2$ stability for $\epsilon >0$] \label{H2Stability}
Let $(u_{0, 1}, \sigma_{0,1})$ and $(u_{0, 2}, \sigma_{0,2})$ denote two sets of initial data which satisfy the assumptions of Theorem \eqref{GlobalExistenceTheorem1}. We denote the corresponding solutions to \eqref{RegularizedVorticityForm} by $(u_1(t),\sigma_1(t))$ and $(u_2(t),\sigma_2(t))$, respectively. For a fixed $T > 0$, there exists a continuous function $R(t)$ such that 
\begin{equation*}
\|u_2(t)-u_1(t)\|_{\overbar{\mathbb{H}}^2_3}^2 + \|\sigma_2(t) -  \sigma_1(t) \|_{\mathbb{H}^2_9}^2 \leq R(t) \left( \|u_{0, 2}- u_{0, 1} \|_{\overbar{\mathbb{H}}^2_3}^2 + \|\sigma_{0, 2} - \sigma_{0, 1} \|_{\mathbb{H}^2_9}^2 \right), 
\end{equation*}
for all $0\leq t \leq T$. The function $R(t)$ depends on the norms of the initial data, $K$, and $T$.   
\end{theorem}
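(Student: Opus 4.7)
The plan is to synthesize the stability estimates already established for $\bar\omega$ and $\bar\sigma$, recover the $H^2$ bound on $\bar u = u_2 - u_1$ from the $H^1$ bound on $\bar\omega$ via the vorticity-to-velocity recovery, and then add everything together. Since all the hard work has already been done in Propositions \ref{OmegaBarH1Prop} and \ref{prop27}, this theorem is essentially a packaging step, and I do not expect any genuinely new estimates to be required.

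First I would observe that $\bar u \coloneqq u_2 - u_1$ is divergence free, has spatial average zero, and satisfies $\nabla \times \bar u = \bar\omega$. Consequently $\bar u$ can be recovered from $\bar\omega$ by exactly the elliptic argument used in Proposition \ref{VelocityFromVorticity}: solve $\Delta \bar F - \nabla(\nabla\cdot\bar F) = \bar\omega$ with $\int_{\mathbb T}\bar F\,dx = 0$ and set $\bar u = -\nabla \times \bar F$. The same elliptic constant $\tilde C$ then yields
\begin{equation*}
\|\bar u(t)\|_{\overbar{\mathbb H}^2_3}^2 \leq \tilde C\,\|\bar\omega(t)\|_{\mathbb H^1_3}^2.
\end{equation*}
Invoking Proposition \ref{OmegaBarH1Prop} on the right hand side, we obtain
\begin{equation*}
\|\bar u(t)\|_{\overbar{\mathbb H}^2_3}^2 \leq \tilde C\, C_T R_7(t)\bigl(\|\bar\omega_0\|_{\mathbb H^1_3}^2 + \|\bar\sigma_0\|_{\mathbb L^2_9}^2\bigr).
\end{equation*}

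Next I would convert the initial data on the right hand side from vorticity to velocity. Since $\bar\omega_0 = \nabla \times \bar u_0$, we have the elementary pointwise estimate $\|\bar\omega_0\|_{\mathbb H^1_3}^2 \leq 2\,\|\bar u_0\|_{\overbar{\mathbb H}^2_3}^2$, and of course $\|\bar\sigma_0\|_{\mathbb L^2_9}^2 \leq \|\bar\sigma_0\|_{\mathbb H^2_9}^2$. Combining,
\begin{equation*}
\|\bar u(t)\|_{\overbar{\mathbb H}^2_3}^2 \leq 2\tilde C\, C_T R_7(t)\bigl(\|\bar u_0\|_{\overbar{\mathbb H}^2_3}^2 + \|\bar\sigma_0\|_{\mathbb H^2_9}^2\bigr).
\end{equation*}
In parallel, Proposition \ref{prop27} directly supplies
\begin{equation*}
\|\bar\sigma(t)\|_{\mathbb H^2_9}^2 \leq C_T R_{10}(t)\bigl(\|\bar\omega_0\|_{\mathbb H^1_3}^2 + \|\bar\sigma_0\|_{\mathbb H^2_9}^2\bigr) \leq 2\, C_T R_{10}(t)\bigl(\|\bar u_0\|_{\overbar{\mathbb H}^2_3}^2 + \|\bar\sigma_0\|_{\mathbb H^2_9}^2\bigr).
\end{equation*}

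Finally I would add the two displayed inequalities and set
\begin{equation*}
R(t) \coloneqq 2\,C_T\bigl(\tilde C\, R_7(t) + R_{10}(t)\bigr),
\end{equation*}
which is continuous in $t$ on $[0,T]$ since $R_7$ and $R_{10}$ are, and which inherits its dependence on $K$, $T$, and the sizes of the initial data through $R_1,\dots,R_6$. This produces exactly the claimed bound. The only step that required any thought is the recovery of $\bar u$ in $\overbar{\mathbb H}^2_3$ from $\bar\omega$; since $\bar u$ satisfies the incompressibility and zero-mean conditions, this is immediate from the machinery of \S\ref{SolvingForVelocity}, so no genuine obstacle arises.
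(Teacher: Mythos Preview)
Your proposal is correct and follows exactly the same approach as the paper: the paper's proof is simply the one line ``follows easily from Propositions \ref{OmegaBarH1Prop} and \ref{prop27}, and the fact that $\|u\|_{\overbar{\mathbb{H}}^2_3}^2 \leq \tilde{C} \|\omega\|_{\mathbb{H}^1_3}^2$,'' and you have spelled out precisely those three ingredients. The only quibble is the numerical constant in $\|\bar\omega_0\|_{\mathbb H^1_3}^2 \leq 2\,\|\bar u_0\|_{\overbar{\mathbb H}^2_3}^2$ (compare \eqref{EstimateForOmega}, which gives a factor of $4$ after squaring), but this is irrelevant since any constant is absorbed into $R(t)$.
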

\begin{proof} 
The proof follows easily from Propositions \ref{OmegaBarH1Prop} and \ref{prop27}, and the fact that $\|u\|_{\overbar{\mathbb{H}}^2_3}^2 \leq \tilde{C} \|\omega\|_{\mathbb{H}^1_3}^2$. 
\end{proof} 

\section{Global existence of solutions to the non-diffusive regularized Oldroyd-B model}\label{SectionNonDissipative}

In this section, we shall provide the proof of Theorem \ref{GlobalExistenceTheorem2}, the case that there is no $\epsilon \Delta \sigma$ term in the equation for the conformation tensor. The proof starts out the same, with the same spaces $X$ and $Y$ under consideration
We will take $\left( \tilde{u}, \tilde{\sigma} \right)\in X$ and then solve 
\begin{equation}\label{MMollifiedFormulation2}
\left \{ \begin{array}{l}
\partial_t \omega - \alpha \Delta \omega  = \beta \left( \nabla \times \text{div} J \tilde{\sigma}   \right)- (J\tilde{u} \cdot \nabla)\tilde{\omega} - \Omega(J\tilde{u}, \tilde{u}) + g	\\ 
\partial_t \sigma + \left(J\tilde{u} \cdot \nabla \right) \sigma  - \left(\nabla J\tilde{u}\right) \sigma -\sigma \left(\nabla J\tilde{u}\right)^T + \gamma \sigma = \delta I \\
\nabla \cdot u =0\, \quad \nabla \times u =\omega \, \quad \mbox{ and } \quad \int_{\mathbb{T}} u(x, t) \; dx = 0, 
\end{array}\right. 
\end{equation}
subject to the initial conditions: 
\begin{equation}
u(0) = u_0 \in \overbar{\mathbb{H}}^2_3  \quad \mbox{ with } \quad \int_{\mathbb{T}} u_0 \, dx =0 \quad \mbox{ and } \quad 
\sigma(0) =\sigma_0 \in \mathbb{H}^2_{9}.  
\end{equation}
to obtain a map $\Phi(\tilde{u}, \tilde{\sigma})=(u, \sigma)$ mapping $X$ to $X$.
The first part of the proof is the same as for Theorem \ref{GlobalExistenceTheorem1}, namely we solve for the vorticity and the corresponding velocity as in \S \ref{SolvingForVorticity} and \S \ref{SolvingForVelocity}.

The difference comes in solving for the conformation tensor, since the equation is no longer parabolic. For this purpose, an explicit formula for the conformation tensor will be used. Further, we observe that in this section it is a bit easier to think of all of our quantities as periodic in $\mathbb{R}^3$ we will do so 

\subsection{Solving for the Conformation Tensor ($\epsilon=0$)}

Given $\tilde{u}$, we wish to solve the equation for $\sigma$, 
\begin{equation}\label{sigma}
\partial_t \sigma + \left(J\tilde{u} \cdot \nabla \right) \sigma  - \left(\nabla J\tilde{u}\right) \sigma -\sigma \left(\nabla J\tilde{u}\right)^T + \gamma \sigma = \delta I, 
\end{equation}
subject to the initial condition that $\sigma(0)  = \sigma_0$. The key is to notice that the left hand side of \eqref{sigma} can be written as a material derivative. Given $\tilde{u}(x, t)\in C([0, T]; \overbar{\mathbb{H}}^2_3)$ we can think of it as a periodic vector field in $\mathbb{R}^3$. We then consider the vector field 
\begin{equation*}
v(x, t)=J\tilde{u}(x, t), 
\end{equation*}
which is then smooth in $x$ and continuous in $t$. Furthermore, since $\tilde{u}$ is periodic, so is $v$. For each $x\in \mathbb{R}^3$, we can solve the system of ODEs  
\begin{equation}\label{ODE} 
\left \{ \begin{array}{ll} 
\dot{x}(t)&= v(x(t), t) \\
x(0)&=x, 
\end{array}\right. 
\end{equation}
for $t \in [0, \tilde{T}]$ for some $0<\tilde{T}\leq T$. This $\tilde{T}$ can be chosen uniformly for all initial $x \in \mathbb{R}^3$ due to the periodicity of $v$.

Furthermore, if we have a bound of the form
\begin{equation}\label{BoundforU}
\|\tilde{u}\|_{C([0, T]; \overbar{\mathbb{H}}^2_3)} \leq r, 
\end{equation}
then $\tilde{T}$ can be chosen depending only on $r$, meaning that for different $\tilde{u}$ bounded by the same $r$, we can obtain a uniform $\tilde{T}$ (recall that the way \eqref{ODE} is solved is by using a contraction mapping, and hence why $\tilde{T}$ depends on $r$). We will denote this dependence by $\tilde{T}=\tilde{T}(r)$. Then we can obtain a solution on $[\tilde{T}, 2\tilde{T}]$ and so on, to obtain a solution on all of $[0, T]$.

This defines a flow continuous in both $x$ and $t$, which we denote by $\varphi(x, t)$. The flow has three components $\varphi=(\varphi^1, \varphi^2, \varphi^3)$, i.e., we indicate the $i$-th component by $\varphi^i$. Note that the point $x=(x^1, x^2, x^3)$ also has three components, for which $x^i(x)$ gives the $i$-th component of $x$. Furthermore, it can be shown that $\varphi(x, t)$ is actually smooth in $x$ for each $t$ and for each ${\varepsilon}>0$ satisfies 
\begin{equation}\label{A}
\max_{\substack{(x, t)\in \mathbb{R}^3 \times [0, T_{{\varepsilon}}  ] \\ |\lambda|\leq 3 }} \vert D^\lambda (\varphi^i(x, t) - x^i(x)) \vert < {\varepsilon},  
\end{equation} 
where $\lambda$ is a multi-index and  $T_{{\varepsilon}}>0$ depends only on $\varepsilon$ and the $r$ appearing on the right hand side of \eqref{BoundforU}. 
We now define the deformation tensor $F(x,t)$ by
\begin{equation*}
F_{ik}(x, t)=\frac{\partial \varphi^i}{\partial x^k}(x, t)
\end{equation*}
and denoting its transpose by $F^{\text{T}}$ it is well known \cite{leethesis2004,Linliuzhang2005} that the solution to \eqref{sigma} can be written as
\begin{equation}\label{SolSigma}
\sigma (x, t)=e^{-\gamma t} F(x, t)\sigma_0(x)F^{\text{T}}(x, t)+ \delta \int_{0}^{t} e^{-\gamma (t-s)}F(x, t)F^{-1}(x, s)F^{-\text{T}}(x, s)F^{\text{T}}(x, t)ds
\end{equation}
from which it is then easy to conclude $\sigma(x, t)\in C([0, T]; \mathbb{H}^2_9)$. This is obvious because since the flow is smooth, it is the regularity of $\sigma_0$ which determines the regularity of $\sigma$. Furthermore, from the formula it is obvious that if $\sigma_0$ is symmetric and positive semi-definite, then so is $\sigma$. Now we can establish the short-time existence.

\subsection{Short-time existence 
($\epsilon=0$)}

Now that we have obtained the solutions to the system \eqref{MMollifiedFormulation2}, we can consider the solution operator as a mapping $\Phi : X\rightarrow X$ such that
\begin{equation*}
\Phi(\tilde{u}, \tilde{\sigma}) = (u, \sigma). 
\end{equation*}
First, we will show that there is some ball of radius $R$ in $X$  for an appropriately small enough $T$.
It will be shown that the map $\Phi$ maps this ball to itself. To do so, define $U_0(t)=u_0$ and $\Sigma_0(t)=\sigma_0$ (these are just functions equal to the initial data for all $t$). Then we have $(U_0, \Sigma_0) \in X $ for all $T$.

Now, let 
\begin{equation}\label{Ball}
B_R(U_0, \Sigma_0) = \{(v, w)\in X : \Vert (v-U_0, w-\Sigma_0) \Vert_X^2 \leq R^2 \},  
\end{equation}
which is just the closed ball of radius $R$ around $(U_0, \Sigma_0)$. We shall show that there is some 
$R$ such that 
\begin{equation*}
\Phi: B_R(U_0, \Sigma_0) \rightarrow B_R(U_0, \Sigma_0), 
\end{equation*}
for some appropriate $T$. To do this we start with observing that  
\begin{equation*}
\Vert u(t) - U_0(t)  \Vert_{\overbar{\mathbb{H}}^2_3}^2 = \Vert u(t) - u_0  \Vert_{  \overbar{\mathbb{H}}^2_3}^2
\end{equation*}
which by \eqref{Uestimate} boils down to estimating $\Vert \omega(t)-\omega(0) \Vert_{   \mathbb{H}^1_3  }^2 $. 
This can be done quite easily as 
\begin{equation*}
\Vert \omega(t) - \omega(0) \Vert_{  \mathbb{H}^1_3     }^2 \leq 4\max_{0\leq \tau \leq T} \Vert \omega(\tau) \Vert_{  \mathbb{H}^1_3      }^2 \leq 4 \left(  \left(1+ e^{(1+2\alpha)T}   \right) \Vert \omega_0 \Vert_{H^{1}(\mathbb{T})}^2 + \left( \frac{1}{\alpha} + e^{(1+2\alpha)T}  \right) \Vert f \Vert_{L^2(0, T; L^2(\mathbb{T}))}^2     \right)
\end{equation*}
by Theorem \ref{KeyTheorem}. We can choose a sufficiently small $T'>0$ such that for all $T\leq T'$ we have that $e^{(1+2\alpha)T}\leq 2$ so the above reduces to 
\begin{eqnarray*}
\|\omega(t) - \omega(0) \|^2_{ \mathbb{H}^1_3  } &\leq& 12 \|\omega_0 \|_{H^{1}(\mathbb{T})}^2 + 4\left(2+\frac{1}{\alpha}\right) \|f\|_{L^2(0, T;  \mathbb{L}^2_3  )}^2 \\
&\leq& 48 \|(u_0, \sigma_0) \|_Y^2 + 4\left(2+\frac{1}{\alpha}\right) \|f\|_{L^2(0, T; \mathbb{L}^2_3  )}^2
\\  &\leq& 48 \left( \|(u_0, \sigma_0)\|_Y + 1 \right)^2 + 4\left(2+\frac{1}{\alpha}\right) \|f\|_{L^2(0, T; \mathbb{L}^2_3)}^2, 
\end{eqnarray*}
where we used \eqref{EstimateForOmega}, and so
\begin{equation*}
\max_{0\leq t \leq T} \|u(t)-U_0(t)\|_{ \overbar{\mathbb{H}}^2_3   }^2 \leq \tilde{C}\left( 48 \left( \| (u_0, \sigma_0) \|_Y + 1 \right)^2 + 4\left(2+\frac{1}{\alpha}\right) \|f\|_{L^2(0, T; \mathbb{L}^2_3  )  }^2  \right), 
\end{equation*}
by the estimate \eqref{Uestimate}. We now choose 
\begin{equation*}
R^2=100\tilde{C}\left( \Vert (u_0, \sigma_0) \Vert_Y+1\right)^2 
\end{equation*}
and notice, that for all $(\tilde{u}, \tilde{\sigma})\in B_R(U_0, \Sigma_0)$,  we have the estimate 
\begin{equation*}
\|(\tilde{u}, \tilde{\sigma}) \|_X \leq r \mbox{ with } r = \|(u_0, \sigma_0)\|_Y + R,  
\end{equation*} and also notice that by this definition, $r$ only depends on the norm of the initial data. Thus, in light of \eqref{EstimateForf}, for all $(\tilde{u}, \tilde{\sigma})\in B_R(U_0, \Sigma_0)$ we can choose a sufficiently small $T_u>0$ with $T_u\leq T'$ such that for all $0<T\leq T_u$ we have 
\begin{equation*}
4\left(2+\frac{1}{\alpha}\right) \| f \|_{L^2(0, T; \mathbb{L}^2_3    )}^2 < \left( \|(u_0, \sigma_0) \| + 1 \right)^2, 
\end{equation*}
and so
\begin{equation}\label{eq371}
\max_{0\leq t \leq T} \Vert u(t)-U_0(t) \Vert_{ \overbar{\mathbb{H}}^2_3   }^2 < 50C\left(\Vert (u_0, \sigma_0) \Vert+1\right)^2 + 50C\left( \Vert (u_0, \sigma_0) \Vert+1 \right)^2 =\frac{1}{2}R^2.
\end{equation}
On the other hand, using the following estimate 
\begin{eqnarray*}
\|\sigma(x, t) -  \Sigma_0(x, t) \|_{\mathbb{H}^2_9}^2 &=& \left \|
e^{-\gamma t} F(x, t)\sigma_0(x)F^{\text{T}}(x, t)-\sigma_0(x) \right. \\
&& \left. + \delta \int_{0}^{t} e^{-\gamma (t-s)}F(x, t)F^{-1}(x, s)F^{-\text{T}}(x, s)F^{\text{T}}(x, t)ds
\right\Vert_{\mathbb{H}^2_9}^2 \\
&\leq& 2 \left\Vert e^{-\gamma t} F(x, t)\sigma_0(x)F^{\text{T}}(x, t)-\sigma_0(x)  \right\Vert_{\mathbb{H}^2_9}^2 \\
&& + 2 \left\Vert \delta \int_{0}^{t} e^{-\gamma (t-s)}F(x, t)F^{-1}(x, s)F^{-\text{T}}(x, s)F^{\text{T}}(x, t)ds \right\Vert_{\mathbb{H}^2_9}^2,  
\end{eqnarray*} 
and using \eqref{A}, by shrinking $T$ if necessary, we can make the above expression as small as we want. Furthermore, this choice of $T$ only depends on $r$. Thus there exists a $T_\sigma>0$ such that for all $0<T\leq T_\sigma$, we have
\begin{equation}\label{eq375}
\max_{0\leq t \leq T} \Vert \sigma(t) - \Sigma_0(t) \Vert_{\mathbb{H}^2_9}^2<\frac{1}{2}R^2.
\end{equation}
We can use this to obtain the following proposition: 

\begin{proposition}\label{SelfMapOfBall}
There exists a $T_{u, \sigma}>0$, depending only on the initial data $(u_0, \sigma_0)$ such that the map $\Phi$ maps the ball \eqref{Ball} of radius $R^2 = 100\tilde{C}\left( \|(u_0, \sigma_0)\|_Y+1 \right)^2$ in $X$ to itself for all $T\leq T_{u, \sigma}$.
\end{proposition}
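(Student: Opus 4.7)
The plan is to simply combine the two estimates already established immediately before the proposition, namely \eqref{eq371} and \eqref{eq375}. All the heavy lifting — solving the vorticity equation, recovering the velocity, and obtaining the explicit representation formula \eqref{SolSigma} for $\sigma$ — has already been done, and the choice $R^2 = 100\tilde{C}(\|(u_0,\sigma_0)\|_Y + 1)^2$ was made precisely so that these estimates produce a factor strictly smaller than $R^2/2$.

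First I would define
\begin{equation*}
T_{u,\sigma} := \min\{T_u,\, T_\sigma\} > 0,
\end{equation*}
where $T_u$ is the time furnished by the derivation culminating in \eqref{eq371} and $T_\sigma$ is the time furnished by the derivation culminating in \eqref{eq375}. I would note that both $T_u$ and $T_\sigma$ were chosen depending only on the bound $r = \|(u_0,\sigma_0)\|_Y + R$, which in turn depends only on the norm of the initial data, so $T_{u,\sigma}$ likewise depends only on the initial data, as required.

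Next, for any $T \leq T_{u,\sigma}$ and any $(\tilde{u},\tilde{\sigma}) \in B_R(U_0,\Sigma_0) \subset X(T)$, I would apply $\Phi$ to obtain $(u,\sigma) = \Phi(\tilde{u},\tilde{\sigma})$. Since $T \leq T_u$ we may invoke \eqref{eq371} to get $\max_{0\leq t \leq T}\|u(t)-U_0(t)\|_{\overbar{\mathbb{H}}^2_3}^2 < \tfrac{1}{2}R^2$, and since $T \leq T_\sigma$ we may invoke \eqref{eq375} to get $\max_{0\leq t \leq T}\|\sigma(t)-\Sigma_0(t)\|_{\mathbb{H}^2_9}^2 < \tfrac{1}{2}R^2$. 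Adding these yields
\begin{equation*}
\|(u - U_0,\, \sigma - \Sigma_0)\|_{X(T)}^2 < R^2,
\end{equation*}
which is exactly the statement that $(u,\sigma) \in B_R(U_0,\Sigma_0)$, completing the proof.

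There is essentially no obstacle here — the proposition is really a bookkeeping corollary of the two estimates that immediately precede it. The only point that requires mild care is verifying that the choices of $T_u$ and $T_\sigma$ can be made uniformly over all $(\tilde{u},\tilde{\sigma}) \in B_R(U_0,\Sigma_0)$; this follows because both were selected using the a priori bound $\|(\tilde{u},\tilde{\sigma})\|_X \leq r$, which holds uniformly on the ball, and because $\tilde{T}=\tilde{T}(r)$ in the ODE flow construction depends only on $r$.
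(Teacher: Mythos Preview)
Your proof is correct and matches the paper's own argument essentially line for line: the paper sets $T_{u,\sigma}=\min\{T_u,T_\sigma\}$, invokes \eqref{eq371} and \eqref{eq375}, and adds the two half-$R^2$ bounds to conclude. Your additional remarks on why $T_{u,\sigma}$ depends only on the initial data and on the uniformity of the choice over the ball are accurate and, if anything, slightly more explicit than the paper.
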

\begin{proof} 
Let $T_{u, \sigma}=\min \{T_u, T_\sigma \} > 0$. By \eqref{eq371} and \eqref{eq375} for all $T\leq T_{u, \sigma}$ we have
\begin{equation}
\|\left( u-U_0, \sigma-\Sigma_0  \right)\|_X^2 = \max_{0\leq t \leq T} \left(\|u-U_0 \|_{\overbar{\mathbb{H}}^2_3}^2 + \|\sigma - \Sigma_0 \|_{\mathbb{H}^2_9}^2 \right) < \frac{1}{2}R^2 + \frac{1}{2}R^2=R^2
\end{equation}
and this establishes the proposition. 
\end{proof} 
Next we show that there is some time $0<T_c\leq T_{u, \sigma}$ such that for all $T\leq T_c$ the map $\Phi$ is a contraction on our ball. Suppose we have $(\tilde{u}_1, \tilde{\sigma}_1), (\tilde{u}_2, \tilde{\sigma}_2) \in B_R(U_0, \Sigma_0)$. Applying the map $\Phi$ to both of them we obtain two solutions, $\Phi(\tilde{u}_1, \tilde{\sigma}_1)=(u_1, \sigma_1), \quad \Phi(\tilde{u}_2, \tilde{\sigma}_2)=(u_2, \sigma_2) $. We need to show that there exists a $0<\theta<1$ such that
\begin{equation*}
\|(u_2-u_1,\sigma_1-\sigma_2) \|_X^2 \leq \theta^2 \| (\tilde{u}_2-\tilde{u}_1, \tilde{\sigma}_2-\tilde{\sigma}_1) \|_X^2
\end{equation*} 
Let us first examine the term corresponding to the velocity. We have
\begin{equation*}
\|u_2(t) - u_1(t)\|_{ \overbar{\mathbb{H}}^2_3}^2 \leq \tilde{C} \|\omega_2(t)-\omega_1(t)\|^2_{\mathbb{H}^1_3}, 
\end{equation*}
where the $\omega_i$ are the corresponding vorticities. As before, we define $\bar{\omega}(t)\coloneqq \omega_2(t)-\omega_1(t)$, both satisfy the vorticity equation with the condition $\bar{\omega}(0)=0$ and $\bar{f}=f_2-f_1$. Thus, using the first estimate in Theorem \ref{ImprovedRegularity}, we obtain the bound: 
\begin{equation*}
\max_{0\leq t \leq T} \|\bar{\omega}(t) \|_{ \mathbb{H}^1_3}^2 \leq \left( \frac{1}{\alpha} + e^{(1+2\alpha)T}  \right) \|\bar{f} \|_{L^2(0, T; \mathbb{L}^2_3  )}^2 \leq \left( \frac{1}{\alpha} + 2  \right) \|\bar{f}\|_{L^2(0, T; \mathbb{L}^2_3)}^2. 
\end{equation*}
since $T\leq T_{u, \sigma} \leq T'$.
Since
\begin{equation*}
\bar{f}= \beta \left(  \nabla \times \text{div} J \bar{\sigma}  \right) - (Ju_2\cdot \nabla)\bar{\omega} - (J\bar{u}\cdot \nabla) \omega_1 + \Omega(J\bar{u}, u_2) + \Omega(Ju_1, \bar{u}). 
\end{equation*}
and so we can estimate
\begin{equation*}
\left \|\bar{f}(t) \right \|^2_{ \mathbb{L}^2_3} \leq C(r) \left( \| \tilde{u}_2 - \tilde{u}_1 \|_{\overbar{\mathbb{H}}^2_3}^2 + \| \tilde{\sigma}_2- \tilde{\sigma}_1 \|_{\mathbb{H}^2_9}^2  \right) \leq C(r) \|(\tilde{u}_2-\tilde{u}_1, \tilde{\sigma}_2-\tilde{\sigma}_1) \|_X^2,    
\end{equation*}
where the constant $C(r)$ only depends on $r$ and not $T$. Therefore we obtain  
\begin{equation*}
\left \|\bar{f}(t) \right \|_{L^2(0, T; \mathbb{L}^2_3)}^2 \leq T C(r) \|(\tilde{u}_2-\tilde{u}_1, \tilde{\sigma}_2-\tilde{\sigma_1}) \|_X^2    
\end{equation*}
and so
\begin{equation}\label{Contraction1}
\max_{0 \leq t \leq T} \|u_2(t) - u_1(t)\|_{ \overbar{\mathbb{H}}^2_3  }^2 
\leq \tilde{C}T\left[\left( 2 + \frac{1}{\alpha} \right) \cdot C(r)   \right] \|(\tilde{u}_2-\tilde{u}_1, \tilde{\sigma}_2-\tilde{\sigma}_1)\|_X^2, 
\end{equation}
for $T\leq T_{u, \sigma}$. Therefore, if we choose any $\theta$ with $0<\theta<1$, then we can choose a sufficiently small $0<T_\theta \leq T_{u, \sigma}$ such that for all $0\leq t\leq T_{\theta}$ we have
\begin{equation}\label{ContractionU}
\max_{0\leq t \leq T_\theta}\Vert u_2(t)-u_1(t)\Vert_{\overbar{\mathbb{H}}^2_3}^2 \leq \frac{\theta^2}{2} \Vert (\tilde{u}_2-\tilde{u}_1, \tilde{\sigma}_2-\tilde{\sigma}_1)\Vert_X^2. 
\end{equation}
Next, we need to obtain a similar estimate for $\sigma_2-\sigma_1$.

\subsubsection{Estimates for $\Vert \sigma_1 - \sigma_2 \Vert $}
In this section, let $C([0, T]; \overbar{\mathbb{H}}^2_3)=Z$.
Take the two vector fields $\tilde{u}_1$ and $\tilde{u}_2$ which are in our ball and so both satisfy 
\begin{equation*}
\|\tilde{u}_1 \|_Z, \|\tilde{u}_2 \|_Z \leq r = \|(u_0, \sigma_0) \|_Y + R.
\end{equation*}
Now consider the corresponding vector fields $v_1(x, t)=J\tilde{u}_1 (x, t)$ and $v_2(x, t)=J\tilde{u}_2(x, t)$ which are smooth in $x$ and continuous in $t$. By \eqref{VBound} they satisfy
\begin{equation}\label{VBound2}
\max_{\substack{0\leq t \leq T \\ x\in \mathbb{R}^3}} \sum_i \sum_{|\lambda|\leq 4} |D^{\lambda} v_m^i(x, t)| \leq K \| \tilde{u}_m \|_Z \leq K r,   
\end{equation}
for $m=1,2$ as well as 
\begin{equation}\label{E3}
\max_{\substack{0\leq t \leq T \\ x\in \mathbb{R}^3}} \sum_i \sum_{|\lambda|\leq 4}|D^{\lambda }\left(v_2^i(x, t) -v_1^i(x, t) \right)| \leq K \|\tilde{u}_2- \tilde{u}_1 \|_Z.
\end{equation}

\begin{remark}
Notice that $Kr$ is a Lipschitz constant for the functions $D^{\lambda }v_m^i(x, t)$ for $|\lambda|\leq 3$. This will be useful later.
\end{remark}

We then use these two vector fields to obtain two flows $\varphi_1(x, t)$ ans $\varphi_2(x, t)$, which we in turn use to obtain two corresponding stress tensors $\sigma_1(x, t)$ and $\sigma_2(x, t)$ using the formula \eqref{sigma}. By the same formula, since we are interested in the quantity
\begin{equation*}
\|\sigma_2(x, t) - \sigma_1(x, t) \|_{\mathbb{H}^2_9} 
\end{equation*}
we will need to study the spatial derivatives of the corresponding deformation tensors, labeled $F_1$ and $F_2$, up to order 2, which corresponds to the derivatives of the the flow up to order 3. In what follows, we will use raised and lowered indices as well as the Einstein summation convention to simplify the presentation.

We begin by introducing the following notation:
\begin{eqnarray}\label{Notation}
&& \frac{\partial \varphi^i}{\partial x^k}(x, t) = a^i_k(x, t), \quad \frac{\partial^2 \varphi^i}{\partial x^j \partial x^k}(x, t) = b^i_{jk}(x, t), \quad  \frac{\partial^3 \varphi^i}{\partial x^l \partial x^j \partial x^k}(x, t)=c^i_{ljk}(x, t) \\
&& \frac{\partial v^i}{\partial x^k}(x, t)=A^i_k(x, t), \quad \frac{\partial v^i}{\partial x^j \partial x^k}(x, t)=B^i_{jk}(x, t), \quad \frac{\partial v^i}{\partial x^l \partial x^j \partial x^k}(x, t)=C^i_{ljk}(x, t).  
\end{eqnarray}
Then by successively differentiating the flow map equation
\begin{equation}\label{Flow0}
\left \{ \begin{array}{l}
\partial_t \varphi^i(x, t) = v^i(\varphi(x, t), t) \\
\varphi^i(x, 0) = x^i(x), 
\end{array} \right. 
\end{equation}
with respect to the spatial variables, the spatial partial derivatives can be shown to satisfy the following equations
\begin{subeqnarray}
\partial_t a^i_j(x, t) &=& A^i_k(\varphi(x, t), t)a^k_j(x, t) \slabel{D1} \\
\partial_t b^i_{lj}(x, t) &=& B^i_{mk}(\varphi(x, t), t)a^m_l(x, t)a^k_j(x, t)+A^i_k(\varphi(x, t), t) b^k_{lj}(x, t) \slabel{D2} \\
\partial_t c^i_{nlj}(x,t) &=& C^i_{bmk}(\varphi(x, t), t) a^b_n(x, t)a^m_l(x, t)a^k_j(x, t)+B^i_{mk}(\varphi(x, t), t)b^m_{nl}(x, t)a^k_j(x, t) \nonumber \\
&& +B^i_{mk}(\varphi(x, t), t)a^m_l(x, t)b^k_{nj}(x, t)+B^i_{bk}a^b_n(x, t)b^k_{lj}(x, t)+A^i_k(\varphi(x, t), t)c^k_{nlj} \slabel{D3}, 
\end{subeqnarray}
subject to initial conditions that $a^i_j(x, 0) = \delta^i_j, b^i_{lj}(x, 0) = 0,$ and 
$c^k_{nlj}(x, 0) = 0$. 

We will denote objects for our two vector fields by $(a_1)^i_j, (A_1)^i_k$, etc. We have the following proposition.

\begin{proposition}\label{Prop9}
There exists an ${\varepsilon}>0$, a corresponding $T_{{\varepsilon}}>0$, and a constant $K_0$, all depending only on $r$ such that for all $0\leq t \leq T_{{\varepsilon}}$ we have the estimate  
\begin{equation}\label{S2}
|\varphi_2(x, t) - \varphi_1(x, t)|  
\leq K_0 t^{1/2} \|\tilde{u}_2 - \tilde{u}_1\|_Z.
\end{equation} 
\end{proposition}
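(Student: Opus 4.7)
The plan is to derive this estimate from the integral form of the flow equations \eqref{Flow0}, combining a direct bound on $v_2-v_1$ from \eqref{E3} with a Lipschitz bound on $v_1$ from \eqref{VBound2}, and then applying Gronwall's inequality. The conversion of the resulting linear--in--$t$ bound to a $t^{1/2}$ bound is achieved at the end by absorbing a factor of $T_\varepsilon^{1/2}$ into the constant.

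More precisely, I would start by first selecting $\varepsilon > 0$ small enough and $T_\varepsilon > 0$ correspondingly small enough (depending only on $r$) so that the uniform estimate \eqref{A} applies to both flows $\varphi_1$ and $\varphi_2$ on the interval $[0, T_\varepsilon]$; this guarantees in particular that both flows remain close to the identity and are well defined on that interval. Writing the flow equation \eqref{Flow0} in integral form gives
\begin{equation*}
\varphi_m(x,t) \;=\; x \;+\; \int_0^t v_m(\varphi_m(x,s), s)\,ds, \qquad m = 1, 2,
\end{equation*}
so subtracting and inserting the auxiliary term $v_1(\varphi_2(x,s),s)$ yields
\begin{equation*}
\varphi_2(x,t) - \varphi_1(x,t) \;=\; \int_0^t \bigl[v_2 - v_1\bigr](\varphi_2(x,s), s)\,ds \;+\; \int_0^t \bigl[v_1(\varphi_2(x,s), s) - v_1(\varphi_1(x,s), s)\bigr]\,ds.
\end{equation*}

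For the first integral, I would apply \eqref{E3} with $\lambda = 0$ to bound the integrand pointwise by $K\|\tilde{u}_2 - \tilde{u}_1\|_Z$ uniformly in space and time, producing a contribution of at most $tK\|\tilde{u}_2 - \tilde{u}_1\|_Z$. For the second integral, I would use the bound \eqref{VBound2} applied with $|\lambda|=1$, which supplies the Lipschitz constant $Kr$ for $v_1$, giving the bound $Kr\int_0^t |\varphi_2(x,s) - \varphi_1(x,s)|\,ds$. Combining these,
\begin{equation*}
|\varphi_2(x,t) - \varphi_1(x,t)| \;\leq\; tK\|\tilde{u}_2 - \tilde{u}_1\|_Z \;+\; Kr\int_0^t |\varphi_2(x,s) - \varphi_1(x,s)|\,ds,
\end{equation*}
and Gronwall's inequality yields $|\varphi_2(x,t) - \varphi_1(x,t)| \leq tK e^{Krt}\|\tilde{u}_2-\tilde{u}_1\|_Z$ for every $x \in \mathbb{R}^3$ and $0 \le t \le T_\varepsilon$.

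Finally, on the interval $[0, T_\varepsilon]$ one has the trivial inequality $t \leq T_\varepsilon^{1/2} \, t^{1/2}$, so setting $K_0 := K T_\varepsilon^{1/2} e^{K r T_\varepsilon}$ produces the desired estimate \eqref{S2}. Since $\varepsilon$ and $T_\varepsilon$ are selected in terms of $r$ alone, the constant $K_0$ depends only on $r$, completing the proposition. There is no genuine obstacle here: the only subtle point is making sure that the $\mathbb{L}^2_3$ norm appearing implicitly in \eqref{E3} can be dominated by $\|\tilde{u}_2 - \tilde{u}_1\|_Z$, which follows since $\overbar{\mathbb{H}}^2_3 \hookrightarrow \mathbb{L}^2_3$ and the $Z$-norm is the uniform-in-time $\overbar{\mathbb{H}}^2_3$ norm.
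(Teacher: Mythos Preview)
Your proof is correct and in fact more direct than the paper's. The paper works with the squared difference $|\varphi_2-\varphi_1|^2$, differentiates in time, and obtains
\[
\frac{d}{dt}|\varphi_2-\varphi_1|^2 \leq 2Kr|\varphi_2-\varphi_1|^2 + 2K|\varphi_2-\varphi_1|\,\|\tilde{u}_2-\tilde{u}_1\|_Z.
\]
The mixed term $|\varphi_2-\varphi_1|\,\|\tilde{u}_2-\tilde{u}_1\|_Z$ prevents an immediate Gronwall argument from yielding the correct power of $\|\tilde{u}_2-\tilde{u}_1\|_Z$; the paper first crudely bounds $|\varphi_2-\varphi_1|\leq 6\varepsilon$ using \eqref{A}, then runs an iterative bootstrap in which the exponent on $\|\tilde{u}_2-\tilde{u}_1\|_Z$ climbs through the sequence $a_n = 1 + a_{n-1}/2 \to 2$, finally obtaining $|\varphi_2-\varphi_1|^2 \leq 24Kt\|\tilde{u}_2-\tilde{u}_1\|_Z^2$.

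By contrast, your integral-form argument avoids the square entirely and delivers the linear-in-$\|\tilde{u}_2-\tilde{u}_1\|_Z$ bound in a single Gronwall step. You actually obtain the stronger estimate $|\varphi_2-\varphi_1|=O(t)$, and the downgrade to $O(t^{1/2})$ via $t\leq T_\varepsilon^{1/2}t^{1/2}$ is harmless (and only needed to match the stated form). The trade-off is that your constant $K_0 = KT_\varepsilon^{1/2}e^{KrT_\varepsilon}$ carries explicit dependence on $T_\varepsilon$, whereas the paper's $K_0=\sqrt{24K}$ does not; but since $T_\varepsilon$ is itself determined by $r$, both are admissible for the proposition. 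Your route is the cleaner one.
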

\begin{proof} 
As shown earlier, we have the bound: 
\begin{equation}\label{E4}
\max_{\substack{(x, t)\in \mathbb{R}^3 \times [0, T_{{\varepsilon}}] \\ |\lambda|\leq 3}} |D^\lambda (\varphi^i(x, t) - x^i(x)) | < \varepsilon,  
\end{equation} 
where $\lambda$ is a multi-index, and  $T_{{\varepsilon}}>0$ depends only on $\varepsilon$ and the $r$. By differentiating $|\varphi_2(x, t)-\varphi_1(x, t)|^2$ in time, we obtain
\begin{eqnarray*}
\frac{d}{dt}|\varphi_2(x, t)-\varphi_1(x, t)|^2 &=& 2(\varphi_2(x, t) - \varphi_1(x, t)) (\partial_t\varphi_2(x, t)-\partial_t\varphi_1(x, t))\\ 
&\leq& 2|\varphi_2(x, t)-\varphi_1(x, t)| |\partial_t\varphi_2(x, t)-\partial_t\varphi_1(x, t))| \\
&=& 2|\varphi_2(x, t)-\varphi_1(x, t))| |v_2(\varphi_2(x, t), t)-v_1(\varphi_1(x, t), t)|. 
\end{eqnarray*}
Using \eqref{E3}, we also, obtain that 
\begin{eqnarray*} 
|v_2(\varphi_2(x, t), t)-v_1(\varphi_1(x, t), t)| &=& |v_2(\varphi_2(x, t), t)-v_2(\varphi_1(x, t), t) + v_2(\varphi_1(x, t), t) - v_1(\varphi_1(x, t), t)| \\
&\leq& |v_2(\varphi_2(x, t), t)-v_2(\varphi_1(x, t), t)|+|v_2(\varphi_1(x, t), t)-v_1(\varphi_1(x, t), t)| \\
&\leq& Kr|\varphi_2(x, t)-\varphi_1(x, t)|+K\|\tilde{u}_2-\tilde{u}_1\|_Z. 
\end{eqnarray*}
Further, we have that by \eqref{E4}, 
\begin{equation*}
|\varphi_2(x, t)-\varphi_1(x, t)|=|\varphi_2(x, t)-x+x-\varphi_1(x, t)|\leq |\varphi_2(x, t)-x|+|x-\varphi_1(x, t)|\leq 3{\varepsilon} + 3{\varepsilon}. 
\end{equation*}
These result in
\begin{subeqnarray}
\frac{d}{dt}|\varphi_2(x, t)-\varphi_1(x, t)|^2 &\leq& 2Kr|\varphi_2(x, t)-\varphi_1(x, t)|^2+ 2K|\varphi_2(x, t)-\varphi_1(x, t)|\Vert \tilde{u}_2-\tilde{u}_1 \Vert_Z \slabel{E2} \\ 
\frac{d}{dt}|\varphi_2(x, t)-\varphi_1(x, t)|^2 &\leq& 2Kr|\varphi_2(x, t)-\varphi_1(x, t)|^2 + 12K{\varepsilon} \Vert \tilde{u}_2-\tilde{u}_1\Vert_Z \slabel{Key},  
\end{subeqnarray}
for $0\leq t \leq T_{{\varepsilon}}$. Gronwall's inequality and the fact that $\vert \varphi_2(x, 0)-\varphi_1(x, 0)\vert=0$, give 
\begin{equation*}
|\varphi_2(x, t)-\varphi_1(x, t)|^2\leq e^{2Krt} \left( 12K{\varepsilon} t \Vert \tilde{u}_2 - \tilde{u}_1 \Vert_Z \right)\leq e^{2KrT_{ {\varepsilon}   }} \left( 12K {\varepsilon} T_{{\varepsilon} } \Vert \tilde{u}_2 - \tilde{u}_1 \Vert_Z \right), 
\end{equation*}
for all $0\leq t \leq T_{ {\varepsilon}}$. By choosing a sufficiently small $ {\varepsilon}$, so that the following inequalities hold true  
\begin{equation}\label{Choices}
e^{2KrT_{{\varepsilon}}}<2, \quad 12K {\varepsilon} \sqrt{T_{{\varepsilon}}} < \frac{1}{2}, \quad 2K\sqrt{T_{{\varepsilon}}} < \frac{1}{2}, \quad \mbox{ and } \quad T_{{\varepsilon}} < 1, 
\end{equation}
we have that
\begin{equation*}
|\varphi_2(x, t)-\varphi_1(x, t)|^2 < \sqrt{T_{ {\varepsilon}}  }\Vert \tilde{u}_2-\tilde{u}_1 \Vert_Z.
\end{equation*}
In particular, since $T_{ {\varepsilon} }<1$, we also have
\begin{equation}\label{E1}
|\varphi_2(x, t)-\varphi_1(x, t)| < \Vert \tilde{u}_2-\tilde{u}_1 \Vert_Z^{1/2}.
\end{equation}
Now, the problem here is that the exponent $1/2$ here is not good enough. The trick is to take \eqref{E1} and plug it into \eqref{E2} to obtain
\begin{equation*}
\frac{d}{dt}|\varphi_2(x, t)-\varphi_1(x, t)|^2 \leq 2Kr|\varphi_2(x, t)-\varphi_1(x, t)|^2+ 2K\Vert \tilde{u}_2-\tilde{u}_1 \Vert_Z^{3/2}
\end{equation*}
and then apply Gronwall's inequality again to obtain
\begin{equation*} 
|\varphi_2(x, t)-\varphi_1(x, t)|^2 < e^{2KrT_{{\varepsilon}}}(2KT_{{\varepsilon}} \Vert \tilde{u}_2-\tilde{u}_1 \Vert_Z^{3/2})<\sqrt{T_{ {\varepsilon}} } \Vert \tilde{u}_2-\tilde{u}_1 \Vert_Z^{3/2} < \Vert \tilde{u}_2-\tilde{u}_1 \Vert_Z^{3/2}
\end{equation*}
where the inequalities follow from \eqref{Choices} and thus from our choice of ${\varepsilon}$.
Iterating this, we have that 
\begin{equation*} 
|\varphi_2(x, t)-\varphi_1(x, t)|^2 < \|\tilde{u}_2-\tilde{u}_1 \|_Z^{a_n}, 
\end{equation*}
where the sequence $a_n$ satisfies 
\begin{equation*}
a_0=1, \quad a_{n}=1+\frac{a_{n-1}}{2}.
\end{equation*}
It can be easily checked that $\lim_{n\rightarrow \infty} a_n=2$ and so we conclude
\begin{equation}\label{E5}
|\varphi_2(x, t)-\varphi_1(x, t)|^2 \leq \|\tilde{u}_2-\tilde{u}_1\|_Z^{2}.
\end{equation}
One final application of Gronwall's inequality using \eqref{E5} in \eqref{E2}, and using \eqref{Choices} then yields
\begin{eqnarray*}
|\varphi_2(x, t)-\varphi_1(x, t)|^2 \leq e^{2Krt} \left( 12K t \|\tilde{u}_2 - \tilde{u}_1 \|_Z^2 \right) \leq e^{2KrT_{ {\varepsilon}} } \left( 12K t \|\tilde{u}_2 - \tilde{u}_1\|_Z^2 \right) \leq 24K t \|\tilde{u}_2 - \tilde{u}_1 \|_Z^2, 
\end{eqnarray*}
for all $0\leq t\leq T_{{\varepsilon}}$. 
Taking square roots and setting $K_0=\sqrt{24K}$ yields \eqref{S2} and completes the proposition.
\end{proof} 

\begin{proposition}
There exists an $\varepsilon > 0$ such that 
\begin{subeqnarray*}
|(F_2)_{ij}(x, t)-(F_1)_{ij}(x, t)| &\leq& M t^{1/2} \|\tilde{u}_2 - \tilde{u}_1 \|_Z \\ 
|(F^{-1}_2)_{ij}(x, t)-(F^{-1}_1)_{ij}(x, t)| &\leq& Mt^{1/2} \|\tilde{u}_2 - \tilde{u}_1 \|_Z, 
\end{subeqnarray*}
for $0\leq t \leq T_\varepsilon$.
\end{proposition}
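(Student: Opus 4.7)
The plan is to run the same style of argument as in Proposition \ref{Prop9}, but applied to the linearized ODEs satisfied by the spatial first derivatives of the flow and by the entries of $F_m^{-1}$, both of which have the advantage of starting from deterministic initial data (the identity matrix) that is the same for both flows, so the linearized difference starts from zero.

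First I would set $D^i_j(x,t) := (a_2)^i_j(x,t) - (a_1)^i_j(x,t)$, which is exactly $(F_2)_{ij}(x,t) - (F_1)_{ij}(x,t)$. Using \eqref{D1} and the standard add-and-subtract trick of evaluating $A_2$ at $\varphi_1$ before comparing to $A_1$ at $\varphi_1$, one finds that $D$ satisfies the linear system
\[
\partial_t D^i_j = (A_1)^i_k(\varphi_1,t)\,D^k_j + R^i_j,\qquad D^i_j(x,0)=0,
\]
with remainder
\[
R^i_j = \bigl[(A_2)^i_k(\varphi_2,t)-(A_2)^i_k(\varphi_1,t)\bigr](a_2)^k_j + \bigl[(A_2)^i_k(\varphi_1,t)-(A_1)^i_k(\varphi_1,t)\bigr](a_2)^k_j.
\]
The remainder is then bounded using three ingredients already at our disposal: \eqref{VBound2} supplies $Kr$ as a uniform spatial Lipschitz constant for $(A_m)^i_k$, so the first bracket is controlled by $Kr\,K_0\,t^{1/2}\|\tilde u_2-\tilde u_1\|_Z$ via Proposition \ref{Prop9}; \eqref{E3} directly gives $K\|\tilde u_2-\tilde u_1\|_Z$ for the second bracket; and \eqref{E4} keeps $|a_2|$ bounded by a constant close to $1$. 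Together these yield $|R(x,t)| \le C(r)\|\tilde u_2-\tilde u_1\|_Z$ on $[0,T_\varepsilon]$. Differentiating $|D|^2$ in $t$ and applying Gronwall with the zero initial data gives $|D(x,t)| \le M t^{1/2}\|\tilde u_2-\tilde u_1\|_Z$, after shrinking $T_\varepsilon$ further if needed (the conditions \eqref{Choices} already force $T_\varepsilon \le 1$, which is convenient for absorbing the stronger $t$ dependence into the $t^{1/2}$ bound).

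The estimate for $F_m^{-1}$ is obtained by the same template. Differentiating $F_m F_m^{-1}=I$ in $t$ yields the linear ODE $\partial_t(b_m)^i_j = -(b_m)^i_k (A_m)^k_j(\varphi_m,t)$ with $(b_m)^i_j(x,0)=\delta^i_j$, where $(b_m)^i_j$ denotes the entries of $F_m^{-1}$. The difference $E^i_j := (b_2)^i_j - (b_1)^i_j$ then satisfies an analogous linear system with zero initial data and a remainder of the same structure as $R$, using the uniform boundedness of $(b_1)^i_j$ (obtained either from the analogue of \eqref{E4} for $F_m^{-1}$, or by Cramer's rule once $\det F_1$ is shown to stay bounded away from zero on $[0,T_\varepsilon]$ by choosing $\varepsilon$ small in \eqref{E4}). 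Running the same Gronwall argument gives $|E(x,t)| \le M t^{1/2}\|\tilde u_2-\tilde u_1\|_Z$.

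The hard part is not any analytic idea, but rather the bookkeeping: one must verify that every constant in the chain ($K_0$, the implicit bound on $|a_2|$ and $|b_1|$, the constant $C(r)$ in the remainder, and the final $M$) depends only on $r = \|(u_0,\sigma_0)\|_Y + R$, not on $\tilde u_1$ or $\tilde u_2$ individually, so that the forthcoming contraction argument can use a uniform $T_\varepsilon$. Notably, in contrast to Proposition \ref{Prop9} no bootstrap iteration of Gronwall is needed, because the linearized ODEs for $D$ and $E$ have matching initial data for the two flows and the remainders are already linear in $\|\tilde u_2 - \tilde u_1\|_Z$.
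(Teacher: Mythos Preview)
Your proposal is correct. For $F_2-F_1$ you follow essentially the paper's argument: both derive the linearized ODE for the difference $D=a_2-a_1$ from \eqref{D1}, split the right-hand side by adding and subtracting $A_2(\varphi_1)$, bound the pieces via \eqref{VBound2}, \eqref{E3}, Proposition~\ref{Prop9}, and the closeness of $a_2$ to the identity from \eqref{E4}, and conclude by Gr\"onwall with zero initial data. Your observation that no bootstrap iteration is needed here is accurate: once the remainder is already linear in $\|\tilde u_2-\tilde u_1\|_Z$, a single application of Young's inequality plus Gr\"onwall gives the $t^{1/2}$ bound directly. The paper instead invokes ``the argument of Proposition~\ref{Prop9}'' wholesale, which carries along the iteration even though it is not needed at this step.

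For $F_2^{-1}-F_1^{-1}$ you take a genuinely different route. The paper writes $F^{-1}$ via cofactors in terms of the entries of $F$ (using $\det F_m=1$ from incompressibility of $J\tilde u_m$) and deduces the estimate by a ``tedious calculation'' from the bounds already obtained on $a_2-a_1$, $b_2-b_1$, $c_2-c_1$. You instead differentiate $F_m F_m^{-1}=I$ to get the linear ODE $\partial_t F_m^{-1}=-F_m^{-1}A_m(\varphi_m)$ and rerun the same difference-plus-Gr\"onwall argument; this is cleaner and avoids the algebraic bookkeeping. One caveat worth flagging: the paper's proof in fact establishes the stronger bounds $|D^\lambda[(F_2)_{ij}-(F_1)_{ij}]|\le K_5 t^{1/2}\|\tilde u_2-\tilde u_1\|_Z$ for $|\lambda|\le 2$ (via the analogous estimates for $b_2-b_1$ and $c_2-c_1$), which are what the next proposition on $\|\sigma_2-\sigma_1\|_{\mathbb{H}^2_9}$ actually uses. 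Your argument proves exactly the stated proposition, but you will need to extend it to the higher spatial derivatives of $F$ and $F^{-1}$ before tackling the $\sigma$ estimate; the same ODE hierarchy \eqref{D2}--\eqref{D3} lets you do this by the identical template.
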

\begin{proof} 
Raising an index, we have that $F_{ij}=a^i_j$. To streamline notation, in what follows we will let $\varphi(x, t)=\varphi$ and suppress the explicit dependence on $x$ and $t$  where convenient. Since  
\begin{equation*}
|a_2 - a_1|^2 = \sum_{i,j} |(a_2)^i_j - (a_1)^i_j|^2, 
\end{equation*}
we have that 
\begin{eqnarray*}
\frac{d}{dt}|a_2-a_1|^2 &=& 2(a_2-a_1) \cdot (\partial_t a_2 - \partial_t a_1) \leq 2| a_2-a_1||\partial_t a_2 - \partial_t a_1| \\
&=& 2|a_2 - a_1| \left( \sum_{i,j} |\partial_t(a_2)^i_j - \partial_t(a_1)^i_j|^2   \right)^{1/2}. 
\end{eqnarray*}
By \eqref{D1} we now have
\begin{equation*}
|\partial_t(a_2)^i_j - \partial_t(a_1)^i_j | \leq \sum_k | (A_2(\varphi_2))^i_k(a_2)^k_j-(A_1(\varphi_1))^i_k(a_1)^k_j|. 
\end{equation*}
Now the trick is to write
\begin{eqnarray*}
(A_2(\varphi_2))^i_k(a_2)^k_j-(A_1(\varphi_1))^i_k(a_1)^k_j &=&
\left((A_2(\varphi_2))^i_k-(A_2(\varphi_1))^i_k\right)(a_2)^k_j + (A_2(\varphi_1))^i_k \left( (a_2)^k_j- (a_1)^k_j \right) \\
&& +\left( (A_2(\varphi_1))^i_k - (A_1(\varphi_1))^i_k  \right)(a_1)^k_j, 
\end{eqnarray*}
and so estimating each of these terms using Proposition \ref{Prop9}, \eqref{A}, and \eqref{E3}, we obtain
\begin{eqnarray*}
|\partial_t (a_2)_i^j - \partial_t (a_1)_i^j | &\leq& \sum_k \left( Kr\varepsilon \| \tilde{u}_2 - \tilde{u}_1 \|_Z + \varepsilon |(a_2)^k_j - (a_1)^k_j| + \varepsilon \| \tilde{u}_2 - \tilde{u}_1 \|_Z  \right) \\
&\leq& 3(Kr+1)\varepsilon \|\tilde{u}_2 - \tilde{u}_1 \|_Z + 3\varepsilon |a_2-a_1| \leq K_1 \|\tilde{u}_2 - \tilde{u}_1 \|_Z + 3\varepsilon |a_2-a_1|,   
\end{eqnarray*}
where for $\varepsilon < 1$ we can let $K_1=3(Kr+1)$. Thus we can estimate
\begin{equation}\label{Key2}
\frac{d}{dt}\vert a_2(x, t) - a_1(x, t) \vert^2 \leq 2 |a_2(x, t)-a_1(x, t)| \left(  3K_1 \Vert \tilde{u}_2 - \tilde{u}_1 \Vert_Z + 9\varepsilon \vert a_2(x, t)-a_1(x, t) \vert \right). 
\end{equation}
Now the idea is that \eqref{Key2} is of the same form as \eqref{Key} and $a_2(x, 0)=a_1(x, 0)$, and thus the argument of Proposition \ref{Prop9} (after possibly shrinking $\varepsilon$ and $T_\varepsilon$) carries through. Thus there exists a constant $K_2$ such that
\begin{equation}\label{S3}
|a_2(x, t) - a_1(x, t)|\leq K_2 t^{1/2} \|\tilde{u}_2 - \tilde{u}_1\|_Z
\end{equation}
We can apply similar arguments to $b_1, b_2$ and $c_1, c_2$ to obtain for some $K_3 > 0$, 
\begin{equation}\label{S4}
|b_2(x, t)-b_1(x, t)| \leq K_3 t^{1/2} \|\tilde{u}_2 - \tilde{u}_1\|_Z, 
\end{equation}
and for some $K_4 > 0$,
\begin{equation}\label{S5}
|c_2(x, t) - c_1(x, t)| \leq K_4 t^{1/2} \|\tilde{u}_2 - \tilde{u}_1 \|_Z, 
\end{equation}
for $0\leq t\leq T_\varepsilon$ after possibly shrinking $T_\varepsilon$. Putting together all of these estimates, we see there is a constant $K_5$, an $\varepsilon>0$ and a corresponding time $T_\varepsilon>0$ both depending only on $r$ such that
\begin{equation}\label{key}
|D^{\lambda}\left[ (F_2)_{ij}(x, t) - (F_1)_{ij}(x, t) \right]|\leq K_5t^{1/2} \|\tilde{u}_2-\tilde{u}_1\|_Z, 
\end{equation} 
for all $\lambda \leq 2$. Since we can write $F^{-1}$ in terms of the components of $F$, our previous estimates and \eqref{Key}, a tedious calculation yields the estimate: 
\begin{equation}\label{key3}
|D^{\lambda}\left[ (F^{-1}_2)_{ij}(x, t) - (F^{-1}_1)_{ij}(x, t)  \right]|\leq K_6t^{1/2} \|\tilde{u}_2-\tilde{u}_1\|_Z. 
\end{equation} 
Letting $M$ be the maximum of $K_5$ and $K_6$ yields the proposition. 
\end{proof} 

\begin{proposition}
There exists a constant $N$ such that 
\begin{equation*}
\|\sigma_2(x, t)-\sigma_1(x, t)\|_{\mathbb{H}^2_9} \leq Nt^{1/2} \| \tilde{u}_2-\tilde{u}_1\|_Z, 
\end{equation*}
for all $0\leq t \leq T_\varepsilon$.
\end{proposition}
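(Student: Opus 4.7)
The plan is to insert the explicit representation formula \eqref{SolSigma} for both $\sigma_1$ and $\sigma_2$, subtract, and bound each resulting piece by repeatedly using the telescoping identity $AB - A'B' = (A - A')B + A'(B - B')$ so that every summand on the right hand side contains at least one factor of the form $F_2 - F_1$ or $F_2^{-1} - F_1^{-1}$ (or a spatial derivative thereof up to order two), to which the previous proposition applies directly.

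More concretely, I would first handle the $L^2$ estimate for $\sigma_2 - \sigma_1$ itself. Writing
\begin{equation*}
e^{-\gamma t}\bigl(F_2 \sigma_0 F_2^T - F_1 \sigma_0 F_1^T\bigr) = e^{-\gamma t}\bigl[(F_2 - F_1)\sigma_0 F_2^T + F_1 \sigma_0 (F_2 - F_1)^T\bigr],
\end{equation*}
and similarly splitting the integrand $F_2 F_2^{-1}(s) F_2^{-T}(s) F_2^T - F_1 F_1^{-1}(s) F_1^{-T}(s) F_1^T$ into four terms, each involving exactly one factor of a difference, the pointwise bound $M t^{1/2}\|\tilde{u}_2-\tilde{u}_1\|_Z$ from the previous proposition can be pulled out, while the remaining factors are uniformly controlled because $|\varphi_m^i(x,t) - x^i(x)|$ and its derivatives are bounded by $\varepsilon$ from \eqref{A}, hence $F_m$ and $F_m^{-1}$ are bounded in $C^0$ by a constant depending only on $r$. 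After integrating over $\mathbb{T}$ and using $\|\sigma_0\|_{\mathbb{L}^2_9} \le \|\sigma_0\|_{\mathbb{H}^2_9}$, one obtains the desired $L^2$ bound.

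For the $\mathbb{H}^2_9$ bound, I would apply $D^\lambda$ with $|\lambda|\le 2$ to $\sigma_2 - \sigma_1$ and expand using the Leibniz rule. This yields a finite sum of products where each summand is of the form $D^{\mu_1}(\cdot)\, D^{\mu_2}\sigma_0\, D^{\mu_3}(\cdot)$ (for the first piece) or a time-integral of a product of five derivatives of $F$ and $F^{-1}$ factors (for the second piece). Again using the telescoping trick on each such summand, one of the $F$- or $F^{-1}$-factors in each product becomes a difference. By the previous proposition, every spatial derivative of such a difference up to order two is bounded pointwise by $Mt^{1/2}\|\tilde u_2-\tilde u_1\|_Z$; every other factor is bounded pointwise in $x$ and $t$ by a constant $K(r)$ (for the $F$, $F^{-1}$ factors and their spatial derivatives) times, where present, $|D^\mu \sigma_0(x)|$ with $|\mu|\le 2$, which is in $L^2(\mathbb{T})$ since $\sigma_0 \in \mathbb{H}^2_9$.

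The main technical point, which is routine but the principal bookkeeping obstacle, is verifying that each term that arises after taking two spatial derivatives still has a difference-factor that the previous proposition can bound; this is true because after Leibniz-expansion, each product contains at least one factor drawn from $\{F_m, F_m^{-1}\}$ (with up to two derivatives on it), and the telescoping can always be performed on that factor. Collecting all terms, integrating over $\mathbb{T}$, pulling out the uniform $C^0$ bounds, and bounding the remaining $\sigma_0$-derivative factors in $L^2$, we obtain
\begin{equation*}
\|\sigma_2(t)-\sigma_1(t)\|_{\mathbb{H}^2_9} \le N t^{1/2} \|\tilde u_2-\tilde u_1\|_Z,
\end{equation*}
for $0 \le t \le T_\varepsilon$, where $N$ depends on $r$, $\|\sigma_0\|_{\mathbb{H}^2_9}$, $\delta$, $\gamma$, $T_\varepsilon$, and the constant $M$ from the previous proposition. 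The $t^{1/2}$ factor is preserved because every summand still contains exactly one difference-factor bounded by $t^{1/2}\|\tilde u_2-\tilde u_1\|_Z$, while the time integral in the second part of \eqref{SolSigma} only improves the $t$-dependence.
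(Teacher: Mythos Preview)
Your proposal is correct and follows essentially the same approach as the paper: both subtract the two instances of the representation formula \eqref{SolSigma}, telescope the products so that each summand contains exactly one difference factor $F_2-F_1$ or $F_2^{-1}-F_1^{-1}$ (or a derivative thereof), invoke the previous proposition for the $Mt^{1/2}\|\tilde u_2-\tilde u_1\|_Z$ bound on that factor, and control the remaining factors via \eqref{A} and $\|\sigma_0\|_{\mathbb{H}^2_9}\le r$. Your write-up is in fact a bit more explicit about the Leibniz expansion needed for the $\mathbb{H}^2_9$ norm than the paper, which simply says ``we have control over the second derivatives of all the terms,'' but the underlying argument is the same.
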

\begin{proof} 
By the formula \eqref{SolSigma}, we have 
\begin{eqnarray*}
&& \sigma_2 (x, t) - \sigma_1(x, t) = e^{-\gamma t} F_2(x, t)\sigma_0(x)F_2^{\text{T}}(x, t)+ \delta \int_{0}^{t} e^{-\gamma (t-s)}F_2(x, t)F_2^{-1}(x, s)F_2^{-\text{T}}(x, s)F_2^{\text{T}}(x, t)ds  \\
&& \qquad -e^{-\gamma t} F_1(x, t)\sigma_0(x)F_1^{\text{T}}(x, t)+ \delta \int_{0}^{t} e^{-\gamma (t-s)}F_1(x, t)F_1^{-1}(x, s)F_1^{-\text{T}}(x, s)F_1^{\text{T}}(x, t)ds \\
&&= e^{-\gamma t} \left( F_2(x, t)\sigma_0(x)F_2^{\text{T}}(x, t) - F_1(x, t)\sigma_0(x)F_1^{\text{T}}(x, t)  \right) \\
&& \qquad + \delta \int_{0}^{t}  e^{-\gamma (t-s)}\left(F_2(x, t)F_2^{-1}(x, s)F_2^{-\text{T}}(x, s)F_2^{\text{T}}(x, t)- F_1(x, t)F_1^{-1}(x, s)F_1^{-\text{T}}(x, s)F_1^{\text{T}}(x, t) \right) ds.
\end{eqnarray*}
Let us suppress $x$ and $t$ to make the idea clearer. The trick is to express 
\begin{eqnarray*}
F_2\sigma_0 F_2^{\text{T}} - F_1\sigma_0 F_1^{\text{T}} &=& F_2\sigma_0 F_2^{\text{T}} -F_1\sigma_0 F_2^{\text{T}} + F_1\sigma_0 F_2^{\text{T}}  -F_1\sigma_0 F_1^{\text{T}} \\
&=&(F_2-F_1)\sigma_0F_2^{\text{T}} + F_1\sigma_0(F_2^{\text{T}}-F_1^{\text{T}}). 
\end{eqnarray*}
From \eqref{A} and Proposition \ref{Prop9} and $\Vert \sigma_0 \Vert_{\mathbb{H}^2_4} \leq r$, we have control over the second derivatives of all the terms and so it is then easy to see that there is a constant $K_7$ depending only on $r$ such that for all $0\leq t \leq T_\varepsilon$ we have the estimate
\begin{equation*}
\|F_2(x, t)\sigma_0(x) F_2^{\text{T}}(x, t)-F_1(x, t)\sigma_0(x) F_1^{\text{T}}(x, t) \|_{\mathbb{H}^2_4} \leq K_7 t^{1/2} \|\tilde{u}_2 - \tilde{u}_1\|_Z. 
\end{equation*}
We can also write the integral term as a difference of terms in the same way (this is a bit more tedious here because there are a lot more terms) and obtain a similar estimate. Putting these estimates together then yields the proposition for some appropriate constant $N$. 
\end{proof}  

Thus we can choose a sufficiently small $0<T'_{\theta}$ such that for $T\leq T_{\theta}'$, 
\begin{equation}\label{ContractionSigma}
\max_{0\leq t \leq T} \Vert \sigma_2(t)-\sigma_1(t) \Vert_{\mathbb{H}^2_9}^2 \leq \frac{\theta^2}{2}  \Vert (\tilde{u}_2-\tilde{u}_1, \tilde{\sigma}_2-\tilde{\sigma}_1)\Vert_X^2.
\end{equation}
With these estimates, we can finally prove the following.  
\begin{proposition}[Short Time Existence for  $\epsilon=0$]\label{ShortTimeExistence2}
For each initial data $(u_0, \sigma_0)\in Y$, there exists a unique local weak solution $(u, \sigma) \in X=C([0, T_c]; \overbar{\mathbb{H}}^2_4)\times C([0, T_c]; \mathbb{H}^2_9)$ with $u'(t)\in L^2(0, T_c; \overbar{\mathbb{H}}^1_2)$ and $\sigma' \in C([0, T_c]; \mathbb{H}^1_9)$ to the system \eqref{RegularizedVorticityForm} with $\epsilon=0$ for $0\leq t \leq T_c$ where $T_c>0$ depends only on the norm of the initial data.
\end{proposition}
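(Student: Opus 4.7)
The plan is to assemble the pieces already developed in this section and apply the Banach contraction principle on the closed ball $B_R(U_0,\Sigma_0)\subset X$ from \eqref{Ball}. Proposition \ref{SelfMapOfBall} already gives a time $T_{u,\sigma}>0$, depending only on $\|(u_0,\sigma_0)\|_Y$, such that $\Phi$ sends this ball into itself for every $T\le T_{u,\sigma}$. Thus the remaining task is to extract a single $T_c>0$ on which $\Phi$ is a strict contraction with respect to the $X$-norm.

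I would fix some $\theta\in(0,1)$, and take $T_c := \min\{T_{u,\sigma},\,T_\theta,\,T'_\theta,\,T_\varepsilon\}$, where $T_\theta$ comes from \eqref{ContractionU}, $T'_\theta$ from \eqref{ContractionSigma}, and $T_\varepsilon$ is the time from Proposition \ref{Prop9} for which the flow estimates controlling $\sigma_2-\sigma_1$ are valid. Since each of these times depends only on $r=\|(u_0,\sigma_0)\|_Y+R$ and ultimately only on $\|(u_0,\sigma_0)\|_Y$, so does $T_c$. Adding \eqref{ContractionU} and \eqref{ContractionSigma} yields, for any $(\tilde{u}_1,\tilde{\sigma}_1),(\tilde{u}_2,\tilde{\sigma}_2)\in B_R(U_0,\Sigma_0)$,
\begin{equation*}
\|\Phi(\tilde{u}_2,\tilde{\sigma}_2)-\Phi(\tilde{u}_1,\tilde{\sigma}_1)\|_X^2 \le \theta^2 \|(\tilde{u}_2-\tilde{u}_1,\tilde{\sigma}_2-\tilde{\sigma}_1)\|_X^2
\end{equation*}
for all $T\le T_c$, so $\Phi$ is a contraction on the complete metric space $B_R(U_0,\Sigma_0)$. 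The Banach fixed point theorem then delivers a unique fixed point $(u,\sigma)\in B_R(U_0,\Sigma_0)$, which by construction solves \eqref{RegularizedVorticityForm} with $\epsilon=0$ subject to the prescribed initial conditions.

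Uniqueness in all of $X(T_c)$ (not just inside $B_R$) follows by the same continuity/bootstrapping argument used in Proposition \ref{ShortTimeExistence}: a hypothetical second solution would, by continuity in time from the same initial data, lie in $B_R$ on some subinterval $[0,T_2]$, and by the contraction property it must then coincide with our fixed point there; iterating extends the agreement up to $T_c$. It remains to record the regularity of the time derivatives. For the velocity, the estimates of Section \ref{SolvingForVelocity} (in particular \eqref{IED5} applied to $\omega$) give $u'\in L^2(0,T_c;\overbar{\mathbb{H}}^1_3)$. For the conformation tensor, in the non-diffusive case one simply reads off $\sigma'$ from \eqref{sigma} as
\begin{equation*}
\sigma' = -\left(Ju\cdot\nabla\right)\sigma + (\nabla Ju)\sigma + \sigma(\nabla Ju)^T - \gamma\sigma + \delta I,
\end{equation*}
and since $Ju$ and its derivatives are uniformly bounded pointwise by the mollifier estimates \eqref{VBound} while $\sigma\in C([0,T_c];\mathbb{H}^2_9)$, every term on the right lies in $C([0,T_c];\mathbb{H}^1_9)$, giving $\sigma'\in C([0,T_c];\mathbb{H}^1_9)$ (in particular in $L^2(0,T_c;\mathbb{H}^1_9)$).

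The main obstacle, which has already been surmounted in the preceding propositions, was the lack of a parabolic smoothing term in the $\sigma$-equation: one cannot appeal to the $L^2$ parabolic estimates used in the diffusive case, and the contraction in the $\sigma$ variable had to be obtained via flow-map estimates producing the factor $t^{1/2}$ in \eqref{ContractionSigma}. With that factor in hand, the present proof is essentially bookkeeping: choose $T_c$ small enough to absorb the constants, and invoke Banach's theorem.
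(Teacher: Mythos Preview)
Your proposal is correct and follows essentially the same route as the paper: set $T_c=\min\{T_{u,\sigma},T_\theta,T'_\theta\}$ (your extra $T_\varepsilon$ is already implicitly enforced since $T'_\theta\le T_\varepsilon$), combine \eqref{ContractionU} and \eqref{ContractionSigma} to get a contraction on $B_R(U_0,\Sigma_0)$, apply Banach, and read off the regularity of $u'$ and $\sigma'$ exactly as you describe. The uniqueness argument via the bootstrapping of Proposition~\ref{ShortTimeExistence} is also what the paper invokes.
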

\begin{proof} 
Let $T_c=\min \lbrace T_{u, \sigma}, T_\theta, T_\theta' \rbrace>0$. Then by Proposition \ref{SelfMapOfBall}, the map $\Phi$ maps the closed ball $B_R(U_0, \Sigma_0)$ to itself. Furthermore, by \eqref{ContractionU} and \eqref{ContractionSigma} we have
\begin{eqnarray*}
\|(u_1-u_2, \sigma_1-\sigma_2)\|_{X(T_c)}^2 &=& \max_{0\leq t \leq T_c} \left( \|u_1 - u_2 \|_{\mathbb{H}^2_3}^2 + \|\sigma_1(t) - \sigma_2(t) \|_{\mathbb{H}^2_9}^2 \right) \\
&\leq& \theta^2 \|(\tilde{u}_1 - \tilde{u}_2, \tilde{\sigma}_1-\tilde{\sigma}_2),  \|_{X(T_c)}^2. 
\end{eqnarray*} 
and so $\Phi$ is a contraction mapping on the ball. Therefore it has a unique fixed point satisfying
\begin{equation*}
\Phi(u,\sigma)=(u,\sigma), 
\end{equation*}
which is therefore the unique solution of the system . By \S \ref{VelocityFromVorticity}, $u'(t)\in L^2(0, T_c; \overbar{\mathbb{H}}^1_2)$. As for $\sigma'$, we can write
\begin{equation*}
\partial_t \sigma  = \delta I - \left(J{u} \cdot \nabla \right) \sigma  + \left(\nabla J{u}\right) \sigma +\sigma \left(\nabla J{u}\right)^T - \gamma \sigma,
\end{equation*}
from which we can conclude that $\sigma' \in C([0, T_c]; \mathbb{H}^1_9)$. The proof of the uniqueness of the solution is the same as in Proposition \ref{ShortTimeExistence}. By the way we defined $R$ and $r$, these only depend on $\Vert (u_0, \sigma_0) \Vert_Y$. Since $T_c$ only depends on $r$, it only depends on the norm of the initial data. This completes the proof. 
\end{proof} 

We can now prove global existence for the non-diffusive model, again using the blow up criterion and the energy estimates.

\begin{proof}[\textbf{Proof of Theorem \ref{GlobalExistenceTheorem2}}] 
Since we have control over the $L^2$ norm of $u$, the definition of $J$ and the above theorem implies that
\begin{equation}\label{VBound3}
\max_{x\in \mathbb{R}^2} \sum_i \sum_{|\lambda|\leq 4}|D^{\lambda }Ju^i(x, t)| \leq \bar{K}(t),
\end{equation}
where $\bar{K}(t) = K(E_1+E_2t)$. 

We could obtain an $L^2$ bound for $\sigma$ as before, by taking the equation for $\sigma$ and taking the Frobenius product with $\sigma$, integrating over $\mathbb{T}$, and using Gronwall's inequality. However, this will not help us getting an $H^2$ estimate for our components. Instead, using the regularization and energy estimate, we can actually obtain pointwise bounds for the conformation tensor and its derivatives up to order two, which of course gives us $H^2$ bounds for the components. Recall our previous notation \eqref{Notation}, where $F_{ij}=a^i_j$. As before, we let
\begin{equation*}
|a|^2=\sum_{i,j} |a^i_j|^2.
\end{equation*} 
Differentiating with respect to time and using \eqref{D1} and \eqref{VBound3} to estimate $|A^i_k|\leq \bar{K}(t)$, we obtain
\begin{subeqnarray}\label{a}
\frac{d}{dt} |a|^2 &=& 2a\cdot \partial_ta \leq 2|a||\partial_ta|=2|a|\left( \sum_{i,j} |A^i_ka^k_j|^2 \right)^{1/2} \leq 2|a|\left( \sum_{i,j} |A^i_k|^2 |a^k_j|^2 \right)^{1/2} \\
&\leq& 2|a| \left( 27K(E_1+E_2t) |a|  \right)=54K(E_1+E_2t)|a|^2. 
\end{subeqnarray} 
Also, since $a^i_j(0) = \delta^i_j$ we have $|a(0)|^2= 3 $,  we can apply Gronwall's inequality to obtain
\begin{equation}\label{abound}
|a(x, t)|^2\leq n(t), 
\end{equation}
where $n(t) = 3e^{54\int_{0}^{t}\bar{K}(s)ds}$. Note that $n(t)$ is finite with finite integral for all finite $t$. We perform a similar procedure for $\partial_{j} F_{ik}=b^i_{jk}$. Defining
\begin{equation*}
|b|^2=\sum_{i,j,k}|b^i_{jk}|^2, 
\end{equation*} 
and using \eqref{VBound3} and \eqref{abound}, we obtain that  
\begin{subeqnarray}\label{b}
\frac{d}{dt} |b|^2 &\leq& 2|b||\partial_t b| = 2|b| \left( \sum_{i,l,j} |\partial_tb^i_{lj} |^2 \right)^{1/2} = 2|b| \left(  \sum_{i,l,j} |B^i_{mk}a^m_l a^k_j+A^i_k b^k_{lj}|^2 \right)^{1/2} \\
&\leq& 2|b| \left( \left( \sum_{i,l,j} |B^i_{mk}|^2|a^m_l|^2|a^k_j|^2  \right)^{1/2} + \left( \sum_{i,l,j} |A^i_k|^2|b^k_lj|^2 \right)^{1/2}  \right)\\
&\leq& 2|b| \left( 243\bar{K}(t)n(t) + 243\bar{K}(t)|b|\right) = 486\bar{K}(t)n(t)|b| + 486\bar{K}(t)|b|^2 \\
&\leq& 486\bar{K}(t)n(t)\left( 1+|b|^2 \right) + 486\bar{K}(t)|b|^2 = 486\bar{K}(t)n(t)+486\bar{K}(t)\left( n(t) +1  \right) |b|^2. 
\end{subeqnarray}
Noting that $|b(x,0)|^2=0$, by applying Gronwall's inequality, we obtain that 
\begin{equation}\label{bbound}
|b(x, t)|^2 \leq m(t), 
\end{equation} 
where 
\begin{equation*}
m(t) = e^{\int_{0}^{t} 486\bar{K}(s)(n(s)+1) \, ds}\left( \int_{0}^{t} 486\bar{K}(s)n(s) \, ds  \right), 
\end{equation*}
which is again finite and with finite integral for all finite $t$. Finally we define
\begin{equation*} 
|c|^2=\sum_{i,j,k,l} |c^i_{jkl}|^2. 
\end{equation*}
Following similar steps above, we can obtain
\begin{equation}\label{c}
\frac{d}{dt}|c|^2 \leq \psi(t) + \phi(t)|c|^2, 
\end{equation}
where 
\begin{subeqnarray*} 
\psi(t) &=& \left(4374\bar{K}(t)n(t)^3+6561\bar{K}(t)m(t)n(t)  \right) \\
\phi(t) &=& \left( 4374 \bar{K}(t)n(t)^3+6561\bar{K}(t)m(t)n(t)+486\bar{K}(t) \right). 
\end{subeqnarray*}
Upon noticing $|c(x, 0)|^2 = 0$ and using Gronwall's inequality, we obtaint that 
\begin{equation} \label{cbound}
|c(x, t)|^2 \leq h(t) \quad \mbox{ with } \quad h(t) = e^{\int_{0}^{t}\phi(s)ds}\left( \int_{0}^{t} \psi(s)ds \right), 
\end{equation}
which is again finite with finite integral for all finite $t$. We now observe that the incompressibility condition
\begin{equation*}
\nabla \cdot Ju(x, t)=0
\end{equation*}
can be expressed as 
\begin{equation*}
\det(F) = 1. 	
\end{equation*}
Consequently, we can express $F^{-1}$ in terms of the components of $F$. Now, the functions $n(t), m(t),$ and $h(t)$ are all smooth, monotonically increasing functions of $t$ which are finite for all finite $0\leq t<\infty$. 

With \eqref{abound}, \eqref{bbound}, and \eqref{cbound}, we have bounds for the derivatives of $F$ up to order $2$. Applying this to \eqref{SolSigma}, we can find a smooth function $p(t)$ depending only on $\bar{K}(t)$ and $\|\sigma_0 \|_{\mathbb{H}^2_4}$, such that $p(t)<\infty$ for all $0\leq t<\infty$ and
\begin{equation}\label{p}
\|\sigma(t)\|_{\mathbb{H}^2_9}^2 \leq p(t).
\end{equation}
for all $0 \leq t < \infty$. Therefore, the norm of the conformation tensor remains finite for all finite $t$.

Next, we examine $\|\omega(t) \|_{  \mathbb{H}^1_3 }$. We can repeat the analysis of Proposition \ref{OmegaL2Estimate} and Proposition \ref{NablaOmegaL2Estimate} with $p(t)$ replacing $R_1(t)$ to conclude that there is some function, denoted by $q(t)$, such that it is continuous for $0\leq t < \infty$ and 
\begin{equation*}
\|\omega(t)\|_{\mathbb{H}^1_3 }^2 \leq q(t), 
\end{equation*}
and so using
\begin{equation*} 
\|u(t)\|_{\overbar{\mathbb{H}}^2_3 }^2 \leq \tilde{C} \|\omega(t)\|_{\mathbb{H}^1_3 }^2, 
\end{equation*}
we obtain that 
\begin{equation*}
\|u(t)\|_{\mathbb{H}^2_3 }^2 \leq \tilde{C}q(t), 
\end{equation*}
and so 
\begin{equation}\label{Pt}
\|\sigma(t)\|_{\mathbb{H}^2_9}^2 + \| u(t) \|_{  \mathbb{H}^2_3 }^2 \leq p(t) + \tilde{C}q(t) = P(t), 
\end{equation}
is finite for all finite $t$. Using Proposition \ref{BlowUpCriterion}, which is still applicable in this case, we complete the proof. 
\end{proof} 

As before, we can obtain better regularity.

\begin{proof}[\textbf{Proof of Theorem \ref{GlobalExistenceTheorem2HigherRegularity2}}]
By the formula \eqref{SolSigma}, we see that if \eqref{InitialConditionsHk} holds, then  $\sigma \in C([0, T]; \mathbb{H}^{k}_{9})$, and $\sigma'\in L^2(0, T; \mathbb{H}^{k-1}_9)$. As mentioned, we can treat the vorticity equation as we did the $\sigma$ equation in Theorem \ref{GlobalExistenceTheorem1HigherRegularity} to obtain $u\in C([0, T]; \overbar{\mathbb{H}}^k_3)$,  $u'\in L^2(0, T; \overbar{\mathbb{H}}^{k-1}_3)$, and again for $k\geq 4$ the solutions are classical by the Sobolev embedding theorem.
\end{proof}

\subsection{Global Stability of Solutions for $\epsilon=0$}

Proving the global stability of the non-diffusive solutions in the $H^2$ norm is difficult. However, we are able to obtain stability in the $L^2$ norm. 

\begin{theorem}[$L^2$ stability for $\epsilon =0$] \label{L2StabilityNonDiffusive}
Let $(u_{0, 1}, \sigma_{0,1})$ and $(u_{0, 2}, \sigma_{0,2})$ denote two sets of initial data which satisfy the assumptions of Theorem \ref{GlobalExistenceTheorem2}. We denote the corresponding solutions to \eqref{RegularizedVorticityForm} by $(u_1(t),\sigma_1(t))$ and $(u_2(t),\sigma_2(t))$, respectively. For a fixed $T > 0$, there exists a continuous function $R(t)$ such that 
\begin{equation*}
\|u_2(t)-u_1(t)\|_{\overbar{\mathbb{L}}^2_3}^2 + \|\sigma_2(t) -  \sigma_1(t) \|_{\mathbb{L}^2_9}^2 \leq R(t) \left( \|u_{0, 2}- u_{0, 1} \|_{\overbar{\mathbb{L}}^2_3}^2 + \|\sigma_{0, 2} - \sigma_{0, 1} \|_{\mathbb{L}^2_9}^2 \right).
\end{equation*}
\end{theorem}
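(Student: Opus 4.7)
The plan is to derive an energy-type differential inequality for the combined quantity $\|\bar{u}(t)\|_{\mathbb{L}^2_3}^2 + \|\bar{\sigma}(t)\|_{\mathbb{L}^2_9}^2$ with $\bar{u} = u_2-u_1$ and $\bar{\sigma} = \sigma_2-\sigma_1$, and close it with Gronwall's inequality. Since we are only after $\mathbb{L}^2$ stability, it is cleaner here to work directly with the original momentum equation from \eqref{RegularizedOldroydB} rather than going through the vorticity. Subtracting the equations for $(u_1,\sigma_1)$ and $(u_2,\sigma_2)$ and decomposing nonlinear differences in the usual bilinear way yields
\begin{equation*}
\partial_t \bar{u} - \alpha \Delta \bar{u} + (Ju_2 \cdot \nabla)\bar{u} + (J\bar{u} \cdot \nabla) u_1 = \beta\, \mathrm{div}\, J\bar{\sigma} - \beta \nabla \bar{p},
\end{equation*}
\begin{equation*}
\partial_t \bar{\sigma} + (Ju_2 \cdot \nabla)\bar{\sigma} + (J\bar{u} \cdot \nabla)\sigma_1 = (\nabla Ju_2)\bar{\sigma} + \bar{\sigma}(\nabla Ju_2)^T + (\nabla J\bar{u})\sigma_1 + \sigma_1(\nabla J\bar{u})^T - \gamma \bar{\sigma},
\end{equation*}
both with zero initial data displaced by $(\bar{u}(0), \bar{\sigma}(0))$.

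Next, I would take the $\mathbb{L}^2_3$ inner product of the first equation against $\bar{u}$ and the Frobenius $\mathbb{L}^2_9$ inner product of the second against $\bar{\sigma}$, and integrate by parts. The transport-type terms $\int (Ju_2 \cdot \nabla)\bar{u}\cdot \bar{u}\,dx$ and $\int (Ju_2 \cdot \nabla)\bar{\sigma}:\bar{\sigma}\,dx$ both vanish because $\nabla \cdot Ju_2 = 0$; the pressure term vanishes because $\nabla\cdot\bar{u}=0$; and the viscous term contributes the good sign $\alpha\|\nabla\bar{u}\|_{\mathbb{L}^2_9}^2$, which I would simply drop to obtain an inequality. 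The remaining cross terms are then controlled using the pointwise mollifier bounds from \eqref{VBound}, which give $|J\bar{u}|,|\nabla J\bar{u}| \leq K\|\bar{u}\|_{\mathbb{L}^2_3}$ and analogous bounds for $\bar{\sigma}$. The representative estimates are
\begin{equation*}
\left|\int (J\bar{u}\cdot \nabla)u_1 \cdot \bar{u}\,dx\right| \leq K\|\nabla u_1\|_{\mathbb{L}^2_9}\|\bar{u}\|_{\mathbb{L}^2_3}^2, \qquad \left|\int \mathrm{div}\, J\bar{\sigma} \cdot \bar{u}\,dx\right| \leq C K\bigl(\|\bar{u}\|_{\mathbb{L}^2_3}^2 + \|\bar{\sigma}\|_{\mathbb{L}^2_9}^2\bigr),
\end{equation*}
and similarly $\left|\int (J\bar{u}\cdot \nabla)\sigma_1 : \bar{\sigma}\,dx\right| \leq \tfrac{K}{2}\|\nabla\sigma_1\|_{\mathbb{L}^2_{27}}\bigl(\|\bar{u}\|_{\mathbb{L}^2_3}^2+\|\bar{\sigma}\|_{\mathbb{L}^2_9}^2\bigr)$, with the $(\nabla Ju_2)\bar{\sigma}$-type terms handled by the uniform $L^\infty$ bound $|\nabla Ju_2|\leq K\|u_2\|_{\mathbb{L}^2_3}$.

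Adding the two resulting inequalities then produces
\begin{equation*}
\frac{d}{dt}\bigl(\|\bar{u}\|_{\mathbb{L}^2_3}^2 + \|\bar{\sigma}\|_{\mathbb{L}^2_9}^2\bigr) \leq C(t)\bigl(\|\bar{u}\|_{\mathbb{L}^2_3}^2 + \|\bar{\sigma}\|_{\mathbb{L}^2_9}^2\bigr),
\end{equation*}
where $C(t)$ is a continuous function on $[0,T]$ built from $\|u_i\|_{\mathbb{H}^1_3}$, $\|\sigma_i\|_{\mathbb{H}^1_9}$ ($i=1,2$), and the universal mollifier constant $K$. The crucial input is that by the non-diffusive global existence Theorem \ref{GlobalExistenceTheorem2}, both $\|u_i(t)\|_{\mathbb{H}^2_3}^2$ and $\|\sigma_i(t)\|_{\mathbb{H}^2_9}^2$ are controlled on $[0,T]$ by the continuous function $P(t)$ defined in \eqref{Pt}, so $C(t)$ is bounded on $[0,T]$. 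Gronwall's inequality then delivers the claim with $R(t) = \exp\!\left(\int_0^t C(s)\,ds\right)$.

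\emph{Main obstacle.} The only term that looks problematic a priori is $(J\bar{u}\cdot\nabla)\sigma_1$ in the $\bar{\sigma}$ equation, since the $\mathbb{L}^2$-scale analysis retains no derivative control on $\bar{u}$. The regularizer $J$ is exactly what rescues us: it upgrades $\bar{u}\in\mathbb{L}^2_3$ to a pointwise bound $|J\bar{u}|\leq K\|\bar{u}\|_{\mathbb{L}^2_3}$, allowing the derivative to sit on $\sigma_1$, whose $\mathbb{H}^1_9$ norm is absorbed into $C(t)$ via $P(t)$. This is the same mechanism that made $\epsilon=0$ tractable in the existence proof and explains why $\mathbb{L}^2$ stability goes through without needing the parabolic smoothing that was available in the diffusive setting; the reason we cannot similarly push to $\mathbb{H}^2$ stability (cf.\ Theorem \ref{H2Stability}) is that pushing two derivatives onto $\sigma_1$ would require $\sigma_1\in \mathbb{H}^3_9$, which we do not have for $\epsilon=0$ without assuming higher initial regularity.
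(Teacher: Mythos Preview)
Your proposal is correct and follows essentially the same strategy as the paper: derive an $L^2$ energy inequality for the differences, exploit the mollifier bounds \eqref{VBound} to control the cross terms, and close with Gronwall, noting that the estimate survives setting $\epsilon=0$ because the diffusive term $\epsilon\|\nabla\bar\sigma\|^2$ was only ever discarded. The paper's proof simply refers back to Proposition~\ref{L2Stability}, whose computations you have essentially reproduced for the $\bar\sigma$ equation.

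The one genuine (though minor) difference is that the paper carries out the velocity-side estimate in the \emph{vorticity} variable $\bar\omega=\omega_2-\omega_1$, obtaining an inequality for $\|\bar\omega\|_{\mathbb{L}^2_3}^2+\|\bar\sigma\|_{\mathbb{L}^2_9}^2$ and then passing to $\bar u$ via the elliptic relation $\|\bar u\|_{\mathbb{L}^2_3}\le C\|\bar\omega\|_{\mathbb{L}^2_3}$; you instead test the original momentum equation against $\bar u$ directly, which makes the pressure term vanish by incompressibility and yields the statement exactly as written without any $\omega\leftrightarrow u$ translation. Your route is slightly cleaner for this particular theorem, since the conclusion is phrased in terms of $\|\bar u\|_{\mathbb{L}^2_3}$ on both sides, whereas the paper's vorticity argument naturally produces $\|\bar\omega(0)\|_{\mathbb{L}^2_3}$ on the right and requires an additional remark to match the stated bound. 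Your identification of $(J\bar u\cdot\nabla)\sigma_1$ as the term that forces the regularizer, and your explanation of why the argument does not upgrade to $\mathbb{H}^2$ when $\epsilon=0$, are both on point.
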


\begin{proof}
The proof is the same as that of Proposition \ref{L2Stability}. As can be easily seen, it holds in the case $\epsilon=0$.
\end{proof}

\section{$L^2-$convergence of solution of diffusive model to that of non-diffusive model}\label{SEC:L2ConvergenceSection}

Now we show that starting with the same initial data, the diffusive solutions converge to the non-diffusive solution, at least in the $L^2$ sense. Fix some initial data $(u_0, \sigma_0)$ satisfying the assumptions of Theorem \ref{GlobalExistenceTheorem1} or \ref{GlobalExistenceTheorem2}. As we saw, we can obtain solutions for any $\epsilon \geq 0$, with components in $H^2$. Denote the solutions by $(u_\epsilon, \sigma_\epsilon)$ for $\epsilon>0$ and $(u, \sigma)$ for $\epsilon=0$. It is natural to expect that 
\begin{equation}
    (u_\epsilon, \sigma_\epsilon) \rightarrow (u, \sigma) \quad \text{as} \quad \epsilon \rightarrow 0 
\end{equation}
where the components converge in $H^2$. We expect this to be true, though it is difficult to prove. For now, we establish convergence in $L^2$ which is already an interesting result.

\begin{theorem}
Fix some initial data $(u_0, \sigma_0) \in Y$ with $\sigma_0$ symmetric and positive semi-definite. For each $\epsilon>0$ let $(u_\epsilon, \sigma_\epsilon)$ denote the solution of Theorem \ref{GlobalExistenceTheorem1}. Let $(u, \sigma)$ denote the solution of Theorem \ref{GlobalExistenceTheorem2} for $\epsilon=0$.  Then for any $0\leq t < \infty$ we have
\begin{equation*}
\lim_{\epsilon \rightarrow 0} \left(\|u_\epsilon (t) - u (t)\|_{\mathbb{L}^2_3}^2 + \|\sigma_\epsilon (t) - \sigma (t)\|_{\mathbb{L}^2_9}^2 \right) = 0
\end{equation*}
and moreover for any finite $T>0$ we have that the convergence is uniform on $0\leq t \leq T$. 
\end{theorem}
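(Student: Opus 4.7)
The plan is to run an $\mathbb{L}^2$ energy estimate on the differences $\bar u = u_\epsilon - u$, $\bar\omega = \omega_\epsilon - \omega$, $\bar\sigma = \sigma_\epsilon - \sigma$, following the structure of Proposition \ref{L2Stability}, with the crucial twist that the two solutions now satisfy equations with different $\epsilon$, producing an extra inhomogeneity $\epsilon \Delta \sigma$ of order $\epsilon$. Subtracting the two constitutive equations gives
\begin{equation*}
\partial_t \bar\sigma - \epsilon \Delta \bar\sigma = \epsilon \Delta \sigma + \bigl[(\nabla J u_\epsilon)\sigma_\epsilon - (\nabla J u)\sigma\bigr] + \bigl[\sigma_\epsilon(\nabla J u_\epsilon)^T - \sigma(\nabla J u)^T\bigr] - \bigl[(Ju_\epsilon\cdot\nabla)\sigma_\epsilon - (Ju\cdot\nabla)\sigma\bigr] - \gamma\bar\sigma,
\end{equation*}
while the vorticity difference satisfies $\partial_t \bar\omega - \alpha \Delta \bar\omega = \beta\nabla\times\mathrm{div}\,J\bar\sigma - [(Ju_\epsilon\cdot\nabla)\omega_\epsilon - (Ju\cdot\nabla)\omega] - [\Omega(Ju_\epsilon,u_\epsilon) - \Omega(Ju,u)]$ (the forcing $g$ cancels). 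Both $\bar\omega(0) = 0$ and $\bar\sigma(0) = 0$ because the initial data are identical.

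First I would split each nonlinear difference bilinearly as, for example, $(Ju_\epsilon\cdot\nabla)\sigma_\epsilon - (Ju\cdot\nabla)\sigma = (Ju_\epsilon\cdot\nabla)\bar\sigma + (J\bar u\cdot\nabla)\sigma$, arranged so that any spatial derivative falls on the \emph{non-diffusive} quantities $u$, $\omega$, $\sigma$ rather than on $u_\epsilon$, $\omega_\epsilon$, $\sigma_\epsilon$. Then take the Frobenius inner product of the $\bar\sigma$ equation with $\bar\sigma$ and the inner product of the $\bar\omega$ equation with $\bar\omega$, integrate over $\mathbb{T}$, and use that $Ju_\epsilon$ is divergence free to eliminate the $(Ju_\epsilon\cdot\nabla)\bar\sigma:\bar\sigma$ and $(Ju_\epsilon\cdot\nabla)\bar\omega\cdot\bar\omega$ contributions. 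The $\epsilon\Delta\sigma$ inhomogeneity is dominated via Cauchy-Schwarz and Young, $\epsilon\int\Delta\sigma:\bar\sigma\,dx \leq \tfrac12\|\bar\sigma\|_{\mathbb{L}^2_9}^2 + \tfrac{\epsilon^2}{2}\|\Delta\sigma\|_{\mathbb{L}^2_9}^2$, and here $\|\Delta\sigma\|_{\mathbb{L}^2_9}$ is controlled by the $\mathbb{H}^2_9$ bound $P(t)$ from \eqref{Pt} in the proof of Theorem \ref{GlobalExistenceTheorem2}, which is $\epsilon$-independent.

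The terms carrying one derivative on the non-diffusive factor are handled using the uniform pointwise control $|D^\lambda J u_\epsilon|, |D^\lambda J\bar u| \leq K\|\cdot\|_{\mathbb{L}^2_3}$ from \eqref{VBound}, the Poincaré–elliptic inequality $\|\bar u\|_{\mathbb{H}^1_3}^2 \leq C\|\bar\omega\|_{\mathbb{L}^2_3}^2$ coming from Proposition \ref{VelocityFromVorticity} applied to $\bar\omega$ (which satisfies $\int_\mathbb{T} \bar\omega\,dx = 0$ by Proposition \ref{AverageOmega}), the uniform $\mathbb{L}^2$ bound on $u_\epsilon$ from Proposition \ref{EnergyEstimate} (which does not involve $\epsilon$), and the $\epsilon$-independent bounds on $\|\sigma\|_{\mathbb{H}^1_9}$ and $\|\omega\|_{\mathbb{H}^1_3}$ for the non-diffusive solution. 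After adding the two resulting estimates one obtains a differential inequality of the form
\begin{equation*}
\frac{d}{dt}\bigl(\|\bar\omega\|_{\mathbb{L}^2_3}^2 + \|\bar\sigma\|_{\mathbb{L}^2_9}^2\bigr) \leq \mathcal{A}(t)\bigl(\|\bar\omega\|_{\mathbb{L}^2_3}^2 + \|\bar\sigma\|_{\mathbb{L}^2_9}^2\bigr) + \epsilon^2 \mathcal{B}(t),
\end{equation*}
where $\mathcal{A}(t)$ and $\mathcal{B}(t)$ are continuous in $t$ and depend only on the $\epsilon$-independent quantities $R_1(t), R_3(t), P(t)$ and the energy constants. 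Gronwall's inequality, combined with $\bar\omega(0)=\bar\sigma(0)=0$, then yields $\|\bar\omega(t)\|_{\mathbb{L}^2_3}^2 + \|\bar\sigma(t)\|_{\mathbb{L}^2_9}^2 \leq \epsilon^2 C(T)$ uniformly on $[0,T]$. Finally, $\|\bar u(t)\|_{\mathbb{L}^2_3}^2 \leq \tilde C\|\bar\omega(t)\|_{\mathbb{L}^2_3}^2$ from Proposition \ref{VelocityFromVorticity} (note that Proposition \ref{AverageOmega} applies to $\bar\omega$ as well) converts the vorticity estimate into the stated $L^2$ velocity convergence.

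The main obstacle is the insistence on an $\mathbb{L}^2$-only estimate: the higher-order bounds for $\sigma_\epsilon$ (e.g.\ $R_2(t)$ in Proposition \ref{NablaSigmaL2Apriori}) degenerate like $e^{CT/\epsilon}$ as $\epsilon\to 0$, so none of them may appear in $\mathcal{A}$ or $\mathcal{B}$. Every derivative landing on the diffusive pair $(u_\epsilon,\sigma_\epsilon)$ must either be removed by integration by parts against the divergence-free $Ju_\epsilon$, or be absorbed into the dissipation $\alpha\|\nabla\bar\omega\|^2$ and $\epsilon\|\nabla\bar\sigma\|^2$ on the left-hand side, or be transferred to the non-diffusive solution via the bilinear splitting. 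The regularizer $J$ is essential here because it allows $\|J u_\epsilon\|_{W^{k,\infty}}$ to be bounded in terms of only $\|u_\epsilon\|_{\mathbb{L}^2_3}$, which is $\epsilon$-uniform.
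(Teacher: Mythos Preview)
Your proposal is correct and follows essentially the same strategy as the paper: an $L^2$ energy estimate on the differences, with the extra inhomogeneity $\epsilon\Delta\sigma$ bounded via the $\epsilon$-independent $\mathbb{H}^2_9$ control of the non-diffusive $\sigma$, followed by Gronwall.

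There are two minor tactical differences worth noting. First, the paper works directly with the velocity equation rather than the vorticity equation, taking the inner product of the $\bar u$ equation with $\bar u$; your vorticity route is equivalent once you invoke $\|\bar u\|_{\mathbb{H}^1_3}\lesssim\|\bar\omega\|_{\mathbb{L}^2_3}$. Second, the paper splits the convective difference as $(Ju\cdot\nabla)\bar u + (J\bar u\cdot\nabla)u_\epsilon$, so the Gronwall coefficient picks up a $\|\nabla u_\epsilon\|_{\mathbb{L}^2_9}$ term; this is not pointwise $\epsilon$-uniform, but its time integral is, by the dissipation term in the energy estimate \eqref{EnergyEstimateEquation}. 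Your splitting, which routes all spatial derivatives onto the non-diffusive solution, avoids this extra step and yields an $\epsilon$-independent $\mathcal{A}(t)$ directly. As a bonus your Young splitting of $\epsilon\int\Delta\sigma:\bar\sigma$ gives an $O(\epsilon^2)$ rate, whereas the paper's version gives $O(\epsilon)$; either suffices for the stated limit.
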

\begin{proof}
As usual, we define $\bar{u} = u_\epsilon - u$ and $\bar{\sigma}=\sigma_\epsilon-\sigma$ and then subtract the corresponding equations. Then for the velocity we obtain
\begin{equation*}
 \partial_t \bar{u} - \alpha \Delta \bar{u} = (-Ju\cdot \nabla ) \bar{u} - (J\bar{u} \cdot \nabla)u_\epsilon + \beta \text{div} J \bar{\sigma} - \beta \nabla \bar{p}  
\end{equation*}
and upon multiplying by $\bar{u}$ and integrating over $\mathbb{T}$ and making some basic estimates we obtain
\begin{equation*}
\frac{1}{2} \|\bar{u} \|_{\mathbb{L}^2_3}^2  \leq \frac{\beta}{2} K^2 |\mathbb{T}| \|\bar{\sigma} \|_{\mathbb{L}^2_3}^2 + \frac{\beta}{2} \|\bar{u} \|_{\mathbb{L}^2_3}^2 + K \|\nabla u_\epsilon \|_{\mathbb{L}^2_9} \|\bar{u}\|_{\mathbb{L}^2_3}^2.
\end{equation*}
Similarly we obtain the equation for $\bar{\sigma}$
\begin{equation*} 
\partial_t \bar{\sigma} - \epsilon \Delta \bar{\sigma} = -(Ju_\epsilon \cdot \nabla ) \bar{\sigma} +(J\bar{u} \cdot \nabla) \sigma + (\nabla Ju_\epsilon)\bar{\sigma} + (\nabla J\bar{u})\sigma + \bar{\sigma} ( \nabla Ju_\epsilon)^T + \sigma (\nabla J\bar{u})^T - \gamma \bar{\sigma} + \epsilon \Delta \sigma
\end{equation*}
and upon taking the Frobenius product with $\bar{\sigma}$, integrating over $\mathbb{T}$ and making some basic estimates we obtain
\begin{subeqnarray*}
\frac{1}{2} \frac{d}{dt}\Vert \bar{\sigma} \|_{\mathbb{L}^2_9}^2 &\leq&  K \|\nabla \sigma \|_{\mathbb{L}^2_9} \left(  \| \bar{u} \|_{\mathbb{L}^2_3} \| \bar{\sigma} \|_{\mathbb{L}^2_9}   \right) + 2K(E_1 t + E_2 ) \| \bar{\sigma} \|_{\mathbb{L}^2_3}^2
\\ && \quad + 2K \|\sigma \|_{\mathbb{L}^2_9}  \left( \| \bar{u} \|_{\mathbb{L}^2_3}  \| \bar{\sigma} \|_{\mathbb{L}^2_9} \right) + \gamma \| \bar{\sigma} \|_{\mathbb{L}^2_9}^2 + \epsilon \| \Delta \sigma \|_{\mathbb{L}^2_9} \| \bar{\sigma} \|_{\mathbb{L}^2_9} \\ &\leq& K \| \nabla \sigma \|_{\mathbb{L}^2_9} \left(  \frac{1}{2}\| \bar{u} \|_{\mathbb{L}^2_3}^2 + \frac{1}{2} \| \bar{\sigma} \|_{\mathbb{L}^2_9}^2    \right) + 2K(E_1 t + E_2) \| \bar{\sigma} \|_{\mathbb{L}^2_3}^2
    \\ && \quad + 2K \| \sigma \|_{\mathbb{L}^2_9}  \left(  \frac{1}{2}\| \bar{u} \|_{\mathbb{L}^2_3}^2 + \frac{1}{2} \| \bar{\sigma} \|_{\mathbb{L}^2_9}^2    \right) + \gamma \| \bar{\sigma} \|_{\mathbb{L}^2_9}^2 + \epsilon \left( \frac{1}{2}\| \Delta \sigma \|_{\mathbb{L}^2_9}^2 + \frac{1}{2} \| \bar{\sigma} \|_{\mathbb{L}^2_9}^2 \right).
\end{subeqnarray*}

Now fix an arbitrary $T>0$. Then for all $0\leq t \leq T$ the terms $\| \nabla \sigma \|_{\mathbb{L}^2_{27}}^2$ and $\| \sigma \|_{\mathbb{L}^2_9}^2$ and $\| \Delta \sigma \|_{\mathbb{L}^2_9}^2$ can all be estimated in terms of $P(T)$ where $P(t)$ is given in \eqref{Pt} and in the end only depends on the initial data.  Also estimating $\| \nabla u_\epsilon \|_{\mathbb{L}^2_3} \leq 1 + \| \nabla u_\epsilon \|_{\mathbb{L}^2_3}^2 $, putting together the two estimates and multiplying by $2$ we can get an estimate of the form
\begin{equation*}
\frac{d}{dt} \left( \| \bar{u} \|_{\mathbb{L}^2_3}^2 + \| \bar{\sigma} \|_{\mathbb{L}^2_9}^2  \right) \leq \left(  C_1  +  \| \nabla u_\epsilon \|_{\mathbb{L}^2_3}^2     \right) \left( \| \bar{u} \|_{\mathbb{L}^2_3}^2 + \| \bar{\sigma} \|_{\mathbb{L}^2_9}^2    \right)  + \epsilon P(T)
\end{equation*}
for $0\leq t \leq T$ where the constant $C_1>0$ ultimately only depends on $T$ and the initial data. Applying Gronwall's inequality, the fact that $\bar{u}(0)=0$ and $\bar{\sigma}(0)=0$, and the energy estimate we obtain
\begin{equation*}
\|\bar{u}(t) \|_{\mathbb{L}^2_3}^2 + \| \bar{\sigma}(t) \|_{\mathbb{L}^2_9}^2 \leq \epsilon t P(T) e^{C_1 t + \int_0^t \| \nabla u_\epsilon (s) \|_{\mathbb{L}^2_3}^2 ds} \leq \epsilon T P(T)
 e^{C_1 T + E_1 + E_2 T}
\end{equation*}
for $0\leq t \leq T$. Taking the limit $\epsilon \rightarrow 0$ we obtain the uniform convergence we desire. \end{proof}

We mention that this is precisely what is observed in numerical simulations. In the numerical setting, due to the finitiness of the discrete mesh, all norms can be shown to be equivalent. Thus the analytical $L^2$ convergence implies numerical convergence. We hope in a future paper to establish the full $H^2$ convergence of the solutions of our model in the limit as the diffusivity of the stress goes to $0$.

\section{Conclusion}\label{con}
In this paper, we have established the global existence and regularity of the solutions to the system \eqref{RegularizedVorticityForm} in both the diffusive and non-diffusive cases. We also established the stability of the solutions and the convergence of the diffusive solutions to the non-diffusive one, in some appropriate sense. We remark that a similar result was established in the discrete case for $\epsilon = 0$ in \cite{lee2011global}. There, the main idea was to use sufficiently small time step size and large mesh size, to obtain control over $\nabla u$ in the equation for the conformation tensor. However, the resulting solutions do not converge as the mesh size is shrunk down to zero. Thus the regularizer in our modified model is the source of the pointwise control over the $\nabla Ju$ terms, and thus allows us to obtain our results. This strongly suggests that some kind of regularizer is needed, at least in the non-diffuisve case.

In a future paper, we will investigate the well-posedness of our model in the discrete setting and carry out numerical experiments. Furthermore, we will attempt to establish the $H^2$ convergence of the diffusive solutions to the non-diffusive one. Finally, we will see if somehow the regularizer can be removed to obtain global solutions to the original Oldroyd-B model.

\appendix

\section{Derivation of Vorticity Formulations}\label{VorticityDerivation}
When taking the curl of the velocity equation, the following identities  
are useful
\begin{align} \label{VectorIdentities}
    \begin{split} 
    &\omega= \nabla \times u \\
    &(u\cdot \nabla ) u =\omega \times u + \nabla \left( \frac{1}{2} |u|^2   \right) \\
    &\nabla \times (\omega \times u ) = (u \cdot \nabla )\omega -(\omega \cdot \nabla)u + \omega (\nabla \cdot u ) - u (\nabla \cdot \omega) \\
    &\nabla \cdot (\nabla \times u ) =\nabla \cdot \omega = 0 \\
    &\nabla \times (\nabla f ) = 0
    \end{split}
\end{align}
where $f$ is any function.

Using these identities along with the assumption of incompressibility
\begin{equation}
    \nabla \cdot u =0 
\end{equation}
we obtain
\begin{equation}
    \nabla \times \left(  (u \cdot \nabla) u \right) = (u\cdot \nabla) \omega - (\omega \cdot \nabla ) u 
\end{equation}
and so, taking the curl of both sides of the velocity equation we have
\begin{equation}
  \nabla \times \left(  \partial_t u - \frac{\nu}{R_e} \Delta u    + u \cdot \nabla u \right)   = \nabla \times \left( \frac{1}{R_e}\text{div} \sigma -\frac{1}{R_e}\nabla p+ \frac{1}{R_e}f \right)
\end{equation}
which gives the equation for the vorticity vector $\omega$
\begin{equation}
    \partial_t \omega - \frac{\nu}{R_e} \Delta \omega + (u \cdot \nabla ) \omega - (\omega \cdot \nabla) u = \frac{1}{R_e} \nabla \times \text{div} \sigma + g
\end{equation}
where 
\begin{equation}
g=\nabla \times \left(  \frac{1}{R_e} f \right).
\end{equation}

More genrally we have the identity, 
\begin{equation}
(v \cdot \nabla ) u = \frac{1}{2} \left( \nabla (v \cdot u) - \nabla \times (v \times u) - u \times (\nabla \times v) - v \times (\nabla \times u) - u (\nabla \cdot v) + v (\nabla \cdot u) \right ).
\end{equation}
Asuming $u$ and $v$ are incompressible, this leads to (with $w_u = \nabla \times u$ and $w_v = \nabla \times v$), 
\begin{align} \begin{split}
\nabla \times  ((v \cdot \nabla) u) &= 
\frac{1}{2} \nabla \times \left ( - \nabla \times (v \times u) - u \times (\nabla \times v) - v \times w \right ) \\ 
&= \frac{1}{2} \nabla \times \left ( - \nabla \times (v \times u) - u \times w_v - v \times w_u \right ) \\
&= \frac{1}{2} \left ( - \nabla \times \nabla \times (v \times u) + \nabla \times (w_v \times u) + \nabla \times (w_u \times v) \right ) \\
&= \frac{1}{2} \left ( - \nabla \times \nabla \times (v \times u) + (u \cdot \nabla ) w_v - (w_v \cdot \nabla) u + (v \cdot \nabla) w_u - (w_u \cdot \nabla)v \right ). \end{split}
\end{align}
Another way of expressing this term is
\begin{equation} \label{A9}
    \nabla \times ( (v \cdot \nabla) u)= (v \cdot \nabla )w_u + \Omega(v, u)
\end{equation}
where
\begin{small}
\begin{equation} \label{A10ExpressionForOmega}
    \Omega(v, u) =\left( \begin{array}{c}
(\partial_2 v^1)(\partial_1 u^3)+(\partial_2 v^2)(\partial_2 u^3)+(\partial_2 v^3 )(\partial_3 u^3) -(\partial_3 v^1)(\partial_1 u^2)-(\partial_3 v^2)(\partial_2 u^2)-(\partial_3 v^3)(\partial_3 u^2)\\
(\partial_3 v^1)(\partial_1 u^1)+(\partial_3 v^2)(\partial_2 u^1)+(\partial_3 v^3)(\partial_3 u^1)-(\partial_1 v^1)( \partial_1 u^3)-(\partial_1 v^2)(\partial_2 u^3)-(\partial_1 v^3)(\partial_2 u^3)
\\(\partial_1 v^1)(\partial_1 u^2)+(\partial_1 v^2)(\partial_2 u^2)+(\partial_1 v^3)(\partial_3 u^2)-(\partial_2 v^1)(\partial_1 u^1)-(\partial_2 v^2)(\partial_2 u^1)-(\partial_2 v^3)(\partial_3 u^1)
\end{array} \right).
\end{equation}
\end{small}
We point out that this formula works whether or not the vector fields are incompressible. It is obtained by writing out everything in components, taking the curl, and collecting some terms. While not very pleasing to the eye, the functional $\Omega(v, u)$ is linear in each argument
\begin{align}
    \begin{split}
       & \Omega(c_1 v + c_2 \tilde{v}, u)=c_1 \Omega(v, u) + c_2 \Omega(\tilde{v}, u) \\
        &\Omega(v, c_1 u + c_2 \tilde{u})=c_1 \Omega(v, u) + c_2 \Omega(v, \tilde{u})
    \end{split}
\end{align}
which is a property that will be useful later on. 

Consequently, if $u = v$, then
\begin{eqnarray*}
\nabla \times  ((u \cdot \nabla) u) = (u \cdot \nabla ) w - (w \cdot \nabla) u. 
\end{eqnarray*}
as before. We will let $v=Ju$ in equation \eqref{A9} to obtain the regularized vorticity form of our equations.

We point out the vorticity equation simplifies in the case of a two dimensional flow. In that case, we consider the two dimensional flow as a a three dimensional flow where one of the componets is $0$ 
\begin{equation}
    u= \left(
\begin{array}{c}
u_1\\
u_2\\
0
\end{array}
\right)
\end{equation}
the force is $f=(f_1, f_2, 0)$ and the conformation tensor satisfies $\sigma_{ij}=0$ when $i \; \text{or} \; j =3 $
in which case the vorticity only has a single component
\begin{equation}
  \nabla \times  u= \left(
\begin{array}{c}
0\\
0\\
\partial_1 u_2 - \partial_2 u_1
\end{array}
\right)= \left(
\begin{array}{c}
0\\
0\\
w
\end{array}
\right)
\end{equation}
and so, since $(\omega \cdot \nabla) u =0$ in that case, we get that the equation for the vorticity vector reduces to the scalar equation for the single component $w$
\begin{equation}
    \partial_t w - \frac{\nu}{R_e} \Delta w + (u \cdot \nabla ) w = \frac{1}{R_e} \left(\nabla \times \text{div} \sigma \right)_3 + g_3
\end{equation}
which could be analyzed independently.

\bibliographystyle{model1-num-names}
\bibliography{ref_new}

\end{document}